\DeclareMathOperator{\image}{''}
\DeclareMathOperator{\acc}{acc}
\DeclareMathOperator{\dom}{dom}
\DeclareMathOperator{\otp}{otp}
\DeclareMathOperator{\cof}{cf}
\DeclareMathOperator{\len}{len}
\DeclareMathOperator{\Lim}{acc}
\DeclareMathOperator{\crit}{crit}
\DeclareMathOperator{\cf}{cf}
\DeclareMathOperator{\Add}{Add}
\DeclareMathOperator{\Col}{Col}
\DeclareMathOperator{\Ult}{Ult}
\newcommand{\ZFC}{{\rm ZFC}\xspace}
\newcommand{\AC}{{\rm AC}\xspace}
\newcommand{\lusim}[1]{\smash{\underset{\raisebox{1.2pt}[0cm][0cm]{$\sim$}}
{{#1}}}}
\newcommand{\name}{\lusim}
\newtheorem{theorem}{Theorem}
\newaliascnt{example}{theorem}
\newaliascnt{fact}{theorem}
\newaliascnt{corollary}{theorem}
\newtheorem{corollary}[corollary]{Corollary}
\newaliascnt{lemma}{theorem}
\newtheorem{lemma}[lemma]{Lemma}
\newaliascnt{claim}{theorem}
\newtheorem{claim}[lemma]{Claim}
\newtheorem{prop}[theorem]{Proposition}
\theoremstyle{definition}
\newaliascnt{definition}{theorem}
\newtheorem{definition}[definition]{Definition}
\newtheorem{question}{Question}
\def\calK{\mathcal K}
\def\upr{\upharpoonright}
\def\l{{\langle}}
\def\r{{\rangle}}
\newtheorem*{theorem*}{Theorem}
\newtheorem*{remark}{Remark}
\newtheorem*{notation}{Notation}
\newtheorem*{example*}{Example}
\begin{document}
\title{The variety of projections of a Tree-Prikry forcing}
\author{Tom Benhamou}
\address[Tom Benhamou]{School of Mathematical Sciences, Raymond and Beverly Sackler Faculty of Exact Science, Tel-Aviv University, Ramat Aviv 69978, Israel}
\email[Tom Benhamou]{tombenhamou@tauex.tau.ac.il}

\author{Moti Gitik}\thanks{The work of the second author was partially supported by ISF grant No 1216/18}
\address[Moti gitik]{School of Mathematical Sciences, Raymond and Beverly Sackler Faculty of Exact Science, Tel-Aviv University, Ramat Aviv 69978, Israel}
\email[Moti Gitik]{gitik@post.tau.ac.il}

\author{Yair Hayut}\thanks{The work of the third author was partially supported by the FWF Lise Meitner
grant 2650-N35, and the ISF grant 1967/21}
\address[Yair Hayut]{Einstein Institute of Mathematics, \\
Edmond J.\ Safra Campus, \\
The Hebrew University of Jerusalem \\
Givat Ram.\ Jerusalem, 9190401, Israel}
\email[Yair Hayut]{yair.hayut@mail.huji.ac.il}

\begin{abstract}
We study which $\kappa$-distributive forcing notions of size $\kappa$ can be embedded into  tree Prikry forcing notions with $\kappa$-complete ultrafilters  under various large cardinal assumptions. An alternative formulation - can the filter of dense open subsets of a $\kappa$-distributive forcing notion of size $\kappa$ be extended to a $\kappa$-complete ultrafilter.
\end{abstract}
\maketitle
\date{\today}
\section{introduction}

In this paper we will study possibilities of embedding of $\kappa$-distributive forcing notions of size $\kappa$ into  
Prikry forcings with non-normal ultrafilter or into tree Prikry forcing notions with $\kappa$-complete ultrafilters.
 \\By the result of Kanovei, Koepke and the second author \cite{PrikryCaseGitikKanKoe} every subforcing of the standard Prikry forcing is either trivial or equivalent to the Prikry forcing with the same normal ultrafilter. 
 However, the situation changes drastically if non-normal ultrafilters are used.

Existence of such embedding allows one to iterate distributive forcing notions on different cardinals, see \cite[Section 6.4]{Gitik2010}.

A closely related problem is the possibility of extension of the filter of dense open subsets of a $\kappa$-distributive forcing notion of size $\kappa$
to a $\kappa$-complete ultrafilter, the exact statement is given in theorem \ref{equivalece}.

Clearly, if $\kappa$ is a $\kappa$-compact cardinal, then this follows. Actually more is true---there is a single Prikry type forcing, such that any $\kappa$-distributive forcing notion of size $\kappa$ embeds into it, see
\cite{GitikOnCompactCardinals}.

However,  there are $\kappa$-distributive forcing notion of size $\kappa$ which can be embedded into Prikry forcing notions under much weaker assumptions. Thus, for example,
in \cite{TomMoti}  starting from a measurable cardinal,  a generic extension in which there is a $\kappa$-complete ultrafilter on $\kappa$, $\mathcal{U}$, such that the tree Prikry forcing using $\mathcal{U}$ introduces a Cohen subset of $\kappa$ was constructed.

This paper  investigates  different possibilities which are intermediate between those two extremes. More specifically, let $H$ be a subclass of the $\kappa$-distributive of size $\kappa$ forcings,
we examine the following question:
\vskip 0.2 cm
\begin{center}
    \textit{ Can the dense open filter, $D(\mathbb{Q})$,  of any $\mathbb{Q}\in H$\\ be extended to a $\kappa$-complete ultrafilter?}
\end{center}
\vskip 0.2 cm
Our notations are mostly standard. For general information about Prikry type forcing we refer the reader to \cite{Gitik2010}. For general information about large cardinals we refer the reader to \cite{Kanamori1994}.

Throughout the paper, $p \leq q$ means that $p$ is weaker than $q$.

The structure of the paper is as follows:
\begin{itemize}
    \item Section $2$ is intended to give the reader background and basic definitions which appear in this paper.
    \item The main result of section $3$ is theorem \ref{equivalece}: Let $\mathbb{Q}$ be a $\sigma$-distributive forcing of size $\kappa$. Then $B(\mathbb{Q})$ is a projection of the tree Prikry forcing if and only if $D(\mathbb{Q})$ can be extended to a $\kappa$-complete ultrafilter. Moreover the ultrafilter extending $D(\mathbb{Q})$ must be Rudin-Keisler below the ultrafilters of the tree Prikry forcing.
    \item Section $4$ deals with the class of $\kappa$-strategically closed and ${<}\kappa$-strategically closed forcings. Lemma \ref{CohenProj} establishes that  $\Add(\kappa,1)$ projects onto every $\kappa$-strategically closed forcing of cardinality $\kappa$. Also, we present the forcing that adds a Jensen square (see definition \ref{definition: jensen square}) and  prove that it maximal among all the ${<}\kappa$-strategically closed forcings, this is formulated in Lemma \ref{JenSenProj}.
    \item Section $5$ focuses on upper bounds. In theorem \ref{LesskappaStClUpperBound} we give an upper bound for the claim ``For every ${<}\kappa$-strategically closed forcing of size $\kappa$ $\mathbb{P}$ and every $p\in\mathbb{P}$, $D_p(\mathbb{P})$ can be extended to a $\kappa$-complete ultrafilter".
    In the rest of the section we discuss some weaker version of $\Pi^1_1$-subcompact cardinal which is an upper bound for the claim ``For every $\kappa$-distributive forcing of size $\kappa$ $\mathbb{P}$ and every $p\in\mathbb{P}$, $D_p(\mathbb{P})$ can be extended to a $\kappa$-complete ultrafilter.
    \item Section $6$ is devoted to the forcing $Q$, of shooting a club through the singulars. This forcing is a milestone for the class of ${<}\kappa$-strategically closed forcing of size $\kappa$. In theorem -\ref{thm:lowerbound}, we prove that if we can extend $D(Q)$, then either $\exists \lambda o(\lambda)=\lambda^{++}$ or $o^{\calK}(\kappa)\geq \kappa+\kappa$.

    \item Section $7$ provides a strengthening  of results of section $6$ to $o^{\calK}(\kappa)\geq \kappa^++1$.
    \item Section $8$ defines a class called \textit{masterable forcing}. We show, starting with a measurable, that one can  force that the filter of $D_p(\mathbb{P})$ can be extended to a $\kappa$-complete ultrafilter for every masterable forcing $\mathbb{P}$. In this generic extension we give examples of many important forcing notions which are masterable.
    \item Section $9$ presents forcing notions which do not fall under the examples considered in this paper and present further research directions.
\end{itemize}
 \section{Preliminaries}
Let us recall some basic concepts about forcing notions and Tree Prikry forcing. First, our forcing notions are always separative and have a minimal element. We force upward i.e. $p\leq q$ means that $q\Vdash p\in \dot{G}$. Let us start with the concept of projection:
\begin{definition}
Let $\mathbb{P},\mathbb{Q}$ be forcing notions,  $\pi:\mathbb{P}\rightarrow\mathbb{Q}$ is a projection if
\begin{enumerate}
    \item $\pi$ is order preserving.
    \item $\forall p\in\mathbb{P}\forall \pi(p)\leq q\exists p'\geq p.\pi(p')\geq q$.
 projection.
    \item $Im(\pi)$ is dense in $\mathbb{P}$.
\end{enumerate}
\end{definition}
\begin{definition}
Let $\pi:\mathbb{P}\rightarrow\mathbb{Q}$ be a function
\begin{enumerate}
    \item If $G\subseteq\mathbb{P}$ is $V$-generic, define
$$\pi_*(G)=\{q\in\mathbb{Q}\mid \exists p\in G.q\leq\pi(p)\}$$
\item If $H\subseteq\mathbb{Q}$ is $V$-generic, define the quotient forcing
$$\mathbb{P}/H=\pi^{-1}[H]=\{p\in\mathbb{P}\mid \pi(p)\in H\}$$
With the separative order $p\leq_{\mathbb{P}/H} q$ if an only if for every $q\leq_{\mathbb{P}} r$, $r$ is compatible with $p$.
\end{enumerate}
\end{definition}
\begin{claim}
Let $\mathbb{P},\mathbb{Q}$ be any forcing notions, then:
\begin{enumerate}

\item Let $G\subseteq\mathbb{P}$ be $V$-generic and $\pi:\mathbb{P}\rightarrow\mathbb{Q}$ a projection, then $\pi_*(G)\subseteq\mathbb{Q}$ is $V$-generic
\item Let $H\subseteq\mathbb{Q}$ be $V$-generic and $\pi:\mathbb{P}\rightarrow\mathbb{Q}$ a projection, then if $G\subseteq\mathbb{P}/H$ is $V[H]$-generic, then $G\subseteq\mathbb{P}$ is $V$-generic, moreover, $\pi_*(G)=H$.
\item Let $G\subseteq\mathbb{P}$ be $V$-generic and $\pi:\mathbb{P}\rightarrow\mathbb{Q}$ a projection, then $G\subseteq\mathbb{P}/\pi_*(G)$ is $V[\pi_*(G)]$-generic.
\end{enumerate}
\end{claim}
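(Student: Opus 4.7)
The plan is to prove the three parts in order, using the three projection axioms --- order preservation, the lifting property~(2), and density of the image --- to shuttle dense sets between $\mathbb{P}$ and $\mathbb{Q}$ as needed.

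For Part~(1), the verification that $\pi_*(G)$ is a filter in $\mathbb{Q}$ is routine: downward closure is built in, and directedness reduces to directedness of $G$ via order preservation. For $V$-genericity, given $D\subseteq\mathbb{Q}$ dense open in $V$, I would check that $\pi^{-1}[D]$ is dense in $\mathbb{P}$: to any $p\in\mathbb{P}$, pick $q\geq\pi(p)$ in $D$ by density of $D$, lift $q$ via~(2) to some $p'\geq p$ with $\pi(p')\geq q$, and use openness of $D$ to conclude $\pi(p')\in D$. Genericity of $G$ then delivers $p\in G\cap\pi^{-1}[D]$, so $\pi(p)\in D\cap\pi_*(G)$.

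For Part~(2) I would handle the two assertions separately. For $V$-genericity of $G$, given $D\subseteq\mathbb{P}$ dense open in $V$ and any $p\in\mathbb{P}/H$, the set $\{\pi(p'):p'\geq p,\ p'\in D\}$ is dense above $\pi(p)$ in $\mathbb{Q}$ (lift via~(2), then extend into $D$); since $\pi(p)\in H$ and $H$ is $V$-generic, $H$ meets this set, producing $p'\geq p$ with $p'\in D$ and $\pi(p')\in H$. Hence $D\cap(\mathbb{P}/H)$ is dense in $\mathbb{P}/H$ in $V[H]$ and is met by $G$. For $\pi_*(G)=H$, the inclusion $\pi_*(G)\subseteq H$ is immediate from downward closure of $H$. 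For the converse, given $q\in H$, I would use that in $V$ the set $\{p:\pi(p)\geq q\}\cup\{p:\pi(p)\perp q\}$ is dense in $\mathbb{P}$ (a common-extension argument using~(2)); by the already-established $V$-genericity of $G$, some $p\in G$ lies in this union, and the second disjunct is excluded since it would place two incompatible conditions $\pi(p),q$ in the filter $H$, so $q\leq\pi(p)$ and $q\in\pi_*(G)$.

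Part~(3) is the technical heart of the claim and the main obstacle. Given $D\in V[H]$ dense open in $\mathbb{P}/H$ with a name $\dot D\in V^{\mathbb{Q}}$ and a condition $q_0\in H$ forcing that $\dot D$ is dense open in $\mathbb{P}/\dot H$, define in $V$
\[
E=\{p\in\mathbb{P}:\pi(p)\Vdash\check p\in\dot D\}\cup\{p\in\mathbb{P}:\pi(p)\perp q_0\}.
\]
The work is to show $E$ is dense in $\mathbb{P}$. Given $p$ with $\pi(p)$ compatible with $q_0$, lift a common extension of $\pi(p)$ and $q_0$ to $p^*\geq p$ with $\pi(p^*)\geq q_0$; then $\pi(p^*)$ forces $\check p^*\in\mathbb{P}/\dot H$ and that $\dot D$ is dense open there, so by the Maximal Principle there exist $q\geq\pi(p^*)$ and $p'\in\mathbb{P}$ with $q\Vdash\check p'\geq\check p^*\wedge\check p'\in\dot D\wedge\pi(\check p')\in\dot H$; separativity of $\mathbb{Q}$ converts the last conjunct into $\pi(p')\leq q$. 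Lifting $q$ once more via~(2) yields $p''\geq p'$ with $\pi(p'')\geq q$, so $\pi(p'')\Vdash\check p'\in\dot D$, and openness of $\dot D$ (forced already by $q_0$) promotes this to $\pi(p'')\Vdash\check{p''}\in\dot D$; thus $p''\in E$. Once $E$ is dense, $V$-genericity of $G$ furnishes $p\in G\cap E$; the second disjunct is excluded because $\pi(p),q_0\in H$, so $\pi(p)\Vdash\check p\in\dot D$, and $\pi(p)\in H$ then gives $p\in D$, establishing $G\cap D\neq\emptyset$.
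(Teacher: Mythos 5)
Your proof is correct, and it is the standard argument for this folklore fact; the paper itself states the claim without proof, so there is nothing to compare against. The only point worth noting is that your Part~(3) implicitly uses the paper's standing convention that all posets are separative (so that the forced relation $\check p'\geq_{\mathbb{P}/\dot H}\check p^*$ really yields $p''\geq_{\mathbb{P}}p$ and the lifting $q\Vdash\pi(\check p')\in\dot H\Rightarrow\pi(p')\leq q$ goes through), which is exactly the convention the authors announce at the start of Section~2.
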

\begin{definition}
Let $\mathbb{P}$ be a forcing notion, denote by $B(\mathbb{P})$ the complete boolean algebra of regular open sets of $\mathbb{P}$.\end{definition}
 If is known that $\mathbb{P}$ can be identify with a dense subset of $B(\mathbb{P})$ and that $B(\mathbb{P})$ is the unique (up to isomorphism) complete boolean algebra we a dense subset isomorphic to $\mathbb{P}$.
 Moreover, $\mathbb{P}$ and $B(\mathbb{P})$ yield the same generic extensions. Let $G\subseteq\mathbb{P}$ be a $V$-generic filter then $\bar{G}=\{b\in B(\mathbb{P})\mid \exists p\in G.b\leq p\}\subseteq B(\mathbb{P})$ is $V$-generic and if $\bar{G}\subseteq B(\mathbb{P})$ is $V$-generic then $G=\bar{G}\cap \mathbb{P}\subseteq\mathbb{P}$ is $V$-generic.
 For more information about boolean algebras see \cite{ShelahProper} or \cite{AbrahamHandbook}.

 \begin{claim}\label{absoToproj}
Let $\mathbb{P},\mathbb{Q}$ be forcing notions. Then:
\begin{enumerate}
    \item There is a projection  $\pi\colon\mathbb{P}\rightarrow B(\mathbb{Q})$ if and only if there is a $\mathbb{P}$-name $\name{H}$ such that
    for every generic filter $H$ for $\mathbb{Q}$ there is a generic filter $G$ for $\mathbb{P}$ such that $(\name{H})_G=H$.
    \item There is a strong projection  $\pi:\mathbb{P}\rightarrow B(\mathbb{Q})$ iff there is a $\mathbb{P}$-name $\name{H}$ such that
    for every $V$-generic filter $H$ for $\mathbb{Q}$ there is a $V$-generic filter $G$ for $\mathbb{P}$ such that $(\name{H})_G=H$.
\end{enumerate}
\end{claim}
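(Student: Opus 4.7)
The plan is to use the standard equivalence between projections of forcings, complete embeddings of the associated Boolean algebras, and names for generic filters on the smaller forcing. Both parts of the claim follow the same pattern; they differ only in the kind of genericity that must be preserved.

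For the forward direction, given a projection $\pi$, I take $\name H$ to be the canonical $\mathbb{P}$-name
\[
\name H = \{(\check q, p) : p \in \mathbb{P},\ q \in \mathbb{Q},\ q \leq \pi(p)\}
\]
for $\pi_*(\dot G)$. By the claim immediately preceding the statement, for any $V$-generic $G \subseteq \mathbb{P}$ the filter $(\name H)_G = \pi_*(G)$ is $V$-generic on $B(\mathbb{Q})$, and conversely any $V$-generic $H \subseteq B(\mathbb{Q})$ is realized as $\pi_*(G)$ for a $V[H]$-generic $G \subseteq \mathbb{P}/H$. This settles (2); for (1) we only need $\name H$ to be forced to be a generic filter in the Boolean-valued sense, which already follows from the same formula.

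For the reverse direction, given $\name H$ I associate the map $e\colon B(\mathbb{Q})\to B(\mathbb{P})$ defined by $e(q) = \|\check q \in \name H\|^{B(\mathbb{P})}$. The fact that $\name H$ is forced to be a generic filter makes $e$ a complete homomorphism, and the surjectivity hypothesis on $\name H$ (every generic $H$ is realized by some generic $G$) is exactly what promotes $e$ to a complete embedding. Dualising, one obtains a projection $\pi\colon B(\mathbb{P})\to B(\mathbb{Q})$, which restricts along the dense inclusion $\mathbb{P}\hookrightarrow B(\mathbb{P})$ to the desired projection. Unpacking, $\pi(p) = \bigvee^{B(\mathbb{Q})}\{q \in \mathbb{Q} : p \Vdash \check q \in \name H\}$, and the defining properties of a projection (order-preservation, the extension clause (2)) reduce to direct applications of the forcing theorem: e.g.\ if $q \leq \pi(p)$ and no $p' \geq p$ satisfied $\pi(p') \geq q$, then no extension of $p$ could force $\check q \in \name H$, so $p \Vdash \check q \notin \name H$, contradicting the definition of $\pi(p)$.

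The main obstacle is verifying the density of $\im(\pi)$ in $B(\mathbb{Q})$---equivalently, the injectivity of $e$---since this is the only place where the surjectivity hypothesis on $\name H$ is used in an essential way. Given nonzero $q\in B(\mathbb{Q})$, I pick any generic $H$ containing $q$; the hypothesis yields a generic $G$ with $(\name H)_G = H$, so $q \in (\name H)_G$ forces some $p \in G$ to satisfy $p \Vdash \check q \in \name H$, giving $\pi(p) \geq q$. This is also precisely where (1) and (2) diverge: for (2) the $G$ must be $V$-generic, which is equivalent to the quotient $\mathbb{P}/H$ being non-trivial in $V[H]$ for every $V$-generic $H$---the defining feature of a strong projection. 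All remaining verifications are routine.
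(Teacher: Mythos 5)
Your forward direction (take $\name{H}$ to be the canonical name for $\pi_*(\dot G)$ and quote the preceding claim about images and quotients of projections) is fine, and your plan for the converse --- read off the complete homomorphism $e(b)=\|b\in\name{H}\|$, use the realization hypothesis for injectivity, and pass to the adjoint --- is the standard folklore argument. The paper states this claim without proof, but it carries out exactly this construction inside the proof of Claim \ref{ExtensionToProj}, where the projection is defined by $\pi(x)=\inf\{b\in B(\mathbb{Q})\mid x\Vdash b\in\name{H}^*\}$, with $\name{H}^*$ the induced name for the generic ultrafilter on $B(\mathbb{Q})$.

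The gap is in your ``unpacking'' of the adjoint, and it matters because your verifications of clause (2) and of density both appeal to ``the definition of $\pi(p)$''. The correct value of $\pi(p)$ is the \emph{strongest} element of $B(\mathbb{Q})$ that $p$ forces into the generic ultrafilter, i.e.\ the Boolean meet of the set $F_p=\{b\in B(\mathbb{Q})\mid p\Vdash b\in\name{H}^*\}$, which is a principal filter generated by $\pi(p)$. Your formula takes a \emph{join}, and only over the dense copy of $\mathbb{Q}$. Taken literally it is the constant map with value $1_{B(\mathbb{Q})}$: the weakest condition of $\mathbb{Q}$ belongs to every filter, so every $p$ forces it into $\name{H}$, and its image in $B(\mathbb{Q})$ is the top element. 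Even under the charitable reading in which your $\bigvee$ denotes the strongest common weakening (the Boolean meet), restricting to $F_p\cap\mathbb{Q}$ is still wrong: in general $\bigwedge(F_p\cap\mathbb{Q})$ is strictly weaker than $\bigwedge F_p$ (in Cohen forcing the Boolean value $[\langle 0\rangle]\vee[\langle 1,1\rangle]$ extends no condition except the trivial one), and whenever this happens clause (2) fails for the modified map: pick $q\in\mathbb{Q}$ extending $\bigwedge(F_p\cap\mathbb{Q})$ but incompatible with $\bigwedge F_p$; for any $p'$ extending $p$, the element $\bigwedge F_{p'}$ is nonzero, extends $\bigwedge F_p$, and extends $\bigwedge(F_{p'}\cap\mathbb{Q})$, so the latter can never extend $q$. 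With the paper's formula the rest of your argument goes through, except that your verification of clause (2) needs the ultrafilter step made explicit: if no $p'\geq p$ forces $\check q\in\name{H}^*$, then $p$ forces $q^{c}\in\name{H}^*$, hence $\pi(p)$ extends $q^{c}$ and is incompatible with $q$, contradicting that $q$ extends $\pi(p)$. Your density argument for $\im(\pi)$ is correct as stated.
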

\begin{definition}\label{distributive}
Let $\mathbb{P}$ be a forcing notion and let $\kappa$ be a cardinal. $\mathbb{P}$ is $\kappa$-distributive if for every collection $\mathcal{D}$ of dense open subsets of $\mathbb{P}$, $|\mathcal{D}| < \kappa$, the intersection $\bigcap \mathcal{D}$ is also a dense open subset of $\mathbb{P}$.
\end{definition}
Note that if $\mathbb{P}$ is $\kappa$-distributive then the filter generated by the dense open subsets of $\mathbb{P}$ is $\kappa$-complete.
\begin{notation}
Let $\mathbb{P}$ be a forcing notion. We denote by $\mathcal{D}(\mathbb{P})$ the filter for dense open subsets of $\mathbb{P}$. For $p\in\mathbb{P}$ let $D_p(\mathbb{P})$ be the filter generated by $D(\mathbb{P})$ and the set $\{q\in\mathbb{P}\mid q\geq p\}$.
\end{notation}

Let us define the tree Prikry forcing. Let $\kappa$ be a cardinal,  and let $\vec{\mathcal{U}} = \langle U_{\eta} \mid \eta \in [\kappa]^{<\omega}\rangle$ be a sequence of ultrafilters on $\kappa$, indexed by $[\kappa]^{<\omega}$ which is the set of all finite sequences below $\kappa$. Such that $U_\eta$ concentrate on the set $\kappa\setminus\max(\eta)+1$.

Let us define the forcing $\mathbb{T}_{\vec{\mathcal{U}}}$.
An element in $\mathbb{T}_{\vec{\mathcal{U}}}$ is a pair $\langle s, T\rangle$ where:
\begin{enumerate}
\item $s\in [\kappa]^{<\omega}$.
\item $T\subseteq [\kappa]^{<\omega}$, and for all $t\in T$, $s \trianglelefteq t$.
\item $T$ is $\vec{\mathcal{U}}$-splitting: for all $t \in T$, $\{\nu<\kappa \mid t^\smallfrown \nu \in T\} \in U_{t}$.
\end{enumerate}

For $T\subseteq [\kappa]^{<\omega}$ and $\eta\in T$ we denote $T_\eta = \{s\in[\kappa]^{<\omega} \mid \eta^\smallfrown s\in T\}$.

For $p = \langle s, T\rangle,\ p'=\langle s', T'\rangle \in \mathbb{T}_{\vec{\mathcal{U}}}$, $p' \leq p$ and say that $p$ extends $p'$ if $s\in T'$ and $T \subseteq T'_s$. We denote $p'\leq^* p$ and say that $p$ is a direct extension of $p'$ if $p'\leq p$ and $s = s'$.

We will assume always that each $U_\eta$ is $\kappa$-complete. In this case, the relation $\leq^*$ is $\kappa$-complete.

The following claim is well known \cite[Lemma 3.16]{TomTreePrikry}:
\begin{lemma}[Strong Prikry Lemma]
Let $D \subseteq \mathbb{T}_{\vec{\mathcal{U}}}$ be dense open and let $p = \langle s, T\rangle\in\mathbb{T}_{\vec{\mathcal{U}}}$ be a condition. There is a direct extension of $p\leq^*p^* = \langle s, T^*\rangle$, and a natural number $n$ such that for all $\eta\in T^*$, with $\len \eta = n$, $\langle s^\smallfrown \eta, T^*_\eta\rangle\in D$ and for all $\eta$ such that $\len \eta < n$, $\langle s^\smallfrown \eta, T^*_\eta\rangle\notin D$.
\end{lemma}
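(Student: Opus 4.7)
The plan is a rank-based combinatorial argument on sequences extending $s$. Call $\sigma$ with $s\trianglelefteq\sigma$ \emph{good} if some $\vec{\mathcal{U}}$-splitting $T'\subseteq T_\sigma$ satisfies $\langle\sigma,T'\rangle\in D$, and \emph{bad} otherwise; for each good $\sigma$ fix a witness~$T^g_\sigma$. The key monotonicity is that if $\sigma$ is good with witness $T'$ whose set of immediate successors of $\sigma$ is $B\in U_\sigma$, then for each $\nu\in B$ the stem-extension $\langle\sigma^\smallfrown\nu, T'_{\sigma^\smallfrown\nu}\rangle$ extends a condition in the open set $D$ and so lies in $D$; hence $\{\nu:\sigma^\smallfrown\nu\text{ is good}\}\in U_\sigma$ whenever $\sigma$ is good.

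Assign $\rank(\sigma)=0$ if $\sigma$ is good; otherwise let $\rank(\sigma)$ be the least ordinal $\alpha$ with $\{\nu:\rank(\sigma^\smallfrown\nu)<\alpha\}\in U_\sigma$, or $\infty$ if no such~$\alpha$ exists. The crux is that $\rank(\sigma)<\omega$ for every such $\sigma$. Rank $\infty$ is excluded by density of $D$: a supremum argument shows $\{\nu:\rank(\sigma^\smallfrown\nu)=\infty\}\in U_\sigma$ whenever $\rank(\sigma)=\infty$, and iterating produces a $\vec{\mathcal{U}}$-splitting subtree $T^\infty\subseteq T_\sigma$ consisting entirely of rank-$\infty$ nodes; but density of $D$ provides an extension of $\langle\sigma,T^\infty\rangle$ in $D$ whose stem is a node of $T^\infty$ that is thereby good (rank $0$), a contradiction. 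Infinite ordinal rank is excluded by $\kappa$-completeness: if $\alpha$ is the least infinite ordinal attained as a rank and $\rank(\sigma)=\alpha$, minimality of $\alpha$ forces all successor ranks below $\alpha$ to be finite, so $\alpha=\omega$; then minimality of $\omega$ in the definition of $\rank(\sigma)$ gives $\{\nu:\rank(\sigma^\smallfrown\nu)\geq n\}\in U_\sigma$ for every $n<\omega$, and $\sigma$-completeness yields $\{\nu:\rank(\sigma^\smallfrown\nu)\geq\omega\}\in U_\sigma$, contradicting $\{\nu:\rank(\sigma^\smallfrown\nu)<\omega\}\in U_\sigma$.

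With finiteness of rank established, set $n=\rank(s)$ and build $T^*$ by recursion on levels: at a node $\sigma\in T^*$ with $\rank(\sigma)=k\geq 1$, the identities $\{\nu:\rank(\sigma^\smallfrown\nu)<k\}\in U_\sigma$ (definition) and $\{\nu:\rank(\sigma^\smallfrown\nu)<k-1\}\notin U_\sigma$ (minimality) together force $\{\nu:\rank(\sigma^\smallfrown\nu)=k-1\}\in U_\sigma$, and this set becomes the set of successors of $\sigma$ in $T^*$; at level~$n$ every node is good, and we graft on the witness tree $T^g_\sigma$. The resulting $p^*=\langle s,T^*\rangle\leq^* p$ is a direct extension satisfying $\langle s^\smallfrown\eta,T^*_\eta\rangle\in D$ for every $\eta\in T^*$ of length~$n$ and, because $s^\smallfrown\eta$ has positive rank and is therefore bad, $\langle s^\smallfrown\eta,T^*_\eta\rangle\notin D$ for $\eta\in T^*$ of length less than~$n$. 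The principal obstacle is the finiteness-of-rank step, where density of $D$ and $\kappa$-completeness of the $U_\sigma$'s must be combined carefully; once this is in hand the construction of $T^*$ and the verification are routine.
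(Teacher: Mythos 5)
Your argument is correct, and in fact the paper gives no proof of this lemma at all --- it is stated as ``well known'' with a citation to \cite[Lemma 3.16]{TomTreePrikry} --- so there is nothing in the paper to compare against; what you have written is the standard rank (unfolding) argument for the strong Prikry property of tree Prikry forcing, and all the key steps check out: the openness of $D$ gives that goodness passes to measure-one many immediate successors, the ultrafilter property gives $\{\nu:\rank(\sigma^\smallfrown\nu)=k-1\}\in U_\sigma$ from the two minimality clauses, the density of $D$ rules out rank $\infty$ via the all-rank-$\infty$ subtree, and $\sigma$-completeness rules out rank $\omega$ (hence, by your least-infinite-rank observation, all infinite ordinal ranks). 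The one point you should tidy is the definition of $\rank$: as literally written it is a recursion in which $\rank(\sigma)$ depends on the ranks of the \emph{longer} sequences $\sigma^\smallfrown\nu$, which is not a well-founded recursion. The standard fix is to define it as a monotone inductive hierarchy --- $R_0$ is the set of good nodes, and $\sigma\in R_\alpha$ iff $\{\nu\mid \sigma^\smallfrown\nu\in\bigcup_{\beta<\alpha}R_\beta\}\in U_\sigma$, with $\rank(\sigma)$ the least $\alpha$ such that $\sigma\in R_\alpha$ and $\infty$ if there is none --- after which every property you use (least such $\alpha$, the supremum argument for the $\infty$ case, the minimality clauses) holds verbatim. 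You should also note explicitly that all measure-one successor sets are intersected with $\mathrm{succ}_T(\sigma)$ so that $T^*$ and the auxiliary trees remain subtrees of $T$; with these cosmetic repairs the proof is complete.
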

When analyzing a tree of measures there is a natural iteration of ultrapowers to consider.
\begin{definition}
Let $\vec{\mathcal{U}}$ be a tree of $\kappa$-complete ultrafilters and $\eta\in[\kappa]^{<\omega}$. For $\vec{\mathcal{U}}$ and $0<n<\omega$, define recursively the $nth$ ultrafilter above $\eta$ derived from $\vec{\mathcal{U}}$, denoted $(\mathcal{U}_{\eta})_n$, to be the following ultrafilter over $[\kappa]^n$: $$(\mathcal{U}_{\eta})_1=\mathcal{U}_{\eta}$$
For $A\subseteq [\kappa]^{n+1}$ define
$$A\in(\mathcal{U}_{\eta})_{n+1}\Longleftrightarrow \{\gamma\in[\kappa]^n \mid A_{\gamma}\in \mathcal{U}_{{\eta}^{\frown}\gamma}\}\in(\mathcal{U}_{\eta})_n  $$
where $$A_{\gamma}=\{\alpha<\kappa\mid \gamma^{\frown}\alpha\in A\}$$
\end{definition}
\begin{definition}\label{iteration}
Let $\vec{\mathcal{U}}$ be a tree of $\kappa$-complete ultrafilters, define recursively the \textit{iteration corresponding} to $\vec{\mathcal{U}}$ above $\eta\in[\kappa]^{<\omega}$. $$j_0=j_{\mathcal{U}_{\eta}}:V\rightarrow M_0\simeq Ult(V,\mathcal{U}_{\eta}), \ \ \delta_0= [id]_{\mathcal{U}_{\eta}}$$
$$j_{n,n+1}:M_n\rightarrow \Ult(M_n,j_{n}(\mathcal{\vec{U}})_{\vec{\eta}^{\frown}\langle\delta_0,\dots,\delta_{n}\rangle})\simeq M_{n+1}$$
 $\delta_{n+1} = [id]_{j_n(\mathcal{\vec{U}})_{\vec{\eta}^{\frown}\langle\delta_0,\dots,\delta_{n}\rangle}}$,$j_{n+1}=j_{n,n+1}\circ j_n$ and $j_{m,n+1}=j_{n,n+1}\circ  j_{m,n}$
\end{definition}
The following theorem can also be found in  \cite{TomTreePrikry}:
\begin{theorem}\label{genericsequence}
Let $M_\omega$ be the $\omega$-th iteration of the iteration corresponding to $\vec{\mathcal{U}}$ above $\vec{\eta}$ i.e. $M_\omega$ is the transitive collapse of the direct limit of the system $\langle M_n,\ j_{n,m}\mid n,m<\omega\rangle$ defined in \ref{iteration}, denote the direct limit embeddings by
$j_{n,\omega}:M_n\rightarrow M_{\omega}$. Then the sequence  $\vec{\eta}^{\frown}\langle\delta_n\mid n<\omega\rangle$ is $M_\omega$-generic for the forcing $j_{\omega}(\mathbb{T}_{\vec{\mathcal{U}}})$.
\end{theorem}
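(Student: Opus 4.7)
The plan is to adapt the standard iterated-ultrapower genericity argument for Prikry forcing to the tree setting. Let $D\in M_\omega$ be dense open in $j_\omega(\mathbb{T}_{\vec{\mathcal{U}}})$. Since $M_\omega$ is the direct limit of $\langle M_n, j_{n,m}\mid n\le m<\omega\rangle$, one can reflect $D$ down to some $D^*\in M_k$ with $j_{k,\omega}(D^*)=D$. By elementarity, $D^*$ is dense open in $j_k(\mathbb{T}_{\vec{\mathcal{U}}})$. I would then work in $M_k$ and apply the Strong Prikry Lemma to the condition $\langle \vec{\eta}^{\frown}\langle\delta_0,\ldots,\delta_{k-1}\rangle,T\rangle$, with $T$ any $j_k(\vec{\mathcal{U}})$-splitting tree above the stem (e.g.\ the maximal one), to obtain a direct extension $\langle \vec{\eta}^{\frown}\langle\delta_0,\ldots,\delta_{k-1}\rangle, T^*\rangle$ and some $n<\omega$ such that every length-$n$ extension of the stem along $T^*$ belongs to $D^*$.

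The crux is an induction on $m\le n$ showing that
\[
\vec{\eta}^{\frown}\langle\delta_0,\ldots,\delta_{k+m-1}\rangle\in j_{k,k+m}(T^*).
\]
The case $m=0$ just records that the stem lies in $T^*$. For the step from $m$ to $m+1$, let $A$ be the set of immediate successors of the current node in $j_{k,k+m}(T^*)$. The splitting property places $A$ in the ultrafilter $U$ indexed by that node, which, by the convention of Definition~\ref{iteration}, is exactly the ultrafilter used to form $M_{k+m+1}$ from $M_{k+m}$. Since $\delta_{k+m}=[\mathrm{id}]_U$, the standard identity $\delta_{k+m}\in j_{k+m,k+m+1}(A)$ extends the stem by one more node inside $j_{k,k+m+1}(T^*)$.

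Applying $j_{k+n,\omega}$ to the conclusion at $m=n$, and using compositionality of the direct-limit embeddings to identify each $\delta_i$ with its image in $M_\omega$, one obtains in $M_\omega$ that $\vec{\eta}^{\frown}\langle\delta_0,\ldots,\delta_{k+n-1}\rangle\in j_{k,\omega}(T^*)$. Therefore the condition $\langle \vec{\eta}^{\frown}\langle\delta_0,\ldots,\delta_{k+n-1}\rangle,\, j_{k,\omega}(T^*)_{\langle\delta_0,\ldots,\delta_{k+n-1}\rangle}\rangle$ lies in $D$ and is compatible with the tail $\langle\delta_i:i\ge k+n\rangle$, so the filter determined by $\vec{\eta}^{\frown}\langle\delta_n:n<\omega\rangle$ meets $D$. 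The main obstacle I anticipate is the bookkeeping: synchronising the index of the ultrafilter that takes $M_{k+m}$ to $M_{k+m+1}$ with the splitting ultrafilter of $T^*$ at the corresponding level, and verifying that the $\delta_i$ are correctly identified across the iteration---both routine but easy to mis-index.
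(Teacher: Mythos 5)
Your overall strategy is the standard one and, as far as one can tell, the same as in the reference the paper cites for this theorem (the paper itself gives no proof, only a pointer to \cite{TomTreePrikry}): reflect the dense set to a finite stage, apply the Strong Prikry Lemma there, and thread the remaining seeds through the resulting tree by induction, using that each $\delta_i$ is the seed of the measure indexed by the preceding node. The reduction to dense open sets, the use of elementarity, and the final passage to $M_\omega$ are all fine; you also correctly note that the whole tail (not just the first $n$ further points) must be shown to lie in the tree, which your induction gives for all $m<\omega$ without change.

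There is, however, a concrete off-by-one that breaks the inductive step exactly where you predicted trouble. By Definition \ref{iteration}, $\delta_{k+m}$ is $[\mathrm{id}]$ of the measure $j_{k+m-1}(\vec{\mathcal{U}})_{\vec{\eta}^{\frown}\langle\delta_0,\dots,\delta_{k+m-1}\rangle}$, which is applied to $M_{k+m-1}$ to form $M_{k+m}$; the measure forming $M_{k+m+1}$ from $M_{k+m}$ is instead indexed by the node ending in $\delta_{k+m}$ and has seed $\delta_{k+m+1}$. So the \L o\'s identity ``$\delta_{k+m}\in j(A)$ iff $A\in U$'' is only available for $A\in M_{k+m-1}$, whereas your successor set $A$ is computed from $j_{k,k+m}(T^*)$, which lives in $M_{k+m}$ and need not belong to $M_{k+m-1}$ (already the first step $m=0\to 1$ fails: $T^*\in M_k$ but $\delta_k$ is a seed over $M_{k-1}$). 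The fix is the reindexing you anticipated: with $D^*\in M_k$, take the stem of the condition to be $\vec{\eta}^{\frown}\langle\delta_0,\dots,\delta_{k}\rangle$ (all seeds created through stage $k$), and prove by induction that $\vec{\eta}^{\frown}\langle\delta_0,\dots,\delta_{k+m}\rangle\in j_{k,k+m}(T^*)$. Then the successor set of that node in $j_{k,k+m}(T^*)$ belongs to $j_{k+m}(\vec{\mathcal{U}})_{\vec{\eta}^{\frown}\langle\delta_0,\dots,\delta_{k+m}\rangle}$, which is precisely the measure applied to $M_{k+m}$ with seed $\delta_{k+m+1}$, and the step goes through. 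With that shift the rest of your argument is correct.
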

\begin{claim}\label{claim}
For every $A\subseteq[\kappa]^{n}$ $$\langle\delta_0,\dots,\delta_{n-1}\rangle\in j_{n-1}(A)\Longleftrightarrow A\in (\mathcal{U}_{\eta})_n$$
\end{claim}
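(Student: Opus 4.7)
The plan is to prove the claim by induction on $n$, unpacking the definition of the iteration \ref{iteration} and using elementarity to relate the recursive construction of $(\mathcal{U}_\eta)_n$ to the successive ultrapower embeddings.

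For the base case $n=1$, one just reads off the definitions: $\delta_0 = [\mathrm{id}]_{\mathcal{U}_\eta}$ and $(\mathcal{U}_\eta)_1 = \mathcal{U}_\eta$, so $\delta_0 \in j_0(A)$ iff $\{\alpha < \kappa : \alpha \in A\} \in \mathcal{U}_\eta$ iff $A \in (\mathcal{U}_\eta)_1$.

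For the inductive step, assume the claim for $n$ and let $A \subseteq [\kappa]^{n+1}$. Set
\[
B = \{\gamma \in [\kappa]^n \mid A_\gamma \in \mathcal{U}_{\eta^\frown \gamma}\},
\]
so that by definition $A \in (\mathcal{U}_\eta)_{n+1}$ iff $B \in (\mathcal{U}_\eta)_n$. First I would check that since $j_{n-1}$ has critical point $\kappa$ and $\eta \in [\kappa]^{<\omega}$, we have $j_{n-1}(\eta)=\eta$; moreover the finite sequence $\vec{\delta}=\langle \delta_0,\dots,\delta_{n-1}\rangle$ lies in $[j_{n-1}(\kappa)]^n$ (each $\delta_i$ is bounded by $j_i(\kappa) \leq j_{n-1}(\kappa)$). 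Applying $j_{n-1}$ to the definition of $B$ and using elementarity,
\[
j_{n-1}(B) = \bigl\{ \vec{\gamma} \in [j_{n-1}(\kappa)]^n \mid (j_{n-1}(A))_{\vec{\gamma}} \in j_{n-1}(\vec{\mathcal{U}})_{\eta^\frown \vec{\gamma}} \bigr\}.
\]
In particular, plugging in $\vec{\delta}$,
\[
\vec{\delta} \in j_{n-1}(B) \iff (j_{n-1}(A))_{\vec{\delta}} \in j_{n-1}(\vec{\mathcal{U}})_{\eta^\frown \vec{\delta}}.
\]

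Now I would analyze the final step of the iteration. Let $W = j_{n-1}(\vec{\mathcal{U}})_{\eta^\frown \vec{\delta}}$, so $j_{n-1,n}\colon M_{n-1}\to M_n$ is the ultrapower by $W$ and $\delta_n = [\mathrm{id}]_W$. By elementarity of $j_{n-1}$, $W$ is $j_{n-1}(\kappa)$-complete in $M_{n-1}$, hence $\crit(j_{n-1,n}) = j_{n-1}(\kappa)$, which lies strictly above each $\delta_i$ for $i < n$. Thus $j_{n-1,n}$ fixes $\vec{\delta}$, and therefore
\[
\vec{\delta}^\frown \delta_n \in j_n(A) = j_{n-1,n}(j_{n-1}(A)) \iff \delta_n \in j_{n-1,n}\bigl( (j_{n-1}(A))_{\vec{\delta}}\bigr).
\]
By the standard ultrapower criterion $\delta_n \in j_{n-1,n}(X) \iff X \in W$, the right-hand side is equivalent to $(j_{n-1}(A))_{\vec{\delta}} \in W$, which by the calculation above is equivalent to $\vec{\delta} \in j_{n-1}(B)$. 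The induction hypothesis then gives $B \in (\mathcal{U}_\eta)_n$, which by choice of $B$ is equivalent to $A \in (\mathcal{U}_\eta)_{n+1}$, completing the step.

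The only mildly subtle point is the bookkeeping around critical points and the identification of the slice $(j_{n-1}(A))_{\vec{\delta}}$ via elementarity; once this is set up the rest is just unwinding the definitions of $(\mathcal{U}_\eta)_{n+1}$ and of the iteration.
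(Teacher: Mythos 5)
Your proof is correct and follows essentially the same route as the paper's: induction on $n$, with the base case by {\L}o\'s and the inductive step obtained by slicing off the last coordinate, applying the ultrapower criterion for $j_{n-1,n}$, and invoking the induction hypothesis on the set $B=\{\gamma \mid A_\gamma\in\mathcal{U}_{\eta^\frown\gamma}\}$. You simply make explicit the elementarity and critical-point bookkeeping that the paper leaves implicit.
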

\begin{proof}
For $n=1$ it is just {\L}o{\'s} theorem $[id]_{\mathcal{U}_\eta}\in j_1(A)\Longleftrightarrow A\in \mathcal{U}_{\eta}=(\mathcal{U}_{\eta})_1$. Assume that the claim holds for $n$, and let $A\subseteq[\kappa]^{n+1}$. Denote by $\vec{\delta}_m=\l\delta_0,\dots,\delta_m\r$, then
$$\vec{\delta}_{n}\in j_{n}(A)\Longleftrightarrow \delta_{n}\in j_{n}(A)_{\vec{\delta}_{n-1}}\Longleftrightarrow j_{n-1}(A)_{\vec{\delta}_{n-1}}\in j_{n-1}(\vec{\mathcal{U}})_{\eta^{\frown}\vec{\delta}_{n-1}}$$ By the definition of $j_{n-1}(A)_{\vec{\delta}_{n-1}}$ and the induction hypothesis we can continue the chain of equivalences
$$\Longleftrightarrow\{\gamma\mid A_{\gamma}\in \mathcal{U}_{\eta^{\frown}\gamma}\}\in (\mathcal{U}_{\eta})_n\Longleftrightarrow A\in (\mathcal{U}_{\eta})_{n+1}$$
\end{proof}

\section{Subforcing of the tree Prikry forcing}
In this section we characterize the $\sigma$-distributive complete subforcings of a tree Prikry forcing. Since no bounded subsets of $\kappa$ are introduced, such a forcing is either trivial or $(\kappa,\kappa)$-centered i.e. it is the union of $\kappa$ many sets $A_i$ for $i<\kappa$ such that each $A_i$ is $\kappa$-directed. Standard arguments show that those forcing notions have to be $\kappa$-distributive.
By a theorem of Gitik (see \cite{GitikOnCompactCardinals}), if $\kappa$ is $\kappa$-compact, then there is a Prikry type forcing which absorbs every $\kappa$-distributive forcing $\mathbb{P}$ of cardinality $\kappa$. A simpler version of this theorem is stated in the following claim:
\begin{claim}\label{ExtensionToProj}
Assume that for every $p\in\mathbb{P}$, we can extend $D_p(\mathbb{P})$ to a $\kappa$-complete ultrafilter $U_p$. Then there is a tree of $\kappa$ complete ultrafilter $$\vec{\mathcal{W}}=\langle W_{\eta}\mid \eta\in[\kappa]^{<\omega}\rangle$$ and a projection $\pi:\mathbb{T}_{\vec{\mathcal{W}}}\rightarrow B(\mathbb{P})$
\end{claim}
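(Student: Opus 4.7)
The plan is to build a $\mathbb{T}_{\vec{\mathcal W}}$-name $\dot G$ for a $V$-generic filter on $\mathbb{P}$ and then invoke Claim~\ref{absoToproj}(1). First I would fix a bijection $\alpha\mapsto p_\alpha$ of $\kappa$ with $\mathbb{P}$ (with $p_0=1_{\mathbb{P}}$), and recursively assign to each $\eta\in[\kappa]^{<\omega}$ a condition $q(\eta)\in\mathbb{P}$ by $q(\emptyset)=1_{\mathbb{P}}$ and $q(\eta{\smallfrown}\langle\alpha\rangle)=p_\alpha$ if $p_\alpha\ge q(\eta)$, and $q(\eta{\smallfrown}\langle\alpha\rangle)=q(\eta)$ otherwise. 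By construction $q$ is monotone along any extension. Then I would let $W_\eta$ be the pullback of $U_{q(\eta)}$ under the enumeration: $A\in W_\eta\iff\{p_\alpha:\alpha\in A\}\in U_{q(\eta)}$. Each $W_\eta$ inherits $\kappa$-completeness from $U_{q(\eta)}$; assuming $\mathbb{P}$ is atomless, each $U_{q(\eta)}$ is nonprincipal (for any $r\in\mathbb{P}$ the set $\{q:r\not\ge q\}$ is dense open and omits $r$), so by $\kappa$-completeness every bounded subset of $\kappa$ is $W_\eta$-null and hence $W_\eta$ concentrates on $(\max\eta,\kappa)$, as required by the definition of a tree of ultrafilters.

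Next I would form $\mathbb{T}_{\vec{\mathcal W}}$ and let $\dot G$ name the filter generated by $\{q(\langle\dot\delta_0,\ldots,\dot\delta_{n-1}\rangle):n<\omega\}$, where $\dot\delta_n$ denotes the $n$-th element of the generic sequence. Two things need to be checked. For directedness, the set $\{\alpha:p_\alpha\ge q(\eta)\}$ is $W_\eta$-large because it is the pullback of $\{p:p\ge q(\eta)\}\in D_{q(\eta)}(\mathbb{P})\subseteq U_{q(\eta)}$; hence the generic satisfies $p_{\delta_n}\ge q(\langle\delta_0,\ldots,\delta_{n-1}\rangle)$, so $q(\langle\delta_0,\ldots,\delta_n\rangle)=p_{\delta_n}$ and we obtain a $\le$-increasing chain in $\mathbb{P}$. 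For genericity, I would verify that $\{\langle\eta,T\rangle:q(\eta)\in D\}$ is dense in $\mathbb{T}_{\vec{\mathcal W}}$ for every dense open $D\subseteq\mathbb{P}$ in $V$: given $\langle\eta_0,T_0\rangle$, the set $D\cap\{p:p\ge q(\eta_0)\}$ lies in $D_{q(\eta_0)}(\mathbb{P})\subseteq U_{q(\eta_0)}$, so its pullback to $\kappa$ meets the $W_{\eta_0}$-large set of $T_0$-successors of $\eta_0$; any $\alpha$ in the intersection produces the extension $\langle\eta_0{\smallfrown}\langle\alpha\rangle,(T_0)_\alpha\rangle$ with $q(\eta_0{\smallfrown}\langle\alpha\rangle)=p_\alpha\in D$.

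Combining the two verifications, $\dot G$ is forced to be a $V$-generic filter for $\mathbb{P}$, and Claim~\ref{absoToproj}(1) then delivers the desired projection $\pi:\mathbb{T}_{\vec{\mathcal W}}\to B(\mathbb{P})$. The main obstacle is picking the right definition of $q$: the naive assignment $q(\eta)=p_{\max\eta}$ fails because an arbitrary stem $\eta$ need not have $\le$-increasing $p_{\eta(i)}$'s, so the filter generated along the generic may be inconsistent. The recursive definition uses the hypothesis $U_p\supseteq D_p(\mathbb{P})$ \emph{for every} $p\in\mathbb{P}$---not only $p=1_{\mathbb{P}}$---uniformly along the tree, and this uniformity is exactly what simultaneously produces directedness and meeting of dense open sets. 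A minor side point is the nonprincipality of each $U_{q(\eta)}$, handled by the atomlessness of $\mathbb{P}$ as above.
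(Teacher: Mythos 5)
Your proof is correct and follows essentially the same route as the paper: identify nodes of $[\kappa]^{<\omega}$ with conditions of $\mathbb{P}$, transfer $U_{q(\eta)}$ to an ultrafilter $W_\eta$ on $\kappa$, and verify that the induced name for a filter on $\mathbb{P}$ is forced to be $V$-generic via the density of $\{\langle\eta,T\rangle: q(\eta)\in D\}$, whence the projection onto $B(\mathbb{P})$. The only (harmless) difference is bookkeeping: the paper re-enumerates the cone above $p_\eta$ by a fresh surjection $f_\eta\colon(\max\eta,\kappa)\to\{r:r\geq p_\eta\}$ at each node, which gives the concentration of $W_\eta$ above $\max\eta$ for free, whereas your single global bijection forces you to argue nonprincipality (via atomlessness) and to discard the $W_\eta$-null set of $\alpha$ with $p_\alpha\not\geq q(\eta)$ — a point you correctly identify and handle.
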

\begin{proof}
We would like to turn the ultrafilters $U_p$ to ultrafilters on $\kappa$. For this, we first need to identify $\mathbb{P}$ with $[\kappa]^{<\omega}$ somehow.    We define inductively for every $\eta\in[\kappa]^{<\omega}$ a condition $p_\eta\in \mathbb{P}$. First $p_{\l\r}=0_{\mathbb{P}}$. Assume that $p_\eta$ is defined, and let $\mathbb{P}/p_{\eta}:=\{q\in\mathbb{P}\mid q\geq p_\eta\}$. By assumption $|\mathbb{P}/p_\eta|\leq\kappa$, fix any surjection $f_{\eta}:(\max\{\eta\},\kappa)\rightarrow\mathbb{P}/p_\eta$. Define for every $\alpha\in(\max\{\eta\},\kappa)$, $p_{\eta^{\smallfrown}\alpha}=f_\eta(\alpha)$.

Next we define the ultrafilters $W_{\eta}$ for every $\eta\in[\kappa]^{<\omega}$.  Let $g_\eta:\mathbb{P}/p_\eta\rightarrow (\max(\eta),\kappa)$ be a right inverse of $f_{\eta}$ such that $f_{\eta}\circ g_{\eta}=id_{\mathbb{P}/p_\eta}$. Define $W_{\eta}=g_{\eta*}(U_{p_\eta})$ to be the Rudin-Keisler projection of $U_{p_\eta}$ to $\kappa$ i.e. for $A\subseteq\kappa$:
$$A\in W_{\eta}\Longleftrightarrow g_{\eta}^{-1}[A\setminus \max(\eta)+1]\in U_{p_\eta}$$
 In particular $\vec{\mathcal{W}}:=\l W_\eta\mid \eta\in[\kappa]^{<\omega}\r$ is defined.

 Let us define the following name $$ \name{H}=\{\langle \dot{q},\langle t, T\rangle\rangle\mid q\in\mathbb{P}, \  q\leq p_{t},\langle t, T\rangle\in \mathbb{T}_{\vec{\mathcal{W}}}\}$$
 Then $\Vdash_{\mathbb{T}_{\vec{\mathcal{W}}}}\name{H}$ is $V$-generic for $\mathbb{P}$. Indeed, Let $G\subseteq \mathbb{T}_{\vec{\mathcal{W}}}$ be $V$-generic and let $H=(\name{H})_G$. Assume that $\langle \alpha_n\mid n<\omega\rangle$ is the Prikry sequence produced by $G$, and denote by $p_n=p_{\l\alpha_0,\dots,\alpha_n\r}$, then $$H=\{q\in\mathbb{P}\mid \exists n<\omega\ q\leq p_n\}$$
 
 Note that, $\alpha_{n+1}>\alpha_n$ and by construction $p_{n+1}=f_{\l\alpha_0,\dots,\alpha_n\r}(\alpha_{n+1})\in \mathbb{P}/ p_n$, hence  the $p_n$'s are increasing in the order of $\mathbb{P}$ and  $H$ is a filter. Let $D\subseteq \mathbb{P}$ be dense open. We proceed by a density argument, let $\langle t,T\rangle\in\mathbb{T}_{\vec{\mathcal{W}}}$ then $D$ is dense open above $p_t$ and therefore $D\cap\mathbb{P}/p_t\in U_{p_t}$. It is not hard to check from the definition that $f_t^{-1}[D\cap\mathbb{P}/p_t]\in W_t$. It follows that $succ_T(t)\in W_t$, fix any $\alpha\in f_t^{-1}[D\cap\mathbb{P}/p_t]\cap succ_T(t)$. Consider the condition $\l t^{\smallfrown}\alpha,T_{t^{\smallfrown}\alpha}\r\geq \l t,T\r$. By density, there is $\l s^{\smallfrown}\alpha_{n_0},S\r\in G$ such that $p_{s^{\smallfrown}\alpha_{n_0}}=f_{s}(\alpha_{n_0})\in D$. By the definition of $H$ we conclude that $p_{s^{\smallfrown}\alpha_{n_0}}\in H\cap D$ and $H$ is a $V$-generic filter for $\mathbb{P}$.
 
 Let $\name{H}^*$ be a $\mathbb{T}_{\vec{\mathcal{W}}}$-name for the $B(\mathbb{P})$-generic corresponding to $\name{H}$.
 Now the projection is defined as follows:
 $$\pi(x)=\inf\{b\in B(\mathbb{P})\mid x\Vdash b\in\name{H}^*\}$$
 Clearly $\pi$ is order preserving and dense in $B(\mathbb{P})$. To see that condition $(2)$ holds, is just an abstract argument, take $b\leq \pi(x)$, then $\neg(x\Vdash b^c\in \name{H}^*)$, otherwise $b^c\geq \pi(x)\geq b$. Hence there is an extension $x'\geq x$ such that $x'\Vdash b^c\notin \name{H}^*$, since $\name{H}^*$ is an ultrafilter it follows that $b\in\name{H}^*$. so $\pi(x')\leq b$.
\end{proof}
\begin{remark}
If $D_p(\mathbb{P})$ can be extended to $U_p$ only densely often, then we still get a projection. 
\end{remark}
The following theorem claims that in some sense, this is the only way to get a projection.
\begin{theorem}\label{equivalece}
Let $\mathbb{P}$ be a $\sigma$-distributive forcing of size $\kappa$. The following are equivalent: 
\begin{itemize}
    \item There is a sequence  $\vec{\mathcal{U}}$ of $\kappa$-complete ultrafilters and a projection $\pi\colon \mathbb{T}_{\vec{\mathcal{U}}} \to B(\mathbb{P})$.
    \item For every $p\in \mathbb{P}$, $D_p(\mathbb{P})$ can be extended to a $\kappa$-complete ultrafilter $U_p$.
\end{itemize}
 
\end{theorem}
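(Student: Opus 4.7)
The direction ``ultrafilter extension $\Rightarrow$ projection'' is essentially the content of \autoref{ExtensionToProj}; I focus on the converse. Suppose $\pi\colon \mathbb{T}_{\vec{\mathcal{U}}}\to B(\mathbb{P})$ is a projection and fix $p\in \mathbb{P}$. The plan is to build an elementary embedding $j\colon V\to M$ together with a seed $a^*\in j(\mathbb{P})$, and to take $U_p=\{A\subseteq\mathbb{P}:a^*\in j(A)\}$.

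By density of $\im(\pi)$, choose $t_p=\langle s_p,T_p\rangle$ with $p\le \pi(t_p)$. Applying the $\omega$-iteration of $\vec{\mathcal{U}}$ above $s_p$ (\autoref{iteration} and \autoref{genericsequence}) yields $j\colon V\to M$ with $\crit(j)=\kappa$ and a $V$-definable $M$-generic filter $G^*\subseteq j(\mathbb{T}_{\vec{\mathcal{U}}})$ containing $j(t_p)$. Let $H^*:=j(\pi)_*(G^*)$, an $M$-generic ultrafilter on the complete Boolean algebra $j(B(\mathbb{P}))$ containing $p=j(p)$. Writing $\vec{\delta}=\langle \delta_n\mid n<\omega\rangle$ for the Prikry sequence, set $r_n:=\langle s_p{}^\frown \vec{\delta}\restriction n,(j(T_p))_{\vec{\delta}\restriction n}\rangle\in G^*$ and $c_n:=j(\pi)(r_n)\in H^*$. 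The $c_n$'s are increasing in strength, and since $H^*$ is an ultrafilter on a complete Boolean algebra there is a nonzero $c_\omega\in H^*$ extending every $c_n$. By density of $j(\mathbb{P})$ in $j(B(\mathbb{P}))$ pick $a^*\in j(\mathbb{P})$ extending $c_\omega$, and define $U_p$ as above.

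That $U_p$ is a $\kappa$-complete ultrafilter is standard from $\crit(j)=\kappa$ and $a^*\in j(\mathbb{P})$: for any $\lambda<\kappa$ and any partition of $\mathbb{P}$ of size $\lambda$, by elementarity $a^*$ falls into exactly one piece of the corresponding partition of $j(\mathbb{P})$. To see $U_p$ extends $D_p(\mathbb{P})$: for each dense open $D\subseteq \mathbb{P}$, $\pi^{-1}(D)$ is dense open in $\mathbb{T}_{\vec{\mathcal{U}}}$, so the Strong Prikry Lemma applied above $t_p$ yields some $n$ with $c_n\in j(D)$; since $a^*$ extends $c_n$ and $j(D)$ is open, $a^*\in j(D)$, hence $D\in U_p$. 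Similarly $p\le c_0\le a^*$ gives $\{q\ge p\}\in U_p$.

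The main obstacle is the uniformity across $D$: the Strong Prikry Lemma produces witnesses at different finite levels $n_D$, and it is the closure of $H^*$ under arbitrary meets on the complete Boolean algebra $j(B(\mathbb{P}))$ (yielding the single element $c_\omega$) that consolidates them into a single seed. The ``moreover'' clause on Rudin--Keisler reducibility then follows by tracking $a^*$ through the iteration: a diagonal/shrinking argument produces a representative function $[\eta\mapsto f(\eta)]$ on some $[\kappa]^n$ witnessing that $U_p$ is the image under $f$ of the derived ultrafilter $(\mathcal{U}_{s_p})_n$.
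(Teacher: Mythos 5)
Your first direction is fine, and the overall architecture of the converse (iterate $\omega$ times, push the generic tree-Prikry filter through $j_\omega(\pi)$, extract a seed $a^*$, set $U_p=\{A: a^*\in j_\omega(A)\}$) matches the paper. But the consolidation step --- ``since $H^*$ is an ultrafilter on a complete Boolean algebra there is a nonzero $c_\omega\in H^*$ extending every $c_n$'' --- is a genuine gap, and it sits exactly where the theorem's difficulty lies. A generic ultrafilter on a complete Boolean algebra is not closed under countable meets (think of Cohen forcing: the meet of the conditions along the generic is $0$). The only version of this closure that is true is for $\sigma$-distributive forcings applied to sequences \emph{lying in the model over which the filter is generic}; here $H^*$ is generic over $M_\omega$, while your sequence $\langle c_n\mid n<\omega\rangle$ is defined from the Prikry sequence $\langle\delta_n\rangle$, which is generic over $M_\omega$ and hence not in $M_\omega$. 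Indeed the set $\{c_n\mid n<\omega\}$ need not even be an element of $M_\omega$ (the direct limit is not countably closed in $V$), so the meet $\bigwedge_n c_n$ need not exist in $j_\omega(B(\mathbb{P}))$ as computed in $M_\omega$, and no genericity argument over $M_\omega$ can be applied to it. Note also that your proposal never actually invokes the $\sigma$-distributivity of $\mathbb{P}$, which should be a red flag: the hypothesis is essential.

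This missing step is precisely what the paper's proof supplies. Using the Strong Prikry Lemma together with $\sigma$-distributivity \emph{in $V$}, it first shows that the levels $n_D$ at which the dense sets $\pi^{-1}[\bar D]$ are entered are uniformly bounded by a single $n^*$ (otherwise one intersects countably many $D_m$ with $\sup n_{D_m}=\omega$ and contradicts the minimality of $n_{D^*}$). This makes the relevant set $F$ of conditions --- those weaker than the projection of some condition with stem of length $k+n^*$ --- a genuine element of $M_{n^*-1}$, hence of $M_\omega$, and only then does a master-condition/genericity argument over $M_\omega$ produce a single $f\in H_\omega$ stronger than all of $F$, which serves as the seed. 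A secondary (more repairable) inaccuracy: the Strong Prikry Lemma witnesses membership in $\bar D$ via the condition $\langle s^\frown\eta,(T_D)_\eta\rangle$ with the \emph{shrunken} tree $T_D$, which is stronger than your $r_{n_D}$ (built from $T_p$); since $\bar D$ is upward closed in strength but not downward, $c_{n_D}=j(\pi)(r_{n_D})$ need not lie in $j(\bar D)$, so even the claim that each $\bar D$ is witnessed by some $c_n$ needs to be rerouted through conditions of $G^*$ with shrunken trees.
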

\begin{proof}
If $D_p(\mathbb{P})$ can be extended to a $\kappa$-complete ultrafilter, use claim \ref{ExtensionToProj}. For the other direction, let $\pi:T_{\vec{\mathcal{U}}}\rightarrow B(\mathbb{P})$ be a projection, denote $\mathbb{T}_{\vec{\mathcal{U}}} = \mathbb{T}$. Without loss of generality, we can assume that $\mathbb{P}=\kappa$, and $\leq_{\mathbb{P}}$ is an order on $\kappa$.

Let $q\in \mathbb{P}$ and  $p = \langle s, T\rangle\in\mathbb{T}$ such that $\pi(p)\geq q$ which exists since $\pi$ is a projection. For every $D\subseteq \mathbb{P}$ dense and open subset above $q$, let $$\bar{D} := \{b\in B(\mathbb{P})\mid \exists a\in D. a\leq b\}$$ Then $\bar{D}\subseteq B(\mathbb{P})$ is dense open, and since $\pi$ is a projection, $D':=\pi^{-1}[\bar{D}]$ is a dense open subset of $\mathbb{T}$ above $p$. By the strong Prikry property, there is a direct extension $p^* = \langle s, T_D\rangle \geq^* p$ and a natural number $n_D<\omega$ such that for all $\eta\in T^*$ such that $\len(\eta)=n_D$, $\pi(\langle s ^\smallfrown \eta, (T_D)_\eta\rangle) \in \bar{D}$, while the projection of any extension of $\langle s, T_D\rangle$ of smaller length is not in $\bar{D}$. We claim that there must be a single $n^*<\omega$ which is an upper bound to the set $$\{n_D \mid D\subseteq \mathbb{P}\text{ dense open above } q\}$$
Otherwise, there is a sequence of dense open subsets $D_m$ above $q$ for which \[\sup_{m<\omega} n_{D_m}=\omega.\]
The forcing $\mathbb{P}$ is $\sigma$-distributive, thus
\[D^*=\bigcap_{m<\omega}D_m\]
is still dense and open above $q$. Consider $n_{D^*}$ and $T_{D^*}$. Any extension $p'$ of length $n_{D^*}$ from $T_{D^*}$ will satisfy $\pi(p') \in \bar{D^*}$ and in particular it will be in $\bar{D_m}$ for all $m$. But let $m$ be so large that $n_{D_m}>n_{D^*}$. This is a contradiction to the definition of $n_{D_m}$.

Let us fix such $n^*$. Next we consider the iterated ultrapower of length $\omega$ using the ultrafilters in $\vec{\mathcal{U}}$.

Let $k = \len s$ (the stem of $p$) and let us denote $s = \langle \delta^*_0, \dots, \delta^*_{k-1}\rangle$. Consider the iteration corresponding to $\vec{\mathcal{U}}$ above $s$, and denote
$\delta^*_{k+n} =\delta_n$.

By theorem \ref{genericsequence}, $\langle \delta^*_n \mid n < \omega\rangle$ is a tree Prikry generic sequence for the forcing $j_{\omega}(\mathbb{T})$ over the model $M_\omega$ and by claim \ref{claim}, this generic filter will contain the condition $j_{\omega}(p)$. Denote by $H_\omega\subseteq j_{\omega}(B(\mathbb{P}))$ the $M_\omega$-generic filter generated by the Prikry sequence in $M_\omega[\l\delta^*_n\mid n<\omega\r]$.

Working in $M_{n^*-1}$, let
\[F = \{x \in j_{n^*-1}(\mathbb{P}) \mid \exists T, j_{n^*-1}(\pi)(\langle \langle\delta^*_0, \dots, \delta^*_{k + n^* - 1}\rangle, T\rangle) \geq x\}.\]
$F \in M_{n*-1}$ and it is a subset of $j_{n^*-1}(\mathbb{P})=j_{n^*-1}(\kappa)$. In particular for every $x\in F$, $j_{n^*-1,\omega}(x)=x$.
Since for every $T$,  $j_{n^*-1,\omega}(\langle \langle\delta_0, \dots, \delta_{k + n^* - 1}\rangle, T\rangle)$ is a member of the generic filter which is generated by the sequence $\langle \delta^*_n \mid n < \omega\rangle$, we conclude that $F\subseteq H_\omega$. Note that $F\in M_{n^*}$, as $M_{n^*-1}$ and $M_{n^*}$ agree on subsets of $j_{n^*-1}(\kappa)$. It follows that $j_{n^*,\omega}(F)=F\in M_\omega$. Thus, there must be a single condition $f\in H_\omega$ forcing $F\subseteq \dot{H_\omega}$. This can be the case only if $f$ is stronger that all elements of $F$. find any $f^*\in\mathbb{P}$ such that $f^*\geq f$. We conclude that for every dense open set $D \subseteq \mathbb{P}$ above $q$, $f\in j_{\omega}(\bar{D})$ and since $D$ is dense open in $\mathbb{P}$, $f^*\in j_{\omega}(D)$.

Let us define:
\[U_q=\{A \subseteq \mathbb{P} \mid f^* \in j_\omega(A)\}\]
$U_q$ is a $\kappa$-complete ultrafilter (since $\crit j_{\omega} = \kappa$) and for all dense open $D \subseteq \mathbb{P}$ above $q$, $D \in U_q$.
\end{proof}
\begin{remark}
In the previous proof we have defined the filter $U_p$ to be $$U_q=\{A\subseteq\mathbb{P}\mid f^*\in j_\omega(A)\}$$
where $f^*\in\mathbb{P}$ was a condition forcing $F\subseteq\Dot{H_\omega}$,  $\Dot{H_\omega}$ being a canonical name for the generic filter of $j_\omega(\mathbb{P})$. In $M_{n^*}$, we will have $F$ bounded in the critical point of $j_{n^*,\omega}$ and therefore $j_{n^*,\omega}(F)=F$. By elementarity of $j_{n^*,\omega}$, there is a condition $q^*\in j_{n^*}(\mathbb{P})$ forcing that $F\subseteq\Dot{H_{n^*}}$ where $\Dot{H_{n^*}}$ is the canonical name for the generic filter of $j_{n^*}(\mathbb{P})$. So we may use $q^*$ in order to define $$U_q=\{A\subseteq\mathbb{P}\mid q^*\in j_{n^*}(A)\}$$
This new definition indicates that if there is a projection from $\mathbb{T}_{\vec{\mathcal{U}}}$ onto $\mathbb{P}$ then there will be a Rudin-Keisler projection of the sequence of ultrafilters $\vec{\mathcal{U}}$ on an ultrafilter extending the filter of dense open subsets of $\mathbb{P}$.
\end{remark}
\begin{definition}
Let $\vec{\mathcal{U}}$ be a tree of $\kappa$-complete ultrafilters and let $W$ be a $\kappa$-complete ultrafilter. We say that $W \leq_{RK} \vec{\mathcal{U}}$ if there is $\vec{\eta} \in \kappa^{<\omega}$ and $n < \omega$ such that  $$W\leq_{RK} (\mathcal{U}_{\vec{\eta}})_n$$.
\end{definition}

\begin{theorem}
Let $\vec{\mathcal{U}}$ be a tree of $\kappa$-complete ultrafilters and let $\mathbb{P}$ be $\sigma$-distributive forcing of cardinality $\kappa$.

If $\mathbb{T}_{\vec{\mathcal{U}}}$ projects onto $B(\mathbb{P})$ then for every $p=\langle\delta_0,\dots,\delta_{k-1},T\rangle\in\mathbb{T}_{\vec{\mathcal{U}}}$ there is a $\kappa$-complete ultrafilter $U_p$ which extends $D_{\pi(p)}(\mathbb{P})$ that contain $p$ and $U_p\leq_{RK} \vec{\mathcal{U}}$.
\end{theorem}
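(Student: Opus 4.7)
The statement is essentially the formalization of the Remark following Theorem~\ref{equivalece}, so the plan is to rerun that proof while carefully tracking the stem of $p$. Given $p=\langle s,T\rangle$ with stem $s=\langle\delta_0,\dots,\delta_{k-1}\rangle$, set $q=\pi(p)\in \mathbb{P}$. For each dense open $D\subseteq\mathbb{P}$ above $q$, the set $D'=\pi^{-1}[\bar D]$ is dense open in $\mathbb{T}_{\vec{\mathcal{U}}}$ above $p$, so the Strong Prikry Lemma yields a direct extension $\langle s,T_D\rangle\geq^*p$ and $n_D<\omega$ with the usual minimality property. Since $\mathbb{P}$ is $\sigma$-distributive, the same diagonalization argument used in the proof of Theorem~\ref{equivalece} produces a uniform bound $n^*<\omega$ on the $n_D$'s.

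Next, I would run the iterated ultrapower of Definition~\ref{iteration} above $s$, producing $j_n\colon V\to M_n$ and generators $\delta^*_{k+n}=\delta_n=[\mathrm{id}]$ for $n<\omega$. Working in $M_{n^*-1}$, define
\[F=\{x\in j_{n^*-1}(\mathbb{P})\mid \exists T\,\,j_{n^*-1}(\pi)(\langle\langle\delta_0,\dots,\delta_{k-1},\delta^*_k,\dots,\delta^*_{k+n^*-1}\rangle,T\rangle)\geq x\}.\]
As in Theorem~\ref{equivalece}, $F\subseteq j_{n^*-1}(\kappa)$ is bounded below $\crit j_{n^*-1,\omega}$, hence fixed by further iterates and lies in $M_{n^*}$. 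The choice of $n^*$ together with Theorem~\ref{genericsequence} and Claim~\ref{claim} guarantees that every element of $F$ belongs to the canonical $M_\omega$-generic filter $H_\omega$; pulling back through $j_{n^*,\omega}$ there is a single $q^*\in j_{n^*}(\mathbb{P})$ forcing $F\subseteq \dot H_{n^*}$. Set
\[U_p=\{A\subseteq\mathbb{P}\mid q^*\in j_{n^*}(A)\}.\]
The proofs that $U_p$ is a $\kappa$-complete ultrafilter (because $\crit j_{n^*}=\kappa$), that it extends $D_{\pi(p)}(\mathbb{P})$, and that it ``contains $p$'' in the sense that the upward cone of $\pi(p)$ lies in $U_p$, are line-by-line copies of the corresponding parts of Theorem~\ref{equivalece}.

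For the Rudin--Keisler claim, the standard fact about iterated ultrapowers of a tree is that the $n^*$-fold iteration $M_{n^*}$ built above the stem $s$ is canonically isomorphic to $\Ult(V,(\mathcal{U}_{s})_{n^*})$, with $\langle\delta^*_k,\dots,\delta^*_{k+n^*-1}\rangle$ corresponding to $[\mathrm{id}]_{(\mathcal{U}_s)_{n^*}}$. Under this isomorphism $q^*$ is represented by some function $f\colon [\kappa]^{n^*}\to\mathbb{P}$; then by {\L}o\'s,
\[A\in U_p\iff q^*\in j_{n^*}(A)\iff \{\vec{\alpha}\in[\kappa]^{n^*}\mid f(\vec{\alpha})\in A\}\in(\mathcal{U}_s)_{n^*},\]
so $U_p=f_*((\mathcal{U}_s)_{n^*})$, witnessing $U_p\leq_{RK}(\mathcal{U}_s)_{n^*}$ and hence $U_p\leq_{RK}\vec{\mathcal{U}}$ with $\vec{\eta}=s$ and exponent $n^*$.

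The only real obstacle is bookkeeping: one must verify that starting the iteration above the given stem $s$ (rather than above $\langle\rangle$) produces the correct base ultrafilter $(\mathcal{U}_s)_{n^*}$ for the RK-projection, and that the function $f$ representing $q^*$ lives on $[\kappa]^{n^*}$ rather than on $[\kappa]^{k+n^*}$. Once the identification of $M_{n^*}$ with $\Ult(V,(\mathcal{U}_s)_{n^*})$ is made explicit via the standard factoring of tree iterations, everything else is a direct quote from the proof of Theorem~\ref{equivalece}.
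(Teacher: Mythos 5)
Your proposal is correct and follows the paper's own route: the published proof is precisely the remark after Theorem \ref{equivalece} together with the observation that $q^*$ is represented by a function into $\mathbb{P}$, making $U_p$ a Rudin--Keisler projection of one of the iterated ultrafilters derived from the tree above the stem $s$. The one bookkeeping point you flagged resolves with an off-by-one relative to your guess: since $M_0$ is already one ultrapower, $M_{n^*}$ is $\Ult(V,(\mathcal{U}_s)_{n^*+1})$ with $\langle\delta^*_k,\dots,\delta^*_{k+n^*}\rangle=[\mathrm{id}]$, so the representing function lives on $[\kappa]^{n^*+1}$ and $U_p=g_*\bigl((\mathcal{U}_s)_{n^*+1}\bigr)$ (the paper's own "$g\colon[\kappa]^{n^*}\to\mathbb{P}$" is a typo, as it is applied to $n^*+1$ coordinates) --- which still yields $U_p\leq_{RK}\vec{\mathcal{U}}$ exactly as you conclude.
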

\begin{proof}
The proof is just the continuation of the discussion following the proof of theorem \ref{equivalece}, recall the definition of $U_p$ $$U_p=\{A\subseteq\mathbb{P}\mid q^*\in j_{n^*}(A)\}$$
There exists a function $g:[\kappa]^{n^*}\rightarrow \mathbb{P}$ such that $j_{n^*}(g)(\delta_k,\dots,\delta_{n^*+k})=q^*$. We claim that $U_p=g_*((\mathcal{U}_{\langle\delta_0,\dots,\delta_{k-1}\rangle})_{n^*+1})$. Let $A\subseteq \mathbb{P}$, then
$$A\in U_p\Longleftrightarrow j_{n^*}(g)(\delta_k,\dots,\delta_{n^*+k})\in j_{n^*}(A)\Longleftrightarrow$$ $$\Longleftrightarrow\langle \delta_k,\dots,\delta_{n^*+k}\rangle\in j_{n^*}(g^{-1}[A]) \Longleftrightarrow g^{-1}[A]\in (\mathcal{U}_{\langle\delta_0,\dots,\delta_{k-1}\rangle})_{n^*+1} $$

\end{proof}
\section{Projections of forcings}
The following simple lemma indicates that the difficulty of extending the dense open filter for different forcing notions is related to the existence of projections from other forcing notions.
\begin{lemma}
Let $\pi\colon \mathbb{P}\to\mathbb{Q}$ be a projection of forcing notions and let $\kappa$ be a regular cardinal. If there is a $\kappa$-complete ultrafilter that extends $\mathcal{D}_p(\mathbb{P})$, then there is a $\kappa$-complete ultrafilter that extends $\mathcal{D}_{\pi(p)}(\mathbb{Q})$.
\end{lemma}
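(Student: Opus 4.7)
The plan is to define the desired ultrafilter on $\mathbb{Q}$ as the Rudin--Keisler pushforward of $U_p$ along $\pi$. Explicitly, given a $\kappa$-complete ultrafilter $U_p$ extending $\mathcal{D}_p(\mathbb{P})$, I would set
\[
U_{\pi(p)} \;=\; \pi_{*}(U_p) \;=\; \{A\subseteq \mathbb{Q}\mid \pi^{-1}[A]\in U_p\}.
\]
Standard Rudin--Keisler bookkeeping shows this is a $\kappa$-complete ultrafilter on $\mathbb{Q}$: $\pi^{-1}$ commutes with complements, unions of size ${<}\kappa$, and intersections of size ${<}\kappa$, so the relevant closure properties transfer from $U_p$ to $\pi_{*}(U_p)$.

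Next I would verify that $\pi_{*}(U_p)$ extends $\mathcal{D}_{\pi(p)}(\mathbb{Q})$. For the generator $\{q\in\mathbb{Q}\mid q\geq \pi(p)\}$, note that order-preservation of $\pi$ gives $\{r\in\mathbb{P}\mid r\geq p\}\subseteq \pi^{-1}[\{q\geq \pi(p)\}]$; the left-hand side is in $U_p$ because $U_p\supseteq \mathcal{D}_p(\mathbb{P})$, so the right-hand side is in $U_p$ and hence $\{q\geq \pi(p)\}\in \pi_{*}(U_p)$. For any dense open $E\subseteq\mathbb{Q}$, it suffices to check that $\pi^{-1}[E]$ is dense open in $\mathbb{P}$, since then $\pi^{-1}[E]\in\mathcal{D}(\mathbb{P})\subseteq\mathcal{D}_p(\mathbb{P})\subseteq U_p$. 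Openness is immediate from $\pi$ being order-preserving (if $\pi(r)\in E$ and $r\leq r'$, then $\pi(r)\leq \pi(r')$, so $\pi(r')\in E$). Density is where the lifting clause of a projection is used: given $r\in\mathbb{P}$, pick $q\in E$ with $q\geq \pi(r)$ using density of $E$, then apply condition (2) of the definition of projection to obtain $r'\geq r$ with $\pi(r')\geq q$; openness of $E$ then yields $\pi(r')\in E$, so $r'\in \pi^{-1}[E]$.

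There is no real obstacle here; the lemma is essentially a bookkeeping argument. The only point that requires any attention is the density verification, where one must invoke condition (2) of the definition of projection rather than merely order-preservation. Everything else is formal manipulation with the pushforward and the definition of $\mathcal{D}_{\pi(p)}(\mathbb{Q})$.
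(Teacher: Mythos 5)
Your proof is correct and is essentially the paper's own argument: both define the ultrafilter on $\mathbb{Q}$ as the Rudin--Keisler pushforward $\{A\subseteq\mathbb{Q}\mid \pi^{-1}[A]\in U_p\}$ and check that preimages of dense open sets are dense open via the lifting clause of a projection. Your write-up just spells out the density verification that the paper leaves implicit.
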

\begin{proof}
Let $\mathcal{U}$ be a $\kappa$-complete ultrafilter that extends $\mathcal{D}_p(\mathbb{P})$. Let:
\[\pi^*(\mathcal{U}) = \{A \subseteq \mathbb{Q} \mid \pi^{-1}(A) \in \mathcal{U}\}.\]
It is clear that $\pi^*(\mathcal{U})$ is a $\kappa$ complete ultrafilter. For any dense open set $D\in \mathcal{D}_{\pi(p)}(\mathbb{Q})$, the fact that $\pi$ is a projection ensures that $\pi^{-1}(D)\in D_{p}(\mathbb{P})$. Thus, $D \in \pi^*(\mathcal{U})$.
\end{proof}

For the definition of $\lambda$-strategically closed forcings see \cite{CummingsHand}. 
The proof of the following lemma is a variant of theorem $14.1$ in \cite{CummingsHand}.
\begin{lemma}[Folklore]\label{CohenProj}
Let $\mathbb{P}$ be $\kappa$-strategically closed forcing notion of size $\leq\lambda$. There is a projection from $\Col(\kappa,\lambda)$ onto $\mathbb{P}$.
\end{lemma}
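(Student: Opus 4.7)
The plan is to treat each condition in $\Col(\kappa,\lambda)$ as a recipe for a play of the $\kappa$-strategic closure game on $\mathbb{P}$ and let $\pi(s)$ be the position reached after Player II's responses are computed from a fixed winning strategy.

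Concretely, fix a surjection $e\colon\lambda\to\mathbb{P}$ and a winning strategy $\sigma$ for Player II in the $\kappa$-game on $\mathbb{P}$, and identify $\Col(\kappa,\lambda)$ with the tree $\{s\colon\alpha\to\lambda\mid\alpha<\kappa\}$ ordered by end-extension. For $s$ of length $\alpha$ I recursively build an increasing sequence $\langle r_\beta : \beta \leq \alpha\rangle$ in $\mathbb{P}$ together with auxiliary Player I moves $\langle q_\beta : \beta < \alpha\rangle$. Put $r_0 = 1_\mathbb{P}$; at a successor stage $\beta+1\leq\alpha$ let $q_\beta := e(s(\beta))$ if this extends $r_\beta$, and otherwise let $q_\beta := r_\beta$ (Player I passes); then set $r_{\beta+1}:=\sigma(r_0,q_0,\dots,r_\beta,q_\beta)$. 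At a limit $\beta\leq\alpha$, let $r_\beta$ be the bound returned by $\sigma$ on the partial play so far. Define $\pi(s):=r_\alpha$.

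Order-preservation will be immediate from the recursion: if $s\subseteq t$ the two plays agree on the common segment, so $\pi(s)=r_{\dom s}\leq r_{\dom t}=\pi(t)$. For the projection property, suppose $\pi(s)\leq q$ in $\mathbb{P}$; pick $\xi<\lambda$ with $e(\xi)=q$ and set $t:=s{}^{\frown}\langle\xi\rangle$. Then in the play attached to $t$, Player I's move at stage $\dom s$ is $q_{\dom s}=e(\xi)=q\geq r_{\dom s}=\pi(s)$, so $\pi(t)=r_{\dom s + 1}\geq q$, as needed. Density of the image in $\mathbb{P}$ follows by the same argument applied to $s=\emptyset$ and arbitrary $q\in\mathbb{P}$.

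The only genuine point of work is that the recursion goes through at every limit $\beta<\kappa$: this is precisely what the hypothesis that $\sigma$ is a \emph{winning} strategy contributes --- all the partial plays produced are legal for Player II under $\sigma$, so $\sigma$ must return an actual upper bound at each limit, and the induction keeps running all the way up to $\alpha<\kappa$. This is the only place where $\kappa$-strategic closure is used; everything else is bookkeeping about how the code $s(\beta)$ drives the next Player I move.
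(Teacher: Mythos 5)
Your proof is correct and is precisely the standard strategy-driven construction that the paper invokes by citing Theorem 14.1 of \cite{CummingsHand} (the paper supplies no proof of Lemma \ref{CohenProj} itself, only that reference): conditions of the collapse code Player I's challenges via a fixed surjection, the good player's winning strategy supplies the responses and the limit bounds, and the projection is the final position of the induced play. The only cosmetic point is that you define $\pi$ on the dense set of conditions whose domain is an ordinal rather than on all of $\Col(\kappa,\lambda)$, but the paper adopts the same convention elsewhere (e.g.\ at the end of the proof of Lemma \ref{JenSenProj}), so nothing is missing.
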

The relevant case for our purpose is the case $\kappa = \lambda$. In this case, $\Col(\kappa,\kappa)\cong\Add(\kappa,1)$. Thus, if $\mathbb{P}$ is a $\kappa$-strategically closed forcing of size $\kappa$ then there is a projection from the Cohen forcing $\Add(\kappa,1)$ onto $B(\mathbb{P})$.

Note that the other direction of lemma \ref{CohenProj} is also true, namely that if there is a projection $\pi:\Add(\kappa,1)\rightarrow B(\mathbb{P})$, then $\mathbb{P}$ must also be $\kappa$-strategically closed.

We conclude that questions about the existence of ultrafilters that extend the dense open filter of $\kappa$-strategically of cardinality $\kappa$ closed forcing notions are equivalent to the same question about the Cohen forcing.

For ${<}\kappa$-strategically closed forcing notions the situation is more involved.

\begin{definition}[Jensen]
Let $\kappa$ be an inaccessible cardinal. A \emph{Jensen Square} on $\kappa$ is a sequence $\langle C_\alpha \mid \alpha \in D\rangle$, such that
\begin{enumerate}
\item $D$ is a club consisting of only limit ordinals.
\item $C_\alpha$ is a club at $\alpha$.
\item $\otp C_\alpha < \alpha$.
\item If $\beta \in \acc C_\alpha$, then $\beta\in D$ and $C_\beta = C_\alpha \cap \beta$.
\end{enumerate}
\end{definition}
Note that if there is a Jensen square on $\kappa$ then $\kappa$ is not a Mahlo cardinal.
The following lemma was proven by Velleman \cite[Theorem 1]{Vell}.
\begin{lemma}\label{Vell}
Let $\kappa$ be an infinite cardinal. If there is a Jensen square on $\kappa$ then every ${<}\kappa$-strategically closed forcing is $\kappa$-strategically closed.
\end{lemma}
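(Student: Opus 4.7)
The plan is to assemble a winning strategy $\sigma$ for Player II in the length-$\kappa$ strategic-closure game on $\mathbb{P}$ out of the winning strategies for the shorter games provided by ${<}\kappa$-strategic closure, by exploiting the coherence of the Jensen square $\l C_\alpha\mid\alpha\in D\r$. Player II's only nontrivial moves are at limit stages: given a monotone (in strength) play $\l p_\beta\mid\beta<\alpha\r$, one must produce $p_\alpha$ extending every $p_\beta$.

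At a limit $\alpha\in D$, let $\bar\alpha=\otp C_\alpha<\alpha$ and enumerate $C_\alpha=\l\gamma^\alpha_i\mid i<\bar\alpha\r$ increasingly. Because $C_\alpha$ is cofinal in $\alpha$ and the play is monotone, an extension of the subsequence $\l p_{\gamma^\alpha_i}\mid i<\bar\alpha\r$ automatically extends the whole play, so the task reduces to bounding a play of length $\bar\alpha<\kappa$. By ${<}\kappa$-strategic closure, fix a winning strategy $\tau_{\bar\alpha}$ for Player II in the game of length $\bar\alpha+1$. The idea is that Player II's moves in the $\kappa$-game at stages belonging to $C_\alpha$ be produced, at a sub-stage level, by a copy of $\tau_{\bar\alpha}$ playing against ``Player I'' moves built from the $p_{\gamma^\alpha_i}$ (suitably massaged into a legal sub-play), so that $\tau_{\bar\alpha}$'s choice at its limit sub-stage $\bar\alpha$ gives $p_\alpha$. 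Limit stages $\alpha\notin D$ are handled in the same spirit, using a canonical cofinal sequence in $\alpha\cap D$ (of order type $<\alpha$ since $\alpha\notin\acc D$ for a thinner square if needed) together with ${<}\kappa$-strategic closure for that length.

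The main obstacle is \emph{coherence}. For the resulting $\sigma$ to be a single well-defined strategy, the simulations at different limits $\alpha\in D$ must agree on their common initial segments; concretely, whenever $\beta\in\acc C_\alpha$ one has $\beta\in D$ and $C_\beta=C_\alpha\cap\beta$ by the Jensen condition, and the condition $p_\beta$ already committed at the earlier stage $\beta$ must coincide with the move that $\tau_{\bar\alpha}$ prescribes at sub-stage $\bar\beta$ of the $\alpha$-simulation---otherwise $p_\alpha$ will fail to extend $p_\beta$. To arrange this I would construct the family $\l\tau_\gamma\mid\gamma<\kappa\r$ itself by recursion along $D$, using at each limit step in $D$ the Jensen coherence to splice the previously constructed strategies into a single strategy for the next length, and only at successor points (and at $\gamma\notin D$) invoking ${<}\kappa$-strategic closure to pick fresh strategies. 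This recursive splicing is the heart of the proof and is exactly where the Jensen square is essential: the coherence $C_\beta=C_\alpha\cap\beta$ ensures the splicing is unambiguous, while the order-type condition $\otp C_\alpha<\alpha$ keeps each step within the reach of ${<}\kappa$-strategic closure. Once the $\tau_\gamma$'s cohere, the global strategy $\sigma$ defined by the simulations is winning: at any limit $\alpha\in D$ its value $p_\alpha$ extends every $p_{\gamma^\alpha_i}$ and hence, by cofinality of $C_\alpha$ in $\alpha$ and monotonicity of the play, every $p_\beta$ with $\beta<\alpha$.
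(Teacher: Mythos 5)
First, a remark on provenance: the paper does not prove this lemma at all --- it cites Velleman \cite[Theorem 1]{Vell} --- so there is no in-paper proof to match; your sketch has to stand on its own. You correctly identify the right reduction (at a limit $\alpha\in D$ it suffices to bound the subsequence indexed by $C_\alpha$, of length $\otp C_\alpha<\kappa$) and, crucially, you correctly identify the coherence obstacle. But the resolution you propose --- recursively ``splicing'' the strategies $\tau_\beta$ along $D$ so that they cohere --- is exactly where the proof breaks. At a stage $\alpha\in D$, the strategies $\{\tau_\beta\mid \beta\in\acc C_\alpha\}$ do form a coherent chain by induction (since $C_\beta=C_\alpha\cap\beta$), and their union $\tau_\alpha^-$ is a strategy defined on plays of all limit lengths $<\otp C_\alpha$ (note that $\{\otp C_\beta\mid\beta\in\acc C_\alpha\}$ is exactly the set of limit ordinals below $\otp C_\alpha$). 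To complete the splice you must extend $\tau_\alpha^-$ to a \emph{winning} strategy of length $\otp C_\alpha+1$, i.e.\ you must produce an upper bound for an arbitrary play of length $\otp C_\alpha$ consistent with $\tau_\alpha^-$. Such a play is a ``diagonal'' play: each proper initial segment is consistent with some single winning strategy $\tau_{\beta_\lambda}$, but the full play is consistent with no previously fixed winning strategy, so ${<}\kappa$-strategic closure gives you nothing here. This is precisely the problem you set out to solve, reproduced in miniature, and the proposal offers no mechanism for it. Indeed, if the splicing step were always possible you could run the same recursion along \emph{all} limit ordinals (taking $C_\alpha=\alpha$) and conclude that ${<}\kappa$-strategic closure implies $\kappa$-strategic closure with no square hypothesis at all --- which is false, as the paper itself notes ($\mathbb{S}_\kappa$ at a Mahlo $\kappa$ is ${<}\kappa$- but not $\kappa$-strategically closed). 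So the hypothesis $\otp C_\alpha<\alpha$ and the square must enter the argument in a more substantive way than merely disambiguating the splice.

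Two secondary points. The case of limit $\alpha\notin D$ is mishandled: since $D$ is closed, $D\cap\alpha$ is \emph{bounded} in $\alpha$ for such $\alpha$, so there is no ``canonical cofinal sequence in $\alpha\cap D$''; what actually needs handling is the tail of the play on the $D$-free interval $(\sup(D\cap\alpha),\alpha)$, whose order type need not be small and which carries no square structure, so Player II's strategy must do separate bookkeeping on the gaps between consecutive points of $D$. And the alignment between whose turn it is in the real game at the stages $\gamma^\alpha_i\in C_\alpha$ and whose turn it is at sub-stage $i$ of the simulated game is dismissed as ``massaging''; this is a standard but genuine issue (the simulated play is only ``according to $\tau$'' if II's simulated moves are exactly $\tau$'s outputs, and the real conditions at those stages were not chosen that way). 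I would recommend consulting Velleman's proof rather than attempting to repair the splicing step, since the repair is the entire content of the theorem.
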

There is a standard forcing for adding Jensen square at a cardinal $\kappa$, $\mathbb{S}_\kappa$.
\begin{definition}\label{definition: jensen square}
The conditions of $\mathbb{S}_\kappa$ are pairs of the form $\langle \mathcal C, d\rangle$, such that
\begin{enumerate}
    \item $d \subseteq \kappa$ is closed and bounded (with last element) consisting only of limit ordinals.
    \item $\mathcal{C}$ is a function, $\dom \mathcal{C} = d$.
    \item For every $\alpha \in d$, $\mathcal{C}(\alpha)$ is a club at $\alpha$, $\otp \mathcal{C}(\alpha) < \alpha$.
    \item $\forall \beta \in \acc \mathcal{C}(\alpha)$, $\beta\in d$ and $\mathcal{C}(\beta) = \mathcal{C}(\alpha)\cap \beta$.
\end{enumerate}
For $\langle \mathcal{C}, d\rangle, \langle\mathcal{C}', d'\rangle\in\mathbb{S}_\kappa$, $\langle\mathcal{C}, d\rangle\leq \langle\mathcal{C}', d'\rangle$ if $d=d'\cap(\max(d)+1)$ and $\mathcal{C} = \mathcal{C}' \restriction d$.
\end{definition}
There are many variations of this forcing, some of them can be found in \cite{CumShmSquare}. 
\begin{lemma}[Folklore]\label{JensenStCl}
Let $\kappa$ be a regular cardinal then $\mathbb{S}_\kappa$ is ${<}\kappa$-strategically closed.
\end{lemma}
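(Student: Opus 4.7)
The goal is to exhibit a winning strategy $\sigma$ for Player II in the game $\mathcal{G}_\gamma(\mathbb{S}_\kappa)$ for every $\gamma < \kappa$. My plan is to have Player II extend, at each of her turns, by a single new top ordinal $\xi$ of cofinality $\omega$, and to have $\xi$ grow fast enough with the stage number that the order-type requirement (3) of Definition \ref{definition: jensen square} will survive every future limit stage. The canonical common extension at a limit stage will be the union of all plays, augmented by the new top $\eta := \sup_{\alpha < \gamma'}\eta_\alpha$, whose Jensen club is declared to be the sequence of previously played tops.

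First, I describe Player II's moves. After Player I has played $p_\alpha = \langle \mathcal{C}_\alpha, d_\alpha\rangle$, let $\eta_\alpha = \max d_\alpha$. Player II picks a limit ordinal $\xi$ of cofinality $\omega$ with $\xi > \eta_\alpha$ and $\xi \geq \aleph_{\alpha+1}$; this stays below $\kappa$ because $\kappa$ is inaccessible. She plays $d_{\alpha+1} = d_\alpha \cup \{\xi\}$ and extends $\mathcal{C}_\alpha$ by setting $\mathcal{C}_{\alpha+1}(\xi) = \{\eta_\alpha + n \mid n < \omega\}$. This is legal: the new club at $\xi$ has order type $\omega < \xi$ and is discrete, so the coherence condition (4) at $\xi$ is vacuously satisfied.

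Second, at every limit stage $\gamma' \leq \gamma$ the strategy declares $p_{\gamma'} = \langle \mathcal{C}_{\gamma'}, d_{\gamma'}\rangle$ with $d_{\gamma'} = \bigcup_{\alpha < \gamma'} d_\alpha \cup \{\eta\}$ and $\mathcal{C}_{\gamma'} = \bigcup_{\alpha < \gamma'}\mathcal{C}_\alpha \cup \{\langle \eta, C_{\gamma'}\rangle\}$, where $C_{\gamma'}$ is the topological closure of $\{\eta_\alpha \mid \alpha < \gamma'\}$ inside $\eta$. I then verify inductively that $p_{\gamma'} \in \mathbb{S}_\kappa$. The only new obligation is the club $C_{\gamma'}$ at $\eta$. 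Condition (3) holds because $\otp C_{\gamma'} \leq \gamma' < \aleph_{\gamma'} \leq \eta$. Condition (4) is where the induction enters: any $\beta^* \in \acc C_{\gamma'}$ is a limit of $\eta_\alpha$'s, hence $\beta^* = \eta_\beta$ for some limit $\beta < \gamma'$, and by the inductive hypothesis $\mathcal{C}_\beta(\eta_\beta)$ was declared at that earlier limit stage to be the closure of $\{\eta_\delta \mid \delta < \beta\}$ in $\eta_\beta$, which is exactly $C_{\gamma'} \cap \beta^*$.

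The step I expect to be the main obstacle is precisely the order-type bound at large limit stages. A naive strategy that increments the top by only $\omega$ at each step fails: at a game stage of uncountable cofinality one obtains $\eta_{\omega_1} = \omega_1 = \otp C_{\omega_1}$, violating (3). This is why the strategy must commit, once and for all and independently of Player I, to a schedule of tops that grows strictly faster than the stage number -- a commitment that uses inaccessibility of $\kappa$ to remain inside the forcing. With any such schedule in hand, $\sigma$ is a well-defined winning strategy, proving that $\mathbb{S}_\kappa$ is ${<}\kappa$-strategically closed.
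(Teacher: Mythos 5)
Your overall architecture is sound and close in spirit to the paper's: a pre-committed schedule of tops, discrete (or at least controlled) clubs at successor moves, and at limit stages the new top is the supremum with its Jensen club read off from the history. The coherence verification at limit stages (accumulation points of the closure of the tops are exactly the earlier limit-stage suprema, whose clubs were declared to be the corresponding initial closures) is correct. However, there are two concrete problems with the strategy as written, the second of which defeats it.

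First, a small but real slip at successor stages: you choose $\xi>\eta_\alpha$ with $\xi\geq\aleph_{\alpha+1}$ and $\cf(\xi)=\omega$, but then declare $\mathcal{C}_{\alpha+1}(\xi)=\{\eta_\alpha+n\mid n<\omega\}$. This set is cofinal in $\eta_\alpha+\omega$, not in $\xi$, so unless $\xi=\eta_\alpha+\omega$ (which is incompatible with $\xi\geq\aleph_{\alpha+1}$ in general) it is not a club at $\xi$ and the move is illegal. You need to take an $\omega$-sequence cofinal in $\xi$ lying above $\eta_\alpha$, as the paper does with its sequence $\langle x_n\mid n<\omega\rangle$; the rest of your argument is unaffected since such a club is still discrete.

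Second, and more seriously, your verification of condition (3) at limit stages relies on the inequality $\otp C_{\gamma'}\leq\gamma'<\aleph_{\gamma'}\leq\eta$, and $\gamma'<\aleph_{\gamma'}$ is false at fixed points of the aleph function. Since $C_{\gamma'}$ is the closure of a strictly increasing $\gamma'$-sequence, $\otp C_{\gamma'}=\gamma'$ exactly; and if $\gamma'=\aleph_{\gamma'}$ and Player I plays tops with $\sup_{\alpha<\gamma'}\eta_\alpha=\aleph_{\gamma'}$, then $\eta=\gamma'=\otp C_{\gamma'}$ and the prescribed move violates $\otp\mathcal{C}(\eta)<\eta$, so the strategy loses at that stage. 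In other words, the schedule $\alpha\mapsto\aleph_{\alpha+1}$ is strictly above the stage index pointwise but not at limits, which is exactly where condition (3) bites; your closing claim that ``any such schedule'' works is therefore also not right. (The schedule additionally escapes $\kappa$ when $\kappa$ is regular but not inaccessible, whereas the lemma is stated for regular $\kappa$.) The repair is the paper's device: fix the game length $\gamma$ in advance and have the strategy's very first move jump above a single bound depending only on $\gamma$ (the paper uses $\omega\cdot\gamma$, since its accumulated clubs have order type $\omega\cdot\alpha$; in your version any limit ordinal $>\gamma$ below $\kappa$ suffices). With that one change, and the corrected successor clubs, your argument goes through.
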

\begin{proof}
Let us define a strategy $\sigma$ first.
$\sigma(\langle\rangle)=\langle\emptyset,\emptyset\rangle$. Assume that $$\langle \langle C_i,D_i\rangle,\langle E_i,F_i\rangle\mid i<\alpha\rangle$$ is defined and played according to $\sigma$ and let us define $$\sigma(\langle \langle C_i,D_i\rangle,\langle E_i,F_i\rangle\mid i<\alpha\rangle)=\langle C_\alpha,D_\alpha\rangle$$
Denote by $d_i=\max(D_i)$. If $\alpha$ is limit, let $d_\alpha=\sup_{i<\alpha}d_i$. Then $\langle C_\alpha,D_\alpha\rangle$ is defined if and only if $d_\alpha$ is a singular cardinal, in which case $$D_\alpha=(\cup_{i<\alpha}D_i)\cup\{d'_\alpha\}$$  For every $i<\alpha$, $C_\alpha\restriction D_i=C_i$ and $$C_\alpha(d_\alpha)=\bigcup_{i<\alpha}C_i(d_i)$$
If $\alpha=\beta+1$,let $d_\alpha$ be an ordinal of cofinality $\omega$ above
$\max(F_\beta)$ and $\langle x_n\mid n<\omega\rangle$ be a cofinal sequence in $d_\alpha$ such that $x_0>d_{\beta}$. Define $$D_\alpha=F_\beta\cup\{d_\alpha\}$$
Also $C_\alpha\restriction F_\beta=E_\beta$ and
$$C_\alpha(d_\alpha)=C_{\beta}(d_{\beta-1})\cup\{x_n\mid n<\omega\}$$
Obviously, by the inductive construction $C_\alpha$ is coherent. It is not hard to see that $\otp(C_\alpha(d_\alpha)))=\omega\cdot\alpha$.
So the strategy $\sigma_\lambda$ starts by jumping above $\omega\cdot\lambda$, then uses $\sigma$. This guarantees that always $\otp(C_\alpha(d_\alpha))<d_\alpha$ and that $d_\alpha$ for limit $\alpha$ is always singular.
\end{proof}
In general, $|\mathbb{S}_\kappa| = \kappa^{<\kappa}$. Thus, for strongly inaccessible cardinals $\kappa$, $|\mathbb{S}_\kappa| = \kappa$ and it fits to the framework of this paper. For Mahlo cardinal $\kappa$, $\mathbb{S}_\kappa$ is not $\kappa$-strategically closed (otherwise, it would be possible to construct Jensen square sequence in the ground model). Thus, for Mahlo cardinal $\kappa$, $\mathbb{S}_\kappa$ is not isomorphic to a complete subforcing of $\Add(\kappa,1)$.

Let us remark that in models of the form $L[E]$, there is a partial square sequence in the ground model which is defined on all singular cardinals. In those cases the forcing that shoots a club through the singular cardinals clearly adds a Jensen square for $\kappa$.

The following lemma shows that adding a Jensen square to $\kappa$ is maximal between all ${<}\kappa$-strategically closed forcing notions.
\begin{lemma}\label{JenSenProj}
$\mathbb{S}_\kappa \cong \mathbb{S}_\kappa \times \Add(\kappa,1)$. In particular, for every ${<}\kappa$-strategically closed forcing $\mathbb{P}$ of cardinality $\kappa$, there is a projection from $\mathbb{S}_\kappa$ onto $B(\mathbb{P})$.
\end{lemma}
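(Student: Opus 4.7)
The plan is to establish the isomorphism first, then use it, together with Velleman's Lemma~\ref{Vell} and Lemma~\ref{CohenProj}, to deduce the projection claim. The key conceptual point is that $\mathbb{S}_\kappa$, although only ${<}\kappa$-strategically closed in $V$, becomes effectively $\kappa$-strategically closed (of size $\kappa$) in its own generic extension, which means the ``Cohen-absorbing'' property from Lemma~\ref{CohenProj} kicks in after the first round of forcing.

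For the isomorphism $\mathbb{S}_\kappa \cong \mathbb{S}_\kappa \times \Add(\kappa,1)$, I would build a dense embedding $e\colon \mathbb{S}_\kappa \times \Add(\kappa,1) \to \mathbb{S}_\kappa$. The idea is to exploit the combinatorial freedom at the \emph{fresh} elements of $d$, i.e.\ those $\beta\in d$ where $\mathcal{C}(\beta)$ is not already dictated by the coherence condition from accumulation points below. At such a $\beta$, the set of admissible choices for $\mathcal{C}(\beta)$ has cardinality $\beta$, and one can fix in $V$ an indexing that partitions these choices into two pieces of equal size labelled $0$ and $1$. Enumerating the fresh points of $d$ as $\langle \beta_i \mid i < \gamma\rangle$, one sends the pair $(\langle\mathcal{C},d\rangle, f)$ with $\dom f = \alpha < \kappa$ to the condition obtained by further constraining $\mathcal{C}(\beta_i)$ to lie in the half labelled $f(i)$ for $i < \alpha$ (extending $d$ first, if necessary, so that $d$ has at least $\alpha$ fresh points). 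Verifying that $e$ is order-preserving, that it is a dense embedding (using ${<}\kappa$-strategic closure to always extend $d$ as needed), and that it preserves incompatibility yields $B(\mathbb{S}_\kappa) \cong B(\mathbb{S}_\kappa\times\Add(\kappa,1))$.

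For the ``in particular'' clause, let $\mathbb{P}$ be ${<}\kappa$-strategically closed of size $\kappa$. After forcing with $\mathbb{S}_\kappa$ the generic Jensen square exists, so Velleman's Lemma~\ref{Vell} makes $\mathbb{P}$ into a $\kappa$-strategically closed forcing in $V^{\mathbb{S}_\kappa}$. Its cardinality there is still $\kappa$ since $\mathbb{S}_\kappa$ is ${<}\kappa$-distributive and adds no bounded subsets of $\kappa$. Lemma~\ref{CohenProj} applied in $V^{\mathbb{S}_\kappa}$ then gives an $\mathbb{S}_\kappa$-name $\dot\pi$ for a projection $\Add(\kappa,1)\to\mathbb{P}$. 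Since $\Add(\kappa,1)$ and $\mathbb{P}$ are ground model forcings (so the iterations degenerate to products), the standard lift yields a $V$-projection
\[
\mathbb{S}_\kappa * \dot{\Add}(\kappa,1) \;=\; \mathbb{S}_\kappa \times \Add(\kappa,1) \;\longrightarrow\; \mathbb{S}_\kappa * \check{\mathbb{P}} \;=\; \mathbb{S}_\kappa \times \mathbb{P}.
\]
Composing with the second-coordinate projection $\mathbb{S}_\kappa\times\mathbb{P}\to\mathbb{P}$ and with the isomorphism of the first part gives a projection $\mathbb{S}_\kappa\to B(\mathbb{P})$, as desired.

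The hard part will be the combinatorial verification that the dense embedding $e$ above really lands inside $\mathbb{S}_\kappa$: the coherence condition $\mathcal{C}(\gamma)=\mathcal{C}(\beta_i)\cap\gamma$ for $\gamma\in\acc\mathcal{C}(\beta_i)$ means that modifying $\mathcal{C}(\beta_i)$ can propagate constraints to many other points of $d$, so the partition of admissible choices at each fresh point must be defined coherently across all conditions that extend to $\beta_i$. An alternative, more abstract route is to argue directly on Boolean algebras: both $B(\mathbb{S}_\kappa)$ and $B(\mathbb{S}_\kappa\times\Add(\kappa,1))$ are complete, atomless, ${<}\kappa$-distributive of size $\kappa$, and each is a complete subalgebra of the other (the nontrivial direction is exactly that $\mathbb{S}_\kappa$ adds a $V$-generic Cohen subset of $\kappa$, which can be read off from the parity of the chosen $\mathcal{C}(\beta)$'s), so a Cantor--Bernstein-type argument in this setting yields the isomorphism.
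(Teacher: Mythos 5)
Your second half (deducing the projection onto $B(\mathbb{P})$ from the isomorphism, Velleman's Lemma~\ref{Vell}, and Lemma~\ref{CohenProj}, then transferring the $V^{\mathbb{S}_\kappa}$-name for a projection back to a genuine projection from the product) is essentially the paper's argument and is fine, modulo the small remark that $\mathbb{P}$ stays ${<}\kappa$-strategically closed in $V^{\mathbb{S}_\kappa}$ because no new plays of length ${<}\kappa$ appear. The gap is in the first half, which carries the real content of the lemma. Your coding scheme at ``fresh'' points of $d$ does not survive scrutiny. First, a condition of $\mathbb{S}_\kappa$ specifies $\mathcal{C}(\beta_i)$ outright, so ``further constraining $\mathcal{C}(\beta_i)$ to lie in the half labelled $f(i)$'' does not name a single condition; you would have to \emph{modify} $\mathcal{C}(\beta_i)$, and any modification propagates downward through the clause $\mathcal{C}(\delta)=\mathcal{C}(\beta_i)\cap\delta$ for $\delta\in\acc\mathcal{C}(\beta_i)$, not only upward. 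Second, and more seriously, the set of fresh points is not stable under extension: a $\beta$ that is fresh in $\langle\mathcal{C},d\rangle$ may become an accumulation point of $\mathcal{C}'(\gamma)$ for some new $\gamma\in d'$, so your enumeration $\langle\beta_i\mid i<\gamma\rangle$ changes as the condition grows, and the correspondence between $i\in\dom f$ and the coding point $\beta_i$ is not preserved; order-preservation of $e$ fails. The paper sidesteps both problems by coding at the fixed set $E_\omega=\{\xi+\omega\mid\xi<\kappa\}$: the bit at $\alpha\in d\cap E_\omega$ is whether $\alpha^-+1\in\mathcal{C}(\alpha)$, and the accompanying bijection only shifts \emph{successor} ordinals inside the final $\omega$-block $(\alpha^-,\alpha)$, which can never be accumulation points, so order types, accumulation points, and hence coherence are untouched. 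Some such concrete device is needed; without it the ``routine verification'' is exactly where the lemma lives.

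Your proposed abstract fallback does not rescue the argument. Schr\"oder--Bernstein fails for Boolean algebras, including complete ones: mutual complete embeddability does not imply isomorphism without extra structure (typically one needs something like $B\cong B\times B$, which is essentially the statement being proved, or strong homogeneity). Moreover, even to get the complete embedding $B(\mathbb{S}_\kappa\times\Add(\kappa,1))\hookrightarrow B(\mathbb{S}_\kappa)$ you would need $\mathbb{S}_\kappa$ to add a \emph{mutually} generic pair consisting of an $\mathbb{S}_\kappa$-generic and a Cohen subset of $\kappa$, not merely a $V$-generic Cohen set; establishing that is again precisely the coding you have deferred. So the abstract route is circular, and the combinatorial construction must be carried out.
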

\begin{proof}
Let us define a dense embedding $\pi:\mathbb{S}_\kappa\rightarrow\mathbb{S}_\kappa\times\Add(\kappa,1)$,
for every $A\subseteq\kappa$ and $\alpha<\otp(A)$ let  $A(\alpha)$ be the $\alpha$-th element of $A$ in it's natural enumeration.
Define $E_\omega=\{\alpha+\omega\mid \alpha<\kappa\}$ and for $\alpha\in E_{\omega}$ let $\alpha^-=\max(Lim(\alpha))$ be the maximal limit ordinal below $\alpha$.
Let $\langle\mathcal{C},d\rangle\in\mathbb{S}_{\kappa}$, define $\pi(\langle\mathcal{C},d\rangle)=\langle\langle\mathcal{C}',d\rangle,f\rangle$
such that:
\begin{enumerate}
    \item $dom(f)=\gamma_d$, where $\gamma_d=\otp(d\cap E_{\omega})$.
    \item For $i<\gamma_d$, define $$f(i)=1\Longleftrightarrow ((d\cap E_{\omega})(i))^-+1\in \mathcal{C}((d\cap E_{\omega})(i))$$
    \item $dom(\mathcal{C}')=d$.
    \item $\mathcal{C}'$ is defined inductively. For $\alpha\in d$ let $\beta_\alpha=\max(Lim(\mathcal{C}(\alpha))\cap\alpha)$ and assume that $\mathcal{C}'(\beta)$ is defined coherently for every $\beta<\alpha$.
    \begin{enumerate}
        \item If $\alpha\in d\cap E_{\omega}$, then $\beta_\alpha\leq\alpha^-$ and define $$\mathcal{C}'(\alpha)=\mathcal{C}'(\beta_\alpha)\cup \big( [\beta,\alpha^-]\cap \mathcal{C}(\alpha)\big)\cup\big \{\gamma-1\mid \gamma\in[\alpha^-+2,\alpha)\cap \mathcal{C}(\alpha)\big\}$$
        \item If $\alpha\notin d\cap E_{\omega}$ and $\beta_\alpha=\alpha$ let \[\mathcal{C}'=\bigcup_{\gamma\in \acc(\mathcal{C}(\alpha))\cap\alpha}\mathcal{C}'(\gamma)\]
        \item $\alpha\notin d\cap E_{\omega}$ and $\beta_\alpha<\alpha$ let
        $$\mathcal{C}'(\alpha)=\mathcal{C}'(\beta_\alpha)\cup\big( [\beta_\alpha,\alpha]\cap\mathcal{C}(\alpha)\big)$$
    \end{enumerate}
\end{enumerate}
Let us prove first that $\langle\langle\mathcal{C}',d\rangle,f\rangle\in\mathbb{S}_{\kappa}\times \Add(\kappa,1)$. Obviously, $f\in \Add(\kappa,1)$, it is routine to check that $\langle\mathcal{C}',d\rangle\in \mathbb{S}_{\kappa}$,  show by induction that, $$\otp(\mathcal{C}(\alpha))=\otp(\mathcal{C}'(\alpha)), \ \acc(\mathcal{C}(\alpha))=\acc(\mathcal{C}'(\alpha))$$ and that condition  $(3),(4)$ of definition \ref{definition: jensen square} hold. %Assume this is the case for every $\beta<\alpha$, if $\beta_\alpha<\alpha$ then  $\mathcal{C}'(\alpha)$ is closed as the union of closed sets on disjoint intervals. By definition, $$\otp(\mathcal{C}'(\alpha))=\otp(\mathcal{C}'(\beta_\alpha))+\otp(\mathcal{C}(\alpha)\setminus\beta_\alpha)$$
The induction step use the fact that by removing at most one ordinal below a limit point of the set does not change the order type and does not change limit points of the set. %Therefore $\mathcal{C}(\alpha)\setminus\beta_\alpha$  did not change its order type and has the only limit point $\alpha$ (by the definition of $\beta_\alpha$).
%By induction hypothesis $\otp(\mathcal{C}'(\beta_\alpha))=\otp(\mathcal{C}(\beta_\alpha))$ and $\acc(\mathcal{C}'(\alpha))=\acc(\mathcal{C}(\alpha))$. Thus $\otp(\mathcal{C}'(\alpha))=\otp(\mathcal{C}(\alpha))<\alpha$, coherency follows directly from the induction hypothesis and the definition of $\beta_\alpha$.
%If $\beta_\alpha=\alpha$ then
%by coherency
%\[\mathcal{C}(\alpha)=\bigcup_{\gamma\in \acc(\mathcal{C}(\alpha))\cap\alpha)}\mathcal{C}(\gamma)\]
%Thus by definition of $\mathcal{C}'(\alpha)$ and induction hypothesis
%\[\otp(\mathcal{C}(\alpha))=\sup_{\gamma\in \acc(\mathcal{C}(\alpha))\cap\alpha)}\otp(\mathcal{C}(\gamma))=\sup_{\gamma\in \acc(\mathcal{C}(\alpha))\cap\alpha)}\otp(\mathcal{C}'(\gamma))=\otp(\mathcal{C}'(\alpha))\]
% Moreover, $\gamma\in \acc(\mathcal{C}'(\alpha))$ if and only if it is a limit point of $\gamma\in \acc(\mathcal{C}'(\beta))$ for some $\beta\in \acc(\mathcal{C}(\alpha))\cap\alpha$ by induction hypothesis this happens if and only if $\gamma\in \acc(\mathcal{C}(\beta))$ for some  $\beta\in \acc(\mathcal{C}(\beta))$, and by definition of $\beta_\alpha=\alpha$ this holds if and only if $\gamma\in \acc(\mathcal{C}(\alpha))$. Now that we know that $\acc(\mathcal{C}(\alpha))=\acc(\mathcal{C}'(\alpha))$ and $\otp(\mathcal{C}(\alpha))=\otp(\mathcal{C}'(\alpha))$, conditions $(3),(4)$ follows by the induction hypothesis.

 To see that $\pi\image\mathbb{S}_\kappa$ is dense in $\mathbb{S}_{\kappa}\times\Add(\kappa,1)$, let $$p=\langle\langle\mathcal{N},d\rangle,f\rangle\in \mathbb{S}_{\kappa}\times\Add(\kappa,1)$$ Extend $p$ if necessary to $\langle\langle\mathcal{N}',d'\rangle,f'\rangle$
 so that $dom(f')=\otp(d'\cap E_{\omega})$. This is possible since $f$ can be defined arbitrarily on missing points of its domain and $\gamma_d$ can be increased by extending $\langle \mathcal{N},d\rangle$ at successor steps of $d$ from the set $E_{\omega}$ in a coherent way just as in lemma \ref{JensenStCl}.
 To see that $\langle\langle\mathcal{N}',d'\rangle,f'\rangle\in \pi\image\mathbb{S}_\kappa$,
 define $\langle \mathcal{C},d'\rangle$ recursively. Assume $\alpha\in d'\cap E_{\omega}$ and $\alpha=(d'\cap E_\omega)(i)$. If $f'(i)=0$  define
 $$\mathcal{C}(\alpha)=\mathcal{C}(\beta_\alpha)\cup \big( [\beta,\alpha^-]\cap \mathcal{N}(\alpha)\big)\cup\big \{\gamma+1\mid  \gamma\in(\alpha^-,\alpha)\cap\mathcal{N}(\alpha)\big\}$$

 If $f'(i)=1$ define
 $$\mathcal{C}(\alpha)=\mathcal{C}(\beta_\alpha)\cup \big( [\beta,\alpha^-]\cap \mathcal{N}(\alpha)\big)\cup\big \{\gamma+1\mid  \gamma\in(\alpha^-,\alpha)\cap\mathcal{N}(\alpha)\big\}\cup\{\alpha-\omega+1\}$$

 If $\alpha\notin d'\cap E_\omega$ and $\alpha=\beta_\alpha$ define \[\mathcal{C}(\alpha)=\bigcup_{\gamma\in \acc(\mathcal{N}(\alpha))\cap\alpha}\mathcal{C}(\gamma)\]

Finally if
$\alpha\notin d'\cap E_{\omega}$ and $\beta_\alpha<\alpha$ let
        $$\mathcal{C}(\alpha)=\mathcal{C}(\beta_\alpha)\cup\big( [\beta_\alpha,\alpha]\cap\mathcal{N}(\alpha)\big)$$

 It is routine to check that $\pi$ is an embedding.

For the second part, assume that $\mathbb{P}$ is a ${<}\kappa$-strategically closed forcing, let $G$ be generic for $\mathbb{S}_\kappa$, then $V[G]=V[G'][H]$ where $G'$ is another generic for $\mathbb{S}_\kappa$ and $H$ is $V[G']$-generic for $Add(\kappa,1)$. In $V[G']$, since $\mathbb{S}_\kappa$ is ${<}\kappa$ strategically closed, there are no new plays of $\mathbb{P}$ of length less than $\kappa$, indicating that $\mathbb{P}$ stays ${<}\kappa$-strategically closed in $V[G']$. Since in $V[G']$ there is a square sequence, use \ref{Vell} to conclude that $\mathbb{P}$ is $\kappa$-strategically closed in $V[G']$. Thus by \ref{CohenProj}, there is $\pi:\Add(\kappa,1)\rightarrow \mathbb{P}\in V[G']$ a projection. Let us turn this projection to a projection in $V$ of $\mathbb{S}_\kappa\times\Add(\kappa,1)$. Let $\tilde{\pi}$ be a $\mathbb{S}_\kappa$-name such that $\Vdash_{\mathbb{S}_\kappa}\tilde{\pi}:\Add(\kappa,1)\rightarrow B(\mathbb{P})$ is a projection. Consider the set $$D=\{\l p,q\r\in\mathbb{S}_\kappa\times\Add(\kappa,1)\mid \exists a\in \mathbb{P}. p\Vdash \tilde{\pi}(q)=a\}$$
It is dense in $\mathbb{S}_\kappa\times\Add(\kappa,1)$. For every $\l p,q\r\in D$, define $\pi_*(\l p,q\r)=a$ for the unique $a\in \mathbb{P}$, such that $p\Vdash \tilde{\pi}(q)=a$. It is a straightforward verification to see that $\pi_*:D\rightarrow \mathbb{P}$ is a projection. 
\end{proof}
The following lemma shows that $\mathbb{S}_\kappa$ is not maximal among the $\kappa$-distributive forcing notions. For a fat stationary set $S\subseteq \kappa$, let $Club(S)$ be the forcing that shoots a club through $S$ using closed and bounded conditions. By \cite{AbrahamShelah1994}, if $\kappa^{<\kappa} = \kappa$, then $Club(S)$ is $\kappa$-distributive if and only if $S$ is fat stationary set.
\begin{lemma}
Let $S \subseteq T \subseteq \kappa$ be fat stationary sets.
If the set of all $\alpha\in T \setminus S$ such that $T\cap\alpha$ contains a club at $\alpha$  is stationary, then $T\setminus S$ stays stationary in $V^{Club(T)}$ and in particular there is no projection from $Club(T)$ to $Club(S)$.
\end{lemma}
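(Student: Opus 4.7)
The plan is to prove first that $T\setminus S$ remains stationary in $V^{Club(T)}$ and then deduce the non-existence of a projection. The second part is immediate: a projection $\pi\colon Club(T)\to Club(S)$ would produce, in $V^{Club(T)}$, a $V$-generic filter for $Club(S)$ whose union is a club contained in $S$, hence disjoint from $T\setminus S$, contradicting the stationarity of $T\setminus S$ in $V^{Club(T)}$.

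For the stationarity preservation, I would show that for every $Club(T)$-name $\dot D$ for a club and every $c_0\in Club(T)$, there exist $\alpha\in T\setminus S$ and $c^*\geq c_0$ with $c^*\Vdash\check\alpha\in\dot D$. Let $T^*$ denote the stationary set of the hypothesis. Fix $\theta$ sufficiently large and pick $M\prec H(\theta)$ containing all relevant objects, with $|M|<\kappa$ and $\alpha:=M\cap\kappa\in T^*$; such $M$ exist because the set of $\alpha$'s arising as $M\cap\kappa$ contains a club. Since $M\cap\kappa=\alpha$, every ordinal below $\alpha$ lies in $M$. Let $B\subseteq T\cap\alpha$ be a club at $\alpha$ witnessing $\alpha\in T^*$; in general $B\notin M$, although all its elements are in $M$.

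I then build recursively a chain $\langle c_i^\# : i<\cf(\alpha)\rangle$ inside $M\cap Club(T)$ together with an increasing sequence $\langle \eta_i : 0<i<\cf(\alpha)\rangle$ in $B$ cofinal in $\alpha$. At successor step $i+1$, elementarity of $M$ produces $c_{i+1}^\#\in M$ extending $c_i^\#$ and forcing some $\delta_i\in\dot D$ above a fixed cofinal sequence $\langle\gamma_i\rangle\subseteq M\cap\alpha$; elementarity also gives $\delta_i\in M\cap\kappa=\alpha$, so $\delta_i<\alpha$. Then choose $\eta_{i+1}\in B$ above both $\max c_{i+1}^\#$ and $\eta_i$. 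At limit $\xi<\cf(\alpha)$ set $c_\xi^\#:=\bigcup_{i<\xi}c_i^\#$ (which lies in $M$ because $\xi<\alpha$ is in $M$ and the sequence is $M$-definable) and $\eta_\xi:=\sup_{i<\xi}\eta_i\in B$ by closure of $B$. The actual condition at stage $i$ is $c_i:=c_i^\#\cup\{\eta_j:0<j\leq i\}$, which lies in $Club(T)$ and end-extends $c_{i-1}$ since the new elements all lie above $\max c_{i-1}=\eta_{i-1}$.

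The hard part will be the bookkeeping needed to check that $c^*:=\bigcup_{i<\cf(\alpha)}c_i\cup\{\alpha\}$ is a genuine condition in $Club(T)$: this requires $\eta_\xi\in B\subseteq T$ at every limit $\xi$ and $\alpha\in T$ (both granted), together with the observation that the $B$-tops always fit above the $M$-shadows so that the $c_i$'s form a genuine descending chain whose union is closed in its sup. Once this is settled, $c^*\geq c_0$ forces each $\delta_i\in\dot D$ with $\gamma_i<\delta_i<\alpha$, and the $\gamma_i$'s are cofinal in $\alpha$; hence $c^*\Vdash\dot D\cap\alpha$ is unbounded in $\alpha$, and by closure of $\dot D$, $c^*\Vdash\check\alpha\in\dot D$. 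Since $\alpha\in T\setminus S$, this yields the required extension.
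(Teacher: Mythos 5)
Your overall strategy is the paper's: choose an elementary submodel with $\alpha=M\cap\kappa$ in the stationary set $T^*$ of the hypothesis, use a club $B\subseteq T\cap\alpha$ to seal the limit stages of an increasing chain of conditions, and force $\alpha\in\dot D$ for some $\alpha\in T\setminus S$ (the paper phrases this contrapositively via a name for a club disjoint from $T\setminus S$, which is immaterial). The reduction of the non-projection statement to stationarity preservation is also fine. The construction of the chain, however, has a genuine gap.

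The problem is the decoupling of the ``shadow'' chain $\langle c_i^{\#}\rangle\subseteq M$ from the real chain $\langle c_i\rangle$. You only require $c_{i+1}^{\#}$ to end-extend $c_i^{\#}$, so its new elements are merely above $\max c_i^{\#}$; nothing prevents them from landing in the interval $(\max c_i^{\#},\eta_i)$. Since the order on $Club(T)$ is end-extension, $c_{i+1}=c_{i+1}^{\#}\cup\{\eta_j\mid j\le i+1\}$ then fails to end-extend $c_i$ (whose maximum is $\eta_i$), contrary to your claim that ``the new elements all lie above $\max c_{i-1}=\eta_{i-1}$.'' Worse, $c_{i+1}$ is in general not even an extension of $c_{i+1}^{\#}$ in the forcing order, because the points $\eta_j$ with $j\le i$ may sit below $\max c_{i+1}^{\#}$ without belonging to $c_{i+1}^{\#}$; consequently the final condition $c^*$ need not force $\delta_i\in\dot D$, and the whole density argument collapses. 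The repair is to run the recursion on the actual conditions: $c_{i+1}$ must be obtained by elementarity as an extension of $c_i^{\#}\cup\{\eta_i\}$ (a legitimate condition, since $\eta_i\in B\subseteq T$, and an element of $M$, since $c_i^{\#},\eta_i\in M$) deciding a new value of $\dot D$, and only then capped by the next point of $B$.

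Once you do that, a second gap surfaces at limit stages: the recursion now genuinely uses $B$ (and your cofinal sequence $\langle\gamma_i\rangle$), neither of which belongs to $M$, so your assertion that $c_\xi^{\#}=\bigcup_{i<\xi}c_i^{\#}$ lies in $M$ ``because the sequence is $M$-definable'' is unjustified; a single model $M$ with no closure hypothesis does not let you continue past limit ordinals. This is exactly why the paper replaces the single $M$ by a continuous increasing chain $\langle M_i\mid i<\kappa\rangle$ with $x_i=M_i\cap\kappa\in\kappa$ and ${}^{x_i}M_i\subseteq M_{i+1}$: every proper initial segment of the construction is then contained in some $M_r$ and hence belongs to $M_{r+1}\subseteq M_\alpha$, so elementarity can be reapplied at the next step, while the interleaved points of $B$ (the set $D$ in the paper's notation) guarantee that the suprema reached at limit stages lie in $T$. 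You need either this chain-of-models device or an explicit closure assumption on $M$ compatible with $M\cap\kappa=\alpha$.
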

\begin{remark}
After adding a single Cohen set to $\kappa$, there is a partition of $\kappa$ into $\kappa$ many disjoint fat stationary sets. Thus, the structure of the $\kappa$-distributive forcing notions of size $\kappa$ might be complicated in general, even when $\kappa$ is a large cardinal.
\end{remark}
\begin{proof}
Let $C\subseteq T$ be a $V$-generic club for $Club(T)$. Assume that $S$ is not stationary in $V[C]$ and let $\name{B}$ be a name such that some $p\in Club(T)$ forces that $\name{B}$ is a club disjoint from $T\setminus S$. Let $\langle M_i\mid i<\kappa\rangle$ be an increasing and continuous chain of elementary substructures of $H(\theta)$ for some large enough $\theta$ such that:
\begin{enumerate}
    \item $p,\name{B},S,T,Club(T)\in M_0$.
    \item $|M_i|<\kappa$.
    \item $x_i:=M_i\cap\kappa\in \kappa$.
    \item and ${}^{x_i}M_i\subseteq M_{i+1}$.
\end{enumerate}
 Consider the club $\{\alpha\mid x_{\alpha}=\alpha\}$. There is $\alpha<\kappa$ such that $x_\alpha=\alpha\in T\setminus S$ and there is a closed unbounded set $D\subseteq T\cap \alpha$. Let us construct an increasing sequence of conditions $\langle p_i\mid i<\theta\rangle$ such that:
 \begin{enumerate}
     \item $p_0=p$ and $p_i\in M_{\alpha}$.
     \item $[p_{i+1}\setminus\max(p_i)]\cap D\neq\emptyset$.
     \item there is $\max(p_i)<y_i\in M_{\alpha}$ such that $p_{i+1}\Vdash y_i\in\name{B}$.
 \end{enumerate} that $\langle p_i\mid i<j\rangle$ is defined and let $\eta=\sup(\max(p_i)\mid i<j)$. If $j$ is limit and $\eta=\alpha$, define $\theta=j$ and stop. Otherwise, there is $r<\alpha$ such that $\l p_i\mid i<j\r\subseteq M_{r}$ thus in $M_{j+1}$. By closure of $D$, $\eta\in D$, hence it is safe to define $$p_j=\cup_{i<j}p_i\cup\{\eta\}\subseteq T$$
  which is definable in $M_{\alpha}$. For the successor step, assume $p_i\in M_{\alpha}$ is define. Work inside $M_{\alpha}$ and let $p'_{i+1}$ be a condition deciding a value $y_{i}\in\name{B}$ above $\max(p_i)$. Since $D$ is unbounded, there is $z\in D\setminus \max(p'_{i+1})$ then $p_{i+1}=p'_{i+1}\cup\{z\}\in M_{\alpha}$ is as wanted.
  Finally, $\cup_{i<\theta}p_i\cup\{\alpha\}\in Club(T)$ must force that $\alpha\in \name{B}\cap(T\setminus S)$ which is a contradiction.
\end{proof}
\section{Implications}
In this section we will show that certain large cardinals weaker than $\kappa$-compacts already imply  an existence of a $\kappa$-complete ultrafilter extending the filters $\mathcal{D}_p(\mathbb{P})$.

Let us deal first with ${<}\kappa$-strategically closed forcing notion of size $\kappa$.

Recall that a cardinal $\kappa$ is called \emph{superstrong} if and only if there is an elementary embedding
$j\colon V \to M$ such that $\crit(j)=\kappa$ and $V_{j(\kappa)}\subseteq M$.

While $j(\kappa)$ is always a strong limit cardinal, and inaccessible in $M$, it need not be regular in $V$. Actually, $\kappa^{+} \leq \cof(j(\kappa)) \leq 2^\kappa$, for the first such cardinal, see \cite{Perlmutter2015}.
%\\However, $j(\kappa)$ is a cardinal in $V$ and is a strong limit there, since $V_{j(\kappa)}\subseteq M$ and $j(\kappa)$ is a measurable in $M$.
However, if $\cof(j(\kappa))>\lambda$, then ${}^\lambda j(\kappa)\subseteq M$.
%\\Let use such form of superstrongnees to show the following:

%The following theorem enables us to obtain a $\kappa$-complete ultrafilter extending the dense open filter of certain forcing notions.
\begin{theorem}\label{LesskappaStClUpperBound}
Suppose that there is an elementary embedding $j\colon V \to M$ such that $\crit j = \kappa$ and ${}^{2^\kappa}j(\kappa)\subseteq M$. Let $\mathbb{P}$ be a $<\kappa$-strategically closed forcing notion of size $\kappa$.

Then for every $p\in\mathbb{P}$ there is a $\kappa$-complete ultrafilter that extends $\mathcal{D}_p(\mathbb{P})$.
\end{theorem}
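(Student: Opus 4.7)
The plan is to produce, using $j$, a single ``master condition'' $q^* \in j(\mathbb{P})$ with $q^* \geq j(p)$ which lies in $j(D)$ for every $D \in \mathcal{D}(\mathbb{P})$. Given such $q^*$, the set $U = \{A \subseteq \mathbb{P} \mid q^* \in j(A)\}$ is (by the usual critical-point argument) a $\kappa$-complete ultrafilter on $\mathbb{P}$; it contains every dense open $D$ because $q^* \in j(D)$, and it contains the cone $\{q \geq p\}$ because $j$ of that cone is $\{q \geq j(p)\} \ni q^*$. Hence $U$ extends $\mathcal{D}_p(\mathbb{P})$.

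Before constructing $q^*$, I would first observe that the closure hypothesis ${}^{2^\kappa}j(\kappa) \subseteq M$ forces $2^\kappa < j(\kappa)$: every subset of $2^\kappa$ is coded by a function $2^\kappa \to 2 \subseteq j(\kappa)$, so $\mathcal{P}(2^\kappa) \subseteq M$ and $(2^\kappa)^V = (2^\kappa)^M$; since $M$ thinks $j(\kappa)$ is strong limit, $2^\kappa < j(\kappa)$. Because $|\mathbb{P}| = \kappa$ we have $|\mathcal{D}(\mathbb{P})| \leq 2^\kappa$, so I can fix in $V$ an enumeration $\langle D_\xi \mid \xi < \mu\rangle$ of $\mathcal{D}(\mathbb{P})$ with $\mu < j(\kappa)$.

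Now, by elementarity $j(\mathbb{P})$ is ${<}j(\kappa)$-strategically closed in $M$, so in $M$ there is a winning strategy $\sigma$ for Player II in the length-$(\mu + 1)$ game on $j(\mathbb{P})$ starting above $j(p)$. I would then simulate this game in $V$: set $q_0 = j(p)$; at successor stage $\xi + 1$ have Player I play some $q_{\xi+1} \in j(D_\xi)$ above the previous Player II response (possible since $j(D_\xi)$ is dense above every condition in $j(\mathbb{P})$, by elementarity), and have Player II respond by $\sigma$. At every limit stage $\lambda \leq \mu$ the partial play is a function from $\lambda \leq \mu$ into $j(\mathbb{P})$; identifying the underlying set of $\mathbb{P}$ with $\kappa$, this is a function $\lambda \to j(\kappa)$, and padding with zeros yields a function $2^\kappa \to j(\kappa)$ which lies in $M$ by the closure hypothesis. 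Thus $\sigma \in M$ can be evaluated in $M$ on this partial play and produce a legal Player II move. At stage $\mu$, $\sigma$ delivers an upper bound $q^*$ for the entire sequence; since each $j(D_\xi)$ is upward closed and $q^* \geq q_{\xi+1} \in j(D_\xi)$, we obtain $q^* \in j(D_\xi)$ for every $\xi$, as desired.

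The main obstacle is ensuring that the limit-stage partial plays are visible to $M$, so that the fixed strategy $\sigma$ can actually be applied in $M$; this is exactly what the closure ${}^{2^\kappa}j(\kappa) \subseteq M$ provides. A secondary subtlety is the preliminary calculation $2^\kappa < j(\kappa)$, without which the enumeration of $\mathcal{D}(\mathbb{P})$ could outrun the strategic-closure length available to $j(\mathbb{P})$ in $M$.
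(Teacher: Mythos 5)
Your proposal is correct and follows essentially the same route as the paper: enumerate the ($\leq 2^\kappa$ many) dense open sets, note $2^\kappa < j(\kappa)$, run the length-$(2^\kappa+1)$ strategic-closure game on $j(\mathbb{P})$ in $M$ using the closure ${}^{2^\kappa}j(\kappa)\subseteq M$ to keep the partial plays (and hence the strategy's responses) inside $M$, and derive the ultrafilter from the resulting master condition. The only differences are cosmetic (which player carries the strategy, and enumerating all of $\mathcal{D}(\mathbb{P})$ versus the dense open sets above $p$).
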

\begin{proof}

  Assume without loss of generality that $\mathbb{P}=\kappa$. Fix $p\in \mathbb{P}$. Denote $2^\kappa$ by $\lambda$.
  \\Clearly, $\lambda<j(\kappa)$, since $\mathcal{P}(\kappa)\subseteq M$ and $j(\kappa)$ is a measurable in $M$.
   \\Let $\langle D_\alpha \mid \alpha < \lambda\rangle$ be an enumeration of all subsets of $\mathbb{P}$ in $V$ which are dense above $p$ and open.
In $M$, let $\Sigma$ be a winning strategy for the 
game on $j(\mathbb{P})$ of length $\lambda + 1$.
Such $\Sigma$ exists since $j(\mathbb{P})$ is ${<}j(\kappa)$-strategically closed.

Let us pick by induction a sequence of conditions $p_\alpha \in j(\mathbb{P})=j(\kappa)$, $\alpha < \lambda$, such that $\forall \alpha < \beta$, $p_\alpha \leq p_\beta$ and $p_{\alpha + 1} \in j(D_\alpha)$. 

First, let $p_0=p$. Each condition $p_{\alpha}$ is played by Player I according to $\Sigma$ and $q_\alpha$ is played by Player II, to be a condition stronger than $p_\alpha$ in $j(D_\alpha)$. 

While the sequence $\langle j(D_\alpha) \mid \alpha < \lambda\rangle$ might not be in $M$, 
the sequence $\langle p_\alpha, q_\alpha \mid \alpha < \lambda\rangle$ is in $M$, since ${}^{\lambda}j(\kappa)\subseteq M$, and it is a play which is played according to the strategy $\Sigma$. Therefore, it has an upper bound $\tilde{p}$ which is stronger than all the conditions $p_\alpha, q_\alpha$, $\alpha < \lambda$. By construction, $\tilde{p}\in \bigcap_{\alpha < \kappa^+} j(D_\alpha)$.

Finally,
$$U=\{X\subseteq \kappa \mid \tilde{p}\in j(X)\}$$ will be as desired.
\end{proof}

 The assumption of the theorem cannot be optimal since 
 $V_{\kappa + 2} \subseteq M$, and thus  it is true in $M$ as well that for every ${<}\kappa$-strategically closed forcing notion $\mathbb{P}$, there is a $\kappa$-complete ultrafilter that extends its dense open filter. Thus, by reflection, the conclusion holds for many cardinals below $\kappa$ as well.

Next, we turn to the class of $\kappa$-distributive forcings. %for which will be compered with subcompact cardinals.
The upper bound in this case is a $1$-extendable cardinal:
\begin{definition}
A cardinal $\kappa$ is called $1$-extendible if there is a non-trivial elementary embedding $j:V_{\kappa+1}\rightarrow V_{\lambda+1}$ such that $crit(j)=\kappa$.
\end{definition}

\begin{prop}
If $\kappa$ is $1$-extendible then for every $\kappa$-distributive forcing $\mathbb{P}$ of size $\kappa$ and every $p\in\mathbb{P}$, the filter $D_p(\mathbb{P})$ can be extended to a $\kappa$-complete ultrafilter.
\end{prop}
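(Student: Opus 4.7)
The plan is to emulate the argument in Theorem \ref{LesskappaStClUpperBound}, but in the absence of strategic closure, to produce the master condition by invoking the $\lambda$-distributivity of $j(\mathbb{P})$ inside $V_{\lambda+1}$. First I would identify $\mathbb{P}$ with a subset of $\kappa$ equipped with its forcing order, so that $\mathbb{P}$, $p$, and every dense open $D\subseteq\mathbb{P}$ above $p$ lies in $V_{\kappa+1}$. Let $j\colon V_{\kappa+1}\to V_{\lambda+1}$ witness $1$-extendibility, with $\crit(j)=\kappa$ and $j(\kappa)=\lambda$. Since $V_{\kappa+1}$ satisfies that $\kappa$ is inaccessible, elementarity yields that $\lambda$ is inaccessible in $V$, so in particular $2^\kappa<\lambda$.

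The central step is to produce $\tilde p\geq j(p)$ in $j(\mathbb{P})$ such that $\tilde p\in j(D)$ for every dense open $D\subseteq\mathbb{P}$ above $p$. Enumerate such $D$ as $\langle D_\alpha:\alpha<\mu\rangle$ with $\mu\leq 2^\kappa<\lambda$, and form the graph
\[
C=\{\langle\alpha,\beta\rangle\mid \alpha<\mu,\ \beta\in j(D_\alpha)\}\subseteq\mu\times j(\mathbb{P})\subseteq V_\lambda,
\]
which therefore lies in $V_{\lambda+1}$. In $V_{\kappa+1}$, the $\kappa$-distributivity of $\mathbb{P}$ takes the coded form: for every $\nu<\kappa$ and every $C'\subseteq\nu\times\mathbb{P}$ whose fibers are dense open above $p$, the intersection of the fibers is dense open above $p$. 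Transferring this statement elementarily through $j$, the analogous assertion holds of $j(\mathbb{P})$ in $V_{\lambda+1}$; applied to $C$, and noting that each fiber $j(D_\alpha)$ is dense open above $j(p)$ in $j(\mathbb{P})$ by elementarity, this gives that $\bigcap_{\alpha<\mu}j(D_\alpha)$ is dense open above $j(p)$. Choose any $\tilde p\geq j(p)$ in this intersection.

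Finally I would set $U=\{X\subseteq\mathbb{P}\mid \tilde p\in j(X)\}$. The ultrafilter property is immediate from $j(\mathbb{P}\setminus X)=j(\mathbb{P})\setminus j(X)$; $\kappa$-completeness follows because $j$ fixes ordinals below $\kappa$ and commutes with intersections of $V_{\kappa+1}$-sequences of length $<\kappa$; and $U$ extends $D_p(\mathbb{P})$ since $\tilde p\geq j(p)$ places $\{q\mid q\geq p\}$ into $U$, while $\tilde p\in j(D_\alpha)$ for each $\alpha$ places every dense open above $p$ into $U$. The main subtlety I anticipate is the coding step: the family $\{j(D_\alpha):\alpha<\mu\}$ need not itself lie in $V_{\lambda+1}$, since the individual sets $j(D_\alpha)$ may have rank exactly $\lambda$, but its graph $C$ does, and it is to $C$ that the coded distributivity inside $V_{\lambda+1}$ applies. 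Everything else is routine manipulation of the elementary embedding.
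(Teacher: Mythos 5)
Your proposal is correct and follows essentially the same route as the paper: transfer the ($\kappa$-)distributivity of $\mathbb{P}$ through the $1$-extendibility embedding to find a single condition $\tilde p\geq j(p)$ in $\bigcap_\alpha j(D_\alpha)$, then derive the ultrafilter from $\tilde p$. Your graph-coding of the family $\{j(D_\alpha)\}_{\alpha<\mu}$ as a subset of $\mu\times j(\mathbb{P})\subseteq V_\lambda$ is in fact slightly more careful than the paper's treatment of the same point, which asserts $j''D_p(\mathbb{P})\in V_{\lambda+1}$ without the coding.
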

\begin{proof}
 Code $\l\mathbb{P},\leq_{\mathbb{P}}\r$ and an order of $\kappa$. Thus we can assume without loss of generality that $\l \mathbb{P},\leq_{\mathbb{P}}\r\in V_{\kappa+1}$. Recall that $D_p(\mathbb{P}):=\{D\subseteq \mathbb{P}\mid D\text{ is dense open above }p\}$. 
 Then $D_p(\mathbb{P})\subseteq V_{\kappa+1}$ and it is definable from $\l\mathbb{P},\leq_{\mathbb{P}}\r$. Since $j(\kappa)=\lambda$, and $V_{\kappa+1}\models \kappa$ is inaccessible cardinal, by elementarity $V_\lambda\models \lambda$ is an inaccessible cardinals,
 and therefore $\lambda$ is inaccessible cardinal in $V$. In particular, $V_\lambda$ is closed under $<\lambda$ sequences and the set $j''D_p(\mathbb{P})=\{j(D)\mid D\in D_p(\mathbb{P})\}\in V_{\lambda+1}$. By elementarity
 $j(\mathbb{P})$ is $\lambda$-distributive, and since $|j''D_p(\mathbb{P})|=|D_p(\mathbb{P})|=2^k<\lambda$, we conclude that $\cap_{D\in D_p(\mathbb{P})}j(D)$ is dense open in $j(\mathbb{P})$.
 In particular it is non empty and we can fix any $p^*\in \cap_{D\in D_p(\mathbb{P})}j(D)$. Now in $V$ we can define
 $$F=\{X\subseteq\mathbb{P}\mid p^*\in j(X)\}$$
 As in previous arguments, this $F$ is a $\kappa$-complete ultrafilter extending $D_p(\mathbb{P})$
\end{proof}

We deal here with the following weakening of $\kappa$-compactness:
\begin{center}
\emph{
For every $\kappa$-distributive forcing notion of cardinality $\kappa$, the filter of its dense open subsets can be extended to a $\kappa$-complete ultrafilter.}

\end{center}

In this context, the major difference between $\kappa$ being $\kappa$-compact and $\kappa$ being $1$-extendible is that we do not need to extend \textit{every} $\kappa$-complete filter on $\kappa$. For our purposes we are only interested in extending filters which are definable using a parameter which is a subset of $\kappa$. This distinction leads to the realm of subcompact cardinals. Subcompact cardinals were defined by R.\ Jensen:

 \begin{definition} A cardinal
$\kappa$ is called \emph{subcompact} if
 for every $A\subseteq H(\kappa^+)$,
there are  $\rho <\kappa$,  $B\subseteq H(\rho^+)$
 and an elementary embedding
$$j\colon \l H(\rho^+),\in, B \r \to \l H(\kappa^+),\in, A \r$$
with critical point $\rho$, such that $j(\rho) = \kappa$.
\end{definition}

The following strengthening  was introduced by I.\ Neeman and J.\ Steel \cite{NeemanSteelSubcompact}:

\begin{definition}\label{def-pi1-1}
$\kappa$ is called \emph{$\Pi_1^1$-subcompact} if
 for every $A\subseteq H(\kappa^+)$ and for every $\Pi_1^1$-statement $\Phi$,
 if $\l H(\kappa^+),\in, A \r\models \Phi$  then there are  $\rho <\kappa$ and $B\subseteq H(\rho^+)$
 such that
$\l H(\rho^+),\in, B \r\models \Phi$ and there is an elementary embedding
$$j:\l H(\rho^+),\in, B \r \to \l H(\kappa^+),\in, A \r$$
with critical point $\rho$, such that $j(\rho) = \kappa$.
\end{definition}
The third author showed in \cite{YairSquare} the following:

\begin{theorem}\label{thm-pi1-1}
If $\kappa$ is $\Pi_1^1$-subcompact, then it is a $\kappa$-compact cardinal i.e.\ every $\kappa$-complete filter over $\kappa$ extends to a $\kappa$-complete ultrafilter.

On the other hand, if $\kappa$ is $\kappa$-compact then $\square(\kappa)$ and $\square(\kappa^+)$ fails.
\end{theorem}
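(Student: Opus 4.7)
For the first implication, let $F$ be a $\kappa$-complete filter on $\kappa$ and assume for contradiction that it admits no $\kappa$-complete ultrafilter extension. The assertion ``for every predicate $U\subseteq H(\kappa^+)$, $U$ is not a $\kappa$-complete ultrafilter on $\kappa$ extending the distinguished predicate'' is $\Pi^1_1$ over $\langle H(\kappa^+),\in,F\rangle$, since the inner clause is first order in $\in$, the predicate for $F$, and the predicate $U$. By $\Pi^1_1$-subcompactness we obtain $\rho<\kappa$, $B\subseteq H(\rho^+)$, and an elementary embedding $j\colon\langle H(\rho^+),\in,B\rangle\to\langle H(\kappa^+),\in,F\rangle$ with $\crit j=\rho$, $j(\rho)=\kappa$, for which the same $\Pi^1_1$ statement holds in the small structure. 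Thus $B$ is a $\rho$-complete filter on $\rho$ admitting no $\rho$-complete ultrafilter extension inside $H(\rho^+)$.

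The plan is to use $j$ itself to exhibit such an extension and obtain a contradiction. The natural candidate is the seed ultrafilter $U_j:=\{X\in P(\rho)\mid \rho\in j(X)\}$, which is a $\rho$-complete ultrafilter contained in $H(\rho^+)$. The hard part is verifying $B\subseteq U_j$: elementarity gives only $X\in B \iff j(X)\in F$, describing $B$ as a pullback rather than as a seed ultrafilter, and this alone does not yield $\rho\in j(X)$. The remedy I would try is to enlarge $F$ so as to contain the club filter on $\kappa$—any $\kappa$-complete ultrafilter extending the enlargement still extends the original $F$—and to fold this into the $\Pi^1_1$ statement so that after reflection $B$ contains the club filter at $\rho$. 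Then for any club $C\subseteq\rho$ in $B$, the fact that $j(C)$ is a club at $\kappa$ with $\rho$ as a limit point of $C\subseteq j(C)$ forces $\rho\in j(C)$; a careful choice of predicate (e.g.\ $F$ together with a coherent sequence of clubs reflecting its sets) should then pin down $\rho\in j(X)$ for every $X\in B$.

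For the second implication, let $\vec C=\langle C_\alpha\mid\alpha\in\Lim\kappa\rangle$ be a $\square(\kappa)$-sequence. By $\kappa$-compactness, extend the club filter on $\kappa$ to a $\kappa$-complete ultrafilter $U$ and form the ultrapower $j\colon V\to M=\Ult(V,U)$ with $\crit j=\kappa$. By elementarity $j(\vec C)_\kappa$ is a club at $\kappa$ in $M$, and it lies in $V$ since it is a subset of $\kappa$. For $\alpha\in\Lim(j(\vec C)_\kappa)\cap\kappa$, coherence gives $j(\vec C)_\alpha=j(\vec C)_\kappa\cap\alpha$, and since $j\upr\alpha=\mathrm{id}$ with $|C_\alpha|<\kappa$, $j(\vec C)_\alpha=j(C_\alpha)=C_\alpha$. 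Hence $j(\vec C)_\kappa$ threads $\vec C$, contradicting $\square(\kappa)$. The $\square(\kappa^+)$ case follows the same skeleton but requires a finer choice of $\kappa$-complete filter on $\kappa$ encoding traces of the $\square(\kappa^+)$-sequence, together with a cofinality analysis around $j(\kappa^+)$ to locate a thread.

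Main obstacle: the principal challenge is in part one, namely reconciling the pullback-description of $B$ with the seed-description of $U_j$. Plain elementarity does not force $B\subseteq U_j$, and making it so requires the predicate reflected to carry extra structural information about $F$ (the club filter or similar normality data) so that every set of $B$ behaves well with respect to the critical point $\rho$. For part two only the $\square(\kappa^+)$ clause is genuinely delicate; a single ultrapower of $V$ by a $\kappa$-complete ultrafilter on $\kappa$ does not directly produce a thread at $\kappa^+$, and the extra strength of $\kappa$-compactness must be mobilized to supply one.
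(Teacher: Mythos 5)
The paper states this theorem without proof, as a citation of \cite{YairSquare}, so there is no in-paper argument to compare against; judged on its own terms, your proposal has a genuine gap in each half. For the first implication your reduction is set up correctly, but you stall at the decisive step, and the fix you propose does not work. The seed is not the critical point. Since $B\subseteq H(\rho^+)$ has cardinality at most $2^\rho<\kappa$ and, by elementarity with respect to the predicates, $X\in B$ if and only if $j(X)\in F$, the family $j''B$ is a subfamily of $F$ of size less than $\kappa$; by $\kappa$-completeness of $F$ we get $\bigcap j''B\neq\emptyset$, and any $\delta$ in this intersection yields the $\rho$-complete ultrafilter $U=\{X\subseteq\rho\mid \delta\in j(X)\}$, which contains $B$, is a subset of $H(\rho^+)$, and therefore contradicts the reflected $\Pi^1_1$ statement. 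Your club-filter remedy cannot substitute for this: take $F$ to be the ($\kappa$-complete, proper) filter generated by the club filter together with $\{\alpha<\kappa\mid\cf(\alpha)=\omega\}$. Its pullback $B$ then contains $\{\alpha<\rho\mid\cf(\alpha)=\omega\}$, yet $\rho$ is regular and hence not in the $j$-image of that set, so $B\not\subseteq U_j$ no matter how much normality data you reflect. The completeness of the target filter $F$ is what manufactures a seed precisely where the critical point fails to be one.

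For the second implication, your treatment of $\square(\kappa)$ is correct (and uses only measurability; weak compactness already suffices), but the $\square(\kappa^+)$ clause --- which is the substantive content of this half --- is not proved. A single ultrapower by a $\kappa$-complete ultrafilter on $\kappa$ has critical point $\kappa$ and does not by itself produce a thread at $\kappa^+$; the known argument builds a specific $\kappa$-complete filter on $\kappa$ from the putative $\square(\kappa^+)$-sequence (for instance from agreement sets of surjections $g_\alpha\colon\kappa\to\alpha$), so that in the ultrapower by any $\kappa$-complete extension the club $j(\vec C)_{\delta}$, for $\delta=\sup j''\kappa^+$, pulls back along $j$ to a thread; the entire difficulty is verifying that this pullback is closed and unbounded, the obstruction being points of cofinality $\kappa$ where $j$ is discontinuous, and that is exactly where the choice of filter is used. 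Saying that ``the extra strength must be mobilized'' does not discharge this step.
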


The failure of square at two consecutive cardinals seem to have very high consistency strength, which made the conjecture that $\kappa$-compactness is equiconsistent with $\Pi_1^1$-subcompactness plausible. However, a recent work of
Larson and Sargsyan, \cite{LarsonSargsyan2021}, casts doubt on
this heuristic by showing that the consistency strength of the failure of two consecutive squares at $\omega_3$ and $\omega_4$
is below a Woodin limit of Woodin cardinals.

Let us start with the following observation:
\begin{prop}
Let $\kappa$ be a subcompact cardinal such the filter $F_{\mathbb Q}$ of dense open subsets of $\mathbb Q$ extends to a $\kappa$-complete ultrafilter over $\kappa$, for every
$\kappa$-distributive poset $\mathbb Q$ of size $\kappa$.

Then $\kappa$ is a limit of cardinals with the same extension property.
\end{prop}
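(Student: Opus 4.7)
The plan is to reflect the extension property from $\kappa$ to unboundedly many smaller cardinals using the subcompactness of $\kappa$. Fix an arbitrary $\alpha<\kappa$; it suffices to produce $\rho$ with $\alpha<\rho<\kappa$ having the extension property.

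Using the hypothesis at $\kappa$, for every $\kappa$-distributive poset $\mathbb{Q}$ of size $\kappa$ with underlying set $\kappa$, fix a $\kappa$-complete ultrafilter $U_{\mathbb{Q}}$ extending $F_{\mathbb{Q}}$. Define
\[
A = \bigl\{\langle \mathbb{Q},X\rangle \mid X \in U_{\mathbb{Q}}\bigr\} \cup \bigl\{\langle 0,\alpha\rangle\bigr\} \subseteq H(\kappa^+),
\]
where the pair $\langle 0,\alpha\rangle$ is tagged so that $\alpha$ is the unique ordinal $\beta$ with $\langle 0,\beta\rangle \in A$. The structure $\langle H(\kappa^+),\in,A\rangle$ satisfies the first-order statement ``for every $\kappa$-distributive poset $\mathbb{Q}$ of size $\kappa$ on $\kappa$, the class $\{X \mid \langle \mathbb{Q},X\rangle \in A\}$ is a $\kappa$-complete ultrafilter on $\kappa$ extending $F_{\mathbb{Q}}$'', since all quantifiers range only over subsets of $\kappa$ and ${<}\kappa$-sequences of such, all of which lie in $H(\kappa^+)$.

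Apply subcompactness to $A$ to obtain $\rho<\kappa$, $B\subseteq H(\rho^+)$, and an elementary embedding $j\colon \langle H(\rho^+),\in,B\rangle \to \langle H(\kappa^+),\in,A\rangle$ with $\crit j = \rho$ and $j(\rho)=\kappa$. Elementarity yields a unique ordinal $\bar\alpha < \rho^+$ with $\langle 0,\bar\alpha\rangle \in B$, and $j(\bar\alpha)=\alpha$. Since $j$ fixes ordinals below $\rho$, sends $\rho$ to $\kappa$, and sends any ordinal in $(\rho,\rho^+)$ above $\kappa$, the equality $j(\bar\alpha)=\alpha<\kappa$ forces $\bar\alpha<\rho$, hence $\alpha = \bar\alpha < \rho$. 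A second application of elementarity gives that $\langle H(\rho^+),\in,B\rangle$ believes ``every $\rho$-distributive poset $\mathbb{P}$ of size $\rho$ on $\rho$ has its dense open filter extended by the $\rho$-complete ultrafilter $\{X \mid \langle \mathbb{P},X\rangle \in B\}$''.

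It remains to transfer this from $H(\rho^+)$ to $V$. The properties ``$\rho$-distributive poset of size $\rho$ on $\rho$'', ``dense open subset of $\mathbb{P}$'', and ``$\rho$-complete ultrafilter on $\rho$ extending $F_{\mathbb{P}}$'' quantify only over subsets of $\rho$ and sequences of length ${<}\rho$ of such, all of which lie in $H(\rho^+)$, so these notions are absolute between $H(\rho^+)$ and $V$. Hence $\rho$ genuinely has the extension property in $V$, and since every $\rho$-distributive poset of size $\rho$ is isomorphic to one on $\rho$, the restriction to underlying set $\rho$ is harmless. Because $\alpha<\kappa$ was arbitrary and the witness satisfies $\rho>\alpha$, the set of such $\rho$ is unbounded in $\kappa$, proving that $\kappa$ is a limit of cardinals with the extension property. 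The one real subtlety is the coding trick that forces $\rho>\alpha$; everything else is routine absoluteness combined with elementarity.
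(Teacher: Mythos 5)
Your proof is correct and follows essentially the same route as the paper: code the chosen ultrafilters $U_{\mathbb{Q}}$ into a predicate $A\subseteq H(\kappa^+)$, apply subcompactness to reflect to some $\langle H(\rho^+),\in,B\rangle$, and use absoluteness of distributivity, density, and $\rho$-completeness between $H(\rho^+)$ and $V$. The one place you go beyond the paper is the tag $\langle 0,\alpha\rangle$ forcing $\rho>\alpha$; the paper's proof only exhibits a single $\rho$ and leaves the unboundedness implicit, so this extra step is a welcome (and correct) completion of the argument.
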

\begin{proof}
Clearly it is enough to deal with posets which are partial orders on the set $\kappa$. 

For every such $\mathbb Q$, fix a $\kappa$-complete ultrafilter $ F^*_{\mathbb Q}$ over $\kappa$ which extends $F_{\mathbb Q}$.
%By $\GCH$, it can be codded into a subset of $\kappa^+$. 
The ultrafilter $F^*_{\mathbb{Q}}$ is a subset of $P(\kappa) \subseteq H(\kappa^{+})$. Thus, one can code the set:
\[\{\l \mathbb Q, F^*_{\mathbb Q} \r \mid \mathbb Q\subseteq \kappa, \text{ is } \kappa\text{-distributive}\}\]
by a subset $A$ of $H(\kappa^+)$ (for example, we can set $A = \bigcup_{\mathbb{Q}} \{\mathbb{Q}\} \times F^*_{\mathbb{Q}}$).

Now by the definition of subcompactness, with parameter $A$,
there are $\rho <\kappa$ and $B\subseteq \rho^+$
%$\l H(\rho^+),\in, B \r\models \Phi$ and
and an elementary embedding
$$j\colon \l H(\rho^+),\in, B \r \to \l H(\kappa^+),\in, A \r$$
with critical point $\rho$, such that $j(\rho) = \kappa$.

Since the set of all $\rho$-distributive posets is
definable in $H(\rho^+)$, and $j$ is elementary, the set $B$ is a code of the set:
\[\{ \l \mathbb Q, F^*_{\mathbb Q} \r \mid \mathbb{Q} \subseteq \rho \text{ is }\rho\text{-distributive}\},\]
and in particular for every $\rho$-distributive poset of size $\rho$, $\mathbb Q$, there is a $\rho$-complete filter extending $F_{\mathbb{Q}}$. 
\end{proof}
Let us consider now the lightface version of Definition \ref{def-pi1-1}:
\begin{definition} A cardinal
$\kappa$ is called \emph{lightface  $\Pi_1^1$-subcompact} if
\\for every $\Pi_1^1$-statement $\Phi$,
 if $\l H(\kappa^+),\in\r\models \Phi$  then there is $\rho <\kappa$
 such that
$\l H(\rho^+),\in\r\models \Phi$ and there is an elementary embedding
$$j:\l H(\rho^+),\in \r \to \l H(\kappa^+),\in \r$$
with critical point $\rho$, such that $j(\rho) = \kappa$.
\end{definition}

The definition does not allow us to add parameters from $H(\kappa^+)$ to the formula $\Phi$, and thus this large cardinal property is witnessed by a countable set of elementary embeddings. 

The next proposition is similar to \ref{thm-pi1-1}:
\begin{prop}
Let $\kappa$ be a lightface $\Pi^1_1$-subcompact. Then every $\kappa$-distributive forcing $\mathbb{P}$ of size $\kappa$, and every $p\in\mathbb{P}$, the filter $D_p(\mathbb{P})$ can be extended to a $\kappa$-complete ultrafilter.
\end{prop}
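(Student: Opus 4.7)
The plan is to mimic the preceding $1$-extendibility proof, using the elementary embedding supplied by lightface $\Pi^1_1$-subcompactness. Fix $\mathbb{P}$ a $\kappa$-distributive forcing of size $\kappa$ and $p\in\mathbb{P}$, coded so that $(\mathbb{P},\leq_{\mathbb{P}},p)\in H(\kappa^+)$. I proceed by contradiction: assume $D_p(\mathbb{P})$ cannot be extended to a $\kappa$-complete ultrafilter. I would formulate this failure as a parameter-free $\Pi^1_1$-sentence $\Phi$ over $\langle H(\kappa^+),\in\rangle$---roughly, ``$\forall X$, $X$ is not a code of a triple $(\mathbb{Q},q,U)$ with $\mathbb{Q}$ a $\kappa$-distributive forcing of size $\kappa$, $q\in\mathbb{Q}$, and $U$ a $\kappa$-complete ultrafilter extending $D_q(\mathbb{Q})$'', packaged so that its truth at $\langle H(\kappa^+),\in\rangle$ is guaranteed by the failure of the extension property for $(\mathbb{P},p)$.

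Applying lightface $\Pi^1_1$-subcompactness to $\Phi$ gives $\rho<\kappa$ and an elementary $j\colon\langle H(\rho^+),\in\rangle\to\langle H(\kappa^+),\in\rangle$ with $\crit(j)=\rho$, $j(\rho)=\kappa$, and $H(\rho^+)\models\Phi$; the reflected statement asserts that some $\rho$-distributive forcing $\bar{\mathbb{Q}}$ of size $\rho$ in $H(\rho^+)$, with a condition $\bar q$, has $D_{\bar q}(\bar{\mathbb{Q}})$ not extendable to a $\rho$-complete ultrafilter. Now I mimic the $1$-extendibility construction: $D_{\bar q}(\bar{\mathbb{Q}})$ has cardinality at most $(2^\rho)^{H(\rho^+)}<\kappa$, so applying $j$ gives a collection of fewer than $\kappa$ dense open subsets of $j(\bar{\mathbb{Q}})\in H(\kappa^+)$, and by $\kappa$-distributivity of $j(\bar{\mathbb{Q}})$ the intersection is dense; pick $p^*$ in this intersection and set $U=\{A\subseteq\bar{\mathbb{Q}}\mid p^*\in j(A)\}$. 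Then $U$ is a $\rho$-complete ultrafilter on $\bar{\mathbb{Q}}$ in $V$ extending $D_{\bar q}(\bar{\mathbb{Q}})$, contradicting the reflected $\Phi$.

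The main obstacle is verifying that the contradiction hypothesis can be genuinely packaged as a parameter-free $\Pi^1_1$-sentence. The naive formulation has a first-order existential ``$\exists(\mathbb{Q},q)$'' in front of the $\Pi^1_1$-kernel ``$\forall U\,U$ does not extend $D_q(\mathbb{Q})$'', and absorbing this outer existential into a single second-order universal---so as to obtain a bona fide $\Pi^1_1$-sentence---requires a careful encoding, combining the witness pair $(\mathbb{Q},q)$ together with the would-be ultrafilter $U$ into a single second-order object ranged over by $\forall X$, in the style of the boldface argument of~\cite{YairSquare}. A secondary subtlety is reconciling the external (in $V$) existence of the constructed $U$ with its internal visibility inside $H(\rho^+)$, which may force one to replace $U$ by an equivalent coded witness of size at most $\rho$ before the contradiction lands in the reflected theory.
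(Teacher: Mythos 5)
Your argument is essentially the paper's: negate the conclusion, express the failure as a parameter-free $\Pi^1_1$ sentence over $\l H(\kappa^+),\in\r$, reflect it via lightface $\Pi^1_1$-subcompactness to some $\rho<\kappa$, and then use the embedding $j$ together with the $j(\rho)$-distributivity of $j(\bar{\mathbb{Q}})$ to intersect the $j$-images of the (fewer than $\kappa$ many) dense open sets and derive an ultrafilter from a point in the intersection, contradicting the reflected statement.

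Two remarks on the obstacles you flag. The ``main obstacle'' is resolved by a standard normal-form fact, not by the encoding you sketch: a sentence of the shape $\exists \mathbb{Q}\,\forall U\,\mu(\mathbb{Q},U)$ (first-order existential in front of a $\Pi^1_1$ kernel) is equivalent, using \AC, to $\forall F\,\exists \mathbb{Q}\,\mu(\mathbb{Q},F(\mathbb{Q}))$, i.e.\ one Skolemizes the \emph{negation}; this is the closure of $\Pi^1_1$ under first-order quantification, and the paper simply cites Drake for it. Your proposal to fold the witness pair $(\mathbb{Q},q)$ into the universally quantified second-order variable $X$ would turn the existential over $(\mathbb{Q},q)$ into a universal and change the meaning, so as literally described it does not work. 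The ``secondary subtlety'' is a non-issue: the second-order quantifier of $\Phi$ interpreted in $\l H(\rho^+),\in\r$ ranges over arbitrary subsets of $H(\rho^+)$ in $V$, and the constructed $U$ is a subset of $\power(\bar{\mathbb{Q}})\subseteq H(\rho^+)$, so it falls directly within the range of the reflected universal quantifier and no re-coding is needed.
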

\begin{proof}
Assume otherwise, and let $\Phi$ be the statement that there is  $\mathbb{P}=\langle \kappa,\leq_{\mathbb{P}}\rangle\in H(\kappa^+)$ and no ulrafilter extending $D_p(\mathbb{P})$. $\Phi$ is of the form $$\underset{First \ order}{\underbrace{\exists\mathbb{P}}}\underset{Second \ order}{\underbrace{\forall U}} \underset{First \  order}{\underbrace{\mu(\mathbb{P},U)}}$$
Using $\AC$, such a formula can be expressed as a $\Pi^1_1$ formula \cite[P. 153, Lemma 7.2]{Drake}.
Note that $\Phi$ is defined with no parameters, hence by a lightface $\Pi^1_1$-subcompactness of $\kappa$, there is $\rho<\kappa$ an elementary embedding
$$j:H(\rho^+)\rightarrow H(\kappa^+)$$
with critical point $\rho$ such that $j(\rho)=\kappa$,
such that $H(\rho^+)\models\Phi$.

 Therefore there is $\mathbb{P}_\rho$ which is a counterexample of a forcing of size $\rho$ which is $\rho$-distributive such that there is no $\kappa$-complete filter extending the filter $D_p(\mathbb{P}_\rho)$ for some $p\in\mathbb{P}_{\rho}$. Let us enumerate all dense open subsets of $\mathbb{P}_{\rho}$ above $p$ by $\langle D_i\mid i<2^\rho\rangle$. The sequence $\langle j(D_i)\mid i<2^\rho\rangle$ is in $H(\kappa^+)$, since $2^\rho < \kappa$. By elementarity, $j(\mathbb{P}_{\rho})$ is $j(\rho)$- distributive and therefore $\bigcap_{i<\rho^+}j(D_i)\neq\emptyset$, so let $x$ be an element in the intersection. Then
$$\{X\subseteq\mathbb{P}_{\rho}\mid x\in j(X)\}$$
is a $\rho$-complete ultrafilter extending $D_p(\mathbb{P}_{\rho})$ --- a contradiction to the choice of $\mathbb{P}_\rho$.
\end{proof}
To see that the notion of a lightface $\Pi^1_1$-subcompact is strictly weaker than $\Pi^1_1$-subcompact we have the following proposition:
\begin{prop}
Let $\kappa$ be  $\Pi^1_1$-subcompact. 
Then $\kappa$ is a limit
of lightface $\Pi^1_1$-subcompact cardinals.
\end{prop}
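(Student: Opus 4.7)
The plan is to fix an arbitrary $\alpha<\kappa$ and produce a lightface $\Pi^1_1$-subcompact $\rho\in(\alpha,\kappa)$ by a single application of the $\Pi^1_1$-subcompactness of $\kappa$. The difficulty is that the statement ``$\kappa$ is lightface $\Pi^1_1$-subcompact'' is naturally $\Sigma^1_1$ over $H(\kappa^+)$: the hypothesis ``$H(\kappa^+)\models\Phi_n$'' is $\Pi^1_1$ and sits on the wrong side of the implication. I will absorb the $\Pi^1_1$-truth predicate and its counterexamples into the parameter $A$, so that the desired reflection statement becomes $\Pi^1_1$-expressible.

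Enumerate the parameter-free $\Pi^1_1$-formulas as $\Phi_n\equiv\forall X\,\phi_n(X)$, let $T=\{n:H(\kappa^+)\models\Phi_n\}$, and for each $n\notin T$ fix a counterexample $X_n\subseteq H(\kappa^+)$. Set
\[
A\ =\ \{(*,\alpha)\}\cup\{(\mathrm{t},n):n\in T\}\cup\bigcup_{n\notin T}\{(n,y):y\in X_n\}\ \subseteq\ H(\kappa^+),
\]
so that the fibre $X_n^A:=\{y:(n,y)\in A\}$ is a subset of $H(\kappa^+)$ definable from $A$ and equal to $X_n$ when $n\notin T$. Consider the sentence $\Psi$ conjoining
\[
\begin{aligned}
&\exists!\gamma\,\bigl(\gamma\in\Ord\wedge(*,\gamma)\in A\bigr),\\
&\forall X\,\forall n\,\bigl[(\mathrm{t},n)\notin A\ \vee\ \phi_n(X)\bigr],\\
&\forall n\,\bigl[(\mathrm{t},n)\in A\ \vee\ \neg\phi_n(X_n^A)\bigr],\\
&\forall n\,\bigl[(\mathrm{t},n)\notin A\ \vee\ \exists\sigma{<}\kappa\,\exists j\,\vartheta(\sigma,j,n)\bigr],
\end{aligned}
\]
where $\vartheta(\sigma,j,n)$ abbreviates ``$j\colon H(\sigma^+)\to H(\kappa^+)$ is elementary, $\crit j=\sigma$, $j(\sigma)=\kappa$, $H(\sigma^+)\models\Phi_n$''. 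The first, third and fourth conjuncts are first-order in $(H(\kappa^+),\in,A)$---in particular, $\exists j$ is bounded because any such embedding has size $<\kappa$ and lives in $H(\kappa^+)$, and for $\sigma<\kappa$ the satisfaction ``$H(\sigma^+)\models\Phi_n$'' is first-order since $H(\sigma^+)$ is a set in $H(\kappa^+)$---while the second is $\Pi^1_1$. Hence $\Psi$ is $\Pi^1_1$, and $\Psi$ holds in $(H(\kappa^+),\in,A)$ by the construction of $A$ together with lightface $\Pi^1_1$-subcompactness of $\kappa$ (inherited from $\Pi^1_1$-subcompactness).

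Apply $\Pi^1_1$-subcompactness to $(A,\Psi)$ to obtain $\rho<\kappa$, $B\subseteq H(\rho^+)$ and an elementary embedding $j\colon(H(\rho^+),\in,B)\to(H(\kappa^+),\in,A)$ with $\crit j=\rho$, $j(\rho)=\kappa$ and $(H(\rho^+),\in,B)\models\Psi$. The reflected first conjunct forces $\alpha\in j[H(\rho^+)]$; since $j$ fixes ordinals below $\rho$ and sends $\rho$ to $\kappa$, skipping $[\rho,\kappa)$, we conclude $\alpha<\rho$. The central calculation identifies $B$: elementarity on the atomic formula ``$(n,X)\in A$'' combined with elementarity on the first-order $\phi_n$ yields $(n,X_0)\in B\iff H(\rho^+)\models\neg\phi_n(X_0)$, so $B$ is precisely the counterexample parameter for $H(\rho^+)$; the reflected second and third conjuncts then force $(\mathrm{t},n)\in B\iff H(\rho^+)\models\Phi_n$. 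The reflected fourth conjunct therefore says: for every $n$ with $H(\rho^+)\models\Phi_n$ there exist $\mu<\rho$ and an elementary $j'\colon H(\mu^+)\to H(\rho^+)$ with $\crit j'=\mu$, $j'(\mu)=\rho$ and $H(\mu^+)\models\Phi_n$. Thus $\rho$ is lightface $\Pi^1_1$-subcompact, and since $\alpha<\kappa$ was arbitrary, $\kappa$ is a limit of such cardinals.

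The main obstacle is the design of $A$: it must carry simultaneously the truth-marker $(\mathrm{t},n)$ and, for each false $\Phi_n$, an arbitrarily large counterexample as the fibre $X_n^A$, so that the assertion ``$A$ correctly encodes $\Pi^1_1$-truth on $H(\kappa^+)$'' compresses into the single $\Pi^1_1$-clause of $\Psi$ (the second conjunct, in which the countable conjunction over $n$ is absorbed under the single $\forall X$) together with first-order clauses. Once this packaging is secured, $\Pi^1_1$-subcompactness automatically transports both the correctness of the encoding and the reflection clause across $j$ to $(H(\rho^+),\in,B)$, forcing lightface $\Pi^1_1$-subcompactness at $\rho$.
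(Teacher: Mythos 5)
Your overall strategy---packaging the $\Pi^1_1$ truth set $T$, the counterexamples, and an ordinal $\alpha$ into the parameter $A$, asserting the correctness of this packaging by a single $\Pi^1_1$ sentence, and reflecting---is the same as the paper's, and your first conjunct handles the ``limit'' part cleanly. But there is a genuine gap in the fourth conjunct. You claim that
\[
\forall n\,\bigl[(\mathrm{t},n)\notin A\ \vee\ \exists\sigma{<}\kappa\,\exists j\,\vartheta(\sigma,j,n)\bigr]
\]
is first-order over $\l H(\kappa^+),\in,A\r$ because the quantifier $\exists j$ is bounded. Boundedness is not the issue: the clause ``$j\colon H(\sigma^+)\to H(\kappa^+)$ is (fully) elementary'' refers to satisfaction in the target structure $H(\kappa^+)$ itself, and by Tarski's undefinability theorem there is no first-order formula over $\l H(\kappa^+),\in\r$ defining its own satisfaction relation, hence none expressing full elementarity of a map into it. (Contrast this with ``$H(\sigma^+)\models\Phi_n$'', which is indeed first-order because $H(\sigma^+)$ is a set element of $H(\kappa^+)$.) If you try to repair this by quantifying over satisfaction classes for $H(\kappa^+)$, the second-order quantifier sits inside the first-order $\exists\sigma\,\exists j$, making the conjunct $\Sigma^1_1$ (or of mixed type), so $\Psi$ is no longer visibly $\Pi^1_1$ without a further rearrangement of quantifiers that you would have to justify. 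A milder instance of the same phenomenon affects your second and third conjuncts, where $\phi_n(X)$ with $n$ a quantified variable needs the universal $\Pi^1_1$ formula and the $\Delta^1_1$-definability of first-order satisfaction (the paper cites Drake for exactly this); that part is standard and repairable.

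The paper circumvents the elementarity problem by a different device, and this is the idea your proof is missing: it does not assert the existence of the witnessing embeddings inside the reflected formula at all. Instead it fixes, for each true $\Phi$, an embedding $j_\Phi$ witnessing (lightface) reflection and codes the countably many $j_\Phi$ themselves into the parameter $A$; the formula $\alpha(A)$ only asserts that the coded truth set is correct. After reflection, the parameter $B$ codes candidate maps $i_\zeta\colon H(\rho_\zeta^+)\to H(\rho^+)$, and their elementarity is verified externally in $V$ by checking $j\circ i_\zeta=j_\zeta$ and using that $j$ and $j_\zeta$ are elementary. To complete your argument you would either adopt this device or carefully show that ``$j$ is elementary into $H(\kappa^+)$'' can be placed in a genuinely $\Pi^1_1$ position, e.g.\ by pulling a universally quantified satisfaction class to the front of $\Psi$ and checking that the resulting sentence still holds and still yields true elementarity after reflection.
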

\begin{proof}
 Suppose that $\kappa$ is a $\Pi^1_1$-subcompact cardinal.
Let $\Phi$ be a $\Pi^1_1$ statement (with no parameters). If $\Phi$ holds in $H(\kappa^{+})$ let 
$$j_{\Phi}\colon\langle H(\rho_\Phi^+),\in\rangle\rightarrow \langle H(\kappa^+),\in\rangle$$ witness the reflection. Otherwise, let $B_\Phi$ be a subset of $H(\kappa^{+})$ witness the negation of $\Phi$. Let $T\subseteq \omega$ be the set of all G{\"o}del number of true $\Pi^1_1$-formulas in $H(\kappa^{+})$, and let $k_\Phi$ be the G{\"o}del number of $\Phi$.

For each $\Phi$ such that $k_\Phi \in T$, $j_\Phi\subseteq H(\rho_\Phi^+)\times H(\kappa^+)\subseteq H(\kappa^+)$.
 There are only countably many such formulas $\Phi$, and thus we can code all those elementary embeddings as a single subset $A_T\subseteq H(\kappa^+)$\footnote{indeed, $A_T$ is an element of $H(\kappa^{+})$.}. 
 
 Similarly, we can gather all the sets $B_\Phi$ for  $\Phi$ such that $k_\Phi \notin T$, into a single subset of $H(\kappa^{+})$, $B$. Since the truth values of first order formulas is $\Delta_1^1$, we can take a $\Pi^1_1$-formula $\Lambda$ with parameter $k$, using the predicate $B$ such that $\Lambda(k, B)$ if and only if $k$ is the G{\"o}del number of a $\Pi^1_1$-formula $\Phi$ and $B_k$ is a counterexample doe $\Phi$. 
 
 Let $A$ be a set coding $T$, $A_T$ and $B$.
 
 There is a universal $\Pi^1_1$-formula $\Psi(y)$ where $y$ is a first order free variable such that for every regular cardinal
$\beta$, every $\Pi^1_1$ statement $\phi$, \[H(\beta)\models \phi\Longleftrightarrow H(\beta)\models \Psi(k)\] 
for some natural number $k$ which is the G{\"o}del numbering of formulas \cite[p. 272, Lemma 1.9]{Drake}.
%For any $\Pi^1_1$-statement $\alpha$ such that $H(\kappa^+)\models \alpha$, pick such natural number $k_\alpha$ we let $A$ be the set of triples $\langle k_{\Phi},x,j_{\Phi}(x)\rangle\in H(\kappa^+)$. 
In the language of the model 
$\l H(\kappa^+),\in, A\r$ we can formulate the statement $\alpha(A)$ "For every $\Pi^1_1$-statement $\Phi$, $k_\Phi\in T$ implies $\Psi(k_\Phi)$ and $k_\alpha \notin T$ implies $\Lambda(B, k)$".
Now, $\l H(\kappa^+),\in, A\r\models \alpha(A)$,
 apply $\Pi^1_1$-subcompactness to $A$,
there are $\rho < \kappa$,  $B \subseteq H(\rho^+)$ and
 an elementary embedding:
$$j\colon\langle H(\rho^+),\in,B\rangle\rightarrow \langle H(\kappa^+),\in,A\rangle$$
such that $\crit(j)=\rho$, $j(\rho)=\kappa$ and $\langle H(\rho^+),\in,B\r\models \alpha(B)$.
Let us show that $\rho$ is lightface $\Pi^1_1$-subcompact and we will be done. Let $\zeta$ be a $\Pi^1_1$-statement, such that $\langle H(\rho^+),\in\rangle\models \zeta$, then $k_\zeta$ is coded in $B_1$ and by elementarity of $j$ also in $A_1$, hence  $\langle H(\kappa^+),\in\rangle\models\zeta$.
So there is an embedding $j_{\zeta}$ coded by $A$.
In particular for $\rho_\zeta<j(\rho)$, $$\l H(\kappa^+),\in,A\r\models  \{x\mid \l k_\zeta,x,j_\zeta(x)\r\in A\}=H(\rho_\zeta^+)$$ by elementarity of $j$, there is $\rho'_\zeta<\rho$ such that $$\langle H(\rho^+),\in,B\rangle\models\{x\mid \l k_\zeta,x,j_\zeta(x)\r\in B\}=H(\rho_\zeta^{'+}).$$
It must be that $\rho'_\zeta=j(\rho_\zeta')=\rho_\zeta$, since the critical point is $\rho$. For every $x\in H(\rho_\zeta^+)$ there is a unique $y$ such that $\langle k_\zeta,x,y\r\in B$, define $i_\zeta(x)=y$. So
$$i_\zeta:\l H(\rho_\zeta^+),\in\r\rightarrow\l H(\rho^+),\in \r$$
We claim that $i_\zeta$ is elementary, and that $i_\zeta(\rho_\zeta)=\rho$. This will follow after we show that $j\circ i_\zeta=j_\zeta$. Indeed, $\langle k_\zeta, x,i_{\zeta}(x)\rangle\in B$ and be elementarity $\l k_\zeta, j(x),j(i_{\zeta}(x))\r\in A$ but $j(x)=x$ since $\rho_\zeta^+<\rho$ and therefore $\l k_\zeta ,x,j(i_{\zeta}(x))\r=\l k_{\zeta}, x, j_{\zeta}(x)\r$ in particular $j(i_\zeta(x))=j_\zeta(x)$.
\end{proof}

\section{Lower bound}

In this section we deal with the forcing notion for shooting a club through the stationary set of singular ordinals below $\kappa$, i.e.\
 $$Q=\{a \subseteq \kappa \mid |a|<\kappa, a \text{ is closed and each member of } a \text{ is singular}\} $$
 ordered by end-extension.

This forcing is $<\kappa$-strategically closed. In our framework, $\kappa$ is strongly inaccessible and thus this forcing is of cardinality $\kappa$. 

Our aim will be to show the following:

\begin{theorem}\label{thm:lowerbound}
 Let us assume that there is a $\kappa$-complete ultrafilter  which extending $\mathcal{D}_{\emptyset}(Q)$.

Then either there is an inner model for $\exists \lambda,\, o(\lambda) = \lambda^{++}$, or $o^{\mathcal{K}}(\kappa) > \kappa^+$.
\end{theorem}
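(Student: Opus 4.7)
I would proceed by contradiction: suppose $U$ is a $\kappa$-complete ultrafilter extending $\mathcal{D}_\emptyset(Q)$, no inner model satisfies $\exists\lambda\, o(\lambda)=\lambda^{++}$, and $o^{\calK}(\kappa)\leq\kappa^+$. Under the first anti-large-cardinal assumption, the core model $\calK$ (in the style of Mitchell, Schimmerling and Zeman) is well-defined, iterable, universal, and satisfies weak covering: $(\mu^+)^{\calK}=\mu^+$ for every $V$-singular cardinal $\mu$. Under the Mitchell-order assumption, only boundedly-many-below-$\kappa^++1$ Mitchell-distinct measures on $\kappa$ appear on the $\calK$-sequence. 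The strategy is to produce from $U$ a witness to $o^{\calK}(\kappa)>\kappa^+$, yielding a contradiction.

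Form the ultrapower $j\colon V\to M=\Ult(V,U)$, with $\crit(j)=\kappa$, and set $b=[id]_U\in M$. By {\L}o{\'s}'s theorem applied to the first-order definition of $Q$, $b$ is a condition in $j(Q)^M$, namely a closed bounded subset of $j(\kappa)$ in $M$ whose members are $M$-singular ordinals; moreover $b\in j(D)$ for every dense open $D\subseteq Q$ in $V$ (since $D\in U$). The first key step is to derive that $\kappa$ is singular in $M$, i.e.\ $\cf^M(\kappa)<\kappa$. Applying $b\in j(D_C)$ for $D_C=\{a\in Q : a\cap C\neq\emptyset\}$ as $C$ ranges over $V$-clubs of $\kappa$ shows that $b$ meets $j(C)$ in $M$. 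Combined with the obstruction that $Q$ fails to be $\kappa$-strategically closed precisely because $\kappa$, being $V$-regular, cannot be a member of any $Q$-condition, one shows that $\kappa\in b$, so $\kappa$ is $M$-singular. Equivalently, the derived ultrafilter $U^*=\{X\subseteq\kappa : \kappa\in j(X)\}$ is a $\kappa$-complete measure on $\kappa$ concentrating on the singular ordinals.

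The final stage is the core-model transfer. Since $\calK$ is sufficiently absolute under our hypothesis, $\calK^M$ is an iterate of $\calK$ by extenders on the $\calK$-sequence, and $j\restriction\calK\colon\calK\to\calK^M$ is the associated iteration map. The $M$-singularity $\cf^M(\kappa)=\lambda<\kappa$ gives, via weak covering inside $M$, $(\kappa^+)^{\calK^M}=(\kappa^+)^M$; combined with $(\kappa^+)^{\calK}=\kappa^+$ in $V$, this forces the iteration $j\restriction\calK$ to apply critical-point-$\kappa$ extenders whose cumulative $\calK$-length is at least $(\kappa^+)^{\calK}$. A careful count of the Mitchell-distinct measures required to realize this iteration, exploiting the diversity of cofinalities reflected by $b$ (one for each regular $\mu<\kappa$, arising from the dense open set $\{a\in Q : \exists\alpha\in a\,\cf(\alpha)=\mu\}$), forces $o^{\calK}(\kappa)>\kappa^+$, contradicting our assumption.

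The main obstacle will be this last counting argument: rigorously showing that the cofinality-diverse information encoded in $U$ cannot be absorbed by an iteration arising from Mitchell order at most $\kappa^+$. My plan here is to use the $\omega$-iterated ultrapower of Theorem~\ref{genericsequence} applied to the tree $\vec{\mathcal{W}}$ produced from $U$ by Claim~\ref{ExtensionToProj}, and within that iteration to exhibit $\kappa^++1$ Mitchell-incomparable measures on $\kappa$ on the $\calK$-sequence, following Mitchell's analysis of coherent sequences together with Gitik's techniques for extracting Mitchell order from non-normal measures concentrating on singular cardinals.
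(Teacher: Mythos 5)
Your proposed first key step is false, and the argument collapses there. Since $U$ is a $\kappa$-complete ultrafilter on $Q$ and $|Q|=\kappa$, the ultrapower $M=\Ult(V,U)$ is closed under $\kappa$-sequences from $V$; hence $\kappa$ remains \emph{regular} in $M$. By {\L}o{\'s}, every member of $b=[id]_U$ is singular in $M$, so $\kappa\notin b$ --- the opposite of what you claim. (Indeed, in the paper's argument one normalizes so that $\min(a^*)>\kappa$.) The heuristic you invoke --- that $Q$ fails to be $\kappa$-strategically closed because $\kappa$ cannot enter a condition --- does not produce $\kappa\in b$; it only explains why the generic club avoids $V$-regular cardinals. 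With $\kappa$ regular in $M$, the weak-covering transfer $(\kappa^+)^{\calK^M}=(\kappa^+)^M$ at $\kappa$ is unavailable, so your core-model stage has nothing to work with. Separately, the closing ``counting argument'' is not a proof: attaching one measure to each regular $\mu<\kappa$ via the dense sets $\{a\in Q:\exists\alpha\in a\,\cof(\alpha)=\mu\}$ can yield at most $\kappa$ many Mitchell-distinct measures, which is far short of the target $o^{\calK}(\kappa)>\kappa^+$.

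The paper extracts strength not at $\kappa$ but at $\max(a^*)$ for a carefully chosen $a^*$: one minimizes $\max(a^*)$ over all $[id]$-representations of ultrafilters extending $\mathcal{D}_\emptyset(Q)$, proves $\kappa^+\le\cof^V(\max(a^*))\le 2^\kappa$, and shows that $\max(a^*)$ equals $\kappa^*=\kappa_{\alpha^{**}}$, a limit of images of $\kappa$ under the iterated ultrapower $j_U\restriction\calK$. The decisive step (Lemma \ref{lem:isolating-the-indiscernibles} and the induction following it) is to isolate $\{\kappa_\gamma\}\cap\acc(a^*)$, modulo a bounded error, as $C_t\cap\acc(a^*)$ for a single function $t\in\calK^{M_U}$; if $o^{\calK^{M_U}}(\kappa^*)<(\kappa^*)^+$ one builds a regressive function on these indiscernibles, reflects it along the generic club $C(Q)$ obtained from a Prikry sequence for the derived measure, and contradicts the regularity of $\kappa$ in $V[C(Q)]$. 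Finally $o^{\calK^{M_U}}(\kappa^*)\ge(\kappa^*)^+$ is pulled back to $o^{\calK}(\kappa)>\kappa^+$ by elementarity of the iteration. None of this machinery --- the minimization of $\max(a^*)$, the cofinality computation, Mitchell's indiscernible analysis, or the reflection to the generic club --- is recoverable from your sketch, so the proposal cannot be repaired without essentially starting over along these lines.
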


We split the proof into three parts. First, we will derive some unconditional claims that follow from the existence of such an ultrafilter. Then, we will focus in the case that there is no inner model with a measurable $\lambda$ of Mitchell order $\lambda^{++}$, and discuss the structure of the indiscernibles that follows from the hypothesis of the theorem. Finally, we will combine those two paths and obtain a robust way to extract some of the indiscernibles, from which we are going to get strength.  

\subsection{Combinatorial consequences}
In order to prove Theorem \ref{thm:lowerbound}, we will start with a sequence of lemmas, establishing the existence of a certain elementary embedding with some useful properties.

Let $F$ be a $\kappa$-complete ultrafilter which extends $\mathcal{D}_\emptyset(Q)$.
Consider the corresponding elementary embedding $j_{F}\colon V\rightarrow \Ult(V,F)\simeq M_{F}$. Let $a=[id]_{F}$.
% and let $U_a=\{X\subseteq Q \mid a\in j_{F}(X)\}$. It follows that $U_a=F$ and
Then
\[a\in\bigcap \{j_{F}(D)\mid D\subseteq Q\text{ is dense open }\}\]
Let $a^*$ be a closed set of ordinals with minimal value of $\max(a^*)$, such that there is an ultrafilter $U=\{X\subseteq Q\mid a^*\in j_U(X)\}$ extending the filter of dense open subsets of $Q$. Equivalently, $a^*\in j_U(D)$, for every $D\subseteq Q$ dense open.

Fix such $a^*$ and let
$U$ be a witnessing ultrafilter.  So $[id]_U=a^*$, by \cite[Lemma 1.6]{Hamkins1997}, \cite[Proposition 2.5]{TomTreePrikry}.

%since, if  $f\colon Q\rightarrow Q$ is such that $[f]_U=a^*$, then, for every $X\subseteq Q$,
%$$X\in U\leftrightarrow [f]_U\in j_U(X)\leftrightarrow \{p\in Q\mid  f(p)\in X\}\in U\leftrightarrow f^{-1}\image X\in U$$
%Hence, $f$ witnesses the Rudin-Keisler equivalence of $U$ with $U$, then  it must be that $[f]_U=[id]_U$ (see \cite[Proposition 2.5]{TomTreePrikry}).

\begin{lemma}\label{minimality}
For every $\xi < \max (a^*)$, there is a dense open $D$ such that $a^*\cap(\xi+1)\notin j_U(D)$.
\end{lemma}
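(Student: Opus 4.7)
The plan is to argue by contradiction, exploiting the minimality of $\max(a^*)$. Suppose toward contradiction that there is some $\xi < \max(a^*)$ for which $b := a^*\cap(\xi+1) \in j_U(D)$ for every dense open $D\subseteq Q$. The aim is to produce another pair $(U', a')$ satisfying the same hypotheses as $(U,a^*)$ but with $\max(a') \le \xi < \max(a^*)$, contradicting the choice of $a^*$.

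First I would check that $b$ is a legitimate element of $j_U(Q)$: closedness passes from $a^*$ to $b$ since we are intersecting with an initial segment of the ordinals; $b$ is bounded below $j_U(\kappa)$ because $\xi < \max(a^*) < j_U(\kappa)$; and every element of $b$ is singular in $M_U$ since $b \subseteq a^* \in j_U(Q)$. Then I would define the derived ultrafilter $U' := \{X\subseteq Q \mid b\in j_U(X)\}$. Since $\crit j_U = \kappa$, a routine argument shows that $U'$ is a $\kappa$-complete ultrafilter on $Q$, and by the contradiction hypothesis it contains every dense open subset of $Q$, so it extends $\mathcal{D}_\emptyset(Q)$.

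The heart of the argument is the standard factor embedding $k\colon M_{U'}\to M_U$ defined by $k([f]_{U'}) = j_U(f)(b)$; it is elementary, satisfies $k\circ j_{U'} = j_U$, and sends $a' := [id]_{U'}$ to $b$. By the same Hamkins/TomTreePrikry results invoked in the setup, $U' = \{X \mid a' \in j_{U'}(X)\}$, so $(U', a')$ is a pair of the kind being minimized. Since elementary embeddings between transitive classes are non-decreasing on ordinals, $\max(a') \le k(\max(a')) = \max(b) \le \xi < \max(a^*)$, contradicting the minimality of $\max(a^*)$. The only delicate point is confirming that $(U', a')$ genuinely fits the minimization framework (the fixed-point equation for $U'$, $\kappa$-completeness, and membership of $a'$ in $j_{U'}(Q)$), but all three follow immediately from the factor-map computation, and I do not anticipate any serious combinatorial obstacle beyond it.
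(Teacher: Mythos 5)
Your proof is correct and follows essentially the same route as the paper's: take a $\xi$ witnessing failure, form the derived ultrafilter $U'$ from $b=a^*\cap(\xi+1)$, use the factor embedding $k$ with $k\circ j_{U'}=j_U$ and $k([id]_{U'})=b$ to see that $[id]_{U'}$ lies in $j_{U'}(D)$ for every dense open $D$ while having maximum at most $\xi$, contradicting the minimality of $\max(a^*)$. The only cosmetic difference is that the paper takes $\xi$ least (which is unnecessary) and adds a side remark exhibiting the Rudin--Keisler reduction explicitly.
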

\begin{proof}
Otherwise, let $\xi < \max(a^*)$ be the least ordinal such that $a' = a^* \cap (\xi + 1)$ belongs to $j_U(D)$ for all $D \subseteq Q$ dense open. Let $$U'=\{X\subseteq Q\mid a'\in j_{U}(X)\}.$$ 
The ultrafilter $U'$ is below $U$ in the Rudin-Kiesler order. One way to illustrate that is to pick a function $g\colon Q \to \kappa$ such that $j_U(g)(a^*) = \xi + 1$ and define the function $f(p)= p \cap g(p)$. Then $j_U(f) = a'$. 

Let $k\colon M_{U'}\rightarrow M_{U}$ be the elementary embedding defined by  $k([f]_{U'})=j_U(f)(a')$. By standard arguments, $k\circ j_{U'}=j_U$.

Consider in $M_{U'}$ the element $b=[id]_{U'}$ (note that $U_b=U'$). By the properties of $k$, $k(b)= a' = a^*\cap(\xi+1)$, hence $\max(b)\leq k(\max(b))\leq \xi<\max(a^*)$. To see that $b$ contradicts the minimality of $a^*$, note that for every dense open $D$, $k(b) = a'\in k(j_{U'}(D))$ and by elementarity of $k$, $b\in j_{U'}(D)$.
\end{proof}
\begin{lemma}\label{genericCon}
Let $\eta<\max(a^*)$ and $q\subseteq\eta$, $q\in j(Q)$. For every dense open set $D \subseteq Q$, the condition $(a^*\setminus\eta)\cup q$ is in $j(D)$.
\end{lemma}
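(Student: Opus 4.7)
The plan is to fix a dense open $D \subseteq Q$ and reduce the lemma for this $D$ to the density of a certain dense open set $D^\dagger \subseteq Q$. I would set
\[D^\dagger := \{p \in Q : \forall \eta < \max p,\ \forall q \in Q \text{ with } q \subseteq \eta,\ q \cup (p \setminus \eta) \in D\}.\]
By elementarity of $j$, the statement $a^* \in j(D^\dagger)$ unfolds to: for every $\eta < \max a^*$ and every $q \in j(Q)$ with $q \subseteq \eta$, $q \cup (a^* \setminus \eta) \in j(D)$, which is precisely what the lemma asserts for this $D$. Since by the very choice of $U$ we have $a^* \in j(E)$ for every dense open $E \subseteq Q$, proving the lemma amounts to showing that $D^\dagger$ is dense (openness is immediate from the definition).

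For density, given $p_0 \in Q$ I would construct an increasing chain $p_0 = p^{(0)} \leq p^{(1)} \leq \cdots$ of length $\omega$, using the $<\kappa$-strategic closure of $Q$, which in particular yields $\kappa$-distributivity. Writing $\mu_n := \max p^{(n)}$, at stage $n+1$ the pairs $(\eta, q)$ with $\eta < \mu_n$, $q \in Q$, $q \subseteq \eta$ form a set of size at most $2^{\mu_n} < \kappa$, using the inaccessibility of $\kappa$. For each such pair define
\[\hat E_{\eta, q} := \{s \in Q : s \geq p^{(n)} \text{ and } q \cup (s \setminus \eta) \in D\}.\]
This set is dense above $p^{(n)}$: given any $s_0 \geq p^{(n)}$, the union $q \cup (s_0 \setminus \eta)$ is a valid condition (since $q \subseteq \eta$ forces $\max q < \eta$, so the union is closed), extend it into $D$, and reassemble above $\mu_n$ to produce an element of $\hat E_{\eta, q}$ end-extending $s_0$. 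By $\kappa$-distributivity, choose $p^{(n+1)} \in \bigcap_{\eta, q} \hat E_{\eta, q}$ with $\mu_{n+1} := \max p^{(n+1)} > \mu_n$.

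At the limit, set $\mu_\omega := \sup_n \mu_n$, which is singular of cofinality $\omega$, and let $p^{(\omega)} := \bigcup_n p^{(n)} \cup \{\mu_\omega\}$, a legitimate element of $Q$ end-extending every $p^{(n)}$. For any $\eta < \mu_\omega$ and $q \in Q$ with $q \subseteq \eta$, choose $n$ with $\eta < \mu_n$; then $q \cup (p^{(n+1)} \setminus \eta) \in D$ by construction, and since $p^{(\omega)} \setminus \eta$ end-extends $p^{(n+1)} \setminus \eta$, openness of $D$ delivers $q \cup (p^{(\omega)} \setminus \eta) \in D$. Hence $p^{(\omega)} \in D^\dagger$ and $p^{(\omega)} \geq p_0$, so $D^\dagger$ is dense.

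The main technical obstacle I expect is the careful bookkeeping of closure and end-extension when forming unions of the form $q \cup (p \setminus \eta)$---in particular confirming that the result is always a bona fide element of $Q$ (a closed, bounded set of singular ordinals), regardless of whether $\eta$ itself belongs to $p$---together with verifying that the $\omega$-step construction covers every $\eta$ below $\mu_\omega = \max p^{(\omega)}$, which it does precisely because $\mu_\omega$ has cofinality $\omega$.
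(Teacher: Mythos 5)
Your reduction has a fatal flaw: the set $D^\dagger$ is \emph{not} open, and the claim that ``openness is immediate from the definition'' is false. If $p\in D^\dagger$ and $p\leq p'$ is a proper end-extension, then for $\max p\leq \eta<\max p'$ the condition $q\cup(p'\setminus\eta)$ involves only the new top part of $p'$, about which membership of $p$ in $D^\dagger$ says nothing. Concretely, take $D=\{p\in Q: |p|\geq 2\}$, which is dense open; then $p\in D^\dagger$ forces (taking $q=\emptyset$) that $|p\setminus\eta|\geq 2$ for all $\eta<\max p$, i.e.\ that $\max p$ is a limit point of $p$, and this property is destroyed by adjoining a single isolated point on top. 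Since $a^*$ is only known to lie in $j(E)$ for \emph{dense open} $E$, density of $D^\dagger$ alone does not yield $a^*\in j(D^\dagger)$ (passing to the upward closure of $D^\dagger$ only gives the conclusion for some initial segment of $a^*$, with the quantifier on $\eta$ bounded below $\max a^*$). Your fusion argument for the density of $D^\dagger$ is fine; the problem is entirely in the openness step.

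The gap is not cosmetic: the lemma genuinely depends on the minimality of $\max(a^*)$, which your argument never uses. Indeed, if $\gamma$ is any $M_U$-singular ordinal above $\max(a^*)$, then $b=a^*\cup\{\gamma\}$ end-extends $a^*$, hence $b\in j(E)$ for every dense open $E$, yet the conclusion of the lemma fails for $b$: taking $\eta=\max(a^*)+1$ and $q=\emptyset$ one would need $\{\gamma\}\in j(D)$ for every dense open $D$, which is absurd. So no proof that uses only ``$a^*\in j(E)$ for all dense open $E$'' can succeed. The paper's proof handles exactly this point: arguing by contradiction, it uses Lemma \ref{minimality} to produce a dense open $D_2$ with $a^*\cap\eta\notin j(D_2)$, and then builds a dense open set $D^*$ of \emph{existential} form (``there is $\eta<\max p$ such that $p\cap\eta$ lands in one dense set and $q\cup(p\setminus\eta)$ lands in the other for all small $q$''); the existential quantifier over initial segments is what makes $D^*$ open, and membership of $a^*$ in $j(D^*)$ then contradicts the choice of $D_1,D_2$. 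Your $\omega$-step fusion is essentially the same distributivity computation the paper performs when verifying that its $D^*$ is dense, so the repair is to keep that part but recast the argument as the paper does, via minimality and an existentially-quantified dense open set.
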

\begin{proof}
Otherwise, let $D_1$, $\eta<\max(a^*)$ and $q'$ be such that $$a^*\setminus\eta\cup q'\notin j(D_1)$$ By minimality of $a^*$, there is $D_2\subseteq Q$ dense and open such that $a^*\cap\eta\notin j(D_2)$.

Let $D^*$ be the set of all conditions $p\in Q$ such that there is $\eta < \max(p)$, $p \cap \eta \in D_1$ and moreover for every condition $q\in Q$ with $\max(q) \leq \eta$, $q \cup (p \setminus\eta) \in D_2$.

We claim that $D^*$ is dense open.

Let us show that $D^*$ is open. Let $p_1\in D^*$ and let $p_1\leq p_2$. Take $\eta<\max(p_1)\leq\max(p_2)$ witnessing $p_1\in D^*$ then $$p_1\cap\eta=p_2\cap\eta\in D_1$$ and if $\max(q)\leq\eta$ then $$q\cup(p_1\setminus\eta)\leq q\cup (p_2\setminus\eta)\in D_2$$ since $D_2$ is open. Thus $p_2\in D^*$.

Let us show that $D^*$ is dense. Let $p\in Q$ be any condition, find $p\leq p_1\in D_1$. denote $\max(p_1)=\eta$ and note that $2^\eta < \kappa$. Let us enumerate all $q\in Q$ with $\max(q)\leq\eta$, $\langle q_i\mid i<2^\eta\rangle$ and let $D^i_2$ be the collection of all conditions $r\in\mathbb{P}$ such that $r \setminus \eta \cup q_i \in D_2$. For every $i$, $D^i_2$ is dense open. $\mathbb{P}$ is $\kappa$-distributive and thus $\bigcap_{i < 2^\eta} D^i_2$ is dense. Let us pick a condition $p_2 \geq p_1$ in this intersection. Clearly, $p_2\in D^*$.

Let us claim that $a^*\notin j(D^*)$, and conclude the proof. For any $\xi<\max(a^*)$, if $\xi\leq \eta$ then $a^*\cap\eta\notin j(D_1)$ and if $\xi>\eta$ then let $p=a^*\cap (\eta,\xi)$ then $q'\cup p\notin j(D_2)$ since $q'\cup a^*\geq q'\cup p$. thus $a^*\notin j(D^*)$ contradiction the choice of $a^*$.
\end{proof}

 We conclude that for every $\eta$, $a^* \setminus \eta \in j(D)$ for all $D\in V$, dense open.
 In particular, we may assume that $\min(a^*)>\kappa$. Although, $a^*\setminus\kappa$ does not necessarily generates $U$, we take $b=[id]_{U_{a^*\setminus\kappa}}$, where $$U_{a^*\setminus\kappa}=\{X\subseteq Q\mid a^*\setminus\kappa\in j_U(X)\}$$ and $b$ will be as wanted, since $\max(b)=\max(a^*)$ but also $\min(b)\geq\kappa$. To see this, assume otherwise that $b\cap\kappa\neq\emptyset$. Let $k:M_{U_b}\rightarrow M_U$, then $\crit(k)\geq\kappa$ and $k(b)=a^*\setminus\kappa$ so $b\cap\kappa=k(b\cap\kappa)\subseteq a^*\setminus\kappa$, contradiction.

Continuing, we would like to derive some more information about the size of $\max (a^*)$.
\begin{lemma}\label{lem:bounding}
For any $f\colon\kappa\to \kappa$ and any $\tau<\max(a^*)$, $j_U(f)(\tau)< \max(a^*)$.
\end{lemma}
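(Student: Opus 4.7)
The plan is to argue by contradiction, deriving a violation of the minimality of $\max(a^*)$ from the supposition that $j_U(f)(\tau)\geq m:=\max(a^*)$ for some $\tau<m$ and $f\colon\kappa\to\kappa$.

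First I represent $\tau=[e]_U$ with $e\colon Q\to\kappa$, and by trimming on a $U$-null set assume $e(p)<\max(p)$ for all $p\in Q$; the assumption then reads $A:=\{p\in Q : f(e(p))\geq\max(p)\}\in U$, and I may assume $f$ is strictly increasing by replacing it with a dominating monotone function (which only increases $j_U(f)(\tau)$). The target is to construct $b^*\in j_U(Q)$ with $\max(b^*)<m$ and $b^*\in j_U(D)$ for every dense open $D\subseteq Q$ in $V$. From such a $b^*$, the ultrafilter $U^*:=\{X\subseteq Q : b^*\in j_U(X)\}$ is $\kappa$-complete (since $\kappa=\crit(j_U)$), extends $\mathcal{D}_\emptyset(Q)$, and has $\max([id]_{U^*})\leq\max(b^*)<m$, contradicting the choice of $a^*$.

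The construction of $b^*$ takes place inside $M_U$, starting from $b_0:=a^*\cap(\tau+1)\in j_U(Q)$, which already has $\max(b_0)\leq\tau<m$. Using the ${<}j_U(\kappa)$-strategic closure of $j_U(Q)$ in $M_U$ together with an enumeration of $\mathcal{D}$ of order-type $2^\kappa<j_U(\kappa)$, one plays a length-$2^\kappa$ game below $b_0$, entering at stage $\xi$ the dense open $j_U(D_\xi)$. At each stage I claim that an extension meeting the next $j_U(D_\xi)$ can be chosen whose max remains strictly below $m$: this is where the hypothesis $j_U(f)(\tau)\geq m$ is used in tandem with Lemma~\ref{genericCon}. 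The latter says that for every $\tau<\eta<m$ and every $q\subseteq\eta$ in $j_U(Q)$, the splicing $(a^*\setminus\eta)\cup q$ lies in $j_U(D)$ for all dense open $D$; this gives a rich supply of ``generic-like'' tails that can be used to push $b_0$ into each $j_U(D_\xi)$ while keeping the running maximum below $m$. The limit of the play produces $b^*$.

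The main obstacle is the careful bookkeeping required to keep the max strictly below $m$ across all $2^\kappa$ stages simultaneously, and to carry out the strategic play entirely inside $M_U$. The first point needs the hypothesis in a delicate way: without $j_U(f)(\tau)\geq m$ one has no reason to expect the set of extensions of a condition of max $<m$ into $j_U(D)$ with max still $<m$ to be dense, whereas the hypothesis (together with Lemma~\ref{genericCon}) supplies such extensions explicitly via splicing of generic-like pieces of $a^*$ below $m$. The second point is handled by the closure of $M_U$ under $\kappa$-sequences combined with the $\sigma$-distributivity of $Q$ used to replay the enumeration of $\mathcal{D}$ through the strategy; this is the technically most demanding step and requires checking that $j_U[\mathcal{D}]$, while not itself in $M_U$, is accessible enough through the winning strategy to perform the length-$2^\kappa$ play.
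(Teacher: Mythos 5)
Your overall plan (produce a condition $b^*\in j_U(Q)$ lying in every $j_U(D)$ with $\max(b^*)<\max(a^*)$, then derive an ultrafilter contradicting the minimality of $a^*$) is a legitimate target, but the construction of $b^*$ does not go through, for two independent reasons. First, the length-$2^\kappa$ strategic play cannot be carried out inside $M_U$: the ultrapower by a $\kappa$-complete ultrafilter is closed only under $\kappa$-sequences, the enumeration $\langle j_U(D_\xi)\mid\xi<2^\kappa\rangle$ is external to $M_U$, and so at limit stages of cofinality above $\kappa$ the partial play need not belong to $M_U$ and the strategy cannot be consulted. This is precisely the obstruction that forces the paper to assume ${}^{2^\kappa}j(\kappa)\subseteq M$ in Theorem~\ref{LesskappaStClUpperBound}; under the hypotheses of Section 6 one only has the ultrapower embedding, and if this kind of play were available one could bypass the whole lemma (indeed the whole minimality analysis) outright. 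Second, and more decisively, your mechanism for keeping the running maximum below $m=\max(a^*)$ is based on a misreading of Lemma~\ref{genericCon}: the spliced conditions $(a^*\setminus\eta)\cup q$ supplied by that lemma all have maximum \emph{equal} to $\max(a^*)=m$, since they contain the top part of $a^*$. They therefore give no supply of extensions into $j_U(D)$ with maximum strictly below $m$; in fact Lemma~\ref{minimality} is exactly the statement that the obvious candidates of smaller maximum (the initial segments of $a^*$) fail to be in all $j_U(D)$. Your proposal also never makes concrete use of the hypothesis $j_U(f)(\tau)\geq m$; it is invoked only as a vague enabling condition.

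The paper's proof is far more economical and uses the hypothesis in a sharply different way. Fix, by Lemma~\ref{minimality}, a dense open $D$ with $a^*\cap(\tau+1)\notin j_U(D)$, and let $D^*$ be the set of $p\in D$ such that for every $\xi<\max(p)$ with $p\cap(\xi+1)\notin D$ one has $f(\xi)<\max(p)$. Density of $D^*$ is a one-line closure argument (extend into $D$, then top off with an ordinal bounding $f$ on $\max$ of the resulting condition). Then $a^*\in j_U(D^*)$, and applying the defining property of $j_U(D^*)$ at $\xi=\tau$ (which is legitimate because $\tau<\max(a^*)$ and $a^*\cap(\tau+1)\notin j_U(D)$) yields $j_U(f)(\tau)<\max(a^*)$ directly, with no transfinite construction in $M_U$ at all. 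If you want to salvage your approach you would need an entirely different device for producing conditions of small maximum inside all the $j_U(D)$, and no such device is available at this consistency level.
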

\begin{proof}
Assume otherwise, then there is $f,\tau$, witnessing the negation.
By lemma \ref{minimality}, there is a dense open set $D$ such that $a^*\cap (\tau+1)\notin j_U(D)$. 
Consider the set $D^*$ of all conditions $p\in D$ such that for every $\xi<\max(p)$, if $p\cap (\xi+1)\notin D$ then  $f(\xi)<\max(p)$. Then $D^*$ is dense since for every $p_0$, we take $q\in D$ above $p_0$, the set $\{f(\xi)\mid \xi<\max(q)\}$ is bounded by some $\max(q)\leq\alpha<\kappa$, then $$p_0\leq q\leq q^*:= q\cup\{\alpha\}\in D^*$$ since if $\xi<\max(q^*)$ and $q^*\cap(\xi+1)\notin D$, then $\xi<\max(q)$ as $q^*\cap (\max(q)+1)=q\in D$ and $D$ is open. so $f(\xi)<\alpha=\max(q^*)$. Also $D$ is open since is $p\in D^*$ and $p\leq p_1$, then $p_1\in D$ (since $D$ is open and $p\in D$), but also for every $\xi<\max(p_1)$, id $p_1\cap (\xi+1)\notin D$, then $\xi<\max(p)$. Thus $f(\xi)<\max(p)\leq \max(p_1)$. 

It follows that $a^*\in j_U(D^*)$, but this is a contradiction since $\tau<\max(a^*)$, $$a^*\cap (\tau+1)\notin j_U(D)\text{ and } j_U(f)(\tau)\geq \max(a^*)$$
\end{proof}

\begin{lemma}\label{lem-cof}
$\kappa^{+} \leq \cf^V \max(a^*) \leq 2^\kappa$.
\end{lemma}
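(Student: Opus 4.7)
The upper bound is immediate: since $\max(a^*) = [p \mapsto \max(p)]_U$ is represented by a function from $Q$ into $\kappa$ and $|Q|^V = \kappa$, there are at most $|\kappa^{\kappa}|^V = 2^\kappa$ such representatives, so $|\max(a^*)|^V \leq 2^\kappa$ and hence $\cf^V(\max(a^*)) \leq 2^\kappa$.

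For the lower bound I will argue by contradiction, assuming $\cf^V(\max(a^*)) = \mu \leq \kappa$. Fix a cofinal sequence $\langle c_i : i < \mu\rangle \in V$ and representatives $h_i : Q \to \kappa$ with $[h_i]_U = c_i$. In the principal case $\mu < \kappa$, set $h(p) = \sup_{i < \mu} h_i(p)$, which is below $\kappa$ by regularity, so $h \in V^Q$. Since $\{p : h(p) \geq h_i(p)\} = Q \in U$, we have $[h]_U \geq c_i$ for every $i$, hence $[h]_U \geq \sup_i c_i = \max(a^*) = [p \mapsto \max(p)]_U$.

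Now I perform a {\L}o\'s trichotomy between $h$ and the function $p \mapsto \max(p)$. The alternative $\{p : h(p) < \max(p)\} \in U$ contradicts $[h]_U \geq [\max]_U$ directly. The alternative $\{p : h(p) > \max(p)\} \in U$ decomposes as $\bigcup_{i < \mu}\{p : h_i(p) > \max(p)\}$, and since $U$ is $\kappa$-complete it is prime for unions of length $< \kappa$; thus some single $i$ satisfies $\{p : h_i(p) > \max(p)\} \in U$, i.e.\ $c_i > \max(a^*)$, contradicting $c_i < \max(a^*)$. Therefore $\{p : h(p) = \max(p)\} \in U$, which means that on a $U$-large set the sequence $\langle h_i(p)\rangle_{i<\mu}$ is cofinal in $\max(p)$, so $\cf^V(\max(p)) \leq \mu$ for $U$-many $p$ and, by {\L}o\'s, $\cf^{M_U}(\max(a^*)) \leq \mu < \kappa$.

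The main obstacle is the finishing step: the conclusion ``$\cf(\max p) \leq \mu$ on a $U$-large set'' does not itself violate $U \supseteq \mathcal{D}_\emptyset(Q)$, because the cofinality of the maximum is not preserved by end-extension in $Q$ and so is not witnessed by any dense open subset of $Q$. The resolution is to transfer the analysis into $M_U$ via Lemma~\ref{lem:bounding}: the closure of $\max(a^*)$ under $j_U(f)$ for all $f : \kappa \to \kappa$, together with $\cf^{M_U}(\max(a^*)) \leq \mu$, allows me to locate $\alpha < \max(a^*)$ such that $a^* \cap \alpha \in j_U(D)$ for every dense open $D \subseteq Q$, contradicting Lemma~\ref{minimality} and the minimality of $\max(a^*)$. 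The case $\mu = \kappa$ is handled by the diagonal variant $h(p) = \sup_{i < \max(p)} h_i(p)$, whose bounds $[h]_U \geq c_i$ follow from the density of $\{p : \max(p) > i\}$; the primality step is replaced by a game-theoretic argument exploiting the $\kappa$-strategic closure of $Q$ along a $\kappa$-length play of dense open sets.
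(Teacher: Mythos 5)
Your upper bound is correct and takes a cleaner route than the paper: every ordinal below $j_U(\kappa)$ is of the form $[g]_U$ for some $g\colon Q\to\kappa$, so $|\max(a^*)|^V\leq 2^\kappa$ and hence $\cf^V(\max(a^*))\leq 2^\kappa$. (The paper instead uses an elementary submodel of size $2^\kappa$ containing $j(D)$ for every dense open $D$, and the fact that the ordinals $\rho_D=\min\{\rho : a^*\cap\rho\in j(D)\}$ are cofinal in $\max(a^*)$ by Lemma~\ref{minimality}; both arguments are fine.)

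The lower bound, however, has a genuine gap at exactly the point you flag as ``the finishing step.'' Everything you derive before that point --- $[h]_U=[\,p\mapsto\max(p)\,]_U$ and hence $\cf^{M_U}(\max(a^*))\leq\mu$ --- is true but carries no contradiction: $\max(a^*)$ is an element of $a^*$, hence singular in $M_U$, and the paper's later analysis explicitly permits $\cf^{M_U}(\max(a^*))=\omega$ (Case 2 of the indiscernible analysis). So the trichotomy detour does not advance the proof. The entire burden then falls on the unexplained claim that Lemma~\ref{lem:bounding} plus $\cf^{M_U}(\max(a^*))\leq\mu$ ``locates $\alpha<\max(a^*)$ such that $a^*\cap\alpha\in j_U(D)$ for every dense open $D$.'' That statement is precisely the negation of Lemma~\ref{minimality}, i.e.\ it is the whole content of what must be proven, and no argument is given for it; closure of $\max(a^*)$ under $j_U(f)$ for $f\colon\kappa\to\kappa$ does not produce such an $\alpha$. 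The missing idea is a diagonalization \emph{inside the forcing}: for each $\delta<\mu$ choose, by Lemma~\ref{minimality}, a dense open $D_\delta$ with $a^*\cap(c_\delta+1)\notin j_U(D_\delta)$, and form the single dense open set
\[D_*=\bigl\{p\in Q \mid \exists\xi<\max(p)\ \ p\cap\xi\in\textstyle\bigcap_{\delta<\min(p)}D_\delta\bigr\},\]
which is dense open by the $\kappa$-distributivity of $Q$. Since $\min(a^*)>\kappa\geq\mu$, membership $a^*\in j_U(D_*)$ would give a single $\xi<\max(a^*)$ with $a^*\cap\xi\in j_U(D_\delta)$ for \emph{all} $\delta<\mu$, contradicting the cofinality of the $c_\delta$'s; hence $a^*\notin j_U(D_*)$, the desired contradiction. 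Note that this handles $\mu\leq\kappa$ uniformly, so your separate $\mu=\kappa$ case is unnecessary; as written, that case is also unsound, since $Q$ is only ${<}\kappa$-strategically closed, not $\kappa$-strategically closed, so no $\kappa$-length play is available.
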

\begin{remark}
Note that $2^\kappa>\kappa^+$ already implies, by Mitchell \cite{Mit}, that $o(\kappa)\geq \kappa^{++}$, since $\kappa$ is a measurable. Thus, assuming our anti-large cardinal hypothesis, we get $\cf \max(a^*) = \kappa^{+}$.
\end{remark}
\begin{proof}
First let us show that $\cf^V \max(a^*) \geq \kappa^+$.

Otherwise, let $\langle\zeta_\delta \mid \delta < \delta^*\rangle$ be cofinal at $\max(a^*)$, $\delta^* \leq \kappa$. For every $\delta < \delta^*$, there is a dense open set $D_\delta \in V$ such that $a^* \cap (\zeta_\delta + 1)\notin D_\delta$.

Let $D_*$ be the set of all condition $p\in D$ such that there is $\xi < \max p$ such that $p \cap \xi \in \bigcap_{\delta < \min p} D_\delta$. Clearly, $D_*$ is dense open. Let us show that $a^* \notin j_U(D_*)$. Indeed, we assume that $\min a^* > \kappa$ and therefore if $a^*\in j_U(D_*)$ then there is some $\xi < \kappa^*$ such that $a^* \cap \xi \in j_U(D_\delta)$ for all $\delta < \delta^* \leq \kappa$, which contradicts our assumption.

Let us show now that $\cf^{V}(\max(a^*)) \leq 2^\kappa$. Indeed, let us fix some elementary submodel $H$ of sufficiently large $H(\theta)$ of cardinality $2^\kappa$ that contains $a^*$ and for every $D\subseteq Q$, $j(D)\in H$. It follows that for every $D\subseteq Q$ dense open in $V$, the minimal ordinal $\rho < \max(a^*)$ such that $a^* \cap \rho \in j(D)$ belongs to $H$. In particular, $\sup (\max(a^*) \cap H) = \max(a^*)$, by the minimality of $\max(a^*)$. Since $|H\cap \max(a^*)| \leq 2^\kappa$, we conclude that $\cf (\max(a^*)) \leq 2^\kappa$.
\end{proof}

Next, we would like to get a parallel of Claim \ref{ExtensionToProj}. Since we only assume the existence of an ultrafilter extending $\mathcal{D}_{\emptyset}(Q)$, we have to be a bit more careful. We could use the homogeneity of $Q$ and derive an extension of $\mathcal{D}_p(Q)$ for all $p$, but we would like to get a relatively concrete representation of the generic, which would be useful during the proof. 

 Since $|Q|=\kappa$, there is a bijection $f:\kappa\rightarrow Q$. Denote $\delta_{a^*}=j(f^{-1})(a^*)$ and define $$\mathcal{W}=\{X\subseteq \kappa\mid \delta_{a^*}\in j(X)\}$$
then $W$ is a $\kappa$-complete ultrafilter on $\kappa$, $U\equiv^{\mathrm{RK}}\mathcal{W}$. $M_{\mathcal{W}}=M_U$ and $[g]_U\mapsto[g\circ f]_{\mathcal{W}}$ is the unique isomorphism between the two ultrapowers.
 \begin{lemma}\label{genericClub}
 Let $\langle \kappa_n\mid n<\omega\rangle$ be a  generic Prikry sequence for $\mathcal{W}$. Then $\underset{n<\omega}{\bigcup}a_n$ is a generic club for $Q$ where $a_n=f(\kappa_n)$. Moreover, there is $N<\omega$ such that for every $N\leq n<\omega$, $\max(a_n)<\min(a_{n+1})$.
  \end{lemma}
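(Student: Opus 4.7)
My plan is to combine Lemma~\ref{genericCon} (every tail of $a^*$ lies in $j_U(D)$ for every $V$-dense open $D\subseteq Q$) with the standard Prikry properties of $\mathcal{W}$. I would prove the \emph{moreover} clause first, so that the union $\bigcup_n a_n$ is manifestly a closed set in $\kappa$, and then deduce $V$-genericity of the derived filter via a density argument inside the Prikry forcing $\mathbb{T}_{\mathcal{W}}$.

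For the separation clause, fix any $\xi<\kappa$. Since $f(\xi)\in Q$ is a bounded subset of $\kappa$ and the reduction preceding the lemma guarantees $\min(a^*)>\kappa$, the set $\{b\in Q\mid \min b>\max f(\xi)\}$ lies in $U$: in $M_U$ we have $\min(a^*)>\kappa>\max f(\xi)$, and $\max f(\xi)$ is fixed by $j_U$. Pulling back through $f$, $\{\eta<\kappa\mid \min f(\eta)>\max f(\xi)\}\in\mathcal{W}$. Consequently the product ultrafilter $\mathcal{W}^{(2)}$ on $[\kappa]^2$ concentrates on pairs $(\xi_0,\xi_1)$ with $\min f(\xi_1)>\max f(\xi_0)$, and the standard Prikry property for $\mathcal{W}^{(2)}$ (via a routine diagonal shrinking of the large set of a Prikry condition) then produces an $N<\omega$ such that $\max a_n<\min a_{n+1}$ for all $n\geq N$.

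For the genericity clause, fix a dense open $D\subseteq Q$ in $V$ and an arbitrary Prikry condition $p=\l s,A\r\in\mathbb{T}_{\mathcal{W}}$. By a preliminary direct extension applying the separation argument above to all pairs drawn from $s$ and from $A$, I may assume $q_s:=\bigcup_{i<\len s}f(s_i)$ is itself a condition in $Q$. Choose any $\eta$ with $\max q_s<\eta<\min a^*$; Lemma~\ref{genericCon} then yields $q_s\cup a^*=q_s\cup(a^*\setminus\eta)\in j_U(D)$, so
\[B:=\{\xi<\kappa\mid \min f(\xi)>\max q_s\text{ and }q_s\cup f(\xi)\in D\}\in\mathcal{W}.\]
The direct extension $p^*=\l s,A\cap B\r\geq^* p$ forces that for every one-step extension $\l s^\smallfrown\xi,(A\cap B)_\xi\r$ with $\xi\in A\cap B$, $\bigcup_{k\leq \len s}a_k=q_s\cup f(\xi)\in D$. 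Setting $G:=\{q\in Q\mid \exists n,\ q\subseteq \bigcup_{k\leq n}a_k\}$, this shows $G$ meets every $V$-dense open subset of $Q$ (unboundedness of $\bigcup_n a_n$ being absorbed as the special case where $D$ is the dense set of conditions reaching past a given ordinal), so $G$ is a $V$-generic filter and $\bigcup_n a_n=\bigcup G$ is the corresponding generic club. The only real delicacy is bookkeeping around the finite initial stem, which is handled uniformly by the preliminary direct extensions; no large-cardinal input beyond Lemma~\ref{genericCon} is required.
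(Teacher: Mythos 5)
Your argument rests on the same key input as the paper's, namely Lemma~\ref{genericCon}, but executes both halves differently. For the separation clause the paper projects $\mathcal{W}$ to a normal measure via $\pi$ with $[\pi\circ f]_{\mathcal{W}}=\kappa$ and reflects the statement ``$\forall\beta<j(\pi)(a^*),\ \max(f(\beta))<\min(a^*)$'' to a single set $A\in\mathcal{W}$, whereas you use the two-dimensional product measure; both work for the tree Prikry forcing. For genericity the paper reflects the \emph{universally quantified} form of Lemma~\ref{genericCon} into one set $B=\{\alpha\mid\forall\xi<\max f(\alpha)\,\forall q\subseteq\xi,\ f(\alpha)\setminus\xi\cup q\in D\}\in\mathcal{W}$, so that for a tail of $n$ one gets $p_n=p_{n-1}\cup a_n\in D$ outright, with no density argument over Prikry conditions; the universal quantifier over $q$ is what absorbs the arbitrary initial segment $p_{n-1}$. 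You instead run a density argument condition-by-condition, hard-wiring the stem $q_s$ into the set $B$. That is a legitimate alternative and arguably more transparent, but it forces you to track the interaction between the stem and the tail, which is where your write-up has two soft spots.

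First, your derived filter $G=\{q\mid\exists n,\ q\subseteq\bigcup_{k\leq n}a_k\}$ is not a filter for $Q$: the order is end-extension, so a $q$ that is a subset but not an initial segment of some $p_n$ is incompatible with $p_n$. You need $G=\{q\mid q=(\bigcup_k a_k)\cap(\max(q)+1)\}$, i.e.\ the initial segments. Second, the stated purpose of your preliminary direct extension is off: $q_s=\bigcup_{i<\len s}f(s_i)$ is \emph{automatically} a condition in $Q$ (a finite union of closed bounded sets of singulars), and a direct extension cannot repair the stem anyway since it leaves $s$ fixed. What that preliminary shrinking must actually deliver --- and what your argument silently uses --- is that every $f(\beta)$ for $\beta$ in the shrunken tree beyond the stem lies strictly above $\max(q_s\cup f(\xi))$ and above its predecessors, so that $q_s\cup f(\xi)$ really is an initial segment of $\bigcup_k a_k$ and hence belongs to the corrected $G$; without this, $q_s\cup f(\xi)\in D$ does not yet place an element of $D$ in the derived filter. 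Both points are repairable with the material you already have, but as written the final step does not close.
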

 \begin{proof}
Let $\pi\colon Q\rightarrow\kappa$ be such that $\kappa=[\pi]_{U}=[\pi\circ f]_{\mathcal{W}}$ be the projection to normal. In $V$, define the set \[A=\{\alpha<\kappa\mid \forall\beta<\pi(f(\alpha)),\, \max(f(\beta))<\min(f(\alpha))\},\] then $A\in \mathcal{W}$. To see this note that
\[M_{U}\models\forall\beta<\kappa=j(\pi)(a^*),\, \max(j(f)(\beta))<\min(a^*)\] since $j(f)(\beta)=f(\beta)<\kappa\leq\min(a^*)$. Now $a^*=[id]_{U}$, to see this, note that  $$j(\pi)(a^*)=j(\pi\circ f)(\delta_{a^*}), \ \min(a^*)=\min(j(f)(\delta_{a^*}))$$ thus  $\delta_{a^*}\in j(A)$ and $A\in \mathcal{W}$.
Let $\langle\kappa_n\mid n<\omega\rangle$ be a Prikry sequence for $W$. Then there is $N$ such that for every $N\leq n<\omega$, $\kappa_{n}<\pi(\kappa_{n+1})$ and $\kappa_n\in A$. By the definition of $A$ it follows that $\max(a_n)<\min(a_{n+1})$. Denote by $p_n=a_0\cup \cdots \cup a_n\in\mathbb{P}$, then for every $n\geq N$, $p_n\leq p_{n+1}$. We claim that $C_G=\underset{n<\omega}{\cup}a_n$ is a generic club though the singulars of $V$. To see this, let $D\subseteq\mathbb{P}$ be dense open, then by claim \ref{genericCon}, $a^*\in j(D)$ and for every $\xi<\max(a^*)$, $q\subseteq\xi$, $a^*\setminus\xi\cup q\in j(D)$, this property reflects on a set in $\mathcal{W}$ i.e. $$B=\{\alpha<\kappa\mid \forall \xi<\max(f(\alpha))\forall q\subseteq\xi , \ f(\alpha)\setminus\xi\cup q\in D\}\in \mathcal{W}$$ and therefore there is $N\leq M<\omega$ such that for every $n\geq M$, $\kappa_n\in B$ and so $p_n\in D$.
\end{proof} 

We denote $C(Q)=\bigcup_{n<\omega}a_n$ to be the $V$-generic club for $Q$.

The idea is that properties of $a^*$ reflect in some sense to the generic club $C(Q)$. This will be useful later, when we encounter some more delicate properties of $a^*$, using Mitchell's analysis of indiscernibles. 
%\begin{lemma}\label{lem0}

%$\otp(a^*)\geq \kappa$.

%\end{lemma}
%\begin{proof}
%Suppose otherwise. Then $\otp(a^*)=\rho$, for some $\rho<\kappa$.
%But this implies that for all but finitely many $n<\omega$, $\otp(a_n)=\rho$, which is impossible since $\bigcup_{n<\omega}a_n$ is a generic (over $V$) club for $Q$ and $\kappa$ remains regular in $V^Q$.
%\end{proof}

%\begin{lemma}\label{lem1}
%There is no $f:\kappa\to \kappa$ such that $j_U(f)(\kappa)\geq \max(a^*)$.

%\end{lemma}
%\begin{proof} Suppose otherwise.
%Consider a club
%$$C_f=\{\nu<\kappa \mid \forall \nu'<\nu (f(\nu')<\nu)\}.$$
%Then, in $M_U$, $(\kappa, j_U(f)(\kappa)) \cap j_U(C_f)=\emptyset$. So, $a^*  \cap j_U(C_f)=\emptyset$.
%But then  $C_f \cap C(Q)$ is bounded in $\kappa$.
%Which is impossible. Contradiction.
%\end{proof}

\subsection{Mitchell's indiscernibles}
Recall that $\mathcal{K}$ is the Mitchell's core model, under the anti-large cardinal hypothesis, $\neg \exists \lambda, o(\lambda) = \lambda^{++}$. 

For the convenience of the reader, we include here the statements of the basic definitions and results which we are going to use in the course of the proof, which we cite from \cite{MitchellHandbookCoveringLemma}. 

%To do this we will need to use Mitchell's analyses of indiscernables appropriate for the situation $o(\kappa)<\kappa^{++}$. For the convenience of the reader we have grouped here the relevant definitions and the formulation of Mitchell's covering lemma.

\begin{definition}\label{def:mitchell-ind}
\begin{enumerate}
    \item Let $U$ be a measure, then $\crit(U)$ is the measurable $\kappa$ such that $U$ is a measure over $\kappa$.
    \item Let $\vec{U}$ be a sequence of measures and let $\gamma'<\gamma$ in $\dom(\vec{U})$, denote by $coh_{\gamma',\gamma}=f$ for the least function in the well ordering of $\calK=L[\vec{U}]$ such that $\gamma'=[f]_{\vec{U}_\gamma}\in \Ult(\calK,\vec{U}_\gamma)$.
    \item A system of indiscernibles for $\calK$ is a sequence $\mathcal{C}$ such that:
    \begin{enumerate}
        \item $\dom(\mathcal{C})\subseteq\dom(\vec{U})$ and $\forall \gamma\in \dom(\mathcal{C}),\,\mathcal{C}_\gamma\subseteq \crit(\vec{U}_\gamma)$.
        \item\label{def:mitchell-ind-3-b} For every $f\in \calK$, there is a finite sets $a\subseteq On$ such that for every $\gamma\in \dom(\vec{U})$:

        $$
        \begin{matrix}
        \forall\nu\in \mathcal{C}_\gamma\setminus \sup(a\cap \crit(\vec{U}_\gamma)). \forall X\in f\image ( \nu\times\{\crit(\vec{U}_\gamma)\}) \\ \nu \in X\leftrightarrow X\cap \crit(\vec{U}_\gamma)\in \vec{U}_\gamma
        \end{matrix}$$
    \end{enumerate}
\item A sequence $\mathcal{C}$ of indiscernibles for $\calK$ is said to be $h-coherent$ if $h\in \calK$ is a function and:
\begin{enumerate}
    \item $\forall\nu\in \cup_{\gamma\in \dom(\mathcal{C})}\mathcal{C}_\gamma$, there is a unique $\xi\in h''\nu$ such that $\nu\in \mathcal{C}_\xi$.
    \item If $\nu\in \mathcal{C}_\gamma\cap \mathcal{C}_{\gamma'}$ where $\gamma\neq\gamma'$ and $\gamma\in h''\nu$, then $\crit(\vec{U}_{\gamma'})\in \mathcal{C}_{\gamma''}$ for some $\gamma''<\gamma$ with $\crit(\vec{U}_{\gamma''})=\crit(\vec{U}_\gamma)$.
    \item\label{def:mitchell-ind-4-c}  If $\nu\in \mathcal{C}_\gamma$, $\gamma_\nu=coh{\gamma',\gamma}(\nu)$ for $\gamma'<\gamma$, and $\gamma'\in h''\nu$, then $\mathcal{C}_{\gamma_\nu}=\mathcal{C}_{\gamma'}\cap(\nu\setminus\nu')$ where $\nu'$ is the least such that $\gamma\in h''\nu'$
\end{enumerate}
\item Let $x$ be any set and $h$ a function. Then set $h''(x;\mathcal{C})$ is the smallest set $X$ such that $x\subseteq X$ and $X=h''[X\cup(\bigcup_{\gamma\in X}\mathcal{C}_\gamma)]$.
\item Suppose that $\mathcal{C}$ is a $g$-coherent system of indiscernibles. Define:
\begin{enumerate}
    \item $S^{\mathcal{C}}(\gamma,\xi)=\min(\mathcal{C}_\gamma\setminus\xi+1)$.
    \item $S^{\mathcal{C}}_*(\gamma,\xi)=\min(\bigcup_{\gamma'\geq\gamma}\mathcal{C}_\gamma\setminus\xi+1)$.
    \item If $X$ is any set, and $\gamma\in \dom(\mathcal{C})\cap X$. An accumulation point of $\mathcal{C}_\gamma$ in $X$ is an ordinal $\nu\in X$ such that for every $\gamma'\in X\cap \gamma\cap g''\nu$, the $$\bigcup\{\mathcal{C}_{\gamma''}\mid \gamma''\geq \gamma', \crit{\vec{U}_{\gamma''}}=\crit{\vec{U}_\gamma}\}$$
    is unbounded in $\nu$. Let $a^{\mathcal{C},X}(\gamma,\xi)$ is the least accumulation point of $\mathcal{C}_\gamma$ in $X$ above $\xi$.
\end{enumerate}
\end{enumerate}
\end{definition}
\begin{theorem}[Mitchell's Covering Lemma]\label{the covering lemma}
Assume there is no inner model with $\exists \lambda. o(\lambda)=\lambda^{++}$. Let $\kappa,\kappa'$ be a $\calK$-cardinal such that $\kappa'\geq \max\{\kappa, o(\kappa)\}$. Also let $X$ be a set such that $\kappa\not\subseteq X=Y\cap\calK_{\kappa'}$ where $Y\prec_1 H(\kappa^{'+})$. Then there is $\rho<\kappa$, $h\in\calK$ and $\mathcal{C}$ such that:
\begin{enumerate}
    \item $\mathcal{C}$ is an $h$-coherent system of indiscernibles for $\calK$.
    \item $\dom(C)\subseteq X$ and $\cup_\gamma \mathcal{C}_\gamma\subseteq X$.
    \item $X=h''(X\cap\rho;\mathcal{C})$ and hence $X\subseteq h''(\rho;\mathcal{C})$.
    \item For every $\nu\in X\cap \kappa$, either $\nu\in h''[X\cap\nu]$, or $\nu\in\mathcal{C}_\gamma$ for some
    $\gamma$ in which case there is $\xi\in X\cap\nu$ such that either
    $\nu=S^{\mathcal{C}}(\gamma,\xi)=S^{\mathcal{C}}_*(\gamma,\xi)$ or there is
    $\gamma<\gamma'\in h''[X\cap\nu]$ such that $\nu=a^{\mathcal{C},X}(\gamma',\xi)$.
    \item If $X'$ is another set satisfying is another set satisfying the assumption of the theorem then there is a finite set $a\subseteq On$ such that for every $\xi,\gamma\in X\cap X'$ such that $a\cap \crit(\vec{U}_\gamma)\subseteq \xi$ and $\xi>\max\{\rho_X,\rho_{X'}\}$ then:
    $$S^{\mathcal{C}}(\gamma,\xi)=S^{\mathcal{C}'}(\gamma,\xi)$$
    $$S^{\mathcal{C}}_*(\gamma,\xi)=S^{\mathcal{C}'}_*(\gamma,\xi)$$
    $$a^{\mathcal{C},X}(\gamma,\xi)=a^{\mathcal{C}',X'}(\gamma,\xi)$$
    whenever either is defined.
\end{enumerate}
\end{theorem}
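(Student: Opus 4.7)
The plan is to apply the standard coiteration strategy of core model theory. Given $X = Y \cap \mathcal{K}_{\kappa'}$ with $Y \prec_1 H(\kappa'^{+})$ and $\kappa \not\subseteq X$, I would first form the Mostowski collapse $\bar{Y}$ of $Y$, obtaining an inverse collapse $\sigma \colon \bar{Y} \to Y$ and, inside $\bar{Y}$, a copy $\bar{\calK}$ of the relevant segment of the core model. The anti-large cardinal hypothesis $\neg \exists \lambda,\, o(\lambda) = \lambda^{++}$ is exactly what makes the whole approach work: both $\bar{\calK}$ and $\calK_{\kappa'}$ are iterable, and coiteration trees on them are tame enough (no branches fed by measures of excessive Mitchell order) that the comparison terminates in fewer than $\kappa'$ steps.

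Next I would coiterate $\bar{\calK}$ against $\calK_{\kappa'}$ and extract the system $\mathcal{C}$ from the $\calK$-side iteration maps. For each $\gamma \in \dom(\vec{U})$ actually used on that side, $\mathcal{C}_\gamma$ should consist of the generators introduced by the ultrapower by $\vec{U}_\gamma$, which by construction all lie in $X$. The Skolem function $h$ will be the canonical Skolem function of $\calK$ (built from the wellorder definable from $\vec{U}$), and $\rho$ is the first ordinal moved by the $\bar{\calK}$-side of the comparison, which must lie below $\kappa$ precisely because $\kappa \not\subseteq X$. The indiscernibility clause of Definition \ref{def:mitchell-ind} then drops out from {\L}o{\'s}'s theorem iterated along the tree: for any $f \in \calK$, only finitely many previously generated indiscernibles (the finite set $a$) are needed as parameters to decide the relevant membership.

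The $h$-coherence of $\mathcal{C}$ encodes the interaction of ultrapowers of differing critical points and Mitchell orders via the functions $coh_{\gamma',\gamma}$; in particular clause (4)(c) of Definition \ref{def:mitchell-ind} is the standard fact that applying $\Ult(\cdot,\vec{U}_\gamma)$ to the measure $\vec{U}_{\gamma'}$ produces the measure $\vec{U}_{\gamma_\nu}$ with $\gamma_\nu = coh_{\gamma',\gamma}(\nu)$, and the cases in clause (b) reflect the combinatorics of which measures become equal after an ultrapower. The uniformity clause (5) of the theorem, comparing two hulls $X, X'$, should come from the canonicity of coiteration: above a shared threshold the two comparisons agree, so $S^{\mathcal{C}}, S^{\mathcal{C}}_*$, and $a^{\mathcal{C},X}$ all match across the two choices of hull.

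The main obstacle is the fine-structural scaffolding on which every step above rests. One must verify termination of the coiteration (using the anti-large cardinal assumption to rule out pathological branches coming from measures of order $\lambda^{++}$), handle the possible drops on the $\bar{\calK}$-side, and then establish clause (4) of the theorem in full, by case analysis on whether an ordinal $\nu \in X \cap \kappa$ lies in the Skolem hull $h''[X \cap \nu]$, is a direct successor indiscernible $S^{\mathcal{C}}(\gamma,\xi) = S^{\mathcal{C}}_*(\gamma,\xi)$, or is an accumulation point $a^{\mathcal{C},X}(\gamma',\xi)$. Rather than reconstruct any of this from scratch, I would invoke Mitchell's treatment in \cite{MitchellHandbookCoveringLemma}; a genuine self-contained proof would require developing premice, iterability, and the weak covering machinery from the ground up, well beyond the scope of a proof sketch.
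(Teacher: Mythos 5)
The paper does not prove this theorem at all: it is stated as a black-box citation of Mitchell's work in \cite{MitchellHandbookCoveringLemma}, so your decision to defer the fine-structural machinery to that source is exactly what the paper itself does. Your preliminary sketch of the collapse-and-coiteration strategy is a reasonable gloss on where the indiscernibles come from, but since neither you nor the paper carries out the argument, there is nothing further to compare.
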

A \textit{Covering model} is a set $X$ satisfying the assumptions of the theorem \ref{the covering lemma}. In the discussion ahead, we will not distinguish between a model and its set of ordinals. Thus, we will freely take elementary substructures in some model of $\ZFC$, that do not contain all ordinals below some $\zeta$ and call them covering models.

The elementary embedding $j_U\restriction\mathcal{K}\colon \mathcal{K} \to \mathcal{K}^{M_U}$ is an iterated ultrapower of $\mathcal{K}$ by its measures.
Let us denote the iteration by $\l j_{\alpha,\beta}\mid \alpha\leq \beta\leq l^*\r$ where $j_{\alpha,\beta}:\mathcal{K}_{\alpha}\rightarrow \mathcal{K}_{\beta}$. We can assume that the iteration is normal i.e. $\l \lambda_i\mid i< l^*\r$ is increasing where $\lambda_i=\crit(j_{i,i+1})$. Hence $\lambda_0=\kappa$. Let $\langle \kappa_\alpha \mid \alpha\leq \alpha^*\rangle$ be the strictly increasing list of images of $\kappa$ under this iteration.
In particular, $\kappa_0=\kappa$ and $\kappa_{\alpha^*}=j_U(\kappa)$, and $\alpha^* \leq l^*$. 

\begin{lemma}\label{crit}
Let $\eta<\alpha^*$ and let $\eta_0$ be the least ordinal such that $\kappa_{\eta}\leq\lambda_{\eta_0}$. Also let $\xi_\eta<l^*$ be such that $j_{\xi_\eta}(\kappa)=\kappa_{\eta}$  Then $j_{\xi_\eta,\eta_0}(\kappa_{\eta})=\kappa_{\eta}$
\end{lemma}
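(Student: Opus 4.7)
The plan is to prove $j_{\xi_\eta,\zeta}(\kappa_\eta)=\kappa_\eta$ by induction on $\zeta\in[\xi_\eta,\eta_0]$. Two preliminary observations drive the argument. First, by the minimality of $\eta_0$ together with the strict monotonicity of $\langle\lambda_\zeta\rangle$, one has $\lambda_{\zeta'}<\kappa_\eta$ for every $\zeta'<\eta_0$; this is the sense in which the relevant segment of the iteration takes place ``below'' $\kappa_\eta$. Second, since $\kappa$ is measurable in $\calK$ and $\kappa_\eta=j_{\xi_\eta}(\kappa)$, elementarity gives that $\kappa_\eta$ is measurable, in particular inaccessible, in $\calK_{\xi_\eta}$, and this will be maintained throughout the induction.

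For the successor step, suppose $j_{\xi_\eta,\zeta'}(\kappa_\eta)=\kappa_\eta$, so by elementarity $\kappa_\eta$ is still inaccessible in $\calK_{\zeta'}$. The embedding $j_{\zeta',\zeta'+1}$ is the ultrapower of $\calK_{\zeta'}$ by a measure with critical point $\lambda_{\zeta'}<\kappa_\eta$. Using GCH in $\calK_{\zeta'}$ and the strong-limit property of $\kappa_\eta$ there, any $\alpha<\kappa_\eta$ satisfies $(|\alpha|^{\lambda_{\zeta'}})^{\calK_{\zeta'}}<\kappa_\eta$, so $j_{\zeta',\zeta'+1}(\alpha)<\kappa_\eta$. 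Combined with continuity of the ultrapower at $\kappa_\eta$ (whose cofinality in $\calK_{\zeta'}$ is $\kappa_\eta>\lambda_{\zeta'}$), this yields $j_{\zeta',\zeta'+1}(\kappa_\eta)=\sup j_{\zeta',\zeta'+1}\image\kappa_\eta=\kappa_\eta$, and composing with the inductive hypothesis gives $j_{\xi_\eta,\zeta'+1}(\kappa_\eta)=\kappa_\eta$.

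For the limit step, the inductive hypothesis asserts that $\kappa_\eta$ is fixed by every $j_{\xi_\eta,\zeta''}$ with $\zeta''<\zeta$. In the direct limit $\calK_\zeta$, the element $j_{\xi_\eta,\zeta}(\kappa_\eta)$ is the transitive collapse of the stable class $[\kappa_\eta,\zeta'']$ (independent of the choice of $\zeta''\in[\xi_\eta,\zeta)$), and the standard continuity formula for a normal iteration yields $j_{\xi_\eta,\zeta}(\kappa_\eta)=\sup_{\zeta''<\zeta}j_{\xi_\eta,\zeta''}(\kappa_\eta)=\kappa_\eta$. Taking $\zeta=\eta_0$ completes the proof.

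The main obstacle is the rigorous justification of the limit step: one must verify that no new ordinal strictly between $\kappa_\eta$ and $j_{\xi_\eta,\zeta}(\kappa_\eta)$ is produced in the direct limit. This reduces, by an auxiliary induction inside the limit step, to showing that every $y<\kappa_\eta$ in $\calK_{\zeta''}$ is sent below $\kappa_\eta$ by $j_{\zeta'',\zeta}$, which in turn follows from the successor-stage bound $(|y|^{\lambda_{\zeta'''}})^{\calK_{\zeta'''}}<\kappa_\eta$ holding uniformly and the fact that $\kappa_\eta$ remains inaccessible in every intermediate $\calK_{\zeta'''}$.
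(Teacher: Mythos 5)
Your successor step is sound: the inaccessibility of $\kappa_\eta$ in $\calK_{\zeta'}$ (maintained by the inductive hypothesis and elementarity) bounds $j_{\zeta',\zeta'+1}(\alpha)$ below $\kappa_\eta$ for $\alpha<\kappa_\eta$, and regularity gives continuity at $\kappa_\eta$. The limit step, however, contains a genuine and fatal gap. The ``standard continuity formula'' $j_{\xi_\eta,\zeta}(\kappa_\eta)=\sup_{\zeta''<\zeta}j_{\xi_\eta,\zeta''}(\kappa_\eta)$ is simply false for direct limits: already for the $\omega$-th iterate of a single normal measure on $\kappa$ one has $\sup_n j_{0,n}(\kappa^+)=\kappa_\omega$ while $j_{0,\omega}(\kappa^+)=(\kappa_\omega^+)^{M_\omega}>\kappa_\omega$; a direct limit can introduce ordinals strictly between the supremum of the images and the image of the supremum. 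You acknowledge this and propose to reduce the limit step to showing that $j_{\zeta'',\zeta}(y)<\kappa_\eta$ for every $y<\kappa_\eta$ in $\calK_{\zeta''}$, but your justification of that reduction is circular: bounding $j_{\zeta'',\zeta}(y)$ at a limit $\zeta$ is itself a direct-limit problem of exactly the same kind, and the ``uniform successor-stage bounds'' only control the thread $\langle j_{\zeta'',\theta}(y)\mid\theta<\zeta\rangle$, not its direct-limit value, which exceeds every member of the thread whenever the thread fails to stabilize.

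Worse, the fact you are implicitly invoking --- that a normal iteration all of whose critical points lie below an inaccessible $\mu$ of the base model must fix $\mu$ --- is false in the generality in which you use it. If the critical points $\lambda_{\theta}$ are cofinal in $\mu$ (which is compatible with $\eta_0$ being a limit and $\lambda_{\eta_0}\geq\kappa_\eta$), an ordinal $\beta<\mu$ whose images repeatedly exceed the next critical point has a thread cofinal in $\mu$, and then $j_{0,\zeta}(\beta)\geq\mu$, so $j_{0,\zeta}(\mu)>\mu$. Concretely, iterating a measure on some $\mu_0<\mu$ through $\mu$ many steps sends $\mu_0$ to $\mu$ and moves $\mu$, even though every critical point is below $\mu$; this is precisely the mechanism by which new images $\kappa_\alpha$ of $\kappa$ are generated along $j_U\restriction\calK$ in the first place. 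So no argument using only ``the critical points of $[\xi_\eta,\eta_0)$ lie below $\kappa_\eta$'' together with inaccessibility can succeed; one must use additional structure of the iteration $j_U\restriction\calK$. This is exactly why the paper's proof takes a different route: it builds an auxiliary \emph{internal} iteration $i\colon \calK_{\xi_\eta}\to N^*$ (with carefully prescribed choices at limit stages) which absorbs $j_{\xi_\eta,l^*}$ via a comparison, observes that this internal iteration fixes $\kappa_\eta$, and then extracts $j_{\xi_\eta,\eta_0}(\kappa_\eta)=\kappa_\eta$ from the sandwich $\kappa_\eta\leq j_{\xi_\eta,\eta_0}(\kappa_\eta)\leq i(\kappa_\eta)=\kappa_\eta$. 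Your induction cannot be repaired without importing an argument of that kind at the limit stages.
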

\begin{proof}
By elementarity, $N_0:=\calK_{\xi_\alpha}\models \kappa_{\eta}$ is measurable. Let us define an internal iteration of the measures of $N_0$, $i:N_0\rightarrow N_0^*$, $\l i_{\alpha,\beta}\mid \alpha\leq\beta\leq \theta^*\r$ defined as follows:

At limit steps we simply take a direct limit.
At successor step $\gamma+1$, assume that $i_{0,\gamma}\colon N_0\rightarrow N_\gamma$ is defined and $U_\beta$ is a measure of $\zeta_\beta$ for $\beta<\gamma$ are the measures applied at stage $\beta$. 

Let \[\bar{\zeta}_\gamma=\sup_{\beta<\gamma}(\zeta_\beta+1).\] 
We split into cases:
\begin{itemize}
    \item If $\cf^{N_0}(\gamma)>\kappa$ or $\gamma$ is successor ordinal, consider the first measurable $\zeta_\gamma\geq\bar{\zeta}_\gamma$ in $N_\gamma$ and apply $U(\zeta_\gamma,0)$. 
    \item If $\cf^{N_0}(\gamma) \leq \kappa$ and $\gamma$ is a limit ordinal, we take the least $N_\gamma$-measurable $\zeta=\zeta_\gamma$, such that for some $\rho$, the set $\{\beta<\gamma\mid i_{\beta,\gamma}(U_\beta)=U(\zeta,\rho)\}$ is bounded in $\gamma$, assuming that there is one. If there is no such $\zeta$, take $\gamma=\theta^*$ and halt.
\end{itemize}

Let us claim that the elementary embedding $j_{\xi_\eta, l^*}\colon N_0\rightarrow\calK^{M_U}$ can be completed to $N^*$. Indeed, in the comparison process between the models $N^*$ and $\calK^{M_U}$, the model $N^*$
will not move since measurable cardinal in $N^*$ are critical points of steps of the iteration of cofinality at most $\kappa$ and $M_U$ is closed under $\kappa$-sequences. 

Hence there is an iteration
$\l\sigma_{\alpha,\beta}\mid \alpha\leq\beta\leq \rho^*\r$ such that $\sigma_{\rho^*}\circ j_{\xi_\eta, l^*}=i$. We are only interested in the part of the iteration which have critical points below $\kappa_\eta$, and the iteration $\sigma\circ j_{\xi_\eta, l^*}$
is equivalent to a normal one. Let $\beta_0$ be the least such that  $\crit(i_{\beta_0,\beta_0+1})\geq\kappa_\eta$, then there is $\gamma_0$ such that $\sigma_{\gamma_0}\circ j_{\eta_0,\xi_\eta}=i_{\beta_0}$. %We define an iteration $\sigma:\calK_{\alpha_0}\rightarrow N^*$. Inductively assume that after $\beta$ many steps we did not move $N^*$ and $\sigma_\beta:\calK_{\alpha_0}\rightarrow M_\beta$ is defined. We only need to prove that every measure in $N^*$ is in $M_\beta$. Let $U$ be a measure in $N^*$ on $\lambda$. Then $\lambda$ is a critical point $\lambda=crit(i_{\rho,\rho+1})$. It is impossible that $\rho$ is successor, nor that $\rho$ is of $N_0$-cofinality grater than $\kappa$ since otherwise, $i_{\rho,\rho+1}$ destroys the measurably of $\lambda$ and in turn $\lambda$ is not measurable in $N^*$. Otherwise, the $N_0$-cofinality of $\rho$ is at most $\kappa$,  Let $\beta_0$ be the least such that $\forall\beta<\beta_0$, $\zeta_\beta<\kappa_\alpha$, and let us argue that $i_{\beta_0}(\kappa_\alpha)=\kappa_\alpha$. Clearly $i_{\beta_0}(\kappa_\alpha)\geq \kappa_\alpha$,  
Since $i_{\beta_0}$ is an internal iteration of $N_0$ with critical points below $\kappa_{\eta}$ which is measurable in $N_0$,  $i(\kappa_{\eta})=\kappa_{\eta}$. 

Hence $$\kappa_{\eta}\leq j_{\eta_0,\xi_\eta}(\kappa_{\eta})\leq \sigma_{\gamma_0}(j_{\eta_0,\xi_{\eta}}(\kappa_{\eta}))=\kappa_{\eta}.$$ 
We conclude that $j_{\xi_\eta, \eta_0}(\kappa_{\eta})=\kappa_{\eta}$.
\end{proof}
%Also , for limit $\alpha\leq\theta^*$ we take direct limit. At successor step $\alpha$, we have defined the model $N_\beta$ for $\beta\leq\alpha$ and the measures used are denoted by $U_\beta$. .

%We can complete the iteration $j_U\restriction\calK$ to the full iteration.
%From $\calK^{M_U}$, we define an external iteration %$\l\sigma_{\alpha,\beta}\mid \alpha\leq\beta\leq \zeta^*\r$, which commutes with the iteration $j_U\restriction\calK$, 
%At limit we take direct limit, and at successor step we take the minimal measurable $\mu$ such that $\{\beta<\alpha\mid \sigme_{\beta,\alpha}(U_\beta)=U(\mu,\rho)\}$ is bounded in $\alpha$.
%So $\sigma\circ j_U\restriction\calK=i$, and we can  ????
\begin{corollary}\label{corcrit} $\{\kappa_\alpha\mid \alpha\leq \alpha^*\}\subseteq \{\lambda_i \mid i\leq l^*\}\cup\{j_U(\kappa)\}$.
\end{corollary}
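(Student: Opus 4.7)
The plan is to reduce the corollary to Lemma \ref{crit} by a short contradiction argument. The case $\alpha = \alpha^*$ is immediate from the setup, since $\kappa_{\alpha^*} = j_U(\kappa)$ by definition of the list of images. So fix $\alpha < \alpha^*$ and set $\xi_\alpha$ so that $j_{\xi_\alpha}(\kappa) = \kappa_\alpha$, and let $\eta_0$ be the least index with $\kappa_\alpha \leq \lambda_{\eta_0}$, as in Lemma \ref{crit}. I will argue that $\kappa_\alpha = \lambda_{\eta_0}$.

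Suppose toward a contradiction that $\kappa_\alpha < \lambda_{\eta_0}$. By normality of the iteration, $\lambda_i$ is strictly increasing in $i$, so every critical point $\lambda_i$ with $i \geq \eta_0$ satisfies $\lambda_i \geq \lambda_{\eta_0} > \kappa_\alpha$; hence the tail embedding $j_{\eta_0, l^*}$ fixes $\kappa_\alpha$. Combined with Lemma \ref{crit}, which gives $j_{\xi_\alpha, \eta_0}(\kappa_\alpha) = \kappa_\alpha$, this yields
\[
 j_{\xi_\alpha, l^*}(\kappa_\alpha) \;=\; j_{\eta_0, l^*}\bigl(j_{\xi_\alpha,\eta_0}(\kappa_\alpha)\bigr) \;=\; \kappa_\alpha.
\]
On the other hand, using that $\kappa_\alpha = j_{0,\xi_\alpha}(\kappa)$ and the composition property of the iteration,
\[
 j_{\xi_\alpha, l^*}(\kappa_\alpha) \;=\; j_{\xi_\alpha, l^*}\bigl(j_{0,\xi_\alpha}(\kappa)\bigr) \;=\; j_{0, l^*}(\kappa) \;=\; j_U(\kappa) \;=\; \kappa_{\alpha^*}.
\]
Putting the two equalities together forces $\kappa_\alpha = \kappa_{\alpha^*}$, but $\alpha < \alpha^*$ and the sequence $\langle \kappa_\beta \mid \beta \leq \alpha^*\rangle$ is strictly increasing, a contradiction. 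Therefore $\kappa_\alpha = \lambda_{\eta_0}$, which places $\kappa_\alpha$ in $\{\lambda_i \mid i \leq l^*\}$, as required.

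There is no real obstacle here: Lemma \ref{crit} has already done the heavy lifting by producing a tail of the iteration that fixes $\kappa_\alpha$, and the only thing to verify is that this tail must actually hit the critical value $\lambda_{\eta_0}$ exactly at $\kappa_\alpha$. The one point to be careful about is to check that the composition identity $j_{\eta_0, l^*} \circ j_{\xi_\alpha, \eta_0} = j_{\xi_\alpha, l^*}$ is valid on $\kappa_\alpha$ (it is, since the iteration is linear and commutative in the usual sense), and that normality of $\langle \lambda_i \rangle$ really is in force, which was explicitly assumed just before Lemma \ref{crit}.
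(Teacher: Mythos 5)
Your argument is correct and is essentially the paper's own: both proofs combine Lemma \ref{crit} (the segment $j_{\xi_\alpha,\eta_0}$ fixes $\kappa_\alpha$) with the observation that, were $\kappa_\alpha<\lambda_{\eta_0}$, the tail $j_{\eta_0,l^*}$ would also fix $\kappa_\alpha$, forcing $\kappa_\alpha=j_U(\kappa)$; you merely phrase this as a contradiction with $\alpha<\alpha^*$ rather than as the paper's direct dichotomy. The only detail you wave at rather than verify is that $\xi_\alpha\leq\eta_0$ (needed for the composition $j_{\eta_0,l^*}\circ j_{\xi_\alpha,\eta_0}=j_{\xi_\alpha,l^*}$ to make sense), which the paper checks with a one-line minimality argument; this is implicit in the statement of Lemma \ref{crit} and is not a substantive gap.
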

\begin{proof}
 Assume that  $\kappa_\alpha\notin\{\lambda_i \mid i\leq l^*\}$, let us show that $\kappa_\alpha=j_U(\kappa)$. Let $\xi_\alpha$ to be the least such that $j_{\xi_\alpha}(\kappa)=\kappa_\alpha$. Consider $\alpha_0$ to be the minimal such that $\kappa_\alpha\leq\lambda_{\alpha_0}$. If $\alpha_0=l^*$, then we are done. Otherwise we actually get the conclusion by using the assumption that $\kappa_\alpha<\lambda_{\alpha_0}=\crit(j_{\alpha_0, l^*})$. Clearly, $\xi_\alpha\leq \alpha_0$, otherwise, since $\crit(j_{\alpha_0,\xi_\alpha})=\lambda_{\alpha_0}>\kappa_\alpha$ (again, this is clear in case $\alpha_0=l^*)$, $j_{\xi_\alpha,\alpha_0}(\kappa_\alpha)=\kappa_\alpha=j_{\xi_\alpha,\alpha_0}(j_{\alpha_0}(\kappa))$, hence $\kappa_\alpha=j_{\alpha_0}(\kappa)$, contradiction the minimality of $\xi_\alpha$.
 By lemma \ref{crit}, $j_{\alpha_0,\xi_\alpha}(\kappa_\alpha)=\kappa_\alpha$, hence
$$j_U(\kappa)=j_{l^*,\alpha_0}(j_{\alpha_0,\xi_\alpha}(j_{\xi_\alpha}(\kappa)))=j_{\alpha_0,l^*}(j_{\xi_\alpha,\alpha_0}(\kappa_\alpha))=j_{\alpha_0,l^*}(\kappa_\alpha)=\kappa_\alpha$$
\end{proof}

 \begin{claim}\label{BoundFun}
If $\kappa_\alpha\leq\delta<\kappa_{\alpha+1}$ then there is $h\in({}^\kappa\kappa)^{\mathcal{K}}$ such that $\delta\leq j_U(h)(\kappa_\alpha)<\kappa_{\alpha+1}$.
\end{claim}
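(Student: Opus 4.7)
The plan is to represent $\delta$ in $\mathcal{K}^{M_U}$ as $j_U(F')(\vec{\mu}, \kappa_\alpha)$ for some $F' \in \mathcal{K}$ with $F'\colon \kappa^{m+1} \to \kappa$ and some $\vec{\mu} \in [\kappa_\alpha]^{m}$, and then absorb the auxiliary parameters by taking a supremum to produce the desired single-variable $h$. I first use Corollary \ref{corcrit} to locate $\kappa_\alpha$ as $\lambda_{i_\alpha}$ and $\kappa_{\alpha+1}$ as the value $j_{i_\alpha,i_\alpha+1}(\kappa_\alpha)$, which equals $\lambda_{i_{\alpha+1}}$ for some $i_{\alpha+1} > i_\alpha$ (or equals $j_U(\kappa)$ if $\alpha+1 = \alpha^*$, a case handled in the same manner). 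Since the critical points of the tail iteration $j_{i_{\alpha+1}, l^*}$ are all at least $\kappa_{\alpha+1}$, they fix $\delta$, so $\delta$ already sits in $\mathcal{K}_{i_{\alpha+1}}$.

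In that model, the standard representation for iterated ultrapowers expresses $\delta$ as $j_{0,i_{\alpha+1}}(F)(\nu_1,\ldots,\nu_n)$ for some $F \in \mathcal{K}$ and $\nu_j \in \{\lambda_p \mid p < i_{\alpha+1}\}$, all below $\kappa_{\alpha+1}$. I then need to remove the parameters that fall in the open interval $(\kappa_\alpha,\kappa_{\alpha+1})$. The key observation is that any such intermediate $\nu_j$ is itself an ordinal in $[\kappa_\alpha,\kappa_{\alpha+1})$ and admits, by induction on the number of such parameters in the representation, an expression of the same desired form involving only $\kappa_\alpha$ and parameters strictly below $\kappa_\alpha$. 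Substituting these expressions into $F$ and collecting $\kappa_\alpha$ as the last argument yields $\delta = j_U(F')(\vec{\mu},\kappa_\alpha)$ with $F' \in \mathcal{K}$ and $\vec{\mu} \in [\kappa_\alpha]^{m}$.

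With the representation in hand, I set
\[
h(\xi) = \sup\{F'(\vec{\eta},\xi)+1 \mid \vec{\eta} \in [\xi]^{m}\},
\]
which lies in $({}^\kappa\kappa)^\mathcal{K}$ because $\kappa$ is regular in $\mathcal{K}$ and each element of the supremum is below $\kappa$. By elementarity, $j_U(h)(\kappa_\alpha) = \sup\{j_U(F')(\vec{\eta},\kappa_\alpha)+1 \mid \vec{\eta} \in [\kappa_\alpha]^{m}\}$; evaluating at $\vec{\eta} = \vec{\mu}$ shows this is strictly greater than $\delta$, while each individual value $j_U(F')(\vec{\eta},\kappa_\alpha)$ is below $\kappa_{\alpha+1}$ by the same representation analysis that placed $\delta$ there. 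Since $\kappa_{\alpha+1}$ is inaccessible in $\mathcal{K}^{M_U}$ with cofinality exceeding $\kappa_\alpha \geq |[\kappa_\alpha]^{m}|$, the supremum stays strictly below $\kappa_{\alpha+1}$. The main technical hurdle will be the inductive elimination of parameters in $(\kappa_\alpha,\kappa_{\alpha+1})$, which requires showing that every critical point used by the iteration in that interval is itself representable from $\kappa_\alpha$ and parameters strictly below $\kappa_\alpha$ via a function in $\mathcal{K}$; this rests on the coherence of the core model's measure sequence and the structure of the ultrapower applied at stage $i_\alpha$.
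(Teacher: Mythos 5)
Your plan hinges on an exact representation $\delta = j_U(F')(\vec{\mu},\kappa_\alpha)$ with $\vec{\mu}\in[\kappa_\alpha]^{m}$, obtained by ``inductively eliminating'' the parameters of the representation that lie in the open interval $(\kappa_\alpha,\kappa_{\alpha+1})$. This is the step that fails, and it is not a deferrable technicality: those parameters are generators of the iteration. If $\nu=\lambda_p=\crit(j_{p,p+1})$ with $\xi_\alpha+1\leq p<\xi_{\alpha+1}$, then indeed $\nu=j_{0,p}(g)(\vec{\mu})$ for some $g\in\calK$ and $\vec{\mu}$ below $\nu$, but this identity lives in $\calK_p$; applying $j_{p,\xi_{\alpha+1}}$ to both sides gives $j_{0,\xi_{\alpha+1}}(g)(\vec{\mu})=j_{p,\xi_{\alpha+1}}(\nu)>\nu$, strictly, because $\nu$ is the critical point of the very next step. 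So substituting the stage-$p$ expression for $\nu_j$ into $F$ at stage $\xi_{\alpha+1}$ changes the value, and in fact no generator in $(\kappa_\alpha,\kappa_{\alpha+1})$ is of the form $j_U(F')(\vec{\mu},\kappa_\alpha)$ with $\vec{\mu}\subseteq\kappa_\alpha$ --- that is precisely what being a generator means (take $\delta=\lambda_{\xi_\alpha+1}$ for a concrete counterexample). Note that the case with no critical points in $(\kappa_\alpha,\kappa_{\alpha+1})$ is the trivial one; the whole content of the claim is the case your elimination cannot handle. A secondary, related gap: even granting a representation, the bound $j_U(F')(\vec{\eta},\kappa_\alpha)<\kappa_{\alpha+1}$ for \emph{arbitrary} $\vec{\eta}\in[\kappa_\alpha]^m$ is asserted but not justified at the final stage ($j_U(F')$ only maps into $j_U(\kappa)$ a priori).

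The paper's proof avoids both problems by working at the stage $\xi_\alpha+1$, i.e.\ immediately after the measure on $\kappa_\alpha$ is applied. At that stage every generator produced so far is $\leq\kappa_\alpha$ and $j_{0,\xi_\alpha+1}(\kappa)=\kappa_{\alpha+1}$, so $\delta<\kappa_{\alpha+1}$ has an exact representation $j_{0,\xi_\alpha+1}(f)(\lambda_{i_1},\dots,\lambda_{i_n})$ with all parameters $\leq\kappa_\alpha$ and $f\in\calK$. Defining $h(\alpha)=\sup\{f(\vec{\xi})\mid\vec{\xi}\in[\alpha+1]^n\}$ (your $h$, essentially) one gets $\delta\leq j_{0,\xi_\alpha+1}(h)(\kappa_\alpha)<j_{0,\xi_\alpha+1}(\kappa)=\kappa_{\alpha+1}$, the upper bound coming from the regularity of $\kappa$ in $\calK$ plus elementarity. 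The remaining embeddings $j_{\xi_\alpha+1,l^*}$ fix $\kappa_\alpha$ and, by Lemma \ref{crit}, fix $\kappa_{\alpha+1}$ as well, and they can only increase the ordinal $j_{0,\xi_\alpha+1}(h)(\kappa_\alpha)$; hence both inequalities survive to $j_U(h)(\kappa_\alpha)$. So the correct move is not to eliminate the intermediate generators but to dominate $\delta$ \emph{before} they appear and then ride the monotonicity of the tail of the iteration. I recommend you rework the argument along these lines.
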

\begin{proof}
 Assume $\kappa_\alpha\leq\delta<\kappa_{\alpha+1}$ decompose the iteration $$j_U\restriction\calK=j_{\xi_{\alpha+1},l^*}\circ j_{\xi_{\alpha}+1,\xi_{\alpha+1}}\circ j_{\xi_\alpha,\xi_\alpha+1}\circ j_{\xi_\alpha}$$ where $$j_{\xi_\alpha}:\calK\rightarrow \calK_{\xi_\alpha}, \ \crit(j_{\xi_\alpha})=\kappa$$  $$j_{\xi_\alpha,\xi_\alpha+1}:\calK_{\xi_\alpha}\rightarrow \calK_{\xi_\alpha+1}, \ \crit(j_{\xi_\alpha,\xi_\alpha+1})=\kappa_\alpha,
 \text{ and }j_{\xi_\alpha,\xi_\alpha+1}(\kappa_\alpha)=\kappa_{\alpha+1}$$
 $$j_{\xi_\alpha+1,\xi_{\alpha+1}}:\calK_{\xi_\alpha+1}\rightarrow\calK_{\xi_{\alpha+1}}, \crit(j_{\xi_\alpha+1,\xi_{\alpha+1}})=\lambda_{\xi_\alpha+1}$$
 $$j_{\xi_{\alpha+1},l^*}:\calK_{\xi_{\alpha+1}}\rightarrow \calK^{M_U}, \ \crit(j_{\xi_{\alpha+1},l^*})=\kappa_{\alpha+1}$$
 First consider only the iteration $j_{\xi_\alpha+1}$, there is $f\in({}^\kappa\kappa)^\calK$ such that $$j_{\xi_\alpha+1}(f)(\lambda_{i_1},\dots,\lambda_{i_n})=\delta$$ where $\lambda_{i_1},\dots,\lambda_{i_n}\leq\kappa_\alpha$. Now let us define $h:\kappa\rightarrow\kappa$ by $$h(\alpha)=\sup(f(\vec{\xi})\mid \vec{\xi}\in[\alpha+1]^n)$$
 $h\in\calK$ as it is definable. It follows that $\delta\leq j_{\xi_\alpha+1}(h)(\kappa_\alpha)<\kappa_{\alpha+1}$. 
 Further iteration might move $j_{\xi_\alpha+1}(h)(\kappa_\alpha)$, but not past $\kappa_{\alpha+1}$. Indeed, by lemma \ref{crit}, $$\kappa_{\alpha+1}\leq j_{\xi_{\alpha}+1,\xi_{\alpha+1}}(\kappa_{\alpha+1})\leq j_{\xi_{\alpha+1}}(\kappa_{\alpha+1})=\kappa_{\alpha+1}$$
 Hence $\kappa_{\alpha+1}= j_{\xi_{\alpha}+1,\xi_{\alpha+1}}(\kappa_{\alpha+1})$. It follows that $$j_U(h)(\kappa_\alpha)=j_{\xi_{\alpha+1},l^*}(j_{\xi_\alpha+1}(h)(\kappa_\alpha))=j_{\xi_{\alpha+1}}(h)(\kappa_\alpha)=$$
 $$=j_{\xi_\alpha+1,\xi_{\alpha+1}}(j_{\xi_{\alpha}+1}(h)(\kappa_\alpha))<j_{\xi_\alpha+1,\xi_{\alpha+1}}(\kappa_{\alpha+1})=\kappa_{\alpha+1}$$
 \end{proof}
 There is a close connection between the critical points of the iteration $j_U$ and indiscernibles of covering models  from Mitchell's covering lemma. 
 
\begin{lemma}\label{lemma: finite error}
Let $N=h^{N}\image(\rho;\mathbb{C}^N)$ be a covering model where $\mathbb{C}^N$ is a $h^N$-coherent system of indiscernibles for $\calK_{M_U}$ where $h^N \in \calK^{M_U}$ is a Skolem function. Suppose that $\kappa_{\gamma_0}\in N$ for some $\gamma_0<\alpha^*$. 

Then
 for all but finitely many $c \in \cup\{ \mathbb{C}^N_\gamma\mid \crit(\mathbb{C}^N_\gamma)=\kappa_{\gamma_0}\}$, $c\in \{\kappa_\alpha\mid \alpha\leq\gamma_0\}$.
\end{lemma}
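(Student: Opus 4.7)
My plan is to compare the indiscernible system $\mathbb{C}^N$ to a ``canonical'' covering model whose indiscernibles at critical point $\kappa_{\gamma_0}$ are precisely the ordinals $\{\kappa_\alpha \mid \alpha<\gamma_0\}$, and then invoke clause (5) of Theorem \ref{the covering lemma}, which forces any two covering models to agree on their indiscernibles modulo a finite error. So concretely I aim to build a second covering model $N^*$ of $\calK^{M_U}$ with $\kappa_{\gamma_0}\in N^*$ for which $\bigcup\{\mathbb{C}^{N^*}_\gamma\mid \crit(\vec{U}_\gamma)=\kappa_{\gamma_0}\}$ equals $\{\kappa_\alpha\mid \alpha<\gamma_0\}$ up to finite error; once $N^*$ is in hand, the lemma follows by enumerating the indiscernibles of $\mathbb{C}^N$ via the operator $S^{\mathbb{C}^N}_*$ of Definition \ref{def:mitchell-ind} starting above the threshold supplied by clause (5), and noting that this enumeration must reproduce the tail of the sequence $\{\kappa_\alpha\}$.

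To construct $N^*$, I would work inside $M_U$ and take an elementary Skolem hull in some large $H(\theta)^{M_U}$ of a parameter set consisting of $\{\kappa_\alpha \mid \alpha\leq\gamma_0\}$ together with a small ordinal $\rho^*<\kappa$. The essential point, which I would verify using Corollary \ref{corcrit}, Lemma \ref{crit} and the argument of Claim \ref{BoundFun}, is that each $\kappa_\alpha$ with $\alpha<\gamma_0$ is a critical point of a measure actually used at some stage of the normal iteration $j_U\upharpoonright \calK$, so that the measure applied there pulls back along the remainder of the iteration to a measure $\vec{U}_\gamma$ on $\kappa_{\gamma_0}$ in $\calK^{M_U}$ for which $\kappa_\alpha$ plays the role of a generic point. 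By \L o\'s's theorem this is exactly the sensitivity clause of Definition \ref{def:mitchell-ind}(\ref{def:mitchell-ind-3-b}), and the coherence requirements in Definition \ref{def:mitchell-ind}(\ref{def:mitchell-ind-4-c}) are met because later stages of the iteration leave $\kappa_\alpha$ fixed (Lemma \ref{crit}) and the measures applied form a normal sequence. Conversely, any ordinal in the Skolem hull which is not of the form $\kappa_\alpha$ already lies inside the hull generated from smaller $\kappa_\alpha$'s and from $\rho^*$, so it is removed by clause (4) of Theorem \ref{the covering lemma} from the list of genuine indiscernibles.

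With both $N$ and $N^*$ in hand, clause (5) of Theorem \ref{the covering lemma} supplies a finite set $a\subseteq\Ord$ such that for every $\gamma$ with $\crit(\vec{U}_\gamma)=\kappa_{\gamma_0}$ and every $\xi$ with $a\cap\kappa_{\gamma_0}\subseteq\xi$ and $\xi>\max\{\rho_N,\rho_{N^*}\}$, the values of $S^{\mathbb{C}^N}_*(\gamma,\xi)$ and $S^{\mathbb{C}^{N^*}}_*(\gamma,\xi)$ coincide. Iterating $S_*$ in $\mathbb{C}^N$ starting from such a $\xi$ therefore lists exactly the tail of $\{\kappa_\alpha\mid \alpha<\gamma_0\}$, so every $c\in\bigcup\{\mathbb{C}^N_\gamma\mid \crit(\vec{U}_\gamma)=\kappa_{\gamma_0}\}$ sufficiently large belongs to $\{\kappa_\alpha\mid \alpha\leq\gamma_0\}$ --- the ``finitely many exceptions'' absorb the error set $a$ together with the finitely many indiscernibles sitting below the threshold. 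The main technical obstacle I foresee is the verification that the canonical $N^*$ realizes precisely the $\kappa_\alpha$'s as its indiscernibles at $\kappa_{\gamma_0}$, and not some finer or coarser family: ruling out spurious additional indiscernibles coming from iteration stages whose critical points are not of the form $\kappa_\alpha$ requires careful use of the normality of the iteration together with Corollary \ref{corcrit} and the fixed-point analysis of Lemma \ref{crit}.
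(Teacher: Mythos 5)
Your plan routes everything through a comparison of $\mathbb{C}^N$ with a second, ``canonical'' covering model $N^*$ via clause (5) of Theorem \ref{the covering lemma}. The paper does something quite different and more direct: it assumes there are infinitely many exceptional indiscernibles $\delta_n$ at $\kappa_{\gamma_0}$, uses Claim \ref{BoundFun} to trap each $\delta_n$ in an interval $[\kappa_{\alpha_n}, j_U(f_n)(\kappa_{\alpha_n}))$ with $f_n\in\calK$, covers the countable family $\{f_n\}$ by a single dominating $f\in\calK$ (using the canonical $\kappa^+$-enumeration of $({}^{\kappa}\kappa)^{\calK}$ and weak covering), and then observes that $\delta_n$ lying in a set of the form $j_U(f)\image(\delta_n\times\{\kappa_{\gamma_0}\})$ which is bounded in $\kappa_{\gamma_0}$ violates Definition \ref{def:mitchell-ind}(\ref{def:mitchell-ind-3-b}). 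No second covering model is needed.

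The central gap in your proposal is that the key property of $N^*$ --- that its indiscernibles at $\kappa_{\gamma_0}$ are \emph{exactly} $\{\kappa_\alpha\mid\alpha<\gamma_0\}$ up to a finite error --- is asserted but not proved, and proving the containment $\bigcup\{\mathbb{C}^{N^*}_\gamma\mid\crit(\vec U_\gamma)=\kappa_{\gamma_0}\}\subseteq\{\kappa_\alpha\}$ (mod finite) is exactly as hard as the lemma itself: Theorem \ref{the covering lemma} hands you \emph{some} coherent system for the hull, it does not let you prescribe it, and clause (4) only gives a dichotomy, not that the indiscernibles coincide with your generators. The easy direction (each $\kappa_\alpha$ is an indiscernible, being a critical point of the normal iteration) does not help; ruling out spurious indiscernibles of $N^*$ strictly between consecutive $\kappa_\alpha$'s requires the very bounding argument of Claim \ref{BoundFun} plus the domination step, i.e.\ the paper's proof. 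So the comparison strategy defers rather than resolves the difficulty. There are two further problems even granting $N^*$: (i) clause (5) only applies to pairs $(\gamma,\xi)\in N\cap N^*$ and to the operators $S$, $S_*$, $a^{\mathcal{C},X}$, so iterating $S_*$ does not enumerate all of $\bigcup_\gamma\mathbb{C}^N_\gamma$ --- accumulation points and measures $\gamma\in N\setminus N^*$ escape the agreement; and (ii) the threshold $\max\{\rho_N,\rho_{N^*},\max(a\cap\kappa_{\gamma_0})\}$ is merely an ordinal below $\kappa_{\gamma_0}$, beneath which $\mathbb{C}^N$ may have \emph{infinitely} many indiscernibles, so at best you would conclude ``all but boundedly many,'' not the stated ``all but finitely many.'' (The latter weakening would in fact suffice for the paper's applications, but it is a mismatch with the statement, and the paper's own argument --- picking an $\omega$-sequence of exceptions and contradicting indiscernibility --- genuinely yields the finite bound.)
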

\begin{proof}
 Suppose otherwise. Let $\l \delta_n \mid  n < \omega\r$ be an increasing sequence in $$\cup\{ \mathbb{C}^N_\gamma\mid \crit(\mathbb{C}^N_\gamma)=\kappa_{\gamma_0}\}\setminus  \{\kappa_\alpha\mid \alpha\leq\alpha^{*}\}$$
 Set $$\alpha_n = \max(\{\alpha\leq \alpha^{*} \mid \kappa_\alpha<\delta_n\}).$$
 By Claim \ref{BoundFun} there is $f_n : \kappa\to \kappa$ in $\calK$ increasing such that
  $$\delta_n < j_U(f_n)(\kappa_{\alpha_n}) <\kappa_{\alpha_{n}+1}.$$ Consider $\{f_n \mid n < \omega\}.$
   While this set might not be a member of $\calK$, we are above to bound it. Let $\l t_\xi \mid \xi < \kappa^+\r$
    be the canonical enumeration in $\calK$ of $(\kappa^\kappa)^\calK$. For every $n < \omega$, let $\xi_n$ be the unique ordinal such that $f_n = t_{\xi_n}$. Both $\kappa$ and $\kappa^+ = (\kappa^+)^{\calK}$ are regular in $V$ (here we are using the covering theorem, and the measurability of $\kappa$ in $V$). So, there is $a \subseteq\kappa^+, a \in \calK,|a| < \kappa$ which covers
    $\{\xi_n \mid n < \omega\}$. To find such a set, let $\xi=\sup_n \xi_n < \kappa^{+}$.
Let $p\colon \kappa \to \xi$ be a bijection in $K$. Since $\cf^V \kappa = \kappa > \omega$, $\sup p^{-1}(\xi_n) = \beta' < \kappa$.
Then take $a= p\image \beta' \in K$. 

Define a function $f\colon\kappa\to \kappa$ in $\calK$ as follows:
     $$f(\nu) = \sup\{t_\xi(\nu) \mid \xi\in  a\},$$ for every $\nu<\kappa$. Then, for every $n < \omega,\nu<\kappa,
$ $$\kappa > f(\nu) > f_{\xi_n}(\nu).$$

Now, in the ultrapower, for every $n < \omega,$
$$\kappa_{\alpha_n+1}> j_U(f)(\kappa_{\alpha_n}) > \delta_n.$$

Let $\delta^*=\sup_{n<\omega}\delta_n\leq\kappa_{\gamma_0}$. If $\delta^*=\kappa_{\gamma_0}$, then the function $j_U(f)\image\nu$ in is $\calK^{M_U}$. Note that $\delta_n\in  j_U(f)\image\kappa_{\alpha_n}$. For high enough $n$, this will contradicts Definition \ref{def:mitchell-ind}, \ref{def:mitchell-ind-3-b} and the indiscernibility of $\delta_n$'s.
If $\delta^*<\kappa_{\alpha+1}$, then it is also indiscernible and by definition \ref{def:mitchell-ind}, \ref{def:mitchell-ind-4-c}, the $\delta_n$'s are part of the indiscernibles for $\delta^*$. Then we again reach a contradiction to \ref{def:mitchell-ind}, \ref{def:mitchell-ind-3-b}.
\end{proof}
 
 \begin{lemma}
 For every $\alpha<\alpha^*$, $\kappa_{\alpha+1}$ is regular in $M_U$.
 \end{lemma}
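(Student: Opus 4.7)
I would split into two cases based on whether $\alpha+1=\alpha^*$ or $\alpha+1<\alpha^*$. If $\alpha+1=\alpha^*$, then $\kappa_{\alpha+1}=j_U(\kappa)$, which is inaccessible (hence regular) in $M_U$ by the elementarity of $j_U$, since $\kappa$ is inaccessible in $V$.

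For the case $\alpha+1<\alpha^*$, I would first show that $\kappa_{\alpha+1}$ is regular in $\mathcal{K}^{M_U}$. By Corollary~\ref{corcrit}, $\kappa_{\alpha+1}=\lambda_i$ for some stage $i$, so $\kappa_{\alpha+1}$ is measurable in the intermediate iterate $\mathcal{K}_i$. The single ultrapower $j_{i,i+1}$ preserves regularity at its critical point---any function $\mu\to\kappa_{\alpha+1}$ with $\mu<\kappa_{\alpha+1}$ in $\mathcal{K}_{i+1}$ codes a bounded subset of $\kappa_{\alpha+1}$, which coincides with its copy in $\mathcal{K}_i$---and by normality every subsequent critical point $\lambda_j$ with $j>i$ strictly exceeds $\kappa_{\alpha+1}$, so the remaining iteration fixes it. Thus $\kappa_{\alpha+1}$ is regular in $\mathcal{K}_{l^*}=\mathcal{K}^{M_U}$. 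To transfer this to $M_U$, I argue by contradiction: assume $\cf^{M_U}(\kappa_{\alpha+1})=\mu<\kappa_{\alpha+1}$. By elementarity of $j_U$, $M_U$ still satisfies $\neg\exists\lambda\,(o(\lambda)=\lambda^{++})$, so Mitchell's covering (Theorem~\ref{the covering lemma}) is available for $\mathcal{K}^{M_U}$ inside $M_U$. I would choose a covering model $X\in M_U$ of cardinality $\mu$ containing both a cofinal $\mu$-sequence in $\kappa_{\alpha+1}$ and $\kappa_{\alpha+1}$ itself, yielding $\rho<\kappa_{\alpha+1}$, an $h$-coherent system of indiscernibles $\mathcal{C}$, and a Skolem function $h$ with $X=h''(X\cap\rho;\mathcal{C})$. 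Since $\kappa_{\alpha+1}$ is regular in $\mathcal{K}^{M_U}$, the hull $h''(X\cap\rho)$ cannot be cofinal in $\kappa_{\alpha+1}$, so the cofinality must be witnessed by indiscernibles $\mathcal{C}_\gamma$ for some $\gamma$ in $\mathcal{K}^{M_U}$'s coherent sequence with $\crit(\vec U_\gamma)\geq\kappa_{\alpha+1}$; indiscernibles for smaller critical points are trivially bounded below those critical points. Writing $\crit(\vec U_\gamma)=\kappa_{\gamma_0}$ with $\gamma_0\geq\alpha+1$, Lemma~\ref{lemma: finite error} places all but finitely many of the contributing indiscernibles into $\{\kappa_\beta:\beta\leq\gamma_0\}$; the ones lying strictly below $\kappa_{\alpha+1}$ must then belong to $\{\kappa_\beta:\beta\leq\alpha\}$, a set bounded by $\kappa_\alpha<\kappa_{\alpha+1}$, contradicting cofinality.

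The main obstacle I anticipate is ensuring that the critical point $\crit(\vec U_\gamma)$ producing the cofinal indiscernibles is itself of the form $\kappa_{\gamma_0}$, which is exactly what Lemma~\ref{lemma: finite error} requires. Should the relevant critical point be a $\mathcal{K}^{M_U}$-measurable outside the enumeration $\{\kappa_\beta\}$, I would either enlarge $X$ to include that critical point and reuse the same indiscernibles via the robustness clause (item 5) of Theorem~\ref{the covering lemma}, or argue directly from the normality of the iteration $j_U\restriction\mathcal{K}$ that any $\mathcal{K}^{M_U}$-measurable in the interval $(\kappa_\alpha,\kappa_{\alpha+2})$ must coincide with some $\kappa_\beta$.
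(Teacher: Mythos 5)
Your proof is correct and follows essentially the same route as the paper: assume $\cf^{M_U}(\kappa_{\alpha+1})<\kappa_{\alpha+1}$, take a covering model of size $<\kappa_{\alpha+1}$ containing a cofinal sequence, observe that the indiscernibles must then be unbounded in $\kappa_{\alpha+1}$, and contradict Lemma~\ref{lemma: finite error}, which confines all but finitely many of them to $\{\kappa_\beta\mid\beta\leq\alpha\}$. Your extra care (the case $\alpha+1=\alpha^*$, the verification via Corollary~\ref{corcrit} that $\kappa_{\alpha+1}$ is measurable in $\mathcal{K}^{M_U}$, and the worry about which critical point the cofinal indiscernibles attach to) only makes explicit steps the paper leaves implicit.
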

 \begin{proof}
 Otherwise, it is singular in $M_U$, denote by $\lambda=\cf^{M_U}(\kappa_{\alpha+1})<\kappa_{\alpha+1}$.

Work in $M_U$, let $H\prec H(\theta^+)$ be an elementary submodel for some high enough $\theta$, closed to $\lambda$ sequences, such that $|H|<\kappa_{\alpha+1}$. Apply Mitchell's covering lemma \ref{the covering lemma}, find a covering
 model $H\cap \calK\subseteq N$ of cardinality less than $\kappa_{\alpha+1}$. It is of the form $h^N \image(\delta^N, \mathbb{C}^N)$, where $\delta^N<\kappa^*$,
$\mathbb{C}^N$ is a $h^N$-coherent system of indiscernibles for $\calK_{M_U}$ and $h^N \in \calK^{M_U}$ is a Skolem function.
We can assume also that $\lambda\subseteq H$. The indescernibles for $\kappa_{\alpha+1}$ in $N$ are unbounded in $\kappa_{\alpha+1}$. On the other hand, all but finitely many indiscernables for $\kappa_{\alpha+1}$ are among $\{\kappa_{\beta}\mid \beta\leq\alpha\}$. This is a contradiction.
 \end{proof}

Consider $\kappa^*=\sup(\max(a^*)+1 \cap \{ \kappa_\alpha \mid \alpha\leq \alpha^*\})$. Then there is $\alpha^{**}<\alpha^*$ such that $\kappa^*=\kappa_{\alpha^{**}}$.

%\begin{lemma}\label{lem:len_larger_than_kappa}
$\alpha^{**}\geq \kappa$.
\\In particular, the length of the sequence $\langle \kappa_\alpha \mid \alpha\leq \alpha^*\rangle$ is at least $\kappa$, and hence, $o^{\calK}(\kappa)\geq \kappa$. We will not pursuit that direction here, as the next lemma gives a strictly stronger result.

\begin{lemma}
$\kappa^*=\max(a^*)$.
\end{lemma}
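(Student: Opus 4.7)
The plan is to argue by contradiction: suppose $\kappa^* < \max(a^*)$ and combine Claim~\ref{BoundFun} with Lemma~\ref{lem:bounding} to derive a contradiction. The inequality $\kappa^* \leq \max(a^*)$ is immediate from the definition of $\kappa^*$, so only the reverse inequality requires proof.

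First I would verify that $\alpha^{**} < \alpha^*$, so that the next stage $\kappa_{\alpha^{**}+1}$ is actually available. This is clear because any condition in $j_U(Q)$ is a bounded subset of $j_U(\kappa) = \kappa_{\alpha^*}$, so $\max(a^*) < \kappa_{\alpha^*}$, and hence $\kappa^* \leq \max(a^*) < \kappa_{\alpha^*}$ forces $\alpha^{**} < \alpha^*$. Under the assumption $\kappa^* < \max(a^*)$ we therefore have
\[
\kappa_{\alpha^{**}} \;=\; \kappa^* \;<\; \max(a^*) \;<\; \kappa_{\alpha^{**}+1},
\]
the right inequality being the maximality in the definition of $\kappa^*$.

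Next I would apply Claim~\ref{BoundFun} with $\alpha = \alpha^{**}$ and $\delta = \max(a^*)$ to obtain a function $h \in ({}^\kappa \kappa)^{\mathcal{K}}$ with
\[
\max(a^*) \;\leq\; j_U(h)(\kappa^*) \;<\; \kappa_{\alpha^{**}+1}.
\]
The crucial observation is that $h$, although produced from the $\mathcal{K}$-analysis, lives in $\mathcal{K} \subseteq V$, so it is a legitimate input for Lemma~\ref{lem:bounding}.

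Finally, since $\kappa^* < \max(a^*)$, Lemma~\ref{lem:bounding} applied to $h \in V$ and $\tau = \kappa^*$ yields $j_U(h)(\kappa^*) < \max(a^*)$, which directly contradicts the lower bound just obtained. Hence $\kappa^* = \max(a^*)$.

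The argument is short and the only subtle point is bookkeeping: one must notice that $\max(a^*)$ is strictly below $j_U(\kappa)$ so that Claim~\ref{BoundFun} is applicable to the index $\alpha^{**}$, and that the bounding function produced by that claim lies in $V$ (via $\mathcal{K}$), which is exactly the hypothesis required by Lemma~\ref{lem:bounding}. No deeper use of the indiscernibles machinery is needed for this particular step.
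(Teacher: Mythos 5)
Your proof is correct. The first half coincides with the paper's: both proofs assume $\kappa^*<\max(a^*)$, note that then $\kappa_{\alpha^{**}}\leq\max(a^*)<\kappa_{\alpha^{**}+1}$, and invoke Claim~\ref{BoundFun} to produce $h\in({}^\kappa\kappa)^{\mathcal K}\subseteq V$ with $j_U(h)(\kappa^*)\geq\max(a^*)$. Where you diverge is in how the contradiction is closed. The paper does not cite Lemma~\ref{lem:bounding}; instead it runs a fresh density argument: using Lemma~\ref{minimality} it fixes a dense open $D_0$ with $a^*\cap(\kappa^*+1)\notin j_U(D_0)$, forms the club $C$ of closure points of $h$, and shows that the dense open set of conditions $p$ admitting $\eta<\eta'<\max p$ with $p\cap\eta\in D_0$ and $\eta'\in C$ is missed by $a^*$, contradicting the defining property of $a^*$. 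You instead observe that $h$ is a legitimate input to the already-established Lemma~\ref{lem:bounding} (whose statement allows any $\tau<\max(a^*)$, in particular $\tau=\kappa^*\geq\kappa$), which immediately gives $j_U(h)(\kappa^*)<\max(a^*)$. Since Lemma~\ref{lem:bounding} is itself proved by essentially the density argument the paper inlines here, the two proofs rest on the same underlying mechanism; your decomposition is the more economical one, trading the bespoke dense set for a citation, and your bookkeeping (why $\alpha^{**}<\alpha^*$ and why $\max(a^*)<\kappa_{\alpha^{**}+1}$) is accurate.
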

\begin{proof}
Otherwise, $\kappa^*<\max(a^*)<\kappa_{\alpha^{**}+1}$. By claim \ref{genericCon}, for every $D$ dense open, $a^*\setminus\kappa^*\in j(D)$. Also, by minimality of $a^*$, there is a dense open set $D_0$ such that $a^*\cap(\kappa^*+1)\notin D_0$.

Let $h\colon\kappa\rightarrow\kappa$ be such that $j(h)(\kappa^*)\geq\max(a^*)$ which exists by claim \ref{BoundFun}.
Consider
\[C=\{\alpha<\kappa\mid \forall\beta<\alpha,\, h(\beta)<\alpha\},\]
the club of all closure points of $h$.

%Since $\kappa^* < \max a^*$, by Claim \ref{genericCon} there is a dense open set $D_0$ such that $$a^* \cap (\kappa^* + 1) \notin j(D_0)$$ 
Let $D$ be the dense open set of all conditions $p\in Q$ such that there are $$\eta < \eta' < \max p$$ such that $p \cap \eta \in D_0$ and $\eta' \in C$. Let us claim that $a^* \notin j(D)$, and thus obtain a contradiction. Indeed, the least $\eta$ such that $a^* \cap \eta \in D_0$ is above $\kappa^* + 1$ and the next element of $j(C)$ above $\kappa^* + 1$ is at least $\max a^*$.
\end{proof}

\begin{claim}\label{claim:bounding-using-function-from-K}
For every $\alpha < \alpha^*$, and a function $f \colon \kappa_\alpha \to \kappa_\alpha$ in $\mathcal{K}^{M_U}$, there is a function $g \in \mathcal{K}$ such that $j(g)(\zeta) \geq f(\zeta)$ for all $\zeta < \kappa_{\alpha}$, except for a bounded error.  
\end{claim}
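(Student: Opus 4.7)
The plan is to represent $f$ via the iterated ultrapower $j_U\restriction\mathcal{K}\colon\mathcal{K}\to\mathcal{K}^{M_U}$ and then define $g$ as a supremum over parameters, in the spirit of Claim~\ref{BoundFun}. First, by Corollary~\ref{corcrit}, since $\alpha<\alpha^*$ there is $\xi_\alpha<l^*$ with $\lambda_{\xi_\alpha}=\kappa_\alpha$. By normality, the tail iteration $j_{\xi_\alpha+1,l^*}$ has critical points strictly above $\kappa_\alpha$ and hence fixes every subset of $\kappa_\alpha$ in the corresponding inner models. Consequently, $\mathcal{P}(\kappa_\alpha)\cap \mathcal{K}_{\xi_\alpha+1}=\mathcal{P}(\kappa_\alpha)\cap \mathcal{K}^{M_U}$, and coding $f\colon\kappa_\alpha\to\kappa_\alpha$ as a subset of $\kappa_\alpha$ places $f$ inside $\mathcal{K}_{\xi_\alpha+1}=\Ult(\mathcal{K}_{\xi_\alpha},U_{\xi_\alpha})$. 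Writing $f=j_{\xi_\alpha,\xi_\alpha+1}(F)(\kappa_\alpha)$ for some $F\in\mathcal{K}_{\xi_\alpha}$ with $F(\nu)\colon\nu\to\nu$ for $U_{\xi_\alpha}$-almost every $\nu$, the function
\[h(\zeta)=\text{the unique }\beta<\kappa_\alpha\text{ with }\{\nu\mid F(\nu)(\zeta)=\beta\}\in U_{\xi_\alpha}\]
lies in $\mathcal{K}_{\xi_\alpha}$ (as $U_{\xi_\alpha}\in\mathcal{K}_{\xi_\alpha}$) and satisfies $h(\zeta)=f(\zeta)$ for all $\zeta<\kappa_\alpha$.

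Next, by the standard representation in iterated ultrapowers, $h=j_{0,\xi_\alpha}(\tilde{h})(\vec{\mu})$ for some $\tilde{h}\in\mathcal{K}$ and a finite tuple $\vec{\mu}=(\mu_1,\ldots,\mu_n)$ of critical points of $j_{0,\xi_\alpha}$, all strictly below $\kappa_\alpha$. Modifying $\tilde{h}$ off the set $S=\{\vec{\mu}'\in[\kappa]^n\mid \tilde{h}(\vec{\mu}')\colon\kappa\to\kappa\}$ does not change $j_{0,\xi_\alpha}(\tilde{h})(\vec{\mu})=h$, since $\vec{\mu}\in j_{0,\xi_\alpha}(S)$ by {\L}o{\'s} (as $h\colon\kappa_\alpha\to\kappa_\alpha$); so we may assume $\tilde{h}(\vec{\mu}')\colon\kappa\to\kappa$ for every $\vec{\mu}'\in[\kappa]^n$. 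Define $g\in\mathcal{K}$ by
\[g(\eta)=\sup\{\tilde{h}(\vec{\mu}')(\eta)\mid \vec{\mu}'\in[\eta]^n\}\qquad\text{for }\eta<\kappa.\]
Since $\kappa$ is inaccessible in $\mathcal{K}$, the supremum ranges over fewer than $\kappa$-many ordinals each below $\kappa$, so $g\colon\kappa\to\kappa$ is a $\mathcal{K}$-function.

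Finally, by elementarity, $j_U(g)(\zeta)=\sup\{j_U(\tilde{h})(\vec{\mu}')(\zeta)\mid \vec{\mu}'\in[\zeta]^n\}$ in $\mathcal{K}^{M_U}$, for every $\zeta<j_U(\kappa)$. For $\zeta$ with $\max(\vec{\mu})<\zeta<\kappa_\alpha$, the tuple $\vec{\mu}$ belongs to $[\zeta]^n$; using the factorization $j_U\restriction\mathcal{K}=j_{\xi_\alpha,l^*}\circ j_{0,\xi_\alpha}$ together with $\crit(j_{\xi_\alpha,l^*})=\kappa_\alpha$ fixing both $\vec{\mu}$ and the ordinal $h(\zeta)<\kappa_\alpha$, we compute
\[j_U(\tilde{h})(\vec{\mu})(\zeta)=j_{\xi_\alpha,l^*}(j_{0,\xi_\alpha}(\tilde{h})(\vec{\mu}))(\zeta)=j_{\xi_\alpha,l^*}(h)(\zeta)=h(\zeta)=f(\zeta),\]
so this value participates in the supremum defining $j_U(g)(\zeta)$. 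Hence $j_U(g)(\zeta)\geq f(\zeta)$ for all $\zeta\in(\max(\vec{\mu}),\kappa_\alpha)$, with the bounded error $\zeta\leq\max(\vec{\mu})<\kappa_\alpha$. The delicate step is the first one: locating $f$ inside $\mathcal{K}_{\xi_\alpha+1}$ via the absoluteness of $\mathcal{P}(\kappa_\alpha)$ and then extracting the internal representative $h\in\mathcal{K}_{\xi_\alpha}$; once this is secured, the rest is a routine sup-over-parameters argument analogous to that of Claim~\ref{BoundFun}.
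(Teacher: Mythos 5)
Your argument is correct, but it is not the route the paper takes. The paper's proof fixes in $\mathcal{K}$ a ${<}^{*}$-increasing dominating family $\langle h_\tau \mid \tau<\kappa^+\rangle$ of functions $\kappa\to\kappa$ (available since $(2^\kappa)^{\mathcal{K}}=(\kappa^{+})^{\mathcal{K}}$), pushes it along the initial segment $i$ of the iteration with $i(\kappa)=\kappa_\alpha$, and uses that $i\image\kappa^{+}$ is cofinal in $i(\kappa^{+})$ together with $(\kappa^{+})^{\mathcal{K}}=\kappa^{+}$ to conclude that $\langle i(h_\tau)\mid\tau<\kappa^{+}\rangle$ still dominates ${}^{\kappa_\alpha}\kappa_\alpha\cap\mathcal{K}^{M_U}$ modulo bounded; the witness $g$ is then the appropriate $h_\tau$. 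You instead pull $f$ back to $\mathcal{K}_{\xi_\alpha}$ via the agreement of $\mathcal{P}(\kappa_\alpha)$ across the tail of the iteration, represent it as $j_{0,\xi_\alpha}(\tilde h)(\vec\mu)$ with finitely many generators below $\kappa_\alpha$, and take the supremum over all parameter tuples --- essentially rerunning the argument of Claim~\ref{BoundFun} one level up. Both arguments are sound and rest on the same standard facts about iterated ultrapowers of $\mathcal{K}$. Yours avoids the dominating family and the appeal to weak covering ($(\kappa^{+})^{\mathcal{K}}=\kappa^{+}$), at the cost of redoing the generator bookkeeping; the paper's version produces a single canonical family from which every bound is selected, which is in the spirit of the cofinal-family tricks it uses again in Lemma~\ref{lemma: finite error}. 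One small presentational point: the identification $j_{0,\xi_\alpha}(\kappa)=\kappa_\alpha$, which you use tacitly in the {\L}o{\'s} step and in the final computation, does not follow from Corollary~\ref{corcrit} alone; it is exactly the content of Lemma~\ref{crit}, and you should cite it there.
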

\begin{proof}
Fix in $\calK = \calK^V$ a sequence $\l h_\tau \mid \tau<\kappa^+\r$ of functions such that for every $\tau<\tau'<\kappa^+$ the following hold in $\calK$:

\begin{enumerate}
  \item $h_\tau:\kappa\to \kappa$,
  \item $h_\tau<h_{\tau'}$ mod bounded,
  \item for every $g:\kappa\to \kappa$ there is $\rho<\kappa^+$ such that $g<h_{\rho}$ mod bounded.
\end{enumerate}

Note that $2^\kappa=\kappa^+$ in $\calK$, hence it is easy to construct such a sequence.
Apply the iteration  $j_U\restriction \calK$ to the list  $\l h_\tau \mid \tau<\kappa^+\r$. %Recall that $\kappa^*=\kappa_{\alpha^{**}}$, for some $\alpha^{**}<\alpha^*$ and 
Let us denote by $\calK'$ the iterated ultrapower of $\calK$, and $i\colon \calK \to \calK'$ the iteation, so that $i(\kappa) = \kappa_\alpha$, and the critical point of the rest of the iteration is $\geq \kappa_\alpha$.

Note that $i\image \kappa^+$ is cofinal at $i(\kappa^+)$. Moreover, $\kappa^+=(\kappa^+)^\calK$, by the anti-large cardinal assumptions made.
Hence $\l i(h_\tau) \mid  \tau<\kappa^+\r$ will be dominating family of functions from $\kappa_\alpha$ to $\kappa_\alpha$ in $\mathcal K'$. As the critical point of the rest of the iteration
is high enough, $j(h_\tau) \restriction \kappa_\alpha = i(h_\tau)$. 
%In particular, there will be $\tau^*<\kappa^+$ such that $i(h_{\tau^*})> \tilde{h}^N$ mod bounded.
\end{proof}

\subsection{Isolating the indiscernibles}

%We will extend here the arguments of the previous section in order to prove Theorem \ref{thm:lowerbound} .
Recall that $\l \kappa_\beta \mid \beta\leq\alpha^{**}\r$ is the sequence of images of $\kappa$ under the iterated ultrapower $j_U\upr \calK$. In particular, each $\kappa_{\beta+1}$ is the image of $\kappa_\beta$ under the ultrapower embedding using a measure over $\kappa_\beta$,
$\kappa_0=\kappa$ and $\kappa_{\alpha^{**}}=\kappa^*$.

%The first problematic case for the argument of the previous section is $o^{\calK^{M_U}}(\kappa^*)=\kappa^*+\kappa^+$.
%The problem here is that the covering arguments are based on using  models of size less than $\kappa^*$.
%In the situation when $o^{\calK^{M_U}}(\kappa^*)<\kappa^*$, or more precisely, if $\alpha^{**}<\kappa^*$, a single model can be used to cover everything relevant, namely $a^*\cap \{\kappa_\alpha \mid \alpha<\alpha^{**}\}$, as it was done in the previous section. The set $\{\kappa_\alpha \mid \alpha<\alpha^{**}\}$ was relatively small.
%Now, this  set    may be much bigger, and so it will be impossible to catch it in a single covering model.

%Actually this type of problem occurs already if  $o^{\calK^{M_U}}(\kappa^*)=\kappa^*$, but we know that $\cof^V(\kappa^*)>\kappa$, so it is possible to proceed
%further up to $o^{\calK^{M_U}}(\kappa^*)=\kappa^*+\kappa^+$. 
The following lemma provides a sufficient condition for the main theorem of this section:
\begin{lemma}\label{lem:isolating-the-indiscernibles}
Let $A(\eta)=\{\kappa_\gamma\mid \kappa_\gamma<\eta\}\cap \acc(a^*)$.
 
 If there function $t\in({}^{\kappa^*}\kappa^*)^{\calK^{M_U}}$ and $\gamma<\kappa^*$ such that $A(\kappa^*)\setminus\gamma=C_t\cap \acc(a^*)\setminus\gamma$, then $o^{\calK^{M_U}}(\kappa^*)\geq(\kappa^*)^+$,
\end{lemma}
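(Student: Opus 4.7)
The plan is to suppose, toward a contradiction, that $o^{\calK^{M_U}}(\kappa^*) = \beta < (\kappa^*)^+$, and use the function $t$ together with the combinatorial control over $a^*$ to exhibit strictly more than $\beta$ coherent measures on $\kappa^*$ in $\calK^{M_U}$.

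First I would collect some basic features of $\kappa^*$ in $V$. By Lemma \ref{lem-cof} and the anti-large-cardinal hypothesis $\neg\exists\lambda\, o(\lambda)=\lambda^{++}$, we have $\cf^V(\kappa^*)=\kappa^+$, so $\acc(a^*)\cap\kappa^*$ is $\kappa^+$-cofinal in $\kappa^*$. The hypothesized equality $A(\kappa^*)\setminus\gamma = C_t\cap \acc(a^*)\setminus\gamma$ therefore forces $C_t$ itself to meet $\acc(a^*)$ on a $\kappa^+$-cofinal set which coincides (modulo $\gamma$) with iteration critical points $\kappa_\alpha$; in particular $|A(\kappa^*)|\geq \kappa^+$ and the iteration $j_U\restriction\calK$ genuinely uses at least $\kappa^+$-many measures at critical points below $\kappa^*$.

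Next, working inside $M_U$, I would apply Mitchell's covering lemma (Theorem \ref{the covering lemma}) for $\calK^{M_U}$ to a covering model $N$ built over an elementary submodel of some $H(\theta)^{M_U}$ containing $t$, $a^*$, $\kappa^*$, the purported enumeration of measures witnessing $o^{\calK^{M_U}}(\kappa^*)=\beta$, and a $\kappa^+$-cofinal sequence in $A(\kappa^*)$. This yields an $h^N$-coherent system of indiscernibles $\mathbb C^N$ for $\calK^{M_U}$, and by Lemma \ref{lemma: finite error} the indiscernibles in $\mathbb C^N$ with critical point $\kappa^*$ agree with a tail of $\{\kappa_\alpha\mid \alpha\leq \alpha^{**}\}$. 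So $A(\kappa^*)$ is, up to a finite error, captured by $\mathbb C^N$.

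The main step is to exploit $o^{\calK^{M_U}}(\kappa^*)=\beta<(\kappa^*)^+$: by clauses (3)(b) and (4)(c) of Definition \ref{def:mitchell-ind}, the $\kappa^*$-level indiscernibles of $\mathbb C^N$ split into at most $\beta+1$ many coherent blocks, one per measure on $\kappa^*$. Intersecting each block with $C_t$ gives a $\calK^{M_U}$-definable refinement of $A(\kappa^*)$ into $\beta+1$ pieces. I would then reflect this refinement down to $V$ by means of the generic club $C(Q)=\bigcup_n a_n$ (Lemma \ref{genericClub}): a density argument as in Lemma \ref{genericCon} allows the refinement to be read off from densely many conditions of $Q$, giving a $V$-definable partition of $\acc(C(Q))$ into $\beta+1$ pieces. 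Combined with Claim \ref{claim:bounding-using-function-from-K}, which bounds $\calK^{M_U}$-functions on $\kappa^*$ by $\calK$-functions on $\kappa$, this partition would lift (via the elementarity of $j_U\restriction\calK$) to a coherent sequence of $\beta+1$ distinct measures on $\kappa$ in $\calK$---but the iteration $j_U\restriction\calK$ already needs $\kappa^+$-many distinct measures on critical points climbing to $\kappa^*$, each of which, combined with its companion, produces a fresh measure on $\kappa^*$ in $\calK^{M_U}$, contradicting the assumed upper bound $\beta$.

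The delicate point I expect to be the main obstacle is the very last inequality: ensuring that the refinement extracted from the coherent indiscernibles actually yields genuinely new measures at $\kappa^*$ in $\calK^{M_U}$ rather than merely reorganizing information already encoded in the $\alpha^{**}$-many iteration steps. To separate these one has to use the $h^N$-coherence of $\mathbb C^N$ together with the minimality of $a^*$ established in Lemma \ref{minimality}---the minimality ensures that each accumulation layer of $a^*$ below $\kappa^*$ is forced to survive reflection, so distinct coherent blocks of $\mathbb C^N$ cannot be identified by any $\calK^{M_U}$-definable map on $C_t$.
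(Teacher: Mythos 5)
There is a genuine gap, and it sits exactly where you flag it: the closing step. You propose to derive a contradiction by counting --- the iteration $j_U\restriction\calK$ has $\kappa^+$-many critical points accumulating to $\kappa^*$, and you assert that each such step ``produces a fresh measure on $\kappa^*$ in $\calK^{M_U}$,'' overwhelming the bound $\beta$. This implication is unjustified and is not true in general: the number of iteration steps with critical point below $\kappa^*$ has no direct bearing on $o^{\calK^{M_U}}(\kappa^*)$. One can perfectly well iterate measures of small Mitchell order $\kappa^+$-many times and have the images of $\kappa$ accumulate to some $\kappa^*$ without this forcing $\kappa^*$ to carry many measures; the whole content of the lemma is that the \emph{extra} hypothesis on $t$ rules this out, so the conclusion cannot be extracted from the cardinality of the critical-point sequence alone. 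The appeal to $h^N$-coherence plus Lemma \ref{minimality} at the end is a gesture, not an argument, and your use of the hypothesis on $t$ --- merely to ``capture $A(\kappa^*)$ in a covering model'' --- is not where its force lies. (Two smaller inaccuracies: the partition you extract cannot be ``$V$-definable on $\acc(C(Q))$'' since $C(Q)\notin V$; and Claim \ref{claim:bounding-using-function-from-K} bounds functions, it does not transport partitions into coherent sequences of measures.)

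The paper's actual mechanism is quite different and worth internalizing. From $o^{\calK}(\kappa)<\kappa^+$ one fixes disjoint sets $X_i\in U(\kappa,i)$ and a bijection $\pi\colon o^{\calK}(\kappa)\to\kappa$, and defines $g(\nu)=\pi(i)$ for $\nu\in X_i$; then $g$ is regressive on a set lying in \emph{every} $U(\kappa,\xi)$, so $j_U(g)(\kappa_\alpha)<\kappa_\alpha$ for all $\alpha<\alpha^{**}$. The hypothesis on $t$ (after dominating $t$ by $j_U(t^*)$ for some $t^*\in\calK$ via Claim \ref{claim:bounding-using-function-from-K}) guarantees that $\acc(a^*)\cap j_U(C_{t^*})$ consists, modulo a bounded piece, exactly of the indiscernibles $\kappa_\alpha$; coherence of indiscernibles then forces $j_U(g)$ to take a new value at the supremum of any $\omega$-sequence of $\kappa_{\alpha_n}$ on which it is constant. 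Reflecting along the generic club $C(Q)$ of Lemma \ref{genericClub}, $g$ is regressive on a tail of the club $\acc(C(Q))\cap C_{t^*}$ in $V[C(Q)]$, where $\kappa$ is still regular; pressing down yields a stationary set of countable-cofinality points on which $g$ is constant, and a closed copy of $\omega+1$ inside it contradicts the previous sentence. So the contradiction is a Fodor argument in the generic extension, not a count of measures; to repair your proof you would need to replace the final counting step with something of this kind.
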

\begin{proof}
Assume otherwise that $o^{\calK^{M_U}}(\kappa^*)<(\kappa^*)^+$.
Using Claim \ref{claim:bounding-using-function-from-K}, we find some $t^*\in\calK$ such that $j_U(t^*)$ dominates $t$. 
Find disjoint sets $\l X_i\mid i<o^{\calK}(\kappa)\r$ such that 
$X_i\in U(\kappa,i)$. Since $o^{\calK}(\kappa)<\kappa^+$ there is a bijection $\pi:o^{\calK}(\kappa)\rightarrow \kappa$.
Define $g:\kappa\rightarrow \kappa$ by $g(\nu)=\pi(i)$ for the unique $i$ such that $\nu\in X_i$.
Let us argue that $$(\star) \ \ A^*:=\{\nu<\kappa\mid g(\nu)<\nu\}\in\cap_{\xi<o^{\calK}(\kappa)} U(\kappa,\xi).$$
Let $\xi<o^{\calK}(\kappa)$, then
in the ultrapower $Ult(\calK,U(\kappa,\xi))$, $j_{U(\kappa,\xi)}(g)$ is defined similarly using $j_{U(\kappa,\xi)}(\pi):j_{U(\kappa,\xi)}(o^{\calK}(\kappa))\rightarrow j_{U(\kappa,\xi)}(\kappa)$ and the sequence $$j_{U(\kappa,\xi)}(\l X_i\mid i<o^{\calK}(\kappa)\r)=\l X'_i\mid i<j_{U(\kappa,\xi)}(o^{\calK}(\kappa))\r$$
Note that $\kappa\in j_{U(\kappa,\xi)}(X_\xi)= X'_{j_{U(\kappa,\xi)}(\xi)}$ hence $$j_{U(\kappa,\xi)}(g)(\kappa)=j_{U(\kappa,\xi)}(\pi)(j_{U(\kappa,\xi)}(\xi))=j_{U(\kappa,\xi)}(\pi(\xi))=\pi(\xi)<\kappa$$ which is what we needed.

By $(\star)$, we can deduce that that $\forall \alpha<\alpha^{**}$, $j_U(g)(\kappa_\alpha)<\kappa_\alpha$. In particular $$(\star\star)  \ \ \ M_U\models \ j_U(g)\text{ is regressive on } \acc(a^*)\cap j_U(C_{t^*})$$

Using our hypothesis again, $\acc(a*) \cap j_U(C_{t*})$ consists of the indiscernibles of $\kappa^*$. In particular, if $\langle \alpha_n \mid n < \omega\rangle$ is a sequence of ordinals below $\alpha^{**}$, such that $j_U(g)(\kappa_{\alpha_n})$ is fixed, then $j_U(g)(\sup \kappa_{\alpha_n})$ is strictly higher.  

Let $\l a_n\mid n<\omega\r$ be a Prikry sequence for $U$ obtained by lemma \ref{genericClub} and let $C(\mathcal{Q})=\cup_{n<\omega} a_n$ be the generic club induced for $\mathcal{Q}$.
By reflecting $(\star\star)$, we get that for every $n\geq n_0$, $g$ is regressive on $\acc(a_n)\cap C_{t^*}$. Hence in $V[C(\mathcal{Q})]$, $g$ is regressive on a final segment of $C(\mathcal{Q})\cap C_{t^*}$ which is a club in $V[C(\mathcal{Q})]$. Since $\kappa$ remains regular in the generic extension $V[C(\mathcal{Q})]$, there is a stationary subset $S \subseteq C_{t*} \cap \acc C(\mathcal{Q})$ of ordinals of countable cofinality, on which $g$ is fixed. But, in particular, there is a continuous copy of $\omega + 1$ that consists of elements of $S$---a contradiction.
\end{proof}
Note that if $o^{{\mathcal K}^{M_U}}(\kappa^*) \geq (\kappa^*)^+$, then $o(\kappa) > \kappa^{+}$.
Indeed, $\kappa^*=\max(a^*)<j_U(\kappa)$. It follows that there is $\xi<l^*$ such that $\kappa^*=\crit(j_{\xi,\xi+1})$. This means that $o^{\calK_\xi}(\kappa^*)>o^{\calK^{M_U}}(\kappa^*)\geq(\kappa^*)^+$. By elementarity, $o^{\calK}(\kappa)>\kappa^+$.  
%Now for the main result of this section:
%\begin{theorem}\label{main theorem of section}

% Let us assume that there is a $\kappa$-complete ultrafilter  which extending $\mathcal{D}_\emptyset(Q)$.

%Then either there is an inner model for $\exists \lambda, o(\lambda) = \lambda^{++}$, or $o^{\calK}(\kappa) > \kappa^+ $.
%\end{theorem}
So, in order to conclude the proof, we need to prove that the hypothesis of Lemma \ref{lem:isolating-the-indiscernibles} holds.  
\begin{lemma}
For every $\eta \leq \alpha^{**}$, such that $\kappa_\eta \in \acc(a^*)$, there are $t_\eta \in ({}^{\kappa_\eta}{\kappa_\eta})^{{\mathcal K}^{M_U}}$ and $\gamma_\eta$ such that $A(\eta) \setminus \gamma_\eta= (C_{t_\eta} \cap a^* \cap \kappa_\eta) \setminus \gamma_\eta$. 
\end{lemma}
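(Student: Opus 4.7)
The plan is to argue by induction on $\eta \leq \alpha^{**}$ with $\kappa_\eta \in \acc(a^*)$. The base case is vacuous since $\min(a^*)>\kappa=\kappa_0$, so $A(\eta)$ and $C_{t_\eta}\cap a^*\cap \kappa_\eta$ are both empty (or bounded by $\kappa$) when $\eta=0$. For the inductive step, the main idea is to apply Mitchell's covering lemma \ref{the covering lemma} inside $M_U$ to a well-chosen covering model, extract the indiscernibles for $\kappa_\eta$ using Lemma \ref{lemma: finite error}, and then encode a bound on the ``next indiscernible'' function using only $\calK^{M_U}$-data.

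Concretely, working in $M_U$, I would fix an elementary submodel $X\prec H_\theta^{M_U}$ for sufficiently large $\theta$, of cardinality less than $\kappa_\eta$, closed under enough sequences, and containing $\kappa_\eta$, a cofinal bounded portion of $a^*\cap \kappa_\eta$, and the previously constructed functions $t_\beta$ ($\beta<\eta$). Setting $N=X\cap \calK^{M_U}$, Mitchell's covering lemma provides a Skolem function $h^N\in \calK^{M_U}$, a cut-off $\rho^N<\kappa_\eta$ and an $h^N$-coherent system of indiscernibles $\mathbb{C}^N$ such that $N=h^N\image(\rho^N;\mathbb{C}^N)$. By Lemma \ref{lemma: finite error}, the set $\bigcup\{\mathbb{C}^N_\gamma\mid \crit(\vec{U}^{\calK^{M_U}}_\gamma)=\kappa_\eta\}$ agrees modulo a finite set with $\{\kappa_\alpha\mid \alpha\leq \eta\}$, and by clause (4) of Theorem \ref{the covering lemma} every $\kappa_\alpha\in \acc(a^*)\cap \kappa_\eta$ above $\rho^N$ is either in $h^N\image(N\cap \kappa_\alpha)$ (which can be excluded by increasing $\gamma_\eta$ if necessary) or is an accumulation point of indiscernibles in $N$.

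I would then define $t_\eta\colon \kappa_\eta\to \kappa_\eta$ by letting $t_\eta(\xi)$ be a bound on $a^{\mathbb{C}^N,X}(\gamma,\xi)$ computed from the $\calK^{M_U}$-measure sequence, ranging over the relevant $\gamma$ with $\crit(\vec{U}^{\calK^{M_U}}_\gamma)=\kappa_\eta$. The canonicity clause (5) of Theorem \ref{the covering lemma} ensures that, up to a finite error absorbed into $\gamma_\eta$, this definition depends only on the measure data in $\calK^{M_U}$ and not on the particular model $N$; combined with the dominating-function technique from Claim \ref{claim:bounding-using-function-from-K} this allows me to replace any $M_U$-definable bound by a genuine $\calK^{M_U}$-function. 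The closure points of $t_\eta$ above $\gamma_\eta$ that lie in $a^*$ then coincide with accumulation points of the indiscernibles that lie in $a^*$, which by the identification above coincide with $A(\eta)\setminus \gamma_\eta$.

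The main obstacle is the encoding step: the covering model $N$ and its indiscernible system $\mathbb{C}^N$ are objects of $M_U$, not of $\calK^{M_U}$, so a naive definition of $t_\eta$ only yields an $M_U$-function. Overcoming this requires combining the canonicity modulo finite error furnished by clause (5) of Theorem \ref{the covering lemma} with the domination argument from Claim \ref{claim:bounding-using-function-from-K}, thereby replacing the $M_U$-function $\xi\mapsto a^{\mathbb{C}^N,X}(\kappa_\eta,\xi)$ by a dominating function in $\calK^{M_U}$. At limit stages $\eta$, I would additionally exploit coherence to glue the previously constructed $t_\beta$ ($\beta<\eta$) into a single function by choosing one covering model that reflects all of them; canonicity up to finite error ensures the agreement necessary to absorb the discrepancies into $\gamma_\eta$.
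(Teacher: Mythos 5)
Your proposal shares the paper's skeleton---induction on $\eta$, Mitchell covering models inside $M_U$, and Lemma \ref{lemma: finite error} to identify the indiscernibles for $\kappa_\eta$ with the $\kappa_\alpha$'s up to a finite error---but the step that actually produces a function in $\calK^{M_U}$ is the crux of the proof, and the mechanism you propose for it does not work. You want to set $t_\eta(\xi)$ to be a bound on the next-accumulation-point function $\xi\mapsto a^{\mathbb{C}^N,X}(\gamma,\xi)$ and then replace this $M_U$-function by a dominating $\calK^{M_U}$-function via Claim \ref{claim:bounding-using-function-from-K}. That claim only dominates functions that are \emph{already} in $\calK^{M_U}$ (by images of a dominating family from $\calK$), and in fact no $g\in({}^{\kappa_\eta}\kappa_\eta)^{\calK^{M_U}}$ can dominate the next-accumulation-point (or even the next-indiscernible) function on a tail: every such $g$ is obtained by plugging finitely many ordinals into a $j_U$-image of a $\calK$-function, so all sufficiently large $\kappa_{\alpha+1}$ are closure points of $g$, whereas for $\xi\in(\kappa_\alpha,\kappa_{\alpha+1})$ the least accumulation point above $\xi$ lies strictly above $\kappa_{\alpha+1}$. (There is also a level mismatch: closure under ``next accumulation point'' would pick out \emph{limits} of accumulation points, not the points of $A(\eta)$ themselves.) So the encoding step, which you correctly identify as the main obstacle, is not actually overcome.

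What the paper does instead, and what is missing from your outline, is twofold. First, it proves that the inductively obtained $t^{\xi}$ can be chosen \emph{uniformly definable in $M_U$ from the parameters $\kappa_\xi$ and $a^*$} (the canonical property $(*)$, Lemmas \ref{lem5-2-0-2}, \ref{lem5-2-0-3} and Claim \ref{definable function}: take the $\calK^{M_U}$-least $t$ with $(*)(C_t\cap\Lim(a^*))$). This is what lets each $t^{\eta_i}$ land inside the covering model $N$ by elementarity alone---you cannot simply ``put the previously constructed $t_\beta$'' into a model of size $<\kappa_\eta$ when there are cofinally many of them---and then Lemma \ref{lem5-2-1} shows the modified Skolem function $\tilde h^N\in\calK^{M_U}$ dominates each $t^{\eta_i}$ on a tail; a pressing-down over $\lambda=\cf^{M_U}(\kappa_\eta)>\omega$ uniformizes the tails. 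Second, the case $\cf^{M_U}(\kappa_\eta)=\omega$ needs a genuinely different argument (no pressing down is available): the $t^{\eta_n}$ are represented via the iteration by functions $f_n\in({}^\kappa\kappa)^{\calK}$ with finitely many critical-point parameters, and a single dominating $\phi\in\calK$ is extracted using the canonical $\kappa^+$-enumeration of $({}^\kappa\kappa)^\calK$ and covering in $V$; then $t^\eta=j_U(\phi)\restriction\kappa_\eta$. Your proposal contains neither the uniform-definability device nor this case division, so as written it has a genuine gap at its central step.
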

\begin{proof}
First note that if $\eta$ is a limit ordinal and $t$ is any function from $\kappa_\eta$ to $\kappa_\eta$ in $\mathcal{K}^{M_U}$, then for every sufficiently large $\alpha < \eta$, $\kappa_\alpha \in C_t$. This is true, by the arguments of the proof of Claim \ref{BoundFun} --- each such function $t$ is obtained by plugging into a an $j$-image of a function in $\mathcal{K}$, finitely many ordinals below $\kappa_{\eta}$, and restricting it to $\kappa_\eta$.  

We prove by induction in $\eta\leq\alpha^{**}$ such that $\kappa_\eta\in \acc(a^*)$. 

%that if $o^{\calK^{M_U}}(\kappa_\eta)<(\kappa_\eta^+)^{M_U}$ then there is $t^{\eta}$ and $\mu$ such that $C_{t^{\eta}}\cap Lim(a^*)\setminus\mu=A(\eta)\setminus\mu$. 
%Then we will conclude the existence of $t^{\alpha^{**}}$ and then we apply lemma \ref{Lemma for lower bound} to conclude that $o^{\calK^{M_U}}(\kappa^*)\geq(\kappa^*)^+$. 

%Let $\eta\leq \alpha^{**}$ such that $\kappa_\eta\in \Lim(a^*)$,  and $o^{\calK^{M_U}}(\kappa_\eta)<(\kappa_\eta^+)^{M_U}$. Hence there is $\gamma_0<\eta$ such that for every $\gamma_0\leq\gamma<\eta$, $o^{\calK^{M_U}}(\kappa_\gamma)<(\kappa_\gamma)^+$. 
Assume inductively that the claim holds for all $\eta'<\eta$.  Since $a^*$ is closed, $\kappa_\eta\in a^*$, thus $\kappa_\eta$ is singular in $M_U$. Let us denote by $\lambda=\cf^{M_U}(\kappa_\eta)<\kappa_\eta$ and split into cases:
\vskip 0.3 cm

\textbf{Case 1}: Assume that  $\lambda>\omega$.
   Since $\kappa_\eta$ is measurable in $\calK^{M_U}$ and singular in $M_U$ there is a Prikry-Magidor sequence in $\l c_i\mid i<\lambda\r\in M_U$ witnessing the singularity of $\kappa_\eta$. We can cover $\{c_i\mid i<\lambda\}$ with a covering model $N$ for $\kappa_\eta$ of cardinality less than $\kappa_\eta$ such that all the $c_i$'s are indiscernibles for $\kappa^*$ is $N$. By lemma \ref{lemma: finite error}, for all but finitely many indiscernibles for $\kappa_\eta$, $c_i\in \{\kappa_\gamma\mid \gamma<\eta\}$. By removing a bounded piece if necessary, we can assume that $c_i=\kappa_{\eta_i}$, for some ordinals $\eta_i$.% and $o^{\calK^{M_U}}(\kappa_{\eta_i})<(\kappa_{\eta_i}^+)^{M_U}$. 
   Since both $a^*\cap \kappa_\eta$ and $\{\kappa_{\eta_i}\mid i<\lambda\}$ are clubs in $\kappa_\eta$ inside $M_U$, and the cofinality of $\kappa_\eta$ is $\lambda>\omega$, we my also assume that each $\kappa_{\eta_i}$ is a limit point of $a^*$. Apply the inductive hypothesis to each of the points $\kappa_{\eta_i}$ and obtain a function $t^{\eta_i}\colon\kappa_{\eta_i}\rightarrow\kappa_{\eta_i}$ in $\calK^{M_U}$ such that for some $\nu_i<\kappa_{\eta_i}$, $$C_{t^{\eta_i}}\cap \Lim(a^*)\setminus \nu_i=A(\eta_i)\setminus \nu_i.$$ 
  By Mitchell's covering lemma, \ref{the covering lemma}, $N=h^N \image (\delta^N, \mathbb{C}^N)$, where $\delta^N<\kappa_\eta$,
  $\mathbb{C}^N$ is a sequence of indiscernibles and $h^N \in \calK^{M_U}$ is a Skolem function.
  
  In order to find a single function that works for $\kappa_\eta$ we will prove that we can choose these functions $t^{\eta_i}$ so that they are definable in the covering model $N$.%, then we use $\tilde{h}^N$ from the previous section.
  
  First let us argue that $A(\eta_i)$ is definable in $M_U$ from the parameters $\kappa_{\eta_i}$ and $a^*$, up to an initial segment, and this definition is uniform. 

\begin{lemma}\label{lem5-2-0-1}
Let $\xi\leq\alpha^{**}$ and suppose that $\l c_i \mid i<\lambda \r\in M_U$ be an increasing sequence, cofinal in $\kappa_{\xi}$.

Let $N'$ be a covering model for $\kappa_{\xi}$ with $\{c_i \mid i<\lambda \}\subseteq N'$.
Suppose that $\l c_i \mid i<\lambda \r$ are indiscernibles in $N'$ for $\kappa_{\xi}$. Then $c_i \in \{\kappa_\beta \mid \beta<\xi \}$, for all but finitely many $i$'s.
\end{lemma}
\begin{proof}
Similar to Lemma \ref{lemma: finite error}.
\end{proof} 

Now we can formulate the crucial property of subsets of $\kappa_{\xi}$ in $M_U$:
$(*)(B)$
\begin{enumerate}
\item $B\subseteq \kappa_{\xi} \cap \Lim(a^*)$.
%\item if $\tau \in B$, then $\tau$ is a limit point of $a^*$,
\item For every covering model $N'$ for $\kappa_{\xi}$  there is $\rho<\kappa_{\xi}$ such that for every indiscernible $c>\rho$ for $\kappa_{\xi}$ in $N'$, if $c$ is a limit point of $a^*$, then $c\in B$.
\item For every sequence $\l c_i \mid i<\theta\r\in M_U$ of elements of $B$, cofinal in $\kappa_{\xi}$, there is a covering model $N'$ for $\kappa_{\xi}$ and an ordinal $\theta'<\theta$ such that  $\l c_i \mid \theta'\leq i<\theta\r$ are indiscernibles for $\kappa_{\xi}$ in $N'$.
\end{enumerate}

\begin{lemma}\label{lem5-2-0-2}
$(*)(A(\xi))$ holds.
\end{lemma}
\begin{proof}
Requirement (1) is clear. Requirement (2) follows from Lemma \ref{lem5-2-0-1}. Let us show requirement (3). 

Indeed, for every function $g \colon \kappa_{\xi}^{<\omega} \to \kappa_\xi$ in $\mathcal{K}^{M_U}$ there is a function $f \in \mathcal{K}$ such that \[g(\bar x) = j(f)(\rho_0, \dots, \rho_{m-1}, \bar x),\] for some fixed $\rho_0, \dots, \rho_{m-1} < \kappa_\xi$. This follows from the iterated ultrapower representation. Thus, every $\kappa_{\alpha}$ which is larger than $\max(\rho_0, \dots, \rho_{m-1})$ would be a closure point of this function. In particular, taking $g$ to be the Skolem function of any covering model $N'$ for the sequence $\l c_i \mid i < \theta\r$ and taking $\theta'$ to be the least index in which $c_i > \max (\rho_0, \dots, \rho_{m-1})$. we conclude that each of the elements $c_i$ must be an indiscernible, by Theorem \ref{the covering lemma}.
\end{proof} 

\begin{lemma}\label{lem5-2-0-3}
If $(*)(B_1)$ and $(*)(B_2)$ hold, then $B_1$ agrees with $B_2$ on a final segment, i.e.\ there is $\nu<\kappa_{\xi}$ such that $B_1\setminus \nu=B_2\setminus \nu$.
\end{lemma}
\begin{proof}
Suppose otherwise.
By symmetry, let us assume that there is a cofinal in $\kappa_{\xi}$ sequence $\{e_i \mid i<\theta\} \in B_1\setminus B_2$, in $M_U$. By the first clause of $(*)(B_1)$, each $e_i$ is a limit point of $a^*$.

By $(*)(B_1)(3)$, there will be a covering model $N'$ for $\kappa_{\xi}$ with $\{e_i \mid i<\theta\} \subseteq N'$ such that for some $\theta'<\theta$,
$\l e_i \mid \theta'\leq i<\theta\r$ are indiscernibles for $\kappa_\xi$ in $N'$. Apply now $(*)(B_2)(2)$ to $N'$ and $\l e_i \mid \theta'\leq i<\theta\r$.
We will have then that a final segment of $\l e_i \mid \theta'\leq i<\theta\r$ is in $B$. Contradiction.
\end{proof} 

\begin{claim}\label{definable function}
If there is a function $t\in({}^{\kappa_\xi}\kappa_\xi)^{\calK^{M_U}}$ and some $\gamma<\kappa_\xi$ such that $A(\xi)\setminus\gamma=C_t\cap \Lim(a^*)\setminus\gamma$ then there is a uniformly definable function in $M_U$, $t^{\xi}\colon \kappa_\xi \to \kappa_\xi\in\calK^{M_U}$, with parameters $\kappa_\xi,a^*$, such that for some $\mu<\xi$, $(C_{t^{\xi}}\cap \Lim(a^*) \cap \kappa_\eta) \setminus\mu=A(\eta)\setminus\mu$.
\end{claim}
\begin{proof}
By assumption, $t$ satisfies the above equality, and by the previous claim, we let $t^{\xi}$ be the least function $t$ in the order of $\mathcal{K}^{M_U}$ such that $(*)(C_{t}\cap \Lim(a^*))$ holds. This is formulated in $M_U$ using the parameters $\kappa_\xi$ and $a^*$.\end{proof}
  
Back to $\kappa_{\eta_i}$'s, by the induction hypothesis and by claim \ref{definable function}, fix the function $t^{\eta_i}$ which is definable with parameters $\kappa_{\eta_i}, a^*$.
  
\begin{lemma}\label{lem5-2-1}
Assume that $N_0$ is a covering model for $\kappa_\eta$ and $h^{N_0}\in\calK^{M_U}$ the associated Skolem function. Consider $\tilde{h}^{N_0}\colon \kappa_\eta\to \kappa_\eta\in\calK^{M_U}$ defined as follows:
$$\tilde{h}^{N_0}({\rho})=\sup(\{ h^{N_0}(\vec{\xi})\mid \vec{\xi}\in [\rho+1]^{<\omega} \text{ and } h^{N_0}(\vec{\xi})<\kappa^* \}).$$
Suppose $\eta'<\eta$ is such that $\kappa_{\eta'},a^*\in N_0$ and $t^{\eta'}$ is definable as above.
\\Then for all but boundedly many  $\nu<\kappa_{\eta'},$
 $\tilde{h}^{N_0}(\nu)\geq t^{\eta'}(\nu)$.
\end{lemma}

\begin{proof}
We use the elementarity of $N_0$ and the definability of $t^{\eta'}$ to conclude that $t^{\eta'} \in N_0\cap \calK^{M_U}$. Note that
$t^{\eta'}= h^{N_0}(\vec{c})$, for a finite sequence of $N_0$-indiscernibles $\vec{c}$ $\leq \kappa_{\eta'}$. By the construction of the covering model $N_0$, we can find $t\in N_0$, $t\colon \kappa_\eta \to \kappa_\eta$ such that $t\restriction\kappa_{\eta'}=t^{\eta'}$ and $t=h^N(\vec{c}')$ where $\vec{c}'$ are all indiscernables strictly below $\kappa_{\eta'}$. 

Hence by the definition of $\tilde{h}^{N_0}$, for every $\max(\vec{c}')\leq\nu<\kappa_{\eta'}$, 
$t^{\eta'}(\nu)\leq \tilde{h}^{N_0}(\nu)$.
\\It follows then by the definition of $\tilde{h}^{N_0}$ that for all but boundedly many  $\nu<\kappa_{\eta'},$
 $\tilde{h}^{N_0}(\nu)\geq t^{\eta'}(\nu)$.
\end{proof}

For every $i<\lambda$, apply lemma \ref{lem5-2-1} to $\kappa_{\eta_i}$ and the model $N$ to find $\nu_i<\kappa_{\eta_i}$ such that for every $\nu_i\leq \nu<\kappa_{\eta_i}$,
$\tilde{h}^N(\nu)\geq t^{\eta_i}(\nu)$.
\\Then, by pressing down, and since $\lambda>\omega$, there will be a stationary $Z\subseteq \lambda$ and $\nu^*<\lambda$ such that for every $\nu, \nu^*\leq \nu<\kappa_{\eta_\xi}$, $\xi\in Z$ the inequality $\tilde{h}^N(\nu)\geq t^{\eta_\xi}(\nu)$ holds.

Now, shrinking $Z$ more if necessary, we will get $\nu^{**}<\kappa^*$ such that
$$C_{\tilde{h}^N}\cap \Lim(a^*)\setminus \nu^{**}=A(\eta)\setminus \nu^{**}.$$
\vskip 0.5 cm

\textbf{Case 2:} Suppose that $\lambda=\omega$.%\footnote{For example if $o^{\calK^{M_U}}(\kappa^*)=\kappa^*$ then $\lambda=\omega$, this is a situation which was not dealt with in the previous section.}.

Once again, since $\kappa_{\eta}\in a^*$ we can find an increasing %continuous 
and cofinal sequence in $\kappa_\eta$, $\l\kappa_{\eta_n}\mid n<\omega\r\in M_U$. %such that $\eta_0\geq\gamma_0$, meaning that $o^{\calK^{M_U}}(\kappa_{\eta_n})<(\kappa_{\eta_n}^+)^{M_U}$. 
Let us add points to this sequence.
If $\kappa_{\eta_n}\in \Lim(a^*)$, apply the induction hypothesis, find $t^{\eta_n}$ and let $\nu_n<\kappa_{\eta_n}$ be minimal such that $$C_{t^{\eta_n}}\cap \Lim(a^*)\setminus\nu_n= A(\eta_n)\setminus \nu_n.$$ 
Find $\xi_n<l^*$ be such that $\crit(j_{\xi_n,\xi_{n}+1})=\kappa_{\eta_n}$, then $t^{\eta_n}\in\calK_{\xi_n}$.
We can represent $t^{\eta_n}$ in the iteration using some $f_n\in({}^{\kappa}\kappa)^{\calK}$ and some intermediate critical points $\lambda_1,\dots ,\lambda_m<\kappa_{\eta_n}$, $j_{\xi_n}(f_n)(\lambda_1,\dots,\lambda_m)=t^{\eta_n}$. 
Let $$\max((\{\kappa_\alpha\mid \eta_{n-1}<\alpha<\eta_n\}\cap\{\lambda_1,\dots,\lambda_m\})\cup\{\kappa_{\eta_{n-1}}\})=\kappa_{\eta_{n,1}}$$
By minimality of $\nu_n$, there is $\eta_{n,2}<\eta_n$ such that $\kappa_{\eta_{n,2}}\leq \nu_n\leq \kappa_{\eta_{n,2}+1}$. If $\eta_{n,2}\leq\eta_{n,1}$ then add $\eta_{n,1}$ to the sequence and set $\eta^{(1)}=\eta_{n,1}$. Otherwise, add $\kappa_{\eta_{n,2}}$ to the sequence and set $\eta^{(1)}=\kappa_{\eta_{n,2}}$.
If $\kappa_{\eta_n}\notin \Lim(a^*)$, denote by $$\nu_n=\sup(a^*\cap \kappa_{\eta_n})<\kappa_{\eta_n}$$ There is $\eta'<\eta_n$ such that $\kappa_{\eta'}\leq \nu_n<\kappa_{\eta'+1}$ and there is a function $t^{\eta_n}\in({}^{\kappa_{\eta_n}}\kappa_{\eta_n})^{\calK_{\xi_n}}$ such that $\nu_n\leq t^{\eta_n}(\kappa_{\eta'})$. Indeed, by lemma \ref{BoundFun}, there is $f\in({}^{\kappa}\kappa)^{\calK}$ such that $j_{\xi+1}(f)(\kappa_{\eta'})\geq \nu_n$, where $\xi<\xi_n$ is the step of the iteration such that $\kappa_{\eta'}$ is a critical point. Then we can set $t^{\eta_n}=j_{\xi_n}(f)$.  Let $\eta^{(1)}=\eta'$. 

In any case, if $\eta^{(1)}\leq \eta_{n-1}$ we are done. Otherwise, we move to $\kappa_{\eta^{(1)}}$ and repeat the above. After finitely many steps, defining $\eta^{(k)}<\eta^{(k-1)}<\cdots<\eta^{(1)}<\eta_n$ we reach $\eta_{n-1}$. After adding these new points, we obtain a sequence still of order type $\omega$. Without loss of generality, this was the sequence $\l \kappa_{\eta_n}\mid n<\omega\r$ that we started with. During the construction we have defined a sequence of functions $\l t^{\eta_n}\mid n<\omega\r$, such that $t^{\eta_n}\in ({}^{\kappa_{\eta_n}}\kappa_{\eta_n})^{\calK^{M_U}}$ and by closure $\l t^{\eta_n}\mid n<\omega\r\in M_U$.
 Clearly, $t^{\eta_n}\in \calK_{\xi_n}$. Let $\xi^*=\sup\xi_n$, then $\crit(j_{\xi^*,l^*})\geq\kappa_\eta$.

\begin{claim}\label{dominating claim}
There is $\phi\in({}^{\kappa}\kappa)^\calK$ such that $\forall n<\omega$, and every $\kappa_{\eta_{n-1}}\leq\nu<\kappa_{\eta_n}$ $t^{\eta_n}(\nu)<j_{\xi_n}(\phi)(\nu)$
\end{claim}
\begin{proof}
By construction of the sequence $\l \kappa_{\eta_n}\mid n<\omega\r$, either $\kappa_{\eta_n}\notin \Lim(a^*)$ in which case there is $f_n\in\calK$  such that $t^{\eta_n}=j_{\xi_n}(f_n)$ (no parameters needed). If $\kappa_{\eta_n}\in \Lim(a^*)$, then by the construction of the sequence $\kappa_{\eta_n}$, there is a function $f_n\in\calK$ and critical points $$\lambda_1<\cdots<\lambda_k<\kappa_{\eta_{n-1}}<\theta_1<\cdots<\theta_m<\kappa_{\eta_{n-1}+1}\leq \kappa_{\eta_n}$$ such that
$$t^{\eta_n}=j_{\xi_n}(f_n)(\lambda_1,\dots,\lambda_k,\kappa_{\eta_{n-1}},\theta_1,..,\theta_m).$$ 
Since $\theta_m<\kappa_{\eta_{n-1}+1}$,
by lemma \ref{BoundFun}, there is $b_n\in({}^{\kappa}\kappa)^{\calK}$ such that $$\theta_m<j_{\xi_{n-1}}(b_n)(\kappa_{\eta_{n-1}})\leq j_{\xi_{n}}(b_n)(\kappa_{\eta_{n-1}})\leq j_U(b_n)(\kappa_{\eta_{n-1}})<\kappa_{\eta_{n}}.$$ 
In $\calK$, define $\phi_n:\kappa\rightarrow\kappa$ by $$\phi_n(\alpha)=\sup\{f_n(\vec{\rho})(\xi)\mid\vec{\rho}\in[b_n(\alpha)]^{<\omega}\cap \dom(f_n)\wedge \xi\leq\alpha\}+1.$$
Then for every $\kappa_{\eta_{n-1}}\leq\nu<\kappa_{\xi_n}$,
$$j_{\xi_n}(f_n)(\vec{\lambda},\kappa_{\eta_{n-1}},\vec{\theta})(\nu)\leq \sup\{j_{\xi_n}(f_n)(\vec{\xi})(\xi)\mid\vec{\xi}\in[j_{\xi_n}(b_n)(\nu)]^{<\omega}\wedge \xi\leq\nu\}.$$
Hence $t^{\eta_n}(\nu)=j_{\xi_n}(f_n)(\vec{\lambda},\kappa_{\eta_{n-1}},\vec{\theta})(\nu)<j_{\xi_n}(\phi_n)(\nu)$.
We proceed as in lemma \ref{lemma: finite error}. Suppose that $\l d_i\mid i<\kappa^+\r$ is an enumeration of $({}^{\kappa}\kappa)^{\calK}$ and that $\phi_n=d_{\mu_n}$ There is a set $a\subseteq\kappa^+$ such that $a\in\calK, \ |a|<\kappa$ and $\{\mu_n\mid n<\omega\}\subseteq a$. Define in $\calK$, $\phi:\kappa\rightarrow\kappa$ by
$$\phi(\alpha)=\sup\{d_i(\alpha)\mid i\in a\}$$
Since $\kappa$ is regular in $\calK$, $\phi$ is well defined 
and for every $n<\omega$, $\phi$ dominates $\phi_n$ everywhere. By elementarity of $j_{\xi_n}$, $\phi$ will be as desired
\end{proof}

Denote by $t^\eta=j_{U}(\phi)\restriction\kappa_{\eta}\in\calK^{M_U}$. Note that $t^\eta\restriction\kappa_{\eta_n}=j_{\xi_n}(\phi)$. Let us prove that $t^\eta$ is as wanted:

\begin{claim}
There is $\gamma_\eta<\kappa_\eta$ such that
$$(C_{t^\eta}\cap \Lim(a^*) \cap \kappa_\eta)\setminus \gamma_{\eta}=A(\eta)\setminus\gamma_{\eta}.$$
\end{claim}
\begin{proof}
As we claimed before, $\{\kappa_\gamma\mid \gamma_\eta\leq \gamma<\eta\}$ is a weak Prikry-Magidor sequence for $\calK^{M_U}$ and $C_{t^\eta}$ is a club in $\calK^{M_U}$, there is $\gamma_\eta$ such that $\{\kappa_\gamma\mid \gamma_\eta\leq \gamma<\eta\}\subseteq C_{t^{\eta}}$. This proves 
the inclusion from right to left. For the other direction, assume that $\delta\in C_{t^\eta}\setminus\kappa_{\gamma_\eta}$ such that $\delta\notin\{\kappa_\gamma\mid \gamma_\eta\leq\gamma<\eta\}$, let us argue that $a^*\cap \delta$ is bounded below $\delta$. Fix any $n<\omega$ such that $\kappa_{\eta_n}<\delta<\kappa_{\eta_{n+1}}$.  We split into cases. If $\kappa_{\eta_{n+1}}\notin \Lim(a^*)$, then
$$\sup(a^*\cap\delta)\leq\sup(a^*\cap\kappa_{\eta_{n+1}})=\nu_n\leq t^{\eta_{n+1}}(\kappa_{\eta_n})$$ 
By claim \ref{dominating claim}, $t^{\eta_{n+1}}(\kappa_{\eta_n})<j_{\xi_n}(\phi)(\kappa_{\eta_n})=t_\eta(\kappa_{\eta_n})$.
Since $\kappa_{\eta_n}<\delta\in C_{t_\eta}$, we conclude that $\sup(a^*\cap\delta)<\delta$ and $\delta$ is not a limit point of $a^*$. 

If $\kappa_{\eta_{n+1}}\in \Lim(a^*)$, then by the construction of $\kappa_{\eta_n}$ we have that $C_{t^{\eta_{n+1}}}\cap \Lim(a^*)\setminus \kappa_{\eta_n}=A(\eta_{n+1})\setminus \kappa_{\eta_n}$.
By assumption, $\delta\notin \{\kappa_\alpha\mid \gamma_\eta\leq \alpha<\eta\}$, hence $\delta\notin A(\eta_{n+1})$. Since $\kappa_{\eta_n}<\delta$, it follows that $\delta\notin C_{t^{\eta_{n+1}}}\cap \Lim(a^*)$.
\end{proof}
This conclude that proof of lemma \ref{lem:isolating-the-indiscernibles}, and the proof of
theorem \ref{thm:lowerbound}.\end{proof}

It is possible to try to proceed further and to deal with the situation when $o(\kappa^*) = (\kappa^*)^+$. If, as a result, $\kappa^*$ remain regular (which is typical forcing situation) then $a^*$ must be bounded in $\kappa^*$, since no regular cardinal can be in $a^*$, and so we are basically   in the situation considered above.
\\However, $\kappa^*$ can change cofinality --- there are forcing construction in which it changes cofinality to $\omega$.
In this case a finer analysis of indiscernibles seems to be needed, and Mitchell's accumulation points may appear.

Our conjecture is that the result above is not optimal and it can be strengthened.
\section{Compactness for masterable forcing notions}

In this section we will isolate a subclass of forcing notions that consistently include many important forcing notions (such as all the complete subforcings of $\Add(\kappa,1)$ and more), such that it is possible to force from a measurable cardinal that for any forcing $\mathbb{P}$ in this class, there is a $\kappa$-complete ultrafilter extending $\mathcal{D}(\mathbb{P})$.

\begin{lemma}\label{lemma}
Let $\mathbb{Q}$ be a $\kappa$-distributive forcing of size $\kappa$.
\\Suppose that there is a generic elementary embedding
\[j\colon V^{\mathbb{Q}} \to M\]
with $\crit j = \kappa$.
 Then, in $M$, there is a single condition $m\in j(\mathbb{Q})$ which is stronger than $j(p)$ for any condition $p$ in the generic filter for $\mathbb{Q}$.
\end{lemma}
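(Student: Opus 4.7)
The plan is to exploit the fact that $\kappa$-distributivity of $\mathbb{Q}$ upgrades the mere filter property of $G$ to full $\kappa$-directedness in $V[G]$, and then to transfer this property through the generic elementary embedding $j$.

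First I would establish that $G$ is $\kappa$-directed in $V[G]$. Fix $\mathcal{F} \subseteq G$ with $|\mathcal{F}| < \kappa$. Since $\mathbb{Q}$ is $\kappa$-distributive, the set $\mathcal{F}$, being a ${<}\kappa$-sized subset of the ground-model set $\mathbb{Q}$, already lies in $V$. Working in $V$, for each $p \in \mathcal{F}$ the set
\[E_p = \{q \in \mathbb{Q} \mid q \geq p \text{ or } q \perp p\}\]
is dense open (given any $r$, either $r \perp p$ already, or else a common extension $r' \geq r, p$ witnesses $r' \in E_p$). By $\kappa$-distributivity, $D = \bigcap_{p \in \mathcal{F}} E_p$ is dense open in $V$, hence meets $G$. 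Any $q \in G \cap D$ lies in $E_p$ for every $p \in \mathcal{F}$; since $p, q$ both belong to the filter $G$ they are compatible, so the alternative $q \perp p$ is ruled out and $q \geq p$. Thus $G$ is $\kappa$-directed in $V[G]$.

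Next I would invoke elementarity of $j\colon V^{\mathbb{Q}} \to M$: the statement ``$G$ is a $\kappa$-directed filter on $\mathbb{Q}$'', true in $V[G]$, transfers to ``$j(G)$ is a $j(\kappa)$-directed filter on $j(\mathbb{Q})$'' in $M$---that is, every subset of $j(G)$ of size strictly less than $j(\kappa)$ admits a common upper bound inside $j(G)$.

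Finally I would locate $j\image G$ as a set of $M$. Fix an enumeration $\vec p = \langle p_\alpha \mid \alpha < \kappa \rangle$ of $G$ in $V[G]$, possible because $|\mathbb{Q}| = \kappa$. Then $j(\vec p) \in M$ is a $j(\kappa)$-indexed enumeration of $j(G)$, and since $\crit j = \kappa$ we have $j(\vec p)(\alpha) = j(p_\alpha)$ for each $\alpha < \kappa$. Hence $j(\vec p) \restriction \kappa \in M$ has range $\{j(p_\alpha) \mid \alpha < \kappa\} = j\image G$, so in $M$ we have $j\image G \in M$, $j\image G \subseteq j(G)$, and $|j\image G| = \kappa < j(\kappa)$. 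Applying the $j(\kappa)$-directedness of $j(G)$ in $M$ to the subset $j\image G$ yields the required master condition $m \in j(G) \subseteq j(\mathbb{Q})$ with $m \geq j(p)$ for every $p \in G$. The delicate point of the argument is the opening step: it is distributivity (rather than closure or strategic closure) that supplies $\kappa$-directedness of $G$, and it does so only through the detour of placing the relevant ${<}\kappa$-subsets of $G$ back into $V$ before applying genericity; without this, the remainder of the proof collapses.
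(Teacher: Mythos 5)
Your proof is correct, and it reaches the master condition by a genuinely different route from the paper's. The paper codes $\mathbb{Q}$ as $\kappa$, notes that $G=j(G)\cap\kappa\in M$ and hence, by the $j(\kappa)$-distributivity of $j(\mathbb{Q})$ over the $M$-ground model $M'$ (writing $M=M'[j(G)]$), that $G\in M'$; it then forms in $M'$ the dense open set of conditions that either dominate every $j(p)$ for $p\in G$ or are incompatible with one of them, and lets the $M'$-genericity of $j(G)$ produce $m$. You instead carry out the density-plus-distributivity argument entirely downstairs, concluding that $G$ is $\kappa$-directed in $V^{\mathbb{Q}}$, push this through $j$ by elementarity to obtain $j(\kappa)$-directedness of $j(G)$ in $M$, and then apply it to $j\image G$, which you correctly place in $M$ (with $M$-cardinality $\kappa<j(\kappa)$) via the restriction of $j(\langle p_\alpha\mid\alpha<\kappa\rangle)$ to $\kappa$. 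The underlying engine is the same in both arguments: distributivity puts the relevant small subsets of $\mathbb{Q}$ back into the appropriate ground model so that a density argument upgrades the filter to a sufficiently directed set. But your version trades the paper's appeal to the structure of $M$ as a generic extension of $M'$ and to the genericity of $j(G)$ over $M'$ (a point the paper leaves largely implicit) for the enumeration trick identifying $j\image G$ inside $M$; the paper avoids that trick because after the identification $\mathbb{Q}=\kappa$ one simply has $j\image G=G$, while your version avoids having to mention $M'$ at all. Both arguments locate $m$ inside $j(G)$ and use only the stated hypotheses, so either can serve as the proof.
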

\begin{proof}

Without loss of generality we can assume that $\mathbb{Q}=\kappa$, i.e.\ the set of conditions of the forcing $\mathbb{Q}$ is just $\kappa$.
Let $G\subseteq\mathbb{Q}$ be the generic filter. By elementarity, $M = M'[j(G)]$, where $\forall p\in G,\,j(p)\in j(G)$.
Note that since $G\subseteq\kappa$ and $\crit(j)=\kappa$, $G=j(G)\cap\kappa\in M$.
%Since $|\mathbb{Q}| = \kappa$, $\mathbb{Q}=j\image \mathbb{Q} \in
% {^\kappa} M\cap V^{\mathbb{Q}}$ and thus in $M$.
By the distributivity of $\mathbb{Q}$ over $V$ and by elementarity of $j$, $j(\mathbb{Q})$ is also $j(\kappa)$-distributive over $M'$, hence $G\in M'$. In particular, the set
$$D=\{q \in j(\mathbb{Q}) \mid ((\forall p \in G)(q \geq j(p)))\vee ((\exists p \in G)(q\perp j(p)))\}$$
 is dense open in $M'$. Clearly, any condition $m\in j(G)$ from this set will witness the validity of the lemma, since $j(G)\supseteq j\image G=G$.
\end{proof}
Define now a subclass of $\kappa$-distributive forcing of size $\kappa$.

\begin{definition}\label{definition: masterable}
A forcing notion $\mathbb{Q}$ is called \emph{masterable} if

\begin{enumerate}
  \item $\mathbb{Q}$ is a $\kappa$-distributive forcing of size $\kappa$,
  \item there is a forcing notion $\lusim{\mathbb{R}} \in V^Q$ such that
 \begin{enumerate}
   \item In $V^{\mathbb{Q}*\lusim{\mathbb{R}}}$, there is an elementary embedding
   $$j\colon V^{\mathbb{Q}} \to M$$
with $\crit j = \kappa$.
   \item $\mathbb{Q}*\lusim{\mathbb{R}}$ contains a dense subset of size $\leq \kappa$
   and $\mathbb{Q} \ast \name{\mathbb{R}}$ is ${<}\kappa$-strategically closed.
 \end{enumerate}

\end{enumerate}
\end{definition}

 Let $\mathcal{N}_\kappa$ denotes the class of all masterable forcing notions.

\begin{claim}
$\mathcal{N}_\kappa$ is closed under complete subforcings.
\end{claim}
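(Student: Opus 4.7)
The plan is to factor $\mathbb{Q}\cong \mathbb{Q}_0*\name{\mathbb{Q}/\dot G_0}$ and build the witness $\name{\mathbb{R}}_0$ for $\mathbb{Q}_0$ out of $\name{\mathbb{R}}$ and the quotient, setting
\[\name{\mathbb{R}}_0 := \name{\mathbb{Q}/\dot G_0} * \name{\mathbb{R}}\]
as a $\mathbb{Q}_0$-name. By associativity of iterated forcing, $\mathbb{Q}_0*\name{\mathbb{R}}_0 \cong \mathbb{Q}*\name{\mathbb{R}}$, and in particular $V^{\mathbb{Q}_0*\name{\mathbb{R}}_0}=V^{\mathbb{Q}*\name{\mathbb{R}}}$. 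I will verify clauses (1) and (2) of Definition \ref{definition: masterable} for $\mathbb{Q}_0$ with this $\name{\mathbb{R}}_0$.

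Clause (1) is immediate: $|\mathbb{Q}_0|\leq|\mathbb{Q}|\leq\kappa$, and $\mathbb{Q}_0$ is $\kappa$-distributive because any new $<\kappa$-sequence of ordinals added by $\mathbb{Q}_0$ is already added by $\mathbb{Q}$. For clause (2)(b), both having a dense subset of size $\leq\kappa$ and being ${<}\kappa$-strategically closed are preserved by forcing equivalence, so both properties transfer to $\mathbb{Q}_0*\name{\mathbb{R}}_0$ from $\mathbb{Q}*\name{\mathbb{R}}$ through the isomorphism above.

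For clause (2)(a), fix in $V^{\mathbb{Q}_0*\name{\mathbb{R}}_0}=V^{\mathbb{Q}*\name{\mathbb{R}}}$ the generic elementary embedding $j\colon V^{\mathbb{Q}}\to M$ with $\crit j=\kappa$ provided by the masterability of $\mathbb{Q}$. Since $V^{\mathbb{Q}_0}\subseteq V^{\mathbb{Q}}$, I would derive an ultrafilter on $\kappa$ over $V^{\mathbb{Q}_0}$ via
\[W = \{X\subseteq\kappa \mid X\in V^{\mathbb{Q}_0},\ \kappa\in j(X)\},\]
check that $W$ is a non-principal $\kappa$-complete ultrafilter (a routine consequence of $\crit j=\kappa$ and the fact that $j$ fixes $<\kappa$-sequences of elements of $V^{\mathbb{Q}_0}$), and form the ultrapower $j_0\colon V^{\mathbb{Q}_0}\to M_0=\Ult(V^{\mathbb{Q}_0},W)$. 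Well-foundedness of $M_0$ follows from $\kappa$-completeness of $W$, and $\crit j_0=\kappa$. Equivalently, one can obtain $j_0$ by composing $j\upharpoonright V^{\mathbb{Q}_0}$ with the Mostowski collapse of its image in $M$.

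The main obstacle, which is largely bookkeeping, is to ensure that the derived embedding is genuinely definable in $V^{\mathbb{Q}_0*\name{\mathbb{R}}_0}$ rather than in the a priori larger-looking $V^{\mathbb{Q}*\name{\mathbb{R}}}$; this is resolved immediately by the observation that the two generic extensions coincide. No new ideas beyond the standard factoring of iterated forcing are required.
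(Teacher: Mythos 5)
Your proof is correct and follows the paper's argument exactly: the paper likewise sets $\name{\mathbb{R}'}=\mathbb{P}/\name{G_{\mathbb{Q}}}\ast\name{\mathbb{R}}$ (quotient followed by the old witness) and concludes from $\mathbb{Q}\ast\name{\mathbb{R}'}\simeq\mathbb{P}\ast\name{\mathbb{R}}$. Your extra verification of clause (2)(a) via the derived ultrafilter is a detail the paper leaves implicit, and it is the right way to fill it in --- though note that the Mostowski collapse of the literal image $j''V^{\mathbb{Q}_0}$ is just $V^{\mathbb{Q}_0}$ again, so the correct ``equivalent'' formulation of your parenthetical remark is to collapse the seed hull $\{j(f)(\kappa)\mid f\in V^{\mathbb{Q}_0}\}$, which is precisely the ultrapower you already built.
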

\begin{proof}
Assume $\mathbb{Q}$ is a complete subforcing of $\mathbb{P}\in \mathcal{N}_\kappa$.
Then $|\mathbb{Q}|\leq |\mathbb{P}|\leq\kappa$ and let $\name{\mathbb{R}}$ witness propery $(2)$ for $\mathbb{P}$. Let $\name{\mathbb{R}'}=\mathbb{P}/\name{G_{\mathbb{Q}}}\ast\name{\mathbb{R}}$ where $\mathbb{P}/\name{G_{\mathbb{Q}}}$ is then quotient forcing. Now $\mathbb{Q}*\name{\mathbb{R}'}\simeq \mathbb{P}\ast\name{\mathbb{R}}$ and so condition $(2)$ holds for $\mathbb{Q}$.
\end{proof}

\begin{theorem}\label{thm1}
Assume GCH and let $\kappa$ be a measurable cardinal.

Then there is a cofinality preserving forcing extension in which for any $\mathbb{Q}\in\mathcal{N}_\kappa$, there is a $\kappa$-complete ultrafilter $\mathcal{U}$ extending $\mathcal{D}_p(\mathbb{Q})$ for every $p\in \mathbb{Q}$.

\end{theorem}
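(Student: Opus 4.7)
The plan is a Laver-style preparation that anticipates masterable forcings.  Using GCH and the measurability of $\kappa$, fix a Laver function $\ell\colon\kappa\to V_\kappa$ for $\kappa$: for every set $x$ there is a normal measure $U^*$ on $\kappa$ whose ultrapower $j^*\colon V\to M^*$ satisfies $j^*(\ell)(\kappa)=x$.  Define $\mathbb{P}_\kappa$ as an Easton-support iteration of length $\kappa$ in which, at every inaccessible stage $\alpha$, if $\ell(\alpha)$ is a $\mathbb{P}_\alpha$-name for a pair $(\name{\mathbb{Q}}_\alpha,\name{\mathbb{R}}_\alpha)$ witnessing $\name{\mathbb{Q}}_\alpha\in\mathcal{N}_\alpha$ we iterate with $\name{\mathbb{Q}}_\alpha * \name{\mathbb{R}}_\alpha$, and otherwise do nothing.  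Because each $\name{\mathbb{Q}}_\alpha * \name{\mathbb{R}}_\alpha$ is ${<}\alpha$-strategically closed with a dense subset of size $\leq\alpha$, standard chain-condition and closure arguments yield that $\mathbb{P}_\kappa$ has size $\kappa$, is $\kappa$-cc, preserves cofinalities and GCH up to $\kappa$, and that $\kappa$ remains inaccessible in any $V[G_\kappa]$.

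Let $G_\kappa$ be $V$-generic for $\mathbb{P}_\kappa$, fix $\mathbb{Q}\in\mathcal{N}_\kappa^{V[G_\kappa]}$ with masterability witness $\name{\mathbb{R}}$, and fix $p\in\mathbb{Q}$.  Using the Laver property of $\ell$, pick $j^*\colon V\to M^*$ with $\crit j^*=\kappa$ so that $j^*(\ell)(\kappa)$ is a $\mathbb{P}_\kappa$-name for $\mathbb{Q}*\name{\mathbb{R}}$; then in $M^*$ we obtain the factorization $j^*(\mathbb{P}_\kappa)=\mathbb{P}_\kappa*(\mathbb{Q}*\name{\mathbb{R}})*\name{\mathbb{P}}'$.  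Force with $\mathbb{Q}*\name{\mathbb{R}}$ below $p$ over $V[G_\kappa]$ to obtain $H=G_\mathbb{Q}*G_\mathbb{R}$ with $p\in G_\mathbb{Q}$.  By the definition of $\mathcal{N}_\kappa$, inside $V[G_\kappa][H]$ there is a generic elementary embedding $j_\mathbb{Q}\colon V[G_\kappa]^\mathbb{Q}\to N$ with $\crit j_\mathbb{Q}=\kappa$, and Lemma \ref{lemma} produces a master condition $m\in N$, $m\in j_\mathbb{Q}(\mathbb{Q})$, below every $j_\mathbb{Q}(q)$ for $q\in G_\mathbb{Q}$.  In parallel, using the ${<}j^*(\kappa)$-strategic closure of $\name{\mathbb{P}}'$ in $M^*[G_\kappa*H]$ together with the fact that, from the vantage point of $V[G_\kappa][H]$, there are only $\kappa^+$-many dense subsets of $\name{\mathbb{P}}'$ inherited from $M^*$, one diagonalizes to build a generic $G'$ for $\name{\mathbb{P}}'$ over $M^*[G_\kappa*H]$, and lifts $j^*$ to $j^{**}\colon V[G_\kappa]\to M^*[G_\kappa*H*G']$.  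By coherence of the two constructions, $m$ identifies with an element of $j^{**}(\mathbb{Q})$ satisfying $m\leq j^{**}(q)$ for all $q\in G_\mathbb{Q}$, and the derived set
$$\mathcal{U}=\{A\in P(\mathbb{Q})^{V[G_\kappa]}\mid m\in j^{**}(A)\}$$
is a $\kappa$-complete ultrafilter which contains every dense open $D\subseteq\mathbb{Q}$ above $p$ (for any such $D$, genericity of $G_\mathbb{Q}$ below $p$ provides $q\in G_\mathbb{Q}\cap D$, whence $m\leq j^{**}(q)\in j^{**}(D)$).

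The principal obstacle is verifying that $\mathcal{U}\in V[G_\kappa]$, since as defined it lives in $V[G_\kappa][H][G']$.  The plan is to push the definition of $\mathcal{U}$ back to $V[G_\kappa]$ by analyzing the forcing that produced the lift.  For each $A\in P(\mathbb{Q})^{V[G_\kappa]}$ with $\mathbb{P}_\kappa$-name $\name{A}$, the statement ``$\check m\in j^*(\name{A})$'' is an assertion in $M^*$ about the generic $G_\kappa*H*G'$; because $\name{\mathbb{P}}'$ is ${<}j^*(\kappa)$-strategically closed in $M^*[G_\kappa*H]$ it adds no new information relevant to names of rank below $j^*(\kappa)$, so the truth value is already determined by $G_\kappa*H$; and by a homogeneity/forcing argument on $\mathbb{Q}*\name{\mathbb{R}}$ (leveraging its dense subset of size $\leq\kappa$), the value is in fact decided by a condition lying in $V[G_\kappa]$, so $\mathcal{U}\in V[G_\kappa]$.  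Executing this closure-and-homogeneity analysis, which depends essentially on the ``dense subset of size $\leq\kappa$'' clause of Definition \ref{definition: masterable}, is where the technical bulk of the proof lies.
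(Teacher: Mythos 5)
There are two genuine gaps here, and both are avoided by the paper's actual construction. First, your preparation presupposes a Laver function for a measurable cardinal: a function $\ell\colon\kappa\to V_\kappa$ such that every target is anticipated by \emph{some} normal measure. This is not a consequence of ``$\kappa$ measurable $+$ GCH'': in $L[U]$ there is a unique normal measure on $\kappa$, so $j_U(\ell)(\kappa)$ takes a single value and no function can anticipate more than one target. This is precisely why the paper iterates the \emph{lottery sum} of all ${<}\alpha$-strategically closed posets of size $\alpha$ at each inaccessible $\alpha$: the factorization $j(\mathbb{P}_\kappa)=\mathbb{P}_\kappa\ast(\text{tail})$ is then arranged by \emph{choosing} the appropriate summand (in fact the trivial one at stage $\kappa$) rather than by anticipation.

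The second and more serious gap is the last step, which you correctly identify as the principal obstacle but do not close, and whose proposed closure would fail. You force with $\mathbb{Q}\ast\name{\mathbb{R}}$ over $V[G_\kappa]$, so the master condition $m$ lives in $j_{\mathbb{Q}}(\mathbb{Q})$ for a \emph{generic} embedding $j_{\mathbb{Q}}$ that exists only in $V[G_\kappa][H]$; consequently $\mathcal{U}=\{A\mid m\in j^{**}(A)\}$ is a priori an object of $V[G_\kappa][H][G']$. Your plan to pull it back rests on ``a homogeneity argument on $\mathbb{Q}\ast\name{\mathbb{R}}$ leveraging its dense subset of size $\leq\kappa$'', but having a small dense subset has nothing to do with homogeneity, and Definition \ref{definition: masterable} imposes no homogeneity whatsoever; moreover $m$ is not a check-name over $V[G_\kappa]$, so ``$\check m\in j^*(\name A)$'' is not even a well-posed forcing statement there. (The claimed identification of $m\in j_{\mathbb{Q}}(\mathbb{Q})$ with an element of $j^{**}(\mathbb{Q})$ is likewise unjustified: $j_{\mathbb{Q}}$ and $j^{**}$ are unrelated embeddings.) The paper's proof never leaves $V[G_\kappa]$: it first lifts the normal-measure embedding to $j_1^*\colon V[G_\kappa]\to N_1[G_{\kappa_1}]$, then uses elementarity to see that $j_1^*(\mathbb{Q})$ is masterable in $N_1[G_{\kappa_1}]$, and builds a generic for the \emph{image} forcing $j_1^*(\mathbb{Q})\ast\name{\mathbb{R}}$ over $N_1[G_{\kappa_1}]$ \emph{inside} $V[G_\kappa]$ --- this is exactly where the ``dense subset of size $\leq\kappa$'' clause is really used (together with GCH it bounds the number of dense sets by $\kappa^+$, which the $\kappa^+$-strategic closure inherited from the $\kappa$-closure of the ultrapower lets one meet). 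The generic embedding $k$ witnessing masterability of $j_1^*(\mathbb{Q})$, the master condition from Lemma \ref{lemma}, and hence $\mathcal{U}=\{A\subseteq\mathbb{Q}\mid m\in k(j_1^*(A))\}$ then all belong to $V[G_\kappa]$ by construction. You should restructure your argument along these lines rather than trying to recover $\mathcal{U}$ after a genuine forcing extension.
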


\begin{proof}
Let $\mathbb{P}_\kappa$ be a Easton support iteration of length $\kappa$, $\langle \mathbb{P}_\alpha, \lusim{\mathbb{Q}}_\beta \mid \alpha\leq \kappa, \beta<\kappa \rangle$. At each step, $\name{\mathbb{Q}}_\alpha$ is either the trivial forcing, if $\alpha$ is not inaccessible, or the lottery sum of all ${<}\alpha$-strategically closed forcing notions of size $\alpha$ (were the trivial forcing is included).

Let $G_\kappa\subseteq P_\kappa$ be a generic. We argue that the model $V[G_\kappa]$ is as desired.

Let $\mathbb{Q}$ be a forcing notion in $\big(\mathcal{N}_\kappa\big)^{V[G_\kappa]}$ and $p\in\mathbb{Q}$. Let $U$ be a normal, $\kappa$-complete ultrafilter over $\kappa$. Let $j_1 \colon V \to N_1 \cong \Ult(V, U)$ be the ultrapower maps using $U$. Let $\kappa_1 = j_1(\kappa)$.

Let us extend, in $V[G_\kappa]$, the embedding $j_1$ to an elementary embedding
\[j_1^*:V[G_\kappa]\to N_1[G_{\kappa_1}].\]
Indeed, $j(\mathbb{P}_\kappa) = \mathbb{P}_\kappa \ast j(\mathbb{P})_{[\kappa, j(\kappa))}$. By picking the trivial forcing at $\kappa$, the rest of the iteration is $\kappa^{+}$-strategically closed in $V$ (by the closure of $N_1$ to $\kappa$-sequences). The number of dense open sets of the tail forcing is $\kappa^{+}$ (as enumerated in $V$) and thus one can construct in $V[G_\kappa]$ an $N_1[G_\kappa]$-generic filter for the tail forcing $j(\mathbb{P})_{\kappa,j(\kappa)}$. Let $G_{\kappa_1}$ be the generic filter for $N_1$.

By elementarity, $j_1^*(\mathbb{Q}) \in (\mathcal{N}_{\kappa_1})^{N_1[G_{\kappa_1}]}$, thus by condition $(2)$ there is a $\name{\mathbb{R}}$ and a dense subset $X\subseteq j_1^*(\mathbb{Q})*\name{R}$ such that $N_1[G_{\kappa_1}]\models|X|\leq \kappa_1$. By $GCH$, from the point of view of $V[G_\kappa]$, there are $\kappa^+$ dense open sets to meet in order to generate a generic filter for $j_1^*(\mathbb{Q})$. By condition $(2)$, $j_1^*(\mathbb{Q})*\name{\mathbb{R}}$ is ${<}\kappa_1$-strategically closed in $N_1[G_{\kappa_1}]$, again by closure of $N_1[G_{\kappa_1}]$ to $\kappa$ sequences from $V[G_k]$, it is $\kappa^+$-strategically closed from the point of view of $V[G_\kappa]$. Hence, one can find a $N_1[G_{\kappa_1}]$-generic filter, $G_{j_1^*(\mathbb{Q})}*G_{\name{\mathbb{R}}} \in V[G_\kappa]$ with $j_1^*(p)\in G_{j_1^*(\mathbb{Q})}$.
Since $j_1^*(\mathbb{Q})$ is masterable using the forcing $\name{\mathbb{R}}$ in the extension $N_1[G_{\kappa_1}][G_{j_1^*(\mathbb{Q})}*G_{\name{\mathbb{R}}}]$ there is an elementary embedding
\[k\colon N_1[G_{\kappa_1}][G_{j_1^*(\mathbb{Q})}] \to N^*\]
such that $\crit k = \kappa_1$.
Let $m$ be a condition in $k(j_1(\mathbb{Q}))$ such that $m$ is stronger than $k(p)$ for all $p\in G_{j_1^*(\mathbb{Q})}$ which exists by applying lemma \ref{lemma} to $j_1^*(\mathbb{Q})$. In $V[G_\kappa]$, define $$\mathcal{U} = \{A\subseteq \mathbb{Q}\mid m \in k(j^*_1(A_\xi))\}$$ It is clear that $\mathcal{U}$ is a $\kappa$-complete ultrafilter that extends $\mathcal{D}_p(\mathbb{Q})$.
\end{proof}
\begin{corollary}\label{MasterForcing}
Consider $\mathcal{N}_\kappa$ of the model of the previous theorem $V[G_\kappa]$. Then
\begin{enumerate}
  \item $\Add(\kappa,1)\in\mathcal{N}_\kappa$, and hence, by the claim above,  all its complete subforcings are in $\mathcal{N}_\kappa$
  ( for example:  adding a Suslin tree to $\kappa$, adding a non-reflecting stationary subset of a given stationary set etc.).
  \item $Club(S)\in\mathcal{N}_\kappa$ for all $S\subseteq \kappa$ that contains all the singular cardinals and is of measure one in a normal measure over $\kappa$.
\end{enumerate}

\end{corollary}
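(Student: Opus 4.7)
My plan is to verify the two conditions of Definition \ref{definition: masterable} for $\Add(\kappa,1)$ and $Club(S)$ inside the model $V[G_\kappa]$ from Theorem \ref{thm1}. In both cases I claim one can take $\name{\mathbb{R}}$ to be the \emph{trivial} forcing, so that the content reduces to showing that $\kappa$ remains measurable in $V[G_\kappa]^{\mathbb{Q}}$. Condition (1) and condition (2)(b) are immediate: under GCH in $V$ (preserved by the Easton iteration $\mathbb{P}_\kappa$) the inaccessible $\kappa$ satisfies $\kappa^{<\kappa}=\kappa$, so $|\Add(\kappa,1)|=|Club(S)|=\kappa$; and $\Add(\kappa,1)$ is outright $\kappa$-closed while $Club(S)$ is ${<}\kappa$-strategically closed (the set $S$ is fat since it contains the singular cardinals of $V[G_\kappa]$).

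For item (1), I adapt the classical Silver lifting argument. Start with the generic embedding $j_1^*\colon V[G_\kappa]\to N_1[G_{\kappa_1}]$ produced in the proof of Theorem \ref{thm1}, and let $H$ be $V[G_\kappa]$-generic for $\Add(\kappa,1)$. Then $m:=\bigcup H\cup\{(\kappa,0)\}$ is a condition in $j_1^*(\Add(\kappa,1))=\Add(\kappa_1,1)^{N_1[G_{\kappa_1}]}$ of size $\kappa+1$, and it extends every element of $j_1^*[H]=H$. The goal is to build, inside $V[G_\kappa][H]$, an $N_1[G_{\kappa_1}]$-generic filter $\tilde H$ containing $m$. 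Three facts combine to make this possible: (i) $\Add(\kappa_1,1)^{N_1[G_{\kappa_1}]}$ is $\kappa_1$-closed in $N_1[G_{\kappa_1}]$; (ii) since $\Add(\kappa,1)$ is $\kappa$-closed, the closure of $N_1[G_{\kappa_1}]$ under $\kappa$-sequences from $V[G_\kappa]$ is preserved to $V[G_\kappa][H]$; and (iii) using GCH in $V$ together with the ultrapower representation of $N_1$, the dense subsets of $\Add(\kappa_1,1)^{N_1[G_{\kappa_1}]}$ lying in $N_1[G_{\kappa_1}]$ are enumerated by $\kappa^+$ in $V[G_\kappa][H]$. Hence a decreasing sequence meeting all of them and starting below $m$ can be built, yielding $\tilde H$ and the lift $\tilde j_1\colon V[G_\kappa][H]\to N_1[G_{\kappa_1}][\tilde H]$ with $\crit \tilde j_1=\kappa$. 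The second half of item (1) then follows from the claim that $\mathcal{N}_\kappa$ is closed under complete subforcings.

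For item (2), use the hypothesis to choose a normal measure $W$ on $\kappa$ in $V[G_\kappa]$ with $S\in W$, and let $j_W\colon V[G_\kappa]\to N_W$ be the corresponding ultrapower. Since $S\in W$ we have $\kappa\in j_W(S)$, so for any $V[G_\kappa]$-generic club $C\subseteq S$ the set $m:=C\cup\{\kappa\}$ is a legitimate condition in $j_W(Club(S))=Club(j_W(S))^{N_W}$, and it extends $j_W[\check C]=C$ (the image of every initial segment). In $N_W$, the forcing $Club(j_W(S))^{N_W}$ is ${<}j_W(\kappa)$-strategically closed, and again by GCH plus ultrapower counting there are only $\kappa^+$ many dense subsets of it in $N_W$ as seen from $V[G_\kappa][C]$. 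I build $\tilde C\ni m$ meeting all of these dense sets by playing the strategic game for $Club(j_W(S))^{N_W}$ using a winning strategy $\sigma\in N_W$ and ground-model moves; this gives the lift $\tilde j_W\colon V[G_\kappa][C]\to N_W[\tilde C]$.

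The main obstacle is the bookkeeping in item (2), because $Club(S)$ is only ${<}\kappa$-strategically closed, not $\kappa$-closed, so unlike item (1) I cannot immediately transfer the closure of $N_W$ into $V[G_\kappa][C]$. The resolution is that $Club(S)$ is ${<}\kappa$-distributive, so any partial play of the strategic game of length $<\kappa$ is already in $V[G_\kappa]$, where $N_W$ is closed under $\kappa$-sequences and $\sigma$ can be applied without leaving $N_W$; and at limit stages of length $\kappa$ and above one uses the master condition together with the enumeration of dense sets to continue the play coherently. Once this is checked, $\name{\mathbb{R}}$ being trivial witnesses masterability of $Club(S)$.
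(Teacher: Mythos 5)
Your overall plan (take $\name{\mathbb{R}}$ trivial and lift an embedding via a master condition) matches the paper's, but there is a genuine gap at exactly the step where the preparation iteration is supposed to be used, and it affects both items. In item (1) you start from the embedding $j_1^*\colon V[G_\kappa]\to N_1[G_{\kappa_1}]$ as built in Theorem \ref{thm1}, where the lottery at stage $\kappa$ chose the \emph{trivial} forcing; consequently $G_{\kappa_1}$, and hence the whole model $N_1[G_{\kappa_1}]$, is constructed inside $V[G_\kappa]$. Your master condition $m=\bigcup H\cup\{(\kappa,0)\}$ codes the $V[G_\kappa]$-generic filter $H$, so $m\notin V[G_\kappa]\supseteq N_1[G_{\kappa_1}]$ and therefore $m$ is \emph{not} a condition of $\Add(\kappa_1,1)^{N_1[G_{\kappa_1}]}$. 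For the same reason your point (ii) fails: $\Add(\kappa,1)$ is only ${<}\kappa$-closed, it adds a new $\kappa$-sequence (namely $\bigcup H$), so $N_1[G_{\kappa_1}]$ is not closed under $\kappa$-sequences of $V[G_\kappa][H]$. The identical problem occurs in item (2): $C$ is generic over $V[G_\kappa]$, so $C\cup\{\kappa\}\notin N_W$ and is not a condition in $Club(j_W(S))^{N_W}$. This is not a bookkeeping issue that distributivity can repair: as the paper remarks, over $L[U]$ the forcing $\Add(\kappa,1)$ destroys the measurability of $\kappa$, so no lifting argument that never invokes the preparation can succeed.

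The missing idea, which is what the paper's proof does, is to exploit the lottery when constructing the image generic: one opts at stage $\kappa$ of $j(\mathbb{P}_\kappa)$ for $\mathbb{Q}$ itself ($\Add(\kappa,1)$, resp.\ $Club(S)$) and feeds the actual $V[G_\kappa]$-generic $H$ (resp.\ $C$) in as the stage-$\kappa$ generic, so that the target model becomes $N_1[G_\kappa\ast H\ast G_{(\kappa,\kappa_1)}]$ and in particular \emph{contains} $\bigcup H$ (resp.\ $C\cup\{\kappa\}$, which is a legal condition since $\kappa\in j(S)$). Only the tail $G_{(\kappa,\kappa_1)}$ and the generic for $j(\mathbb{Q})$ above this master condition are then built by the strategic-closure-plus-$\kappa^{+}$-many-dense-sets argument, and for that the closure of $N_1$ under $\kappa$-sequences computed in $V[G_\kappa][H]$ (which holds because the whole two-step forcing $\mathbb{P}_\kappa\ast\name{\mathbb{Q}}$ appears as an initial segment of the image iteration) suffices. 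With this correction the rest of your counting and closure arguments go through essentially as in the paper.
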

\begin{proof}

For (1), we wish to prove that $\mathbb{Q}=\Add(\kappa,1)\in (N_\kappa)^{V[G_{\kappa}]}$.
Let $f$ be $V[G_\kappa]$-generic for $\Add(\kappa,1)$, We will extend in $V[G_\kappa][f]$ the elementary embedding $j_U:V\rightarrow M_U$ to $$j^*:V[G_\kappa][f]\rightarrow M_U[G_{\kappa_1}][f']$$
Then we can take $\lusim{\mathbb{R}}$ to be the trivial forcing in the definition of masterable.
the generic $G_{\kappa_1}$ will be made of $G_\kappa$ followed by $f$ as generic for $Q_\kappa$, then a $M[G_\kappa*f]$-generic filter for the rest of the forcing $P_{(\kappa,\kappa_1]}$, can be constructed in $V[G_k][f]$ using the strategic closure of of the forcing as we did in theorem \ref{thm1}. Also we can find the generic $f_{\kappa_1}\in V[G_\kappa][f]$ for $(\Add(\kappa_1,1))^{M[G_{\kappa_1}]}$, and  $f_{\kappa_1}\restriction\kappa=f$. Note that this is a condition in $\Add(\kappa_1,1)^{M[G_{\kappa_1}]}$. Above this condition, we can construct the generic $f_{\kappa_1}$ since again $\Add(\kappa_1,1)^{M[G_{\kappa_1}]}$ as $\kappa^+$ many dense open subsets from the point of view of $V[G_\kappa][f]$ and is it $k^+$-closed since the model is closed under $\kappa$-sequences.

For (2),
 Let $S\subset\kappa$ be a stationary set that contains all singular cardinals and let us assume that $S\in W$, for normal measure $W$ over $\kappa$.
  We need to show that $Club(S) \in (\mathcal{N}_\kappa)^{V[G_\kappa]}$. Indeed, the forcing $Club(S)$ is ${<}\kappa$-strategically closed. Let $H\subseteq Club(S)$ be $V[G_\kappa]$-generic.
Let us show that in $V[G_\kappa][H]$, the elementary embedding $j_W$, which corresponds to $W$, extends to an elementary embedding:
\[j_W^{\prime}\colon V[G_\kappa] \to N_W[G_{\kappa_1}'],\]
where $\kappa_1=j_W(\kappa)$,
by taking the generic of $j(\mathbb{P}_\kappa) \restriction \kappa + 1$ to be $G_\kappa \ast H$ and extending it to a generic filter $G_{\kappa_1}'$, using the $\kappa^{+}$-strategically closure of the tail forcing in $V[G_\kappa][H]$.

Since $\crit j'_W = \kappa$, for any $p\in Club(S)$, $j'_W(p) = p$. Also, since $\kappa \in j'_W(S)$.
\[m = \{\kappa\}\cup\bigcup_{p\in H} p \in j_W'(Club(S)).\]
Using the same arguments as before, we can find an $N_W[G_{\kappa_1}]$-generic filter $H'\in V[G_\kappa][H]$ for $j_W'(Club(S))$ such that $m\in H'$. We conclude that the embedding $j_1'$ extends to an embedding:
\[j_W''\colon V[G_\kappa][H]\to N_1[G_{\kappa_1}][H'].\]
Therefore, we can take $\name{\mathbb{R}}$ to be the trivial forcing.
\end{proof}

Note that in general $\Add(\kappa,1)$ might not be masterable. For example, if we force above $L[U]$ with $\Add(\kappa,1)$ the $\kappa$ is no longer measurable.

Let us deduce now one more corollary that relates to the result of section 6.

\begin{corollary}
Consider, in $V[G_\kappa]$, the forcing for adding a club through singulars and inaccessibles which are not Mahlo, i.e.
 $$\mathbb{Q}=\{a \subseteq \kappa \mid |a|<\kappa, a \text{ is closed and each member of } $$$$a \text{ is either a singular cardinal or an inaccessible which is not a Mahlo}\} $$
 ordered by end-extension.
Then $\mathbb{Q}\in \mathcal{N}_\kappa$.

\end{corollary}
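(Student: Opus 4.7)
The plan is to verify the two clauses of Definition~\ref{definition: masterable} for $\mathbb{Q}$, following the template of Corollary~\ref{MasterForcing}(2). Clause~(1) is a routine adaptation of Lemma~\ref{JensenStCl}: $\mathbb{Q}$ is ${<}\kappa$-strategically closed (Player~I extends by a singular cardinal at successor moves, and at limit moves takes the union, which lies in the allowable set), and $|\mathbb{Q}|=\kappa$ under $\kappa^{<\kappa}=\kappa$ in $V[G_\kappa]$, so $\mathbb{Q}$ is $\kappa$-distributive of size $\kappa$.

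For clause~(2), fix a normal measure $W\in V$ on $\kappa$, giving $j_W\colon V\to N$ with $\kappa_1=j_W(\kappa)$. Factor $j_W(\mathbb{P}_\kappa)$ in $N$ as $\mathbb{P}_\kappa\ast\dot{\mathbb{Q}}_\kappa\ast\dot{\mathbb{P}}^{\mathrm{tail}}$, where $\dot{\mathbb{Q}}_\kappa$ is the stage-$\kappa$ lottery in $N[G_\kappa]$. By closure of $N$ under $\kappa$-sequences from $V$, the allowable ordinals below $\kappa$ (singulars or non-Mahlo inaccessibles) agree in $N[G_\kappa]$ and $V[G_\kappa]$, so $\mathbb{Q}$ appears as a summand of the lottery. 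Choose this summand with stage-$\kappa$ generic equal to the given $V[G_\kappa]$-generic $G_\mathbb{Q}$ (which is automatically $N[G_\kappa]$-generic), and use the $\kappa^+$-strategic closure of $\dot{\mathbb{P}}^{\mathrm{tail}}$ to construct an $N[G_\kappa][G_\mathbb{Q}]$-generic filter $H^{\mathrm{tail}}$ in $V[G_\kappa][G_\mathbb{Q}]$. Set $\name{\mathbb{R}}$ to be a $\mathbb{Q}$-name for $\dot{\mathbb{P}}^{\mathrm{tail}}$; this data yields an elementary embedding $j_W^*\colon V[G_\kappa]\to N^*:=N[G_\kappa][G_\mathbb{Q}][H^{\mathrm{tail}}]$.

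The crucial step, and the main obstacle, is extending $j_W^*$ to an embedding of $V[G_\kappa][G_\mathbb{Q}]$, which reduces to producing a master condition $m\in j_W^*(\mathbb{Q})$ above every $p\in G_\mathbb{Q}$. The natural candidate $m=\bigcup_{p\in G_\mathbb{Q}}p\cup\{\kappa\}$ is a valid condition in $j_W^*(\mathbb{Q})$ exactly when $\kappa$ is singular or non-Mahlo inaccessible in $N^*$. Unlike Corollary~\ref{MasterForcing}(2), where membership of $\kappa$ in $j_W(S)$ was given directly by the choice $S\in W$, here we must verify that the generic $G_\mathbb{Q}$ itself renders $\kappa$ non-Mahlo in $N^*$; this is where the restriction of $\mathbb{Q}$'s allowable set to non-Mahlo ordinals becomes essential, as the generic club is disjoint from the Mahlo cardinals below $\kappa$, and a careful density argument (using that singular cardinals are cofinal in $\kappa$ and always extendable) produces enough singular structure in $V[G_\kappa][G_\mathbb{Q}]$ to witness the non-Mahlo-ness of $\kappa$ in the target model (by closure of $N$). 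Granted the master condition $m$, an $N^*$-generic filter $G^*\ni m$ for $j_W^*(\mathbb{Q})$ is built in the final extension using the $\kappa^+$-distributivity of the forcing and the $\GCH$-bound on dense open sets; the lifted embedding $j\colon V[G_\kappa][G_\mathbb{Q}]\to N^*[G^*]$ with $\crit j=\kappa$ witnesses clause~(2), so $\mathbb{Q}\in\mathcal{N}_\kappa$.
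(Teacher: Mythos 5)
Your clause-(1) verification and the overall lifting template are fine, but the step you yourself flag as ``the crucial step'' contains a genuine error. You assert that $G_{\mathbb{Q}}$ ``renders $\kappa$ non-Mahlo in $N^*$'' because the generic club avoids the Mahlo cardinals below $\kappa$. That only kills the stationarity of the set of \emph{Mahlo} cardinals below $\kappa$; non-Mahloness of $\kappa$ requires killing the stationarity of the set of \emph{inaccessibles} (equivalently, regulars) below $\kappa$, which does not happen. In fact $\mathbb{Q}$ provably preserves the Mahloness of $\kappa$: the set $T$ of non-Mahlo inaccessibles $\alpha<\kappa$ is stationary (the least inaccessible limit point $\alpha$ of any club $C$ cannot be Mahlo, since $\acc(C)\cap\alpha$ is a club of $\alpha$ avoiding inaccessibles), and each such $\alpha$ carries a club contained in $\mathbb{Q}$'s allowable set (as $\alpha$ is non-Mahlo, the singulars below $\alpha$ contain a club); by the lemma of Section 4 comparing $Club(T)$ and $Club(S)$, $T$ stays stationary after forcing with $\mathbb{Q}$, so $\kappa$ stays Mahlo. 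Since the tail forcing adds no subsets of $\kappa$, $\kappa$ is still a Mahlo inaccessible in your $N^*$, so $\bigcup_{p\in G_{\mathbb{Q}}}p\cup\{\kappa\}$ is \emph{not} a condition of $j_W^*(\mathbb{Q})$ and your master condition does not exist. The paper says this explicitly: $\kappa$ is still Mahlo in $V[G_\kappa,G(\mathbb{Q})]$ and it is $G(\name{\mathbb{R}})$ that destroys its Mahloness.

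The paper repairs exactly this point by taking $\name{\mathbb{R}}$ to be non-trivial: it is the forcing shooting a club through the singulars over $V[G_\kappa,G(\mathbb{Q})]$. Then $\mathbb{Q}*\name{\mathbb{R}}$ is still ${<}\kappa$-strategically closed of size $\kappa$, hence a legal lottery summand at stage $\kappa$ of $j_W(\mathbb{P}_\kappa)$; placing $G(\mathbb{Q})*G(\name{\mathbb{R}})$ there makes $\kappa$ a non-Mahlo inaccessible in $N[G_{j_W(\kappa)}]$ (the new club consists of singulars only, so the regulars below $\kappa$ become non-stationary), and only then is $\bigcup G(\mathbb{Q})\cup\{\kappa\}$ a legitimate starting condition for building the generic for $j_W(\mathbb{Q})$. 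A secondary issue: declaring $\name{\mathbb{R}}$ to be the tail iteration does not meet Definition \ref{definition: masterable}, which requires $\mathbb{Q}*\name{\mathbb{R}}$ to have a dense subset of size $\leq\kappa$ and to be ${<}\kappa$-strategically closed; the tail generic is constructed internally via strategic closure and a count of dense sets, and is not part of $\name{\mathbb{R}}$.
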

\begin{proof}
Let $G(\mathbb{Q})$ be a $V[G_\kappa]$-generic subset of $\mathbb{Q}$.
Clearly, $\mathbb{Q}$ is a ${<}\kappa$-strategically closed forcing of cardinality $\kappa$.
Let $\name{\mathbb{R}}$ be the forcing for adding a club through singulars over $V[G_\kappa,G(\mathbb{Q})]$.
Again $\mathbb{Q}*\name{\mathbb{R}}$ is a ${<}\kappa$-strategically closed forcing of cardinality $\kappa$.

Let $G(\name{\mathbb{R}})$  be a generic subset of $\name{\mathbb{R}}$ over $V[G_\kappa,G(\mathbb{Q})]$.
We shall argue that in $V[G_\kappa,G(\mathbb{Q}), G(\name{\mathbb{R}})]$ there is an elementary embedding
$$i\colon V[G_\kappa,G(\mathbb{Q})] \to M,$$
with $\crit(i) = \kappa$ and $({}^\kappa M )\cap  V[G_\kappa,G(\mathbb{Q})]\subseteq M$.

Let $U$ be a normal ultrafilter over $\kappa$ in $V$ and
$j:V\to N$ the corresponding elementary embedding.
Work in $V[G_\kappa,G(\mathbb{Q}), G(\name{\mathbb{R}})]$ and extend it to an elementary embedding
$$i:V[G_\kappa,G(\mathbb{Q})] \to N[G_{j(\kappa)}, G(j(\mathbb{Q}))]$$
as follows.
Set $ G_{j(\kappa)}\restriction \kappa=G_\kappa$.
Now let $Q_\kappa=\mathbb{Q}*\name{\mathbb{R}}$ and take $G(\mathbb{Q})* G(\name{\mathbb{R}})$ to be its generic subset.

Note that $\kappa$ was a Mahlo cardinal in $ V[G_\kappa,G(\mathbb{Q})]$, and hence, in $N[G_\kappa,G(\mathbb{Q})]$, but $G(\mathbb{R})$ destroys its Mahloness.
We complete building  $G_{j(\kappa)}$ using the strategic closure of the relevant forcing.

Let $G(j(\mathbb{Q}))$ starts with $\bigcup G(\mathbb{Q}) \cup \{\kappa\}$. $\kappa$ is not Mahlo anymore, and so, can be added.
Finally, complete building  $G_{j(\mathbb{Q})}$ using the strategic closure of the  forcing $j(\mathbb{Q})$ i.e. we have $\kappa^+$ many dense open sets to meet, the bad player starts with playing  $G(\mathbb{Q}) \cup \{\kappa\}$ and then using the strategy we meet the rest of the dense open sets.

This completes the proof of $\mathbb{Q} \in \mathcal{N}_\kappa$.
\end{proof}
\section{Other examples}
The next interesting examples should be of forcings of size $\kappa$, which are $\kappa$-distributive,  but not ${<}\kappa$-strategically closed nor masterable.

Let start with two simple general observations.

\begin{prop}
Let $\kappa>\aleph_1, \eta<\kappa$ be a regular cardinals. Assume that for every $\lambda<\kappa$, $\lambda^{<\eta}<\kappa$.
Suppose that $\l Q,\leq_Q\r $ is an $\eta+1$-strategicaly closed forcing notion.
Then $\l Q, \leq_Q\r$ preserves stationary subsets of $\kappa$ which concentrate on cofinality $\eta$ i.e. For any set $S$ such that $S\subset\{\nu<\kappa \mid \cof(\nu)=\eta\}$ is stationary, $\Vdash_Q \dot{S}$ is stationary.
\end{prop}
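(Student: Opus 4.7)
The plan is to argue by contradiction. Assume some $q \in Q$ forces that $\dot C$ is a club in $\kappa$ disjoint from $\check S$; we shall construct $q_\eta \geq q$ that forces some $\nu \in S$ into $\dot C$, giving a contradiction.

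Fix a winning strategy $\sigma$ witnessing $(\eta+1)$-strategic closure, and a sufficiently large regular $\theta$. Build a continuous increasing chain $\langle N_\alpha : \alpha < \kappa \rangle$ of elementary substructures of $H(\theta)$, each of size $<\kappa$, with $Q, \dot C, S, q, \sigma \in N_0$ and $N_\alpha^{<\eta} \subseteq N_{\alpha+1}$. Such a chain can be built because $\lambda^{<\eta} < \kappa$ for every $\lambda < \kappa$: at each step one closes $N_\alpha$ under $<\eta$-sequences without exceeding $\kappa$ in cardinality. The set $D = \{\alpha < \kappa : N_\alpha \cap \kappa = \alpha\}$ is a club in $\kappa$, as is the set $D'$ of its limit points. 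Since $S$ is stationary we pick $\nu \in S \cap D'$; since $\cf \nu = \eta$, fix a strictly increasing cofinal sequence $\langle \alpha_\beta : \beta < \eta \rangle$ in $D \cap \nu$, and set $M_\beta := N_{\alpha_\beta}$. Then $M_\beta \cap \kappa = \alpha_\beta$, $\sup_\beta \alpha_\beta = \nu$, and $M_\beta^{<\eta} \subseteq M_{\beta+1}$.

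We next produce an increasing chain $\langle q_\beta : \beta \leq \eta \rangle$ above $q$, with $q_0 = q$. At successor stage $\beta+1$, use elementarity of $M_{\beta+2}$: the statement ``there are $r \geq q_\beta$ and an ordinal $\gamma > \alpha_\beta$ with $r \Vdash \gamma \in \dot C$'' is true in $V$ (since $\dot C$ is forced unbounded) and its parameters lie in $M_{\beta+2}$, so we may choose $q_{\beta+1}$ and $\gamma_\beta$ in $M_{\beta+2}$ witnessing it; in particular $\gamma_\beta < \alpha_{\beta+2}$. At limit $\beta \leq \eta$, let $q_\beta$ be $\sigma$'s response to the play $\langle q_{\beta'} : \beta' < \beta\rangle$, which exists since $\sigma$ is winning. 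For the induction to close we need that at every limit $\beta < \eta$ the previously played sequence lies in $M_\beta^{<\eta} \subseteq M_{\beta+1}$ and $\sigma \in M_{\beta+1}$, so $q_\beta \in M_{\beta+1}$ by elementarity, and thus the successor step at $\beta+1$ proceeds inside $M_{\beta+2}$. Finally $q_\eta$ extends every $q_{\beta+1}$, hence forces $\gamma_\beta \in \dot C$ for every $\beta < \eta$; since $\alpha_\beta < \gamma_\beta < \alpha_{\beta+2}$ and $\sup_\beta \alpha_\beta = \nu$, the sequence $\langle \gamma_\beta : \beta<\eta\rangle$ is cofinal in $\nu$, so closedness of $\dot C$ yields $q_\eta \Vdash \nu \in \dot C$, contradicting $\nu \in S$ and $q \Vdash \dot C \cap \check S = \emptyset$.

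The principal obstacle is to keep the ordinals $\gamma_\beta$ strictly below $\nu$, which requires that the conditions $q_\beta$ produced by $\sigma$ at limit stages land inside the models $M_{\beta+1}$. This is exactly what the closure $M_\beta^{<\eta} \subseteq M_{\beta+1}$ guarantees, and it is the sole role of the cardinal arithmetic assumption $\lambda^{<\eta} < \kappa$.
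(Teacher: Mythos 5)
Your proof is correct and follows essentially the same route as the paper's: both derive a contradiction by using the hypothesis $\lambda^{<\eta}<\kappa$ to build ${<}\eta$-closed elementary submodels, locating a point of $S$ as a supremum of model ordinals of cofinality $\eta$, and running the length-$(\eta+1)$ game so that the final condition forces that point into the alleged club. The only differences are cosmetic --- you keep a chain $\langle M_\beta\rangle$ with $M_\beta^{<\eta}\subseteq M_{\beta+1}$ where the paper collapses this into a single ${<}\eta$-closed model $N$ with $\sup(N\cap\kappa)\in S$, and you should interleave the good player's $\sigma$-moves at successor stages as well (not only at limits) so that the partial play is literally consistent with $\sigma$ when you invoke it.
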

\begin{proof}
  Let $S\subseteq \{\nu<\kappa \mid \cof(\nu)=\eta\}$ be stationary.
Suppose that for some generic subset $G(Q)$ of $Q$, $S$ is non-stationary in $V[G(Q)]$.
Let $C\subseteq \kappa$ be a club  disjoint from $S$. Let $\name{C}$ be a $Q$-name for $C$.

Then, back in $V$ there are $q \in G(Q)$ such that
$$q \Vdash (\lusim{C}\subseteq \kappa \text{ is a club and } S\cap \lusim{C}=\emptyset).$$
Fix a winning strategy $\sigma$ for the Player I in  plays of the length $\eta+1$ for $Q$.
\\Pick now an elementary submodel  $N$ of $H_\theta$, with $\theta$ large enough, such that
 \begin{enumerate}
   \item $N\supseteq \eta+1$ and $Q, S, \sigma, \lusim{C},q \in N$.
   \item $\kappa>|N|\geq\eta$,
   \item $\sup(N\cap \kappa)\in S$,
   \item ${}^{<\eta}N\subseteq N$,

 \end{enumerate}
 This is possible since we can construct an continuous and increasing sequence of models $\langle N_i\mid i<\eta\rangle$ satisfying $(1),(2)$, ${}^{<\eta}N_i\subseteq N_{i+1}$ and $\sup(N_i\cap\kappa)<\kappa$. Since $\eta$ is regular and $\eta^{<\eta}=\eta$ we can construct such a sequence and $\cup_{i<\eta}N_i=N^*_0$. Then $N^*_0$ satisfy $(1),(2),(4)$. We keep defining increasing and continuous models $$\langle N^*_i\mid i<\kappa\rangle$$ satisfying $(1),(2)$ and at successor points also $(4)$. In this definition we exploit the cardinal assumption that for every $\lambda<\kappa$, $\lambda^<\eta<\kappa$. The set $$\{\sup(N^*_i\cap\kappa)\mid i<\kappa\}$$ is a club at $\kappa$ thus there is $i<\kappa$ such that $\alpha=\sup(N^*_i\cap\kappa)\in S$. Note that the cofinality of $\alpha$ is $\eta$ and therefore $(N^*_i)^{<\eta}\subseteq N^*_i$. Let $N=N^*_i$, then $N$ satisfy $(1)-(4)$.

Let $\l \xi_i \mid i<\eta \r$ be a cofinal sequence in $\sup(N\cap \kappa)$. By $(4)$, every initial segment of it is in $N$.

Using $\sigma$ it is easy to  define an increasing sequence of conditions $\l q_i \mid i\leq \eta\r$ in $Q$ such that
 \begin{enumerate}
   \item $q_0=q$.
   \item $q_i \in N$, for every $i<\eta$.
   \item There is $\alpha_i\geq\xi_i$ such that $q_{i+1}\Vdash\dot{\alpha_i}\in\lusim{C}$.
 \end{enumerate}
  Since $q_i\in N$, $\alpha_i\in N\cap\kappa$ such $\langle\alpha_i\mid i<\eta\rangle$ form an increasing and continuous sequence in $\alpha$. Let $p_\eta=\sigma(\langle p_i q_i\mid i<\eta\rangle)$,
 then $$q_\eta\Vdash  \sup(N\cap \kappa)\in \lusim{C},$$ since it also forces that $\lusim{C}$ is closed.

 This is impossible, since $\sup(N\cap \kappa)\in S$, Contradiction.
 \end{proof}

\begin{prop}
Let $\kappa>\aleph_1, \eta<\kappa$ be a regular cardinals. Assume that for every $\lambda<\kappa$, $\lambda^{<\eta}<\kappa$.
Suppose that $\l \mathbb{P},\leq_{\mathbb{P}}\r$ is a forcing notion that destroys stationarity of a subset of $\kappa$ which concentrate on cofinality $\eta$.
Then $P$ is not masterable.

\end{prop}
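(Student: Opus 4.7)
The plan is to argue by contradiction, using the previous proposition as a black box. Suppose $\mathbb{P}$ were masterable, witnessed by some $\mathbb{P}$-name $\name{\mathbb{R}}$. By clause (2b) of Definition \ref{definition: masterable}, $\mathbb{P} \ast \name{\mathbb{R}}$ is a ${<}\kappa$-strategically closed forcing in $V$. Since $\eta$ is a regular cardinal below $\kappa$ we have $\eta + 1 < \kappa$, so $\mathbb{P} \ast \name{\mathbb{R}}$ is in particular $(\eta+1)$-strategically closed. The cardinal-arithmetic hypothesis ``$\lambda^{<\eta} < \kappa$ for every $\lambda < \kappa$'' is shared with the previous proposition, so the previous proposition applies to $\mathbb{P} \ast \name{\mathbb{R}}$ over $V$.

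Now let $S \subseteq \{\nu < \kappa \mid \cof(\nu) = \eta\}$ be a stationary set in $V$ whose stationarity is destroyed by $\mathbb{P}$; such an $S$ exists by the hypothesis. On the one hand, the previous proposition applied to $\mathbb{P} \ast \name{\mathbb{R}}$ tells us that $S$ remains stationary in any generic extension $V[G \ast H]$ by $\mathbb{P} \ast \name{\mathbb{R}}$. On the other hand, factoring the extension as $V[G][H]$ where $G$ is $V$-generic for $\mathbb{P}$, the assumption on $\mathbb{P}$ gives a club $C \subseteq \kappa$ in $V[G]$ with $C \cap S = \emptyset$; this $C$ lies in $V[G][H] = V[G \ast H]$ and continues to be a club of $\kappa$ there (because $\mathbb{P} \ast \name{\mathbb{R}}$ preserves $\kappa$ and does not add bounded subsets to it, so closedness and unboundedness of $C$ are absolute). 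Hence $S$ is non-stationary in $V[G \ast H]$, a direct contradiction to the conclusion of the previous proposition.

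No heavy machinery is needed beyond the previous proposition; the only step where one should be a bit careful is confirming that a club of $\kappa$ in the intermediate model $V[G]$ really is still a club in $V[G \ast H]$, which is immediate from the fact that $\mathbb{P} \ast \name{\mathbb{R}}$ is ${<}\kappa$-strategically closed and hence $\kappa$-distributive and cardinal-preserving at $\kappa$. No appeal to the elementary embedding in clause (2a) of masterability is required: the clash is purely at the level of stationary-set preservation, forced by clause (2b).
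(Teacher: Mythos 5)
Your proof is correct and follows essentially the same route as the paper: apply the preceding preservation proposition to the two-step iteration $\mathbb{P}\ast\name{\mathbb{R}}$, then observe that the club witnessing non-stationarity of $S$ in the $\mathbb{P}$-extension persists to the full extension, yielding the contradiction. The extra remark on why the club remains a club is harmless (in fact, closedness and unboundedness in $\kappa$ of a fixed set are absolute to any outer model), so nothing further is needed.
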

\begin{proof}
 Suppose otherwise.
 Then there is a forcing notion $\lusim{\mathbb{R}}$ such that $\mathbb{P}*\lusim{\mathbb{R}}$ is ${<}\kappa$-strategically closed.
 In particular,  $\mathbb{P}*\lusim{\mathbb{R}}$ is $\eta+1$-strategically closed.

 By the previous proposition, then  $\mathbb{P}*\lusim{\mathbb{R}}$ preserves stationary subsets of $\kappa$ which concentrate on cofinality $\eta$.

 This is impossible since already  $\l \mathbb{P},\leq_{\mathbb{P}}\r$ is a forcing notion that destroys stationarity of some stationary subset $S\subseteq \kappa$ which concentrate on cofinality $\eta$, hence the witnessing club which is disjoint from $S$ will also be present in extensions of  $\mathbb{P}*\lusim{\mathbb{R}}$, Contradiction.
 \end{proof}

Now we deal with a particular example.
Let $S$ be a fat subset of $\kappa$ such that $$ \{\nu<\kappa \mid \cof(\nu)=\eta\}\setminus S$$ is stationary.

Then, the forcing $Club(S)$ is $\kappa$-distributive (since $S$ is fat). $Club(S)$ shoots a club through $S$ and therefore distroys the stationarity of $\{\nu<\kappa\mid \cof(\nu)=\eta\}\setminus S$. It follows that $Club(S)$ is not $<-\kappa$-strateginaly closed  (even not $\eta+1$-strateginaly closed ) and not masterable.

%\begin{question}
%Does the method used for masterable forcings works for $Club(S)$ as well?
%\end{question}
Note that if we force a Cohen function $f:\kappa \to \kappa$, then for every $\delta<\kappa$ the set
$$S^f_\delta=\{\nu<\kappa \mid f(\nu)=\delta\}$$ will be a fat stationary subset of $\kappa$ such that
for every regular $\eta<\kappa$,
the set
$$S^f_\delta\cap \{\nu<\kappa \mid \cof(\nu)=\eta\}$$ is co-stationary. The next lemma shows that a similar method to the one used for masterable forcings, can be used to extend $D_p(Q)$ for this kind of fat stationary sets.
\begin{lemma}
Let $\kappa$ be measurable cardinal and assume $GCH$. There is a cofinality preserving extension $V[G_\kappa]$ in which the following holds:

After forcing a Cohen function $f:\kappa\to \kappa$ with $\Add(\kappa,1)^{V[G_\kappa]}$, for every $\delta<\kappa$ and $p\in Club(S^{f}_\delta)$, $D_p(Club(S^{f}_{\delta}))$ can be extended to a $\kappa$-complete ultrafilter.
\end{lemma}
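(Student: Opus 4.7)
The plan is to verify that $Club(S^f_\delta)$ is \emph{masterable} in $V[G_\kappa][f]$ in the sense of Definition \ref{definition: masterable}, with trivial witness $\name{\mathbb{R}}$, and then to run the proof of Theorem \ref{thm1} internally to $V[G_\kappa][f]$. I take $V[G_\kappa]$ to be the Easton preparation of Theorem \ref{thm1}, let $f$ be $V[G_\kappa]$-generic for $\Add(\kappa,1)^{V[G_\kappa]}$, and fix $\delta<\kappa$ and $p\in Club(S^f_\delta)$. The forcing $Club(S^f_\delta)$ is $<\kappa$-strategically closed in $V[G_\kappa][f]$ because $S^f_\delta$ is fat: for each $\lambda<\kappa$, Player~I can steer a game of length $\lambda$ into a fixed closed subset of $S^f_\delta$ of order type $\lambda+1$. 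Starting with a normal measure $U\in V$ on $\kappa$ and the induced $j\colon V\to M$, I first lift to $j^{*}\colon V[G_\kappa]\to M[G_{j(\kappa)}]$ exactly as in Theorem \ref{thm1}, picking $\Add(\kappa,1)^{M[G_\kappa]}$ at stage~$\kappa$ of the lottery, and then further to $j^{**}\colon V[G_\kappa][f]\to M[G_{j(\kappa)}][f']$ by constructing in $V[G_\kappa][f]$ an $M[G_{j(\kappa)}]$-generic $f'\supseteq f$ for $\Add(j(\kappa),1)^{M[G_{j(\kappa)}]}$. The $\kappa^+$-closure of this Cohen forcing in $V[G_\kappa][f]$ gives room to meet the $\kappa^+$ relevant dense open sets and simultaneously to pin $f'(\kappa)=\delta$, whence $\kappa\in j^{**}(S^f_\delta)$.

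Let $H\subseteq Club(S^f_\delta)$ be any $V[G_\kappa][f]$-generic filter with $p\in H$. Then $m=\bigcup H\cup\{\kappa\}$ is a condition of $j^{**}(Club(S^f_\delta))$ stronger than every $j^{**}(p')=p'$ for $p'\in H$. Building in $V[G_\kappa][f][H]$ an $M[G_{j(\kappa)}][f']$-generic filter $H'\ni m$ for $j^{**}(Club(S^f_\delta))$ lifts $j^{**}$ to $j^{***}\colon V[G_\kappa][f][H]\to M[G_{j(\kappa)}][f'][H']$ with $\crit j^{***}=\kappa$, witnessing masterability of $Club(S^f_\delta)$ in $V[G_\kappa][f]$ with trivial $\name{\mathbb{R}}$. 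I then reproduce the proof of Theorem \ref{thm1} internally to $V[G_\kappa][f]$: construct inside $V[G_\kappa][f]$ an $M[G_{j(\kappa)}][f']$-generic $G_*\ni p$ for $j^{**}(Club(S^f_\delta))$ using the $\kappa^+$-strategic closure inherited from the $<j(\kappa)$-strategic closure in $M[G_{j(\kappa)}][f']$; apply masterability of $j^{**}(Club(S^f_\delta))$ in $M[G_{j(\kappa)}][f']$ (by elementarity) to obtain $k\colon M[G_{j(\kappa)}][f'][G_*]\to N^*$ with $\crit k=j(\kappa)$; extract a master condition $m^*\in k(j^{**}(Club(S^f_\delta)))$ via Lemma \ref{lemma}; and define
\[
\mathcal{U}=\{A\subseteq Club(S^f_\delta) \mid m^*\in k(j^{**}(A))\}\in V[G_\kappa][f],
\]
which is a $\kappa$-complete ultrafilter extending $D_p(Club(S^f_\delta))$.

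The main obstacle is the construction of the generic filter $H'$ inside $V[G_\kappa][f][H]$. The forcing $j^{**}(Club(S^f_\delta))$ is only $<j(\kappa)$-strategically closed in $M[G_{j(\kappa)}][f']$, and the passage through $Club(S^f_\delta)$ can add new $\kappa$-sequences to $V[G_\kappa][f][H]$, which threaten to push strategic-closure histories outside $M[G_{j(\kappa)}][f']$. Following the pattern of Corollary \ref{MasterForcing}(2), this is addressed by working above the master condition $m$, which absorbs the level-$\kappa$ behaviour of $H$ in one stroke; above $m$, any descending sub-chain of length $<\kappa$ used as input to the $M[G_{j(\kappa)}][f']$-strategy lies in $V[G_\kappa][f]$ by the $\kappa$-distributivity of $Club(S^f_\delta)$, and therefore in $M[G_{j(\kappa)}][f']$ by its $\kappa$-closure in $V[G_\kappa][f]$.
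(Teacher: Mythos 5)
Your proposal breaks at its very first step: the claim that $Club(S^f_\delta)$ is ${<}\kappa$-strategically closed in $V[G_\kappa][f]$ (and hence masterable with trivial $\name{\mathbb{R}}$) is false, and in fact the paper proves the opposite. Fatness of $S^f_\delta$ yields $\kappa$-distributivity of $Club(S^f_\delta)$, but it does not let Player~I ``steer'' the game into a pre-chosen closed subset of $S^f_\delta$ of order type $\lambda+1$: Player~II's moves are unconstrained, so at a limit stage $\theta$ the supremum $\nu$ of the ordinals played has cofinality $\cf(\theta)$ and can be driven by Player~II to land outside $S^f_\delta$, at which point Player~I has no legal move (any condition extending the union of the play is closed and must contain $\nu$, hence $\nu$ would have to lie in $S^f_\delta$). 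Concretely, for every regular $\eta<\kappa$ the set $\{\nu<\kappa\mid \cf(\nu)=\eta\}\setminus S^f_\delta$ is stationary, and $Club(S^f_\delta)$ destroys its stationarity; by the two propositions of Section~9, $Club(S^f_\delta)$ is therefore not even $(\eta+1)$-strategically closed and is \emph{not masterable for any choice of} $\name{\mathbb{R}}$ (putting a forcing after the club-shooting cannot repair the missing limit stages). The same error propagates to your construction of $H'$ and of $G_*$: by elementarity $j^{**}(Club(S^f_\delta))$ is likewise not ${<}j(\kappa)$-strategically closed in $M[G_{j(\kappa)}][f']$, so the ``usual counting of dense sets'' argument has nothing to run on. The analogy with Corollary~\ref{MasterForcing}(2) also fails, because there $S$ contains all singular cardinals, which is exactly what makes limit suprema land in $S$ and the forcing strategically closed.

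The whole point of this lemma is to handle a non-masterable forcing, and the paper's proof does so with two ingredients you are missing. First, the correct strategic-closure statement is about the \emph{two-step} iteration $\Add(\kappa,1)\ast\name{Club(S^{\dot f}_\delta)}$ over $V[G_\kappa]$: this is ${<}\kappa$-strategically closed because Player~I controls the Cohen coordinate and at each limit supremum $\nu$ can declare $f(\nu)=\delta$, thereby forcing $\nu\in S^f_\delta$ and keeping the club coordinate legal. This is what allows one to build, inside $V[G_\kappa][f_\kappa]$, an $M_1[G_{\kappa_1}]$-generic $f_{\kappa_1}\ast H$ for $\Add(\kappa_1,1)\ast Club(S^{f_{\kappa_1}}_\delta)$ with $f_{\kappa_1}\restriction\kappa=f_\kappa$ and $j_1(p)\in H$. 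Second, since the generic club $H$ now lives at level $\kappa_1$ rather than being the generic you are trying to measure, a \emph{second} ultrapower $j_{1,2}\colon M_1\to M_2$ is needed: the master condition $\langle f_{\kappa_1}\cup\{\langle\kappa_1,\delta\rangle\},\, \bigcup H\cup\{\kappa_1\}\rangle$ lives in $\Add(\kappa_2,1)\ast\name{Club(S^{f_{\kappa_2}}_\delta)}$, and the ultrafilter is $W=\{X\subseteq Club(S^{f}_\delta)\mid m\in j_{1,2}^*(j_1^*(X))\}$. Your single-ultrapower-plus-masterability scheme cannot be repaired within the masterability framework.
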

\begin{proof}
 Let us use the same Easton support iteration $\langle P_\alpha,\name{Q}_\beta\mid \alpha\leq\kappa,\beta<\kappa\r$ as for masterable forcing, where $\name{Q_\beta}$ is the trivial forcing for accessible ordinals and the lottery sum over all ${<}\beta$-strategically closed forcings of size $\leq\beta$ for inaccessible $\beta$.
 Let $G_\kappa\subseteq P_\kappa$ be $V$-generic. We claim that the model $V[G_\kappa]$ is as wanted. Let $f_\kappa$ be a $V[G_\kappa]$-generic function for $\Add(\kappa,1)^{V[G_{\kappa}]}$. In $V[G_\kappa][f_\kappa]$ we shell extend $D_p(Club(S^{f_\kappa}_\delta))$ for some $\delta<\kappa$ and $p\in Club(S^{f_\kappa}_\delta)$. First let $U\in V$ be some normal measure on $\kappa$, $$j_1:V\rightarrow M_1\simeq Ult(V,U)$$
 is the corresponding elementary embedding and
 $$j_{1,2}:M_1\rightarrow M_2\simeq Ult (M_1,j_1(U))$$ is the second iteration. Denote by $j_2=j_{1,2}\circ j_1$, $\kappa_i=j_i(\kappa)$ for $i=1,2$.

 Secondly, by the same arguments as in \ref{MasterForcing}, by picking $\Add(\kappa,1)$ at $\name{Q}_\kappa$, we can construct the generic $$\underset{G_{\kappa_1}}{\underbrace{G_{\kappa}*f_\kappa*G_{(\kappa,\kappa_1)}}}*f_{\kappa_1}\in V[G_\kappa][f_\kappa]$$
 which is $M_1$-generic for $j(P_\kappa*\Add(\kappa_1))=P_\kappa*\name{Q}_\kappa*P_{(\kappa,\kappa_1)}*\Add(\kappa_1,1)$. Then the embedding $j_1:V\rightarrow M_1$ lifts to $$j_1^*:V[G_\kappa][f_\kappa]\rightarrow M_1[G_{\kappa_1}][f_{\kappa_1}]$$
 Next, we claim that the forcing $\Add(\beta,1)*Club(S^{f}_\beta)$ is ${<}\beta$-strategically closed when $\beta$ is inaccessible. To see this, let $\lambda<\beta$, Then the good player can always play conditions of the form $\l g,\dot{a}\r\in \Add(\beta,1)*Club(S^{f}_\beta)$ where $\dot{a}$ is the canonical name for some closed set such that $\max(a)=\dom(g)$. The strategy is defined as follows,
 $$\sigma_\lambda(\l\l g_i,\dot{a_i}\r,\l f_i,\name{b_i}\r\mid i<\theta\r)=\langle g,\dot{a}\r$$
 where for limit steps $\theta$, $$g=\bigcup_{i<\theta}g_i\cup\{\langle \nu,\delta\r\}, \ a=\cup_{i<\theta}a_i\cup\{\nu\}$$ $\nu$ being $\sup_{i<\theta}(\sup(\dom(g_i)))$.
 This will form an element of $\Add(\beta,1)*Club(S^{f}_\beta)$ by the definition at successor points $\theta=\tau+1$, in which case $g$ will simply fill the missing points in $\dom(f_\tau)$ with some value different then $\delta$ up to $\sup(\dom(f_\tau))$, if there is a maximal element in  $\dom(f_\tau)$ let $\nu=\max(f_\tau)+1$ otherwise $\nu=\sup(\dom(f_\tau))$ and define $g(\nu)=\delta$. $\name{b_\tau}$ will be extended to a canonical name $\dot{a}$ according to $g$.

 Using this strategically closure of the forcing $$\Add(\kappa_1,1)*\name{Club(S^{f_{\kappa_1}}_\delta)}$$ and the usual arguments of number of dense open sets, in $V[G_\kappa][f_{\kappa}]$ we can find a $M_1[G_{\kappa_1}][f]$-generic club $H$ for $Club(S^{f_{\kappa_1}}_\delta)^{M_1[G_{\kappa_1}][f_\kappa]}$ with $j_1(p)\in H$. Let $C=\cup H\subseteq S^{f_{\kappa_1}}_\delta$ be the generic club.

 Next we shell extend $j_{1,2}:M_1\rightarrow M_2$ to $$j_{1,2}^*:M_1[G_{\kappa_1}][f_{\kappa_1}][H]\rightarrow M_2[G_{\kappa_2}][f_{\kappa_2}][H'']$$
   To do this, note that $$j_{2,1}(P_{\kappa_1}*\Add(\kappa_1,1)*\name{Club(S^{f_{\kappa_1}}_\delta)}=P_{\kappa_1}*\name{Q}_{\kappa_1}*P_{(\kappa_1,\kappa_2)}*\Add(\kappa_2,1)*\name{club(S^{f_{\kappa_2}}_\delta))}$$
   For $P_{\kappa_1}*\name{Q}_\kappa$ we take $G_{\kappa_1}*(f_{\kappa_1}*H)$. For the forcing $P_{(\kappa_1,\kappa_2)}$ we can find a generic $G_{(\kappa_1,\kappa_2)}\in M_1[G_\kappa][f'][H]$ which is $M_2[G_{\kappa_1}][f'][H]$-generic for $P_{(\kappa_1,\kappa_2)}$. Finally, note that the condition $$\langle f_{\kappa_1}\cup\{\l\kappa_1,\delta\r\}, C\cup\{\kappa_1\}\r\in \Add(\kappa_2,1)*\name{Club(S^{f_{\kappa_2}}_\delta))}$$ and once again by the strategically closure and $GCH$ we can extend this condition to a generic $f_{\kappa_2}*H'\in M_1[G_{\kappa_1}][f_{\kappa_1}][H]$.
   So the embedding $j_{1,2}:M_1\rightarrow M_2$ is lifted to
   $$j_{1,2}^*:M_1[G_{\kappa_1}][f_{\kappa_1}][H]\longrightarrow M_2[G_{\kappa_1}][f_{\kappa_1}][H][G_{(\kappa_1,\kappa_2))}][f_{\kappa_2}][H']$$
   By lemma \ref{lemma} there is a condition $m\in H'$ such that for every $q\in H$ $j_U^*(p)\leq m$. In $V[G_\kappa][f]$, define
   $$W=\{x\subseteq Club(S^{f}_\delta)\mid m\in j^*_{2,1}(j^*_1(X)\}$$
   This $\kappa$-complete ultrafilter extends $D_p(Club(S^{f}_\delta))$.
\end{proof}

%starting with a measurable, it is consistent that the filter of dense open subsets of each of the forcing notions $Club(S_\delta)$
%can be extended to a $\kappa$-complete ultrafilter.

\section{Open Problems}
The following question looks natural:

\begin{question}
What is the exact strength of the following assertion: For every $\kappa$-distributive forcing notion of size $\kappa$ the filter of its dense open subsets can be extended to a $\kappa$-complete ultrafilter?
\end{question}
This question is twofold. We can ask what is the \emph{consistency strength} of this assertion and we can also inquire which large cardinals imply it.

%Our conjecture is that it corresponds to $\kappa$ being
% a lightface $\Pi_1^1$-subcompact cardinal.
 %\\Unfortunately, we do not have nice examples of  $\kappa$-distributive forcing notion of size $\kappa$ such that the possibility of extension of the filter of its dense open subsets  to a $\kappa$-complete ultrafilter requires more than just $o(\kappa)=\kappa+\kappa$.
 
 Let $Q$ be the forcing for shooting a club through the singulars. 
\begin{question}
Assume that $D(Q)$ can be extended to a $\kappa$-complete ultrafilter is it consistent that $\exists\lambda.\ o(\lambda)=\lambda^{++}$?
\end{question}
 
 A natural candidate for a forcing for which extending the dense open filter to an ultrafilter might require a higher consistency strength is the forcing of adding a club through a fat stationary set $S\subseteq \kappa$.
 \\However, as it was shown above, depending on the fat stationary set, it may require a measurable alone.
\\
A. Brodsky and A. Rinot \cite{Rinot2019} give a different way to produce many fat stationary sets.
They showed that  $\square(\kappa)$ implies that $\kappa$ can be partitioned into $\kappa$ many disjoint fat stationary sets. In our context, $\kappa$ is a measurable, and so   $\square(\kappa)$ fails.
It is likely that still  in $L[E]$-type models there will be interesting fat sets.

The next question relates to theorem \ref{equivalece}.
Recall that an abstract Prikry type forcing, is a forcing notion $\l \mathcal{Q},\leq,\leq^*\r$ such that $\leq^*\subseteq\leq$, and the Prikry property holds:
$$\text{For every statement in the forcing language }\sigma,\text{ and any condition }q\in\mathcal{Q},$$
$$\text{ there is }q\leq^* q^*\in\mathcal{Q},\text{ such that }q^*\text{ decide }\sigma$$

To obtain interesting Prikry type forcing we usually require that the order $\leq^*$ has high closure or directness degree.

\begin{question}
Is there an abstract generalization of theorem \ref{equivalece} to Prikry type forcing?
Namely, assume there is a projection from a Prikry type forcing $\mathcal{Q}$, for which $\leq^*$ is sufficiently closed or directed onto a distributive forcing $\mathbb{P}$. Can the filter $D_p(\mathbb{P})$ be extended to a $\kappa$-complete ultrafilter?
\end{question}
As we noted after the proof of Theorem \ref{equivalece}, the current formulation does not quite give us an equivalence, as we do not know if the Prikry forcing can be projected onto a distributive forcing notion of size larger than $\kappa$.
\begin{question}
Is there a tree of measures on $\kappa$ such that the corresponding tree Prikry forcing, projects onto a $\sigma$-distributive forcing notion of size $>\kappa$.
\end{question}
\bibliographystyle{amsplain}
\bibliography{ref}
\end{document}